\DeclareFontFamily{OMS}{mdpgd}{\skewchar\font=48}
\DeclareFontShape{OMS}{mdpgd}{m}{n}{<->s*[1.08]mdpgdr7y}{}
\DeclareMathAlphabet{\mathcal}{OMS}{mdpgd}{m}{n}
\setlist[itemize,1]{label=\(\vcenter{\hbox{\scriptsize\(\bullet\)}}\)}
\providecommand\phantomsection{}
\theoremstyle{plain}
\newtheorem{conj}{Conjecture}[subsection]
\newtheorem{coro}[conj]{Corollary}
\newtheorem{lemm}[conj]{Lemma}
\newtheorem{prop}[conj]{Proposition}
\newtheorem{theo}[conj]{Theorem}
\newtheorem*{conj*}{Conjecture}
\newtheorem*{coro*}{Corollary}
\newtheorem*{lemm*}{Lemma}
\newtheorem*{prop*}{Proposition}
\newtheorem*{theo*}{Theorem}
\theoremstyle{definition}
\newtheorem{defi}[conj]{Definition}
\newtheorem{exem}[conj]{Example}
\newtheorem*{assu*}{Assumption}
\newtheorem*{axio*}{Axiom}
\newtheorem*{cond*}{Condition}
\newtheorem*{defi*}{Definition}
\newtheorem*{exem*}{Example}
\newtheorem*{exer*}{Exercise}
\newtheorem*{hypo*}{Hypothesis}
\newtheorem*{prob*}{Problem}
\newtheorem*{ques*}{Question}
\theoremstyle{remark}
\newtheorem{rema}[conj]{Remark}
\newtheorem*{case*}{Case}
\newtheorem*{clai*}{Claim}
\newtheorem*{conc*}{Conclusion}
\newtheorem*{rema*}{Remark}
\newtheorem*{plan*}{Organisation of the article}
\newtheorem*{merci*}{Acknowledgements}
\newtheorem*{nota*}{Notations}
\newtheorem*{conv*}{Conventions}
\newtheorem*{convnota*}{Conventions \& notations}
\renewcommand{\theconj}{%
	\ifnum\value{section}=0
		\arabic{conj}%
	\else
		\ifnum\value{subsection}=0
			\thesection
		\else
			\thesubsection
		\fi
			.\arabic{conj}%
	\fi
}
\numberwithin{equation}{subsection}
\renewcommand{\theequation}{%
	\ifnum\value{section}=0
		\arabic{equation}%
	\else
		\ifnum\value{subsection}=0
			\thesection
		\else
			\thesubsection
		\fi
			.\arabic{equation}%
	\fi
}
\providecommand{\dedicace}[1]{\begin{flushright}\textit{#1}\end{flushright}}
\providecommand{\msc}[1]{{\noindent\small\textbf{Mathematics Subject Classification (2020)} --- #1.}}
\providecommand{\keywords}[1]{{\noindent\small\textbf{Keywords} --- #1.}}
\title{\textbf{Universal norms and the Fargues-Fontaine curve}}
\author{Gautier Ponsinet}
\date{5th October 2020}
\providecommand{\contact}{{
	\bigskip
	\small

	\noindent
	\textsc{G.~Ponsinet}, Max Planck Institut für Mathematik, Vivatsgasse 7, 53111 Bonn, Germany
	\par\noindent\nopagebreak
	\textit{E-mail address:} \href{mailto:gautier.ponsinet@mpim-bonn.mpg.de}{\url{gautier.ponsinet@mpim-bonn.mpg.de}}
}}
\newcommand{\ie}{\textit{i.e.} }
\let\leq\leqslant
\let\geq\geqslant
\newcommand{\ra}{\rightarrow}
\newcommand{\xra}[1]{\xrightarrow{#1}}
\newcommand{\riso}{\xra{\raisebox{-0.5ex}[0ex][0ex]{\(\sim\)}}}
\newcommand{\ul}[1]{\underline{#1}}
\newcommand{\h}[1]{\hat{#1}}
\newcommand{\dprime}{{\prime\prime}}
\newcommand{\N}{\mathbf{N}}
\newcommand{\Z}{\mathbf{Z}}
\newcommand{\Zp}{\mathbf{Z}_p}
\newcommand{\Qp}{\mathbf{Q}_p}
\newcommand{\Cp}{\mathbf{C}_p}
\newcommand{\Qpbar}{\bar{\mathbf{Q}}_p}
\renewcommand{\AA}{\mathcal{A}}
\newcommand{\CC}{\mathcal{C}}
\newcommand{\EE}{\mathcal{E}}
\newcommand{\FF}{\mathcal{F}}
\newcommand{\GG}{\mathcal{G}}
\newcommand{\HH}{\mathcal{H}}
\newcommand{\II}{\mathcal{I}}
\newcommand{\LL}{\mathcal{L}}
\newcommand{\MM}{\mathcal{M}}
\newcommand{\OO}{\mathcal{O}}
\renewcommand{\SS}{\mathcal{S}}
\newcommand{\mg}{\mathfrak{m}}
\DeclareMathOperator{\Hom}{Hom}
\DeclareMathOperator{\Ker}{Ker}
\DeclareMathOperator{\Img}{Im}
\DeclareMathOperator{\Coker}{Coker}
\DeclareMathOperator{\Sym}{Sym}
\DeclareMathOperator{\Obj}{Ob}
\let\temp\phi
\let\phi\varphi
\let\varphi\temp
\let\temp\epsilon
\let\epsilon\varepsilon
\let\varepsilon\temp
\newcommand{\liminj}{\varinjlim}
\newcommand{\limproj}{\varprojlim}
\DeclareMathOperator{\Gal}{Gal}
\DeclareMathOperator{\Spec}{Spec}
\DeclareMathOperator{\Proj}{Proj}
\newcommand{\et}{\text{ét}}
\newcommand{\BdR}{\mathbf{B}_\mathrm{dR}}
\newcommand{\Bcris}{\mathbf{B}_\mathrm{cris}}
\newcommand{\Be}{\mathbf{B}_\mathrm{e}}
\newcommand{\DdR}{\mathbf{D}_\mathrm{dR}}
\DeclareMathOperator{\Fil}{Fil}
\DeclareFontFamily{U}{wncy}{}
\DeclareFontShape{U}{wncy}{m}{n}{<->wncyr10}{}
\DeclareSymbolFont{mcy}{U}{wncy}{m}{n}
\DeclareMathSymbol{\Sha}{\mathord}{mcy}{"58}
\newcommand{\Crm}{\mathrm{C}}
\newcommand{\Brm}{\mathrm{B}}
\newcommand{\Zrm}{\mathrm{Z}}
\renewcommand{\H}{\mathrm{H}}
\newcommand{\Iw}{\mathrm{Iw}}
\newcommand{\B}{\mathbf{B}}
\newcommand{\Mod}{\mathrm{Mod}}
\newcommand{\Rep}{\mathrm{Rep}}
\newcommand{\XFF}{X^\mathrm{FF}}
\newcommand{\e}{\mathrm{e}}
\newcommand{\dR}{\mathrm{dR}}
\newcommand{\Coh}{\mathrm{Coh}_\mathrm{FF}}
\newcommand{\Bun}{\mathrm{Bun}_\mathrm{FF}}
\newcommand{\fr}{\mathrm{free}}
\DeclareMathOperator{\rk}{rk}
\DeclareMathOperator{\length}{length}
\newcommand{\RGamma}{\mathrm{R}\Gamma}
\renewcommand{\Vec}{\mathrm{Vec}}
\newcommand{\fl}{\mathrm{flat}}
\newcommand{\trunc}{\tau^{\leq 0}}
\newcommand{\id}{\mathrm{id}}
\newcommand{\HT}{\mathrm{HT}}
\newcommand{\disc}{{\mathrm{disc}}}
\newcommand{\discu}{{\ul{\mathrm{disc}}}}
\newcommand{\res}{\mathrm{res}}
\newcommand{\cor}{\mathrm{cor}}
\newcommand{\norm}{\mathrm{norm}}
\newcommand{\D}{\mathbf{D}}
\newcommand{\cris}{\mathrm{cris}}
\DeclareMathOperator{\Gr}{Gr}
\newcommand{\Top}{\mathrm{Top}}
\newcommand{\op}{\mathrm{op}}
\begin{document}
\maketitle
\dedicace{À Alma}
\smallskip
\begin{abstract}
	We study the module of universal norms associated with a de Rham \(p\)\=/adic Galois representation in a perfectoid field extension.
	In particular, we compute precisely this module when the Hodge-Tate weights of a representation are greater than or equal to \(0\).
	This generalises a result by Coates and Greenberg for Abelian varieties, and partially answers a question of theirs.
	Our method relies on the classification of vector bundles over the Fargues-Fontaine curve.
\end{abstract}
\bigskip
\keywords{\(p\)\=/adic Galois representations, Bloch-Kato subgroups, universal norms, Iwasawa theory, Fargues-Fontaine curve}
\newline
\msc{Primary: 11R23; Secondary: 11F80, 11F85, 11S25, 14F30, 14G45, 14H60}
\tableofcontents

\section*{Introduction} \label{sec:intro}
\phantomsection
\addcontentsline{toc}{section}{\nameref{sec:intro}}
Let \(p\) be a prime number.
In the present article, we are interested in local Iwasawa theory of \(p\)\=/adic Galois representations in perfectoid field extensions.

Let \(K\) be a finite extension of the field \(\Qp\) of \(p\)\=/adic numbers, contained in some algebraic closure \(\Qpbar\) of \(\Qp\), and let \(\GG_K = \Gal(\Qpbar/K)\) be the absolute Galois group of \(K\).
Let \(V\) be a \(p\)\=/adic representation of \(\GG_K\), \ie \(V\) is a finite dimensional \(\Qp\)\=/vector space endowed with a continuous and linear action of \(\GG_K\), and let \(T\) be a \(\Zp\)\=/lattice in \(V\) stable under the action of \(\GG_K\).
Let \(L\) be an algebraic extension of \(K\) contained in \(\Qpbar\).
We consider  the first Galois cohomology group
\[
	\H^1(L,V/T) = \H^1(\Gal(\Qpbar/L),V/T).
\]
Bloch and Kato~\cite{BlochKato90} have defined arithmetically significant subgroups
\[
	\H^1_e(L,V/T) \subset \H^1_f(L,V/T) \subset \H^1_g(L,V/T) \subset \H^1(L,V/T),
\]
which are involved in their conjecture on special values of \(L\)\=/functions of motifs.
For instance, if \(T=T_p(A)\) is the \(p\)\=/adic Tate module of an Abelian variety \(A\) defined over \(K\), so that \(V/T\) is the subgroup \(A[p^\infty]\) of \(p\)\=/power torsion points of \(A(\Qpbar)\), then the Bloch-Kato subgroups of \(\H^1(L,A[p^\infty])\) are all equal and coincide with the image of the Kummer map
\[
	\kappa_L : A(L) \otimes_{\Z} \Qp/\Zp \ra \H^1(L,A[p^\infty]).
\]

A precise description of the Bloch-Kato subgroups when \(L\) is an infinite extension of \(K\) is essential to Iwasawa theory.
Indeed, such a description makes for example possible the study of Selmer groups of motifs over infinite extensions~\cite{Mazur72,Greenberg03,CoatesHowson01} and the construction of \(p\)\=/adic height pairings~\cite{Schneider82,PerrinRiou92,Nekovar93}.

Perfectoid fields, a class of complete non-Archimedean fields introduced by Scholze~\cite{Scholze12}, are of particular interest for Iwasawa theory and its non-commutative generalisation~\cite{CFKSV05}.
Let \(\h{L}\) be the completion of \(L\) for the \(p\)\=/adic valuation topology.
First, we recall that if \(\h{L}\) is perfectoid then the extension \(L/K\) is infinite and infinitely wildly ramified.
Furthermore, by a theorem of Sen~\cite{Sen72}, if \(L\) is an infinite Galois extension of \(K\) with finite residue field and whose Galois group is a \(p\)\=/adic Lie group, then \(\h{L}\) is perfectoid.
Such extensions are ubiquitous in Iwasawa theory.
In particular, the completion of the cyclotomic extension of \(K\) generated by all the \(p\)\=/power roots of unity is a perfectoid field.

Coates and Greenberg~\cite{CoatesGreenberg96} gave the following simple cohomological description of the image of the Kummer map \(\kappa_L\) associated with an Abelian variety when \(\h{L}\) is a perfectoid field.
If the representation \(V\) is de Rham, a class of \(p\)\=/adic representations defined by Fontaine~\cite{Fontaine94II}, we set \(V_0\) the minimal sub\=/\(\GG_K\)\=/representation of \(V\) such that the Hodge-Tate weights of \(V/V_0\) are all less than or equal to \(0\), and \(T_0 = T \cap V_0\).
The inclusion \(V_0/T_0 \subset V/T\) induces a map
\[
	\lambda_L : \H^1(L,V_0/T_0) \ra \H^1(L,V/T).
\]
If \(A\) is an Abelian variety defined over \(K\) and \(T=T_p(A)\), then the representation \(V=\Qp \otimes_{\Zp} T_p(A)\) is de Rham with Hodge-Tate weights \(0\) and \(1\), and Coates and Greenberg proved\footnote{We have slightly rephrased their result here, see Remark~\ref{rema:CGcomparaison}.} that if \(\h{L}\) is a perfectoid field, then
\begin{equation} \label{eq:CGtheo_intro}
	\Img(\kappa_L) = \Img(\lambda_L).
\end{equation}

The purpose of the present article is to study the following question asked by Coates and Greenberg~\cite[p.~131]{CoatesGreenberg96}.
\begin{ques*}[Coates \& Greenberg]
	Does a description of the Bloch-Kato subgroups analogous to~\eqref{eq:CGtheo_intro} exist when \(V\) is a general \(p\)\=/adic representation?
\end{ques*}

Berger~\cite{Berger05}, generalising works of Perrin-Riou~\cite{PerrinRiou92,PerrinRiou00,PerrinRiou01}, answered this question positively when \(L\) is the cyclotomic extension of \(K\) and \(V\) is de Rham.
However, both Berger and Perrin-Riou's approaches use tools specific to the cyclotomic extension, and therefore seem difficult to extend to other extensions.

The first and most precise result of the present article regarding Coates and Greenberg's question is the following generalisation of~\eqref{eq:CGtheo_intro}.
\begin{theo} \label{theo:intro_precis}
	Assume \(V\) is de Rham with Hodge-Tate weights less than or equal to \(1\).
	If \(\h{L}\) is a perfectoid field, then
	\[
		\H^1_e(L,V/T) = \Img(\lambda_L).
	\]
\end{theo}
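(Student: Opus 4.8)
The plan is to read $\H^1_e$ off the fundamental exact sequence of $p$\=/adic Hodge theory and then to transport the result onto the Fargues-Fontaine curve, where the classification of vector bundles turns the hypothesis on the Hodge-Tate weights into a statement about slopes. Tensoring
\[
	0\to\Qp\to\Be\to\BdR/\BdR^+\to0
\]
with $V$ and taking continuous $\Gal(\Qpbar/L)$\=/cohomology, I would identify $\H^1_e(L,V)$ with the image of the connecting map $\partial\colon\H^0(L,(\BdR/\BdR^+)\otimes V)\to\H^1(L,V)$, so that everything is controlled by this source module and by $\partial$.

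Here the perfectoid hypothesis enters decisively: since $\h{L}$ is perfectoid one has $\H^i(L,\Cp(n))=0$ for all $n\neq0$, so the Hodge-Tate decomposition of the de Rham representation $V$ collapses the associated graded of $(\BdR/\BdR^+)\otimes V$, and the bound $\HT\leq1$ leaves only the weight\=/$1$ graded piece. I would check that the inclusion $V_0\hookrightarrow V$ thus induces an isomorphism
\[
	\H^0(L,(\BdR/\BdR^+)\otimes V_0)\xrightarrow{\ \sim\ }\H^0(L,(\BdR/\BdR^+)\otimes V)\cong\h{L}\otimes_K\bigl(\DdR(V)/\Fil^{0}\DdR(V)\bigr),
\]
the quotient $V/V_0$ contributing nothing because its weights are $\leq0$. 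Composing with $\partial$ this yields $\H^1_e(L,V)=\lambda_L(\H^1_e(L,V_0))$, and in particular the inclusion $\H^1_e(L,V/T)\subseteq\Img(\lambda_L)$. The theorem is then reduced to the reverse inclusion, that every class coming from $V_0$ becomes exponential in $V/T$, for which it suffices to prove $\H^1_e(L,V_0/T_0)=\H^1(L,V_0/T_0)$.

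For this I would pass to the curve $\XFF$ attached to $\h{L}$, over which $V_0$ together with its lattice defines a vector bundle (equivalently a $\Gal$\=/equivariant bundle over the curve associated with $\Cp$). The non\=/exponential quotient $\H^1(L,V_0)/\H^1_e(L,V_0)\cong\H^1(L,\Be\otimes V_0)$ is then the coherent cohomology of this bundle, and the classification writes the latter as a sum of line bundles $\OO(\lambda)$ whose slopes are confined to $(-\infty,1]$ by the weight bound. Since $\H^1(\XFF,\OO(\lambda))=0$ for $\lambda\geq0$ over an algebraically closed base, the obstruction is concentrated in the slopes $\leq0$, and I would argue that it is an $\h{L}$\=/vector space, hence uniquely divisible, so that its image under reduction modulo $T_0$ vanishes; this is the mechanism by which, over a perfectoid base, the non\=/exponential classes carry no torsion and $\H^1_e(L,V_0/T_0)$ fills out all of $\H^1(L,V_0/T_0)$.

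The hard part, and the step I expect to demand the most care, is the boundary slope $0$. Over a perfectoid but not algebraically closed $\h{L}$ the trivial bundle already has $\H^1(\XFF,\OO)\cong\H^1(L,\Qp)\neq0$: this is the unramified, valuation\=/type part of $\H^1$ that survives over a finite field and that must be shown to disappear here. Isolating this slope\=/$0$ piece $\Gal$\=/equivariantly through the Harder-Narasimhan filtration and matching it against the perfectoid computation — concretely, the fact that the $p$\=/divisibility of the value group of $\h{L}$ kills the valuation contribution to $\H^1(L,\Qp(1))$ and to its higher\=/slope analogues — is the crux on which the whole argument turns. Once this vanishing is established, the asserted equality of subgroups of $\H^1(L,V/T)$ follows by combining it with the first inclusion.
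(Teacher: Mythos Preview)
Your reduction is sound: showing \(\H^1_e(L,V/T)\subseteq\Img(\lambda_L)\) via the isomorphism on \(\H^0\) of \((\BdR/\BdR^+)\otimes(-)\) is correct, and you are right that the reverse inclusion follows once \(\H^1_e(L,V_0/T_0)=\H^1(L,V_0/T_0)\). The gap is in your argument for this last equality. The quantity \(\H^1(L,\Be\otimes V_0)\) is continuous \emph{Galois} cohomology of a \(\GG_L\)-module; it is not the coherent cohomology of any bundle on \(\XFF\), and there is no identification of the two that would let you invoke the slope classification directly. In fact the bundle \(\EE(V_0)=\OO_{\XFF}\otimes V_0\) is semistable of slope exactly \(0\) for \emph{every} \(p\)-adic representation --- there is no range \((-\infty,1]\) of slopes, and the weight bound plays no role here. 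Your ``hard part'' is therefore not a boundary case but the entire problem: you need \(\H^1(L,\Gamma(\XFF,\FF))=0\) for a \(\GG_L\)-equivariant bundle of slope \(0\), which is simply false (take \(V_0=\Qp(1)\); then \(\H^1(L,\Qp(1))\neq 0\) even over perfectoid \(L\) --- indeed this group carries the image of units under Kummer). The claim that the obstruction is an \(\h{L}\)-vector space is likewise unsupported: \(\Be\) carries no \(\h{L}\)-structure.

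The paper avoids this slope-\(0\) obstruction by working not with \(\EE(V_0)\) but with its \emph{modification by truncation} \(\EE_+(V_0)=(\Be\otimes V_0,\ \BdR^+\otimes V_0+\BdR^+\otimes_K\DdR(V_0))\). The key computation (Proposition~\ref{prop:HN0V0}) is that this modified bundle has Harder--Narasimhan slopes \emph{strictly} greater than \(0\), precisely because \((V_0)_0=V_0\); Fargues--Fontaine's vanishing (Proposition~\ref{prop:FFsplit}) then gives \(\H^1(L,E_+(V_0))=0\) directly, with no residual slope-\(0\) piece to handle. What remains is to pass from \(E_+\) back to the discrete group \(E_{\discu}\) controlling \(\H^1_e\): this is where the hypothesis \(\HT\leq 1\) actually enters, since it forces \(\h{t}_V(\Qpbar)=t_V(\Cp)\) (Corollary~\ref{coro:Bn}), and then a density-and-continuity argument using the topology on cohomology (treated in the appendix) closes the gap. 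So the weight bound is used not to constrain slopes, but to make the completion step tractable.
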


The study of the Bloch-Kato subgroups over infinite extensions, motivated by Iwasawa theory, has a long history.
For Abelian varieties and \(p\)\=/divisible groups when \(L/K\) is a ramified \(\Zp\)\=/extension, it has been initiated by Mazur~\cite{Mazur72} and further developed in an important literature~\cite{Manin71,Vvedenskii73,Konovalov76,Hazewinkel74,Hazewinkel74II,Hazewinkel77,Hazewinkel78,LubinRosen78,Rubin85,Schneider87}.
Coates and Greenberg's theorem~\eqref{eq:CGtheo_intro} (and therefore, Theorem~\ref{theo:intro_precis}) generalises all these results.

We recall that, for an Abelian variety \(A\) defined over \(K\), to compute the image of the Kummer map \(\kappa_L\) is equivalent, by Tate local duality, to compute the module of universal norms associated with the dual Abelian variety
\begin{equation} \label{eq:norms_intro}
	\limproj_{\mathrm{norm}} A^\vee(K^\prime)
\end{equation}
where \(K^\prime\) runs through the finite extensions of \(K\) contained in \(L\).
Prior to the work of Coates and Greenberg (whose approach we shall recall briefly below), this is this last module~\eqref{eq:norms_intro} of universal norms that was computed directly.
We shall deduce from Theorem~\ref{theo:intro_precis} a description of the module of universal norms of Bloch-Kato subgroups (see Theorems~\ref{theo:Iwasawa}, \ref{theo:Iwasawafini} and~\ref{theo:Iwasawa_autresBK}).

We now explain our proof of Theorem~\ref{theo:intro_precis}.
Fontaine~\cite{Fontaine03} has defined a discrete \(\GG_K\)\=/module \(E_\discu(V/T)\) containing \(V/T\) as torsion subgroup, and whose first Galois cohomology group fits into a short exact sequence
\begin{equation} \label{eq:Edisc_intro}
		0 \ra \H^1_e(L,V/T) \ra \H^1(L,V/T) \ra \H^1(L,E_\discu(V/T)) \ra 0.
\end{equation}
Precisely, \(E_\discu(V/T)\) is the maximal subgroup stable under the action of \(\GG_K\) of \((\Be \otimes_{\Qp} V)/T\) on which the action of \(\GG_K\) is discrete, endowed with the discrete topology, where \(\Be = \Bcris^{\phi = 1}\) and \(\Bcris\) is Fontaine's ring of crystalline \(p\)\=/adic periods.

If \(A\) is an Abelian variety defined over \(K\), then \(E_\discu(A[p^\infty])\) is canonically isomorphic to \(A(\Qpbar)\) up to prime-to\=/\(p\) torsion, and the short exact sequence~\eqref{eq:Edisc_intro} is isomorphic to the Kummer short exact sequence
\[
	0 \ra A(L) \otimes_{\Z} \Qp/\Zp \xra{\kappa_L} \H^1(L,A[p^\infty]) \ra \H^1(L,A)[p^\infty] \ra 0.
\]
Coates and Greenberg use explicit methods to compute \(\H^1(L,A)[p^\infty]\) when \(\h{L}\) is perfectoid, and deduce their result~\eqref{eq:CGtheo_intro}.
However, this explicit approach seems difficult to generalise to \(\H^1(L,E_\discu(V/T))\).

Instead, the subspace topology from \((\Be \otimes_{\Qp} V)/T\) endows the module \(E_\discu(V/T)\) with a linear and separated topology, and we consider its Hausdorff completion \(E_+(V/T)\) also introduced by Fontaine~\cite{Fontaine03}.
For instance, if \(A\) is an Abelian variety defined over \(K\), this topology on \(E_\discu(A[p^\infty])\) coincides with the natural topology of \(A(\Qpbar)\) via the aforementioned canonical isomorphism, and the completion \(E_+(A[p^\infty])\) is isomorphic to \(A(\Cp)\) up to prime-to\=/\(p\) torsion, where \(\Cp\) is the completion of \(\Qpbar\) for the \(p\)\=/adic valuation topology.

The completion map \(E_\discu(V/T) \ra E_+(V/T)\) induces an application
\begin{equation} \label{eq:completion}
	\xi_L : \H^1(L,E_\discu(V/T)) \ra \H^1(L,E_+(V/T)).
\end{equation}
The most general result of the present article regarding Coates and Greenberg's question is then the following.
\begin{theo} \label{theo:intro_principal}
	Assume \(V\) is de Rham.
	If \(\h{L}\) is a perfectoid field, then there is an isomorphism
	\[
		\frac{\Img(\lambda_L)}{\H^1_e(L,V/T)} \simeq \Ker\left(\H^1(L,E_\discu(V/T)) \xra{\xi_L} \H^1(L,E_+(V/T))\right).
	\]
\end{theo}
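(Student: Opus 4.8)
The plan is to factor the comparison through the surjection \(\pi_L\colon\H^1(L,V/T)\twoheadrightarrow\H^1(L,E_\discu(V/T))\) of the exact sequence~\eqref{eq:Edisc_intro}, whose kernel is \(\H^1_e(L,V/T)\). Write \(W=V/V_0\) and \(T_W=T/T_0\), so that \(W\) is de Rham with Hodge-Tate weights \(\leq 0\) while \(V_0\) has Hodge-Tate weights \(\geq 1\), and recall that \(\lambda_L\) is induced by the inclusion \(V_0/T_0\hookrightarrow V/T\). From the long exact sequence attached to \(0\ra V_0/T_0\ra V/T\ra W/T_W\ra 0\) one obtains \(\Img(\lambda_L)=\Ker\!\big(\H^1(L,V/T)\xra{\beta}\H^1(L,W/T_W)\big)\), where \(\beta\) is induced by the projection \(V/T\ra W/T_W\). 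Two structural facts about Fontaine's modules, both to be read off from the description of \(E_\discu\) and \(E_+\) via the Fargues-Fontaine curve, drive the argument: \textbf{(A)} since \(W\) has Hodge-Tate weights \(\leq 0\), the associated modification is trivial at \(\infty\) in the relevant direction, so that \(E_\discu(W/T_W)=W/T_W\), whence \(\H^1_e(L,W/T_W)=0\) by~\eqref{eq:Edisc_intro} applied to \(W\); and \textbf{(B)} since \(V_0\) has Hodge-Tate weights \(\geq 1\), the bundle attached to \(V_0\) has positive slopes, and as \(\h L\) is perfectoid (hence deeply ramified in the sense of Coates and Greenberg) one has \(\H^1(L,E_+(V_0/T_0))=0\).

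Fact~(A) has two consequences. It shows that \(\beta\) carries \(\H^1_e(L,V/T)\) into the functorial subgroup \(\H^1_e(L,W/T_W)=0\), so that \(\H^1_e(L,V/T)\subseteq\Img(\lambda_L)\) and the left-hand quotient in the statement is defined. Combined with the surjectivity of \(V/T\twoheadrightarrow W/T_W\), it also yields a short exact sequence of discrete \(\GG_K\)-modules
\[
	0\ra E_\discu(V_0/T_0)\ra E_\discu(V/T)\xra{q} W/T_W\ra 0,
\]
in which \(q\) restricts on \(V/T\) to the projection onto \(W/T_W\), so that \(\beta=q_*\circ\pi_L\). Since \(\pi_L\) is surjective with \(\Ker(\pi_L)=\H^1_e(L,V/T)\subseteq\Img(\lambda_L)=\Ker(\beta)=\pi_L^{-1}(\Ker q_*)\), it induces an isomorphism \(\Img(\lambda_L)/\H^1_e(L,V/T)\riso\Ker\!\big(q_*\colon\H^1(L,E_\discu(V/T))\ra\H^1(L,W/T_W)\big)\). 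It therefore remains to identify \(\Ker(q_*)\) with \(\Ker(\xi_L)\).

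To this end I would Hausdorff-complete the sequence above. As the quotient \(W/T_W\) is discrete, completion is exact and produces
\[
	0\ra E_+(V_0/T_0)\ra E_+(V/T)\xra{\bar q} W/T_W\ra 0,
\]
together with a morphism from the previous sequence that is the identity on \(W/T_W\) and the completion maps on the other two terms. Passing to cohomology, the right-hand square gives \(q_*=\bar q_*\circ\xi_L\), whence \(\Ker(\xi_L)\subseteq\Ker(q_*)\). Conversely, a class \(c\in\Ker(q_*)\) is, by exactness of the top row, of the form \(c=\iota_*(c_0)\) with \(c_0\in\H^1(L,E_\discu(V_0/T_0))\), where \(\iota_*\) is induced by the left-hand inclusion; commutativity of the left square gives \(\xi_L(c)=\iota_*^+(a(c_0))\), with \(a\) the completion map on the \(V_0\)-term and \(\iota_*^+\) the completed analogue of \(\iota_*\). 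Fact~(B) forces \(a(c_0)=0\), hence \(\xi_L(c)=0\). Thus \(\Ker(q_*)=\Ker(\xi_L)\), and composing the two identifications yields the asserted isomorphism with the kernel of \(\xi_L\) from~\eqref{eq:completion}.

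I expect the genuine difficulty to be Fact~(B), the vanishing \(\H^1(L,E_+(V_0/T_0))=0\) for the positive-weight part: this is precisely where the hypothesis that \(\h L\) is perfectoid is used, and where the classification of vector bundles over the Fargues-Fontaine curve enters, presenting \(E_+(V_0/T_0)\) as the Galois cohomology of a positive-slope object whose higher cohomology is annihilated by deep ramification à la Tate and Sen. Subsidiary technical points demanding care are the compatibility of the subspace topology on \(E_\discu(V_0/T_0)\) with its intrinsic topology, the exactness of Hausdorff completion on the displayed sequence, and the reconciliation of discrete and continuous cohomology implicit in the map \(\xi_L\).
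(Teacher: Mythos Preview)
Your proposal is correct and follows essentially the same strategy as the paper: the key ingredients are precisely your Facts~(A) and~(B), and the paper's proof (Theorem~\ref{theo:principal} and Corollary~\ref{coro:snake}) likewise reduces to the vanishing \(\H^1(L,E_+(V_0/T_0))=0\) together with the identification \(E_+(W/T_W)\simeq W/T_W\). The organisation differs slightly: where you carry out a two-step chase (first identifying \(\Img(\lambda_L)/\H^1_e(L,V/T)\) with \(\Ker(q_*)\), then showing \(\Ker(q_*)=\Ker(\xi_L)\)), the paper packages both steps into a single application of the snake lemma to the two-row diagram whose rows are the surjections \(\H^1(L,V/T)\twoheadrightarrow\H^1(L,E_\discu(V/T))\) and \(\H^1(L,V/T)\twoheadrightarrow\H^1(L,E_+(V/T))\), after first identifying the latter target with \(\H^1(L,W/T_W)\) via Corollary~\ref{coro:perfectoidplus}. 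One genuine methodological difference: you obtain the short exact sequence \(0\to E_+(V_0/T_0)\to E_+(V/T)\to W/T_W\to 0\) by Hausdorff-completing the discrete sequence, which (as you note) requires care; the paper instead derives it directly from the cohomology of coherent sheaves on \(\XFF\) (Corollary~\ref{coro:exactV0}), avoiding any appeal to exactness of completion.
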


Under the additional assumption of Theorem~\ref{theo:intro_precis}, we shall prove that the map~\eqref{eq:completion} is an isomorphism, and then Theorem~\ref{theo:intro_principal} implies Theorem~\ref{theo:intro_precis}.

To prove Theorem~\ref{theo:intro_principal}, we shall demonstrate that if \(\h{L}\) is perfectoid, then there is a natural isomorphism
\[
	\H^1(L,E_+(V/T)) \simeq \H^1(L,(V/V_0)/(T/T_0)),
\]
which generalise results of Coates and Greenberg for Abelian varieties and \(p\)\=/divisible groups (see Corollary~\ref{coro:perfectoidplus} and Remark~\ref{rema:byproductEdisc}).
To do so, we use the classification of vector bundles over the Fargues-Fontaine curve~\cite{FarguesFontaine18}.
We recall that the Fargues-Fontaine curve, which we denote by \(\XFF\), is a projective scheme build out of Fontaine's rings of \(p\)\=/adic periods, and whose geometry is closely related to \(p\)\=/adic Hodge theory.
Let \(E_+(V)= \limproj_{p\times} E_+(V/T)\).
Then \(E_+(V)\) is an almost \(\Cp\)\=/representation of \(\GG_K\), a category introduced by Fontaine~\cite{Fontaine03}, and there exists a \(\GG_K\)\=/equivariant vector bundle \(\EE_+(V)\) over \(\XFF\) such that
\[
	E_+(V) \simeq \Gamma(\XFF,\EE_+(V)).
\]
Fargues and Fontaine's classification of vector bundles over \(\XFF\) combined with a study of the vector bundle \(\EE_+(V)\) enable the computation of the group \(\H^1(L,E_+(V))\) when \(\h{L}\) is a perfectoid field, and consequently, the computation of \(\H^1(L,E_+(V/T))\).

\begin{plan*}
	In the first section, we review the necessary material on coherent sheaves over the Fargues-Fontaine curve from~\cite{FarguesFontaine18}.
	We also include a reminder on Fontaine's rings of \(p\)\=/adic periods.
	This first section, which contains no new results, is also used to fix notations for the rest of the article.
	In the second section, we study the vector bundle \(\EE_+(V)\).
	Instead of a direct examination of \(\EE_+(V)\), we define and study a modification of de Rham vector bundles over the curve \(\XFF\), of which the vector bundle \(\EE_+(V)\) is a particular case.
	In the third and final section, we first recall the definition of the groups \(E_\discu(V/T)\) and \(E_+(V/T)\). Then we relate these groups to the Bloch-Kato subgroups and the vector bundle \(\EE_+(V)\) respectively. Finally, we deduce from these relations the main results of the article: Theorems~\ref{theo:intro_precis} and~\ref{theo:intro_principal}, as well as various corollaries.
	To prove that the map~\eqref{eq:completion} is an isomorphism under the assumption of Theorem~\ref{theo:intro_precis}, we shall use the fact that Galois cohomology groups can be equipped with a well-behaved structure of topological group, this is reviewed in the appendix.
\end{plan*}

\begin{merci*}
	I thank Kâzım Büyükboduk and Arthur-César Le Bras for helpful discussions.
	I also thank Antonio Lei for his comments on a preliminary version of this text.
	Part of this work was done while visiting the Korean Institute for Advanced Study.
	I thank the KIAS and Chan-Ho Kim for this opportunity and their hospitality.
	I am grateful to the Max Planck Institute for Mathematics, for hospitality, support, and excellent working conditions.
	As it will be clear to the reader, the article of Coates and Greenberg~\cite{CoatesGreenberg96} as well as Fontaine's works~\cite{Fontaine03,Fontaine20} and~\cite{FarguesFontaine18} with Fargues have been major influences to this work and constant sources of inspiration.
\end{merci*}

\bigskip
\begin{convnota*}
	We adopt the convention that the set of natural numbers \(\N\) contains \(0\).

	We fix once and for all a prime number \(p\), an algebraic closure \(\Qpbar\) of the field \(\Qp\) of \(p\)\=/adic numbers, and a finite extension \(K\) of \(\Qp\) contained in \(\Qpbar\).
	We denote by \(\GG_K=\Gal(\Qpbar/K)\) the absolute Galois group of \(K\), and by \(K_0\) the maximal unramified extension of \(\Qp\) contained in \(K\).
	The completion of \(\Qpbar\) for the \(p\)\=/adic valuation topology is denoted by \(\Cp\).
	The action of \(\GG_K\) on \(\Qpbar\) extends by continuity to \(\Cp\).

	Every algebraic extension of \(K\) considered throughout the article will be contained in \(\Qpbar\).
	If \(L\) is such an extension, then we denote by \(\GG_L=\Gal(\Qpbar/L)\) the absolute Galois group of \(L\), by \(L_0\) the maximal unramified extension of \(\Qp\) contained in \(L\), and by \(\h{L}\) the completion of \(L\) for the \(p\)\=/adic valuation topology.

	If \(F\) is a valued field, we denote by \(\OO_F\) its valuation ring, by \(\mg_F\) the maximal ideal of \(\OO_F\), and by \(k_F = \OO_F/\mg_F\) the residue field of \(F\).

	If \(G\) is a topological group and \(M\) is a topological \(G\)\=/module, then, for all \(n \in \N\), we denote by \(\H^n(G,M)\) the \(n\)\=/th group of continuous group cohomology of \(G\) with coefficients in \(M\) (see Appendix~\ref{apdx:contcohom}).
	If \(G=\GG_k\) is the absolute Galois group of some field \(k\),
	then we shall write \(\H^n(k,M)\) instead of \(\H^n(\GG_k,M)\), and \(\H^n(k,M)\) is the \(n\)\=/th group of Galois cohomology of \(\GG_k\) with coefficients in \(M\).

	We denote by \(\Zp(1)\) the free \(\Zp\)\=/module of rank \(1\) whose elements are sequences \((\zeta_{p^n})_{n \in \N}\) of \(p\)\=/power roots of unity in \(\Qpbar\) such that \(\zeta_1 = 1\) and \(\zeta_{p^{n+1}}^p = \zeta_{p^n}\) for each \(n \in \N\), endowed with the natural action of \(\GG_K\).
	We fix a generator \(t\) of \(\Zp(1)\) with group law written additively.
	For all \(n \in \N\), we set
	\[
		\begin{split}
			\Zp(n)  & = \Sym^n_{\Zp}(\Zp(1)),\\
			\Zp(-n) & = \Hom_{\Zp}(\Zp(n),\Zp).
		\end{split}
	\]
	If \(N\) is a \(\Zp\)\=/module equipped with a linear action of \(\GG_K\), then, for all \(n \in \Z\), we set
	\[
		N(n) = N \otimes_{\Zp} \Zp(n).
	\]
\end{convnota*}

\section{Coherent sheaves over the Fargues-Fontaine curve} \label{sec:FF}
In this section, we recall the results on coherent sheaves over the Fargues-Fontaine curve~\cite{FarguesFontaine18} which shall be needed.
We first give a brief overview of the \(p\)\=/adic periods rings introduced by Fontaine~\cite{Fontaine94}.

\subsection{\texorpdfstring{\(p\)\=/}{p-}adic periods rings} \label{subsec:periods}
The ring \(\BdR^+\) is a complete discrete valuation ring endowed with an action of \(\GG_K\), containing \(\Zp(1)\) as sub\=/\(\GG_K\)\=/modules, whose residue field is \(\Cp\), and of which \(t\) is a uniformiser.
Moreover, \(\BdR^+\) has a structure of \(K\)\=/algebra, and there exists an isomorphism between the separable closure of \(K\) in \(\BdR^+\) and \(\Qpbar\) compatible with the action \(\GG_K\) which we use to identify these two fields.
The \emph{field of \(p\)\=/adic periods} \(\BdR\) is the field of fractions of \(\BdR^+\).
There is a natural filtration on \(\BdR\) by the fractional ideals
\[
	\Fil^n \BdR = \BdR^+ \cdot t^n, n \in \Z,
\]
which is stable under the action of \(\GG_K\).
For each \(n \in \N\), we set \(\B_n = \BdR^+ / \Fil^n \BdR\).
In particular, we have \(\B_0 = 0\) and \(\B_1 = \Cp\).

The field \(\BdR\) is equipped with a topology, the \emph{canonical topology}, which is coarser than the valuation topology from \(\BdR^+\).
The action of \(\GG_K\) on \(\BdR\) endowed with the canonical topology is continuous, and we have
\begin{equation} \label{eq:FilGalois}
	\begin{aligned}
		\BdR^{\GG_K} = (\Fil^n \BdR)^{\GG_K} & = K & \text{if } n \leq 0,\\
		(\Fil^n \BdR)^{\GG_K} & = 0 & \text{if } n > 0.
	\end{aligned}
\end{equation}
Moreover, the ring \(\BdR^+\) is the topological closure of \(\Qpbar\) in \(\BdR\) relatively to the canonical topology~\cite{Colmez12}.
Unless otherwise stated, we will consider \(\BdR\) and its subrings and subquotient rings endowed with the canonical topology.
In particular, the topology on \(\Cp=\B_1\) coincides with the usual \(p\)\=/adic valuation topology.

The ring \(\BdR^+\) contains a sub\=/\(K_0\)\=/algebra \(\Bcris^+\), stable under the action of \(\GG_K\), containing \(\Zp(1)\), and equipped with a continuous and semi-linear (with respect to the absolute Frobenius acting on \(K_0\)) endomorphism \(\phi\) commuting with the action of \(\GG_K\).
The \emph{ring of crystalline periods} is \(\Bcris=\Bcris^+[1/t]\).
We have \(\Bcris^{\GG_K} = K_0\).
Moreover, we have \(\phi(t) = pt\), and the endomorphism \(\phi\) extends uniquely to \(\Bcris\).

Let
\[
	\Be = \Bcris^{\phi = 1} = \{ b \in \Bcris, \phi(b)=b \},
\]
and, for each \(n \in \N\), let
\[
	(\Bcris^+)^{\phi = p^n} = \{ b \in \Bcris^+, \phi(b)=p^n b \}.
\]

The ring \(\Be\) is a principal ideal domain~\cite[Théorème~6.5.2]{FarguesFontaine18}.
It inherits a filtration from \(\BdR\), and we have
\[
	\Fil^n \Be = \Fil^n \BdR \cap \Be = \left\{ \begin{array}{ll}
		(\Bcris^+)^{\phi = p^n} \cdot t^{-n} & \text{if } n \leq 0,\\
		0 & \text{if } n > 0.
	\end{array}\right.
\]
Furthermore, we have \(\Fil^0 \Be = (\Bcris^+)^{\phi = 1} = \Qp\), and there exists a \(\GG_K\)\=/equivariant short exact sequence of topological \(\Qp\)\=/algebras, the so-called \emph{fundamental exact sequence},
\begin{equation} \label{eq:funda}
	0 \ra \Qp \ra \Be \ra \BdR/\BdR^+ \ra 0,
\end{equation}
where the map \(\Be \ra \BdR/\BdR^+\) is the composition of the projection of \(\BdR\) on \(\BdR/\BdR^+\) with the inclusion of \(\Be\) in \(\BdR\).

For \(Z \in \{\Qp, \Be,\BdR^+,\BdR\}\), we denote by \(\Mod_Z\) the category of finitely generated \(Z\)\=/modules.
The topology of \(Z\) induces a natural topology on each finitely generated \(Z\)\=/module.
If \(L\) is an algebraic extension of \(K\) (possibly \(L=K\)), then we denote by \(\Rep_Z(\GG_L)\) the subcategory of \(\Mod_Z\) of finitely generated \(Z\)\=/modules equipped with a continuous and semi-linear action of \(\GG_L\), and whose morphisms are \(\GG_L\)\=/equivariant \(Z\)\=/linear applications.
An object of \(\Rep_Z(\GG_L)\) is a \emph{\(Z\)\=/representation of \(\GG_L\)}, and a \(\Qp\)\=/representation of \(\GG_L\) is simply called a \emph{\(p\)\=/adic representation of \(\GG_L\)}.

\subsection{The Fargues-Fontaine curve} \label{subsec:FFcurve}
The \emph{Fargues-Fontaine curve} is the scheme
\[
	\XFF = \Proj\left( \bigoplus_{n \in \N} (\Bcris^+)^{\phi = p^n} \right).
\]
It is a regular, Noetherian, separated and connected \(1\)\=/dimensional scheme defined over \(\Qp\) with global sections \(\Gamma(\XFF,\OO_{\XFF}) = \Qp\).
Moreover, the curve \(\XFF\) is complete, \ie the divisor of a non-zero rational function on \(\XFF\) has degree zero.

The Galois group \(\GG_K\) naturally acts continuously on \(\XFF\) and there exists a unique closed point of \(\XFF\), denoted by \(\infty\), fixed under this action.
The completion of the fibre \(\OO_{\XFF,\infty}\) is canonically isomorphic to \(\BdR^+\), and we have \(\XFF \setminus \{\infty\} = \Spec(\Be)\).
The glueing of \(\XFF \setminus \{\infty\}\) and \(\{\infty\}\) defining \(\XFF\) is done via the natural inclusion \(\Be \subset \BdR\).

\subsection{Coherent sheaves} \label{subsec:Coh}
We recall the description à la Beauville-Laszlo of coherent sheaves over \(\XFF\).
Let \(\MM\) be the fibre product of the categories
\[
	\begin{tikzcd}
							& \Mod_{\BdR^+} \ar{d} \\
		\Mod_{\Be} \ar{r}	& \Mod_{\BdR},
	\end{tikzcd}
\]
where the arrows are the functors of extension of scalars.
An object of \(\MM\) is a triple \((\FF_\e,\FF_\dR^+,\iota_{\FF})\) with
\begin{itemize}
	\item \(\FF_\e\) a finitely generated \(\Be\)\=/module,
	\item \(\FF_\dR^+\) a finitely generated \(\BdR^+\)\=/module,
	\item and
		\[
			\iota_{\FF} : \BdR \otimes_{\Be} \FF_\e \ra \BdR \otimes_{\BdR^+} \FF_\dR^+
		\]
		an isomorphism of \(\BdR\)\=/vector spaces.
\end{itemize}
A morphism between objects of \(\MM\)
\[
	(\FF_\e, \FF_\dR^+, \iota_{\FF}) \ra (\HH_\e, \HH_\dR^+, \iota_{\HH})
\]
is a pair \((f_\e, f_\dR^+)\) with
\begin{itemize}
	\item \(f_\e : \FF_\e \ra \HH_\e \) a morphism of \(\Be\)\=/modules,
	\item \(f_\dR^+ : \FF_\dR^+ \ra \HH_\dR^+ \) a morphism of \(\BdR^+\)\=/modules,
\end{itemize}
such that the diagram
\[
	\begin{tikzcd}
		\BdR \otimes_{\Be} \FF_\e \ar{r}{\iota_{\FF}} \ar{d}{1 \otimes f_\e} & \BdR \otimes_{\BdR^+} \FF_\dR^+ \ar{d}{1 \otimes f_\dR^+}\\
		\BdR \otimes_{\Be} \HH_\e \ar{r}{\iota_{\HH}}                       & \BdR \otimes_{\BdR^+} \HH_\dR^+
	\end{tikzcd}
\]
commutes.

Let \(\Coh\) be the category of coherent sheaves over \(\XFF\).
There is a functor from \(\Coh\) to \(\MM\) which associates with a coherent sheaf \(\FF\) the object of \(\MM\) defined by
\begin{itemize}
	\item \(\FF_\e = \Gamma(\XFF\setminus \{\infty\}, \FF)\),
	\item \(\FF_\dR^+ = \BdR^+ \otimes_{\OO_{\XFF,\infty}} \FF_\infty\),
	\item and \(\iota_{\FF}\) the natural glueing data map.
\end{itemize}

\begin{exem}
	With the structure sheaf \(\OO_{\XFF}\) is associated the triple
	\[
		(\Be, \BdR^+, \iota_{\OO_{\XFF}})
	\]
	where the map \(\iota_{\OO_{\XFF}}\) is induced by the natural inclusion \(\Be \subset \BdR\).
\end{exem}

\begin{theo} \label{theo:BeauvilleLaszlo}
	The functor
	\[
		\begin{split}
			\Coh & \ra \MM \\
			\FF  & \mapsto (\FF_\e, \FF_\dR^+, \iota_{\FF})
		\end{split}
	\]
	is an equivalence of categories.
\end{theo}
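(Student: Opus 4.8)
The plan is to recognise this statement as an instance of Beauville-Laszlo gluing adapted to the geometry of \(\XFF\) around its distinguished point \(\infty\). The two geometric inputs I would use are already recorded in Subsection~\ref{subsec:FFcurve}: the complement \(\XFF \setminus \{\infty\} = \Spec(\Be)\) is affine, with \(\Be\) a principal ideal domain, while the completion of the local ring \(\OO_{\XFF,\infty}\) is \(\BdR^+\), a complete discrete valuation ring of uniformiser \(t\) and fraction field \(\BdR\). Thus the pair \(\bigl(\Spec(\Be),\, \Spf(\BdR^+)\bigr)\) is a formal cover of \(\XFF\) whose overlap is \(\Spec(\BdR)\), and the category \(\MM\) is precisely the category of gluing data for this cover. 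Accordingly, the whole proof is the verification that this formal cover is effective for coherent sheaves.

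First I would check that the functor is well-defined, \ie that it lands in \(\MM\). For a coherent sheaf \(\FF\), affineness of \(\XFF \setminus \{\infty\}\) together with coherence makes \(\FF_\e = \Gamma(\XFF \setminus \{\infty\}, \FF)\) a finitely generated \(\Be\)\=/module; since \(\BdR^+\) is the completion of the Noetherian local ring \(\OO_{\XFF,\infty}\), the module \(\FF_\dR^+ = \BdR^+ \otimes_{\OO_{\XFF,\infty}} \FF_\infty\) is finitely generated over \(\BdR^+\); and the isomorphism \(\iota_{\FF}\) arises because both \(\BdR \otimes_{\Be} \FF_\e\) and \(\BdR \otimes_{\BdR^+} \FF_\dR^+\) compute the same \(\BdR\)\=/vector space, namely the fibre at the generic point seen through the two charts.

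For essential surjectivity I would build a quasi-inverse by gluing: given \((\FF_\e, \FF_\dR^+, \iota_{\FF})\), take the quasi-coherent sheaf \(\widetilde{\FF_\e}\) on \(\Spec(\Be)\) and the module \(\FF_\dR^+\) on the formal disc at \(\infty\), and glue them along \(\Spec(\BdR)\) via \(\iota_{\FF}\). The content of the Beauville-Laszlo theorem is exactly that this datum descends to an honest coherent sheaf on \(\XFF\): because \(t\) is a non\=/zero\=/divisor and \(\BdR^+\) is \(t\)\=/adically complete, a finitely generated \(\BdR^+\)\=/module is equivalent to its generic localisation together with its \(t\)\=/adic completion, so the formal gluing is effective. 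Full faithfulness follows from the same descent: a morphism of coherent sheaves restricts to a \(\Be\)\=/linear map on \(\Spec(\Be)\) and induces a \(\BdR^+\)\=/linear map on completed stalks, compatibly over \(\BdR\), and conversely any compatible pair \((f_\e, f_\dR^+)\) glues back to a unique morphism of sheaves. Faithfulness is clear, since \(\Spec(\Be)\) and the point \(\infty\) cover \(\XFF\) and the local ring \(\OO_{\XFF,\infty}\) injects into its completion \(\BdR^+\).

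The main obstacle I expect is precisely the effectivity of this formal gluing: the pair \(\bigl(\Spec(\Be), \Spf(\BdR^+)\bigr)\) is not a Zariski cover of \(\XFF\), since \(\infty\) is a single closed point and \(\BdR^+\) is the completion, not a localisation, of \(\OO_{\XFF,\infty}\). Ordinary Zariski gluing therefore does not apply, and one must invoke the Beauville-Laszlo descent, whose hypotheses---flatness of \(\BdR^+\) over \(\OO_{\XFF,\infty}\), the fact that \(t\) is a non\=/zero\=/divisor, and \(t\)\=/adic completeness of \(\BdR^+\)---must be verified. Granting these, the equivalence is formal; the technical heart is the faithfully flat descent along the completion at \(\infty\).
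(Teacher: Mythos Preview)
Your approach is correct and is indeed the standard Beauville--Laszlo argument that underlies this statement. Note, however, that the paper does not actually supply a proof: this theorem appears in Section~\ref{sec:FF}, which is explicitly a review section citing results from~\cite{FarguesFontaine18}, and the theorem is stated without proof as background material. The very label \texttt{theo:BeauvilleLaszlo} signals that the authors regard it as an instance of the Beauville--Laszlo theorem, which is precisely what you have outlined. Your sketch---identifying \(\Spec(\Be)\) and the formal disc \(\Spf(\BdR^+)\) as a formal cover with overlap \(\Spec(\BdR)\), and invoking faithfully flat descent along the completion at \(\infty\)---is the correct and expected argument; the hypotheses you list (\(t\) a non-zero-divisor, \(t\)-adic completeness of \(\BdR^+\), flatness over the Noetherian local ring \(\OO_{\XFF,\infty}\)) are exactly what Beauville--Laszlo requires.

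One small wording quibble: when you write that \(\iota_{\FF}\) arises because both sides ``compute the fibre at the generic point,'' it would be more accurate to say they compute the module over the punctured formal disc \(\Spec(\BdR)\); the curve \(\XFF\) does not have a single generic point in the usual sense, and \(\BdR\) is not its function field but rather the completed local field at \(\infty\).
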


Under the equivalence of Theorem~\ref{theo:BeauvilleLaszlo}, the full subcategory \(\Bun\) of \(\Coh\) of \emph{vector bundle} over \(\XFF\), \ie of torsion free coherent \(\OO_{\XFF}\)\=/modules, is equivalent to the full subcategory \(\MM^\fr\) of \(\MM\) whose objects are triple \((\FF_\e, \FF_\dR^+, \iota_{\FF})\) such that both the \(\Be\)\=/module \(\FF_\e\) and the \(\BdR^+\)\=/module \(\FF_\dR^+\) are free.

The Galois group \(\GG_K\) acts continuously on \(\XFF\), so there are Galois equivariant coherent sheaves over \(\XFF\).
Since the closed point \(\infty\) and its complement \(\XFF \setminus \{\infty\}\) are stable under the action of \(\GG_K\), Theorem~\ref{theo:BeauvilleLaszlo} yields a description of Galois-equivariant coherent sheaves over \(\XFF\).
Precisely, let \(L\) be an algebraic extension of \(K\), and let \(\MM(\GG_L)\) be the fibre product of the categories
\[
	\begin{tikzcd}
									& \Rep_{\BdR^+}(\GG_L) \ar{d} \\
		\Rep_{\Be}(\GG_L) \ar{r}	& \Rep_{\BdR}(\GG_L),
	\end{tikzcd}
\]
where the arrows are the functors of extension of scalars.
Then \(\MM(\GG_L)\) is a subcategory of \(\MM\) whose objects are triple \((\FF_\e, \FF_\dR^+, \iota_{\FF})\) with
\begin{itemize}
	\item \(\FF_\e\) a \(\Be\)\=/representation of \(\GG_L\),
	\item \(\FF_\dR^+\) a \(\BdR^+\)\=/representation of \(\GG_L\),
	\item the isomorphism \(\iota_{\FF}\) is \(\GG_L\)\=/equivariant,
\end{itemize}
and whose morphisms are maps \((f_\e, f_\dR^+)\) such that both \(f_\e\) and \(f_\dR^+\) are \(\GG_L\)\=/equivariant.
Let \(\Coh(\GG_L)\) be the subcategory of \(\Coh\) of \(\GG_L\)\=/equivariant coherent sheaves over \(\XFF\).
Then, the equivalence between \(\Coh\) and \(\MM\) of Theorem~\ref{theo:BeauvilleLaszlo} induces an equivalence between the categories \(\Coh(\GG_L)\) and \(\MM(\GG_L)\).

We denote by \(\Bun(\GG_K)\) the full subcategory of \(\Coh(\GG_K)\) of \(\GG_K\)\=/equivariant vector bundles over \(\XFF\) whose objects are \(\GG_K\)\=/equivariant coherent sheaves over \(\XFF\) with underlying torsion free coherent \(\OO_{\XFF}\)\=/modules.
Fontaine proved~\cite[Proposition~3.1]{Fontaine20} that the \(\Be\)\=/module underlying any \(\Be\)\=/representation of \(\GG_K\) is free.
Therefore, a \(\GG_K\)\=/equivariant coherent sheaf \(\FF\) is a vector bundle if and only if the \(\BdR^+\)\=/module \(\FF_\dR^+\) is free.
Let \(\Rep_{\BdR^+}^\fr(\GG_K)\) be the full subcategory of \(\Rep_{\BdR^+}(\GG_K)\) of \(\BdR^+\)\=/representations of \(\GG_K\) whose underlying \(\BdR^+\)\=/module is free.
Let \(\MM^\fr(\GG_K)\) be fibre product of the categories
\[
	\begin{tikzcd}
									& \Rep_{\BdR^+}^\fr(\GG_K) \ar{d}\\
		\Rep_{\Be}(\GG_K) \ar{r}	& \Rep_{\BdR}(\GG_K).
	\end{tikzcd}
\]
Then, under the equivalence of Theorem~\ref{theo:BeauvilleLaszlo}, the category \(\Bun(\GG_K)\) is equivalent to \(\MM^\fr(\GG_K)\).

\begin{rema}
	The category \(\MM^\fr(\GG_K)\) is the category of \emph{\(B\)\=/pairs} introduced by Berger~\cite{Berger08}.
\end{rema}

From now on, we identify the categories \(\Coh\) and \(\MM\).
We shall write \(\FF = (\FF_\e,\FF_\dR^+,\iota_{\FF})\) a coherent sheaf over \(\XFF\).
We will omit the map \(\iota_{\FF}\) when there is no ambiguity.
If \(\FF_\dR^+\) is a \(\BdR^+\)\=/module, then we set
\[
	\FF_\dR=\BdR \otimes_{\BdR^+} \FF_\dR^+.
\]

\subsection{The Harder-Narasimhan filtration} \label{subsec:HN}
We recall the theory of Harder-Narasimhan slope filtration for coherent sheaves over \(\XFF\).
The Abelian category \(\Coh\) is endowed with two additive functions \emph{degree} and \emph{rank}
\[
	\begin{split}
		\deg : 	\Coh & \ra \Z,\\
		\rk  :	\Coh & \ra \N.\\
	\end{split}
\]
The rank of a coherent sheaf \(\FF = (\FF_\e, \FF_\dR^+, \iota_{\FF})\) is its rank as an \(\OO_{\XFF}\)\=/module, equivalently, it is the rank of the \(\Be\)\=/module \(\FF_\e\), or also, the rank of the \(\BdR^+\)\=/module \(\FF_\dR^+\).
The additive function degree is characterized by the following.
\begin{itemize}
	\item The degree of an invertible sheaf \(\LL\) is the degree of the divisor associated with any non-zero section of \(\LL\). The degree of \(\LL\) is then well-defined since \(\XFF\) is complete.
	\item If \(\EE\) is a vector bundle of rank \(r\), then the degree of \(\EE\) is the degree of the determinant line bundle \(\bigwedge^r \EE\) of \(\EE\).
	\item If \(\FF\) is a torsion coherent sheaf, then
\[
	\deg(\FF) = \sum_{\text{closed point } x \in \XFF} \length_{\OO_{\XFF,x}}( \FF_x ).
\]
\end{itemize}

The \emph{slope} of a coherent sheaf \(\FF\) is
\[
	\mu(\FF) = \left\{ \begin{array}{ll}
		\frac{\deg(\FF)}{\rk(\FF)} & \text{if } \FF \text{ is non-torsion},\\
		+\infty & \text{if } \FF \text{ is torsion}.
	\end{array}\right.
\]
A coherent sheaf \(\FF\) over \(\XFF\) is \emph{semi-stable} if it is non-trivial, and
\[
	\mu(\FF^\prime) \leq \mu(\FF)
\]
for every non-trivial coherent subsheaf \(\FF^\prime \subset \FF\).

\begin{theo} \label{theo:HN}
	Let \(\FF\) be a coherent sheaf over \(\XFF\).
	There exists a unique increasing filtration of \(\FF\) by coherent subsheaves
	\[
		0 = \FF_0 \subset \FF_1 \subset \cdots \subset \FF_{n-1} \subset \FF_n = \FF
	\]
	such that \(\FF_i/\FF_{i-1}\) is semi-stable for each \(i \in \{1,\dots,n\}\), and
	\[
		\mu(\FF_1/\FF_0) > \mu(\FF_2/\FF_1) > \cdots > \mu(\FF_{n-1}/\FF_{n-2}) > \mu(\FF_n/\FF_{n-1}).
	\]
\end{theo}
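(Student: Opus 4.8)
The plan is to deduce the statement from the standard slope-filtration formalism, establishing existence and uniqueness separately after first splitting off the torsion. Throughout I would use that both \(\deg\) and \(\rk\) are additive on short exact sequences in \(\Coh\).

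\emph{Step 1 (torsion).} Since \(\XFF\) is Noetherian, \(\FF\) has a largest torsion subsheaf \(\FF_{\mathrm{tors}}\), and torsion subsheaves are stable under sums. Every nonzero subsheaf of \(\FF_{\mathrm{tors}}\) is torsion of slope \(+\infty\), so \(\FF_{\mathrm{tors}}\) is semi-stable of slope \(+\infty\), and because \(+\infty\) exceeds every finite slope it must occur as the first step \(\FF_1\). As \(\FF/\FF_{\mathrm{tors}}\) is torsion free, I am reduced to the case where \(\FF = \EE\) is a vector bundle, all of whose nonzero subsheaves have finite slope.

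\emph{Step 2 (the two lemmas).} The formal engine consists of two facts. The first is a seesaw inequality: for subsheaves \(\FF',\FF''\subseteq \EE\) and \(\lambda \in \R\), if \(\mu(\FF')\geq\lambda\) and \(\mu(\FF'')\geq\lambda\) then \(\mu(\FF'+\FF'')\geq\lambda\); this follows by feeding the exact sequence \(0 \to \FF'\cap\FF'' \to \FF'\oplus\FF'' \to \FF'+\FF'' \to 0\) into the additivity of \(\deg\) and \(\rk\) and rearranging. The second, and crucial, fact is a boundedness statement: the set of slopes of nonzero subsheaves of \(\EE\) is bounded above and its supremum is attained. This is the step I expect to be the main obstacle, and it is where the geometry of \(\XFF\) genuinely enters. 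Using the Beauville-Laszlo description of Theorem~\ref{theo:BeauvilleLaszlo}, a subsheaf of \(\EE\) is a sub-\(\Be\)-module \(\FF'_\e\subseteq\EE_\e\) (free, since \(\Be\) is a principal ideal domain) together with a \(\BdR^+\)-lattice \(\FF^{\prime+}_\dR \subseteq \EE^+_\dR\) inside the fixed \(\BdR\)-vector space \(\EE_\dR\). Its degree is governed by the relative position of \(\FF^{\prime+}_\dR\) with respect to \(\EE^+_\dR\), a relative length that is bounded above for subsheaves of fixed rank precisely because \(\BdR^+\) is a discrete valuation ring and the ambient lattice is fixed. Since saturating a subsheaf only increases its degree, it suffices to bound saturated subsheaves of each rank \(s\leq\rk(\EE)\); taking the maximum over \(s\) yields the required uniform bound, and finiteness gives attainment.

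\emph{Step 3 (existence).} Write \(\mu_{\max}(\EE)\) for this attained supremum and let \(\EE_1\) be the sum of all subsheaves of slope \(\mu_{\max}(\EE)\); by the seesaw lemma \(\EE_1\) again has slope \(\mu_{\max}(\EE)\), so it is the unique maximal such subsheaf, and it is semi-stable since a subsheaf of strictly larger slope would contradict the definition of \(\mu_{\max}\). I would then induct on \(\rk(\EE)\): the quotient \(\EE/\EE_1\) is a vector bundle of smaller rank, hence carries an HN filtration, and the preimages of its steps extend \(\EE_1\) to a filtration of \(\EE\). The strict decrease \(\mu(\EE_1) > \mu(\EE_2/\EE_1) > \cdots\) follows from maximality of \(\EE_1\): any subsheaf of \(\EE/\EE_1\) of slope \(\geq\mu(\EE_1)\) would, via the seesaw lemma applied to its preimage, enlarge \(\EE_1\) without lowering the slope, a contradiction.

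\emph{Step 4 (uniqueness).} The key observation is that if \(\HH\) and \(\HH'\) are semi-stable with \(\mu(\HH) > \mu(\HH')\), then every morphism \(\HH \to \HH'\) vanishes: a nonzero image would be both a quotient of \(\HH\), hence of slope \(\geq\mu(\HH)\), and a subsheaf of \(\HH'\), hence of slope \(\leq\mu(\HH')\), which is impossible. Given two filtrations as in the statement, let \(i\) be minimal with \(\FF_1\subseteq\FF'_i\); then the induced map \(\FF_1 \to \FF'_i/\FF'_{i-1}\) is nonzero, forcing \(\mu(\FF_1)\leq\mu(\FF'_i/\FF'_{i-1})\), while the strict decrease of the primed filtration gives \(\mu(\FF'_i/\FF'_{i-1})\leq\mu(\FF'_1)=\mu(\FF_1)\); hence \(i=1\) and \(\FF_1\subseteq\FF'_1\), and symmetrically \(\FF'_1\subseteq\FF_1\). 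Thus the first steps coincide, and passing to \(\EE/\EE_1\) concludes by induction on the rank. The only genuinely non-formal input is the boundedness of Step 2; everything else is the standard Harder-Narasimhan machinery.
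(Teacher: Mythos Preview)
The paper does not prove this theorem. Section~\ref{sec:FF} is explicitly a review of material from~\cite{FarguesFontaine18} that, in the author's words, ``contains no new results'', and Theorem~\ref{theo:HN} is simply stated there without proof. So there is no argument in the paper to compare yours against; the Harder--Narasimhan formalism for \(\XFF\) is quoted wholesale from Fargues--Fontaine.

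Your outline is the standard slope-filtration argument and is sound in structure. Two points deserve tightening. In Step~2, the sentence ``its degree is governed by the relative position of \(\FF^{\prime+}_\dR\) with respect to \(\EE^+_\dR\)'' is not a correct degree formula: the degree of a sub-bundle depends on both \(\FF'_\e\) and \(\FF^{\prime+}_\dR\), and the implicit \(\Be\)-trivialisation you use to measure the lattice varies with \(\FF'\). A non-circular route is to observe that a saturated rank-\(s\) sub-bundle yields a nonzero map \(\det(\FF')\to\bigwedge^s\EE\), and that \(\Hom(\OO_{\XFF}(n),\bigwedge^s\EE)=\Gamma\bigl(\XFF,(\bigwedge^s\EE)(-n)\bigr)\) vanishes for \(n\) large because \(\Fil^m\Be=0\) for \(m>0\); this bounds \(\deg(\FF')\). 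In Step~4, you invoke \(\mu(\FF'_1)=\mu(\FF_1)\) before it is established; the fix is to run your inclusion argument once in each direction first, obtaining \(\mu(\FF_1)\leq\mu(\FF'_1)\) and \(\mu(\FF'_1)\leq\mu(\FF_1)\), after which \(i=1\) and the induction proceed as you wrote.
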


If \(\FF\) is a coherent sheaf over \(\XFF\), then the filtration \((\FF_i)_{0 \leq i \leq n}\) of \(\FF\) from Theorem~\ref{theo:HN} is the \emph{Harder\-/Narasimhan filtration} of \(\FF\), and the slopes \((\mu(\FF_i/\FF_{i-1}))_{1 \leq i \leq n}\) are the \emph{Harder\-/Narasimhan slopes} of \(\FF\).

\subsection{Cohomology of coherent sheaves} \label{subsec:cohom}
Let \(\FF=(\FF_\e, \FF_\dR^+, \iota_{\FF})\) be a coherent sheaf over \(\XFF\).
The description à la Beauville-Laszlo of \(\FF\) yields the following description of its cohomology~\cite[Proposition~5.3.3]{FarguesFontaine18}.
\begin{theo} \label{theo:cohomCoh}
	The complex \(\RGamma(\XFF,\FF)\) is canonically and functorially isomorphic to the complex in degree \(0\) and \(1\)
	\[
		\left[\begin{aligned}
			\FF_\e \oplus \FF_\dR^+ & \ra \FF_\dR\\
			(x,y) & \mapsto \iota_{\FF}(x) - y
		\end{aligned}\right].
	\]
\end{theo}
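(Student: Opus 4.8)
The plan is to compute $\RGamma(\XFF,\FF)$ by isolating the contribution of the unique fixed closed point $\infty$, exploiting that its complement $U=\XFF\setminus\{\infty\}=\Spec(\Be)$ is affine and that the local ring $\OO_{\XFF,\infty}$ is a discrete valuation ring with uniformiser $t$ and completion $\BdR^+$. The restriction of a global section of $\FF$ to $U$ lands in $\FF_\e=\Gamma(U,\FF)$, while its germ at $\infty$, completed along $t$, lands in $\FF_\dR^+$; since these two images agree in $\FF_\dR$ through $\iota_{\FF}$, they assemble into a natural morphism
\[
	\Phi_{\FF}\colon \RGamma(\XFF,\FF)\ra T^\bullet(\FF),\qquad T^\bullet(\FF)=\bigl[\FF_\e\oplus\FF_\dR^+\ra\FF_\dR\bigr],
\]
whose target has differential $(x,y)\mapsto\iota_{\FF}(x)-y$. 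The whole proof then amounts to showing that $\Phi_{\FF}$ is a quasi-isomorphism, and I would do this by comparing two distinguished triangles.

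The backbone is the localisation (cohomology-with-supports) triangle for the closed point $\infty$:
\[
	\RGamma_{\{\infty\}}(\XFF,\FF)\ra\RGamma(\XFF,\FF)\ra\RGamma(U,\FF)\xra{+1}.
\]
First I would dispatch the open term: as $U=\Spec(\Be)$ is affine and $\FF|_U$ is the quasi-coherent sheaf attached to the finitely generated $\Be$-module $\FF_\e$, all higher cohomology vanishes and $\RGamma(U,\FF)=\FF_\e$ in degree $0$. For the support term, excision reduces $\RGamma_{\{\infty\}}(\XFF,\FF)$ to the local cohomology of the stalk $\FF_\infty$ over the discrete valuation ring $\OO_{\XFF,\infty}$, whose maximal ideal is $(t)$; the Čech complex on the single element $t$ then gives $\RGamma_{\{\infty\}}(\XFF,\FF)\simeq[\FF_\infty\ra\FF_\infty[t^{-1}]]$ in degrees $0$ and $1$. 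Completing along $t$ produces a natural comparison map to $[\FF_\dR^+\ra\FF_\dR]$, and I would verify this is a quasi-isomorphism: on a free stalk it reduces to the identifications $\OO_{\XFF,\infty}/t^n\simeq\BdR^+/t^n$ and $\FF_\infty[t^{-1}]/\FF_\infty\simeq\BdR/\BdR^+$, while torsion stalks have finite length and are unchanged by completion. Hence $\RGamma_{\{\infty\}}(\XFF,\FF)\simeq[\FF_\dR^+\ra\FF_\dR]$, functorially in $\FF$.

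Substituting these two computations, the localisation triangle reads $[\FF_\dR^+\ra\FF_\dR]\ra\RGamma(\XFF,\FF)\ra\FF_\e\xra{+1}$. On the other hand, the tautological short exact sequence of complexes $0\ra[\FF_\dR^+\ra\FF_\dR]\ra T^\bullet(\FF)\ra\FF_\e\ra 0$ puts $T^\bullet(\FF)$ in a triangle with exactly the same outer terms. I would then check that $\Phi_{\FF}$ is a morphism of triangles inducing the identity on these outer terms: the boundary map $\FF_\e\ra[\FF_\dR^+\ra\FF_\dR][1]$ of the localisation triangle measures the obstruction to extending a section over $U$ across $\infty$, which is precisely reduction of $\iota_{\FF}$ modulo $\FF_\dR^+$, the same boundary carried by $T^\bullet(\FF)$. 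The triangulated five-lemma then forces $\Phi_{\FF}$ to be a quasi-isomorphism, and canonicity and functoriality are inherited from those of the localisation triangle and of the completion comparison. As a sanity check, for $\FF=\OO_{\XFF}$ this recovers the fundamental exact sequence, yielding $\H^0=\Be\cap\BdR^+=\Qp$ and $\H^1=\BdR/(\Be+\BdR^+)=0$.

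The main obstacle is the second step: passing from the genuine local ring $\OO_{\XFF,\infty}$ and its fraction field to the completed rings $\BdR^+$ and $\BdR$ without altering the support cohomology (this is where flatness of completion over the discrete valuation ring and the fact that $t$ cuts out $\infty$ scheme-theoretically enter), together with the explicit identification of the boundary homomorphism with reduction of $\iota_{\FF}$. Producing a genuine isomorphism in the derived category, rather than merely a termwise agreement of the cohomology groups $\H^0$ and $\H^1$, is the delicate point, and it is exactly what makes the resulting isomorphism canonical and functorial as claimed.
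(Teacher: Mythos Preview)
The paper does not prove this theorem at all: it is quoted verbatim from Fargues and Fontaine (the reference to \cite[Proposition~5.3.3]{FarguesFontaine18} appears in the line introducing the statement), so there is no in-paper argument to compare against. Your proposal therefore has to be judged on its own merits.

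Your strategy is sound and is essentially the standard one: combine the localisation triangle at the closed point \(\infty\) with the affineness of \(U=\Spec(\Be)\), then use flatness of \(t\)-adic completion over the discrete valuation ring \(\OO_{\XFF,\infty}\) to replace \([\FF_\infty\ra\FF_\infty[t^{-1}]]\) by \([\FF_\dR^+\ra\FF_\dR]\). The check that completion does not change this two-term complex is correct (\(M[t^{-1}]/M\simeq\varinjlim_n t^{-n}M/M\) and each \(t^{-n}M/M\) is of finite length, hence already complete; the torsion summand is likewise finite length), and the identification of the boundary map with the class of \(\iota_{\FF}\) modulo \(\FF_\dR^+\) is the right computation.

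One point deserves more care. At the outset you announce a morphism \(\Phi_{\FF}:\RGamma(\XFF,\FF)\ra T^\bullet(\FF)\) by describing what it does on \(\H^0\), and then appeal to the five-lemma on the two triangles. But the five-lemma in a triangulated category only guarantees that \emph{some} fill-in map is an isomorphism; it does not by itself show that your particular \(\Phi_{\FF}\) is that map, nor that the resulting isomorphism is canonical. The clean fix is to realise \(\RGamma(\XFF,\FF)\) by the \v{C}ech complex for a two-member affine cover \(\{U,V\}\) with \(\infty\in V\), giving \([\Gamma(U,\FF)\oplus\Gamma(V,\FF)\ra\Gamma(U\cap V,\FF)]\), and then write down an honest map of complexes to \(T^\bullet(\FF)\) (identity on the \(U\)-component, completion on the \(V\)- and \((U\cap V)\)-components). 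This map is visibly functorial in \(\FF\), independent of the choice of \(V\) up to canonical homotopy, and your completion argument shows directly that it is a quasi-isomorphism. With this adjustment your proof is complete.
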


\begin{rema}
	The \(\Qp\)\=/vector space of global sections \(\Gamma(\XFF, \FF)\) is naturally equipped with the subspace topology from \(\FF_\e \oplus \FF_\dR^+\) by Theorem~\ref{theo:cohomCoh} (making \(\Gamma(\XFF, \FF)\) into a \(p\)\=/adic Banach space).
	In particular, if \(L\) is an algebraic extension of \(K\) and \(\FF\) is \(\GG_L\)\=/equivariant, then \(\Gamma(\XFF,\FF)\) is a topological \(\GG_L\)\=/module.
\end{rema}

Moreover, we have the following relation between the cohomology of \(\FF\) and its Harder\-/Narasimhan filtration~\cite[Proposition~8.2.3]{FarguesFontaine18}.
\begin{samepage}
\begin{theo}[Fargues \& Fontaine] \label{theo:cohomHN}
	\
	\begin{enumerate}
		\item The group \(\H^0(\XFF,\FF)\) vanishes if and only if the Harder\-/Narasimhan slopes of \(\FF\) are strictly less than \(0\).
		\item The group \(\H^1(\XFF,\FF)\) vanishes if and only if the Harder\-/Narasimhan slopes of \(\FF\) are greater than or equal to \(0\).
	\end{enumerate}
\end{theo}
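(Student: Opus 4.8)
The plan is to reduce both assertions to the case of semi-stable sheaves by \emph{dévissage} along the Harder-Narasimhan filtration, and then to settle the semi-stable case using the classification of vector bundles over \(\XFF\) together with the cohomology of the stable bundles. Two formal inputs come first. By Theorem~\ref{theo:cohomCoh} the complex \(\RGamma(\XFF,\FF)\) is concentrated in degrees \(0\) and \(1\), so \(\H^i(\XFF,\FF)=0\) for \(i\geq2\); hence every short exact sequence \(0\to\FF'\to\FF\to\FF''\to0\) of coherent sheaves induces a six-term exact sequence
\[
	0\to\H^0(\XFF,\FF')\to\H^0(\XFF,\FF)\to\H^0(\XFF,\FF'')\to\H^1(\XFF,\FF')\to\H^1(\XFF,\FF)\to\H^1(\XFF,\FF'')\to0,
\]
so that \(\H^0\) is left exact and \(\H^1\) is right exact. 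In the Harder-Narasimhan filtration \(0=\FF_0\subset\cdots\subset\FF_n=\FF\) of Theorem~\ref{theo:HN}, with semi-stable graded pieces \(\GG_i=\FF_i/\FF_{i-1}\) of slopes \(\mu_1>\cdots>\mu_n\), the top piece \(\FF_1=\GG_1\) carries the maximal slope and embeds into \(\FF\), while the bottom piece \(\GG_n=\FF/\FF_{n-1}\) carries the minimal slope and is a quotient of \(\FF\).

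Granting the following claim for a semi-stable sheaf \(\HH\) of slope \(\mu\) — (a) \(\mu<0\Rightarrow\H^0(\XFF,\HH)=0\); (b) \(\mu\geq0\Rightarrow\H^0(\XFF,\HH)\neq0\); (c) \(\mu\geq0\Rightarrow\H^1(\XFF,\HH)=0\); (d) \(\mu<0\Rightarrow\H^1(\XFF,\HH)\neq0\) — both statements follow. For (i): if all slopes are \(<0\), applying (a) and the left exactness of \(\H^0\) inductively up the filtration gives \(\H^0(\XFF,\FF)=0\); conversely, some slope \(\geq0\) means \(\mu_1\geq0\), and then \(\H^0(\XFF,\FF_1)\hookrightarrow\H^0(\XFF,\FF)\) is nonzero by (b). For (ii): if all slopes are \(\geq0\), then (c) and the right exactness of \(\H^1\) give \(\H^1(\XFF,\FF)=0\); conversely, some slope \(<0\) means \(\mu_n<0\), and then the surjection \(\H^1(\XFF,\FF)\twoheadrightarrow\H^1(\XFF,\GG_n)\) has nonzero target by (d).

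To prove the claim, part (a) is elementary and needs no classification: a nonzero section is a nonzero map \(\OO_{\XFF}\to\HH\), and since \(\mu<0\) forces \(\HH\) to be torsion free, its image is a rank-one subsheaf isomorphic to \(\OO_{\XFF}\), so semi-stability forces \(0=\mu(\OO_{\XFF})\leq\mu\), contradicting \(\mu<0\). For (b), (c) and (d) I would invoke the Fargues-Fontaine classification: a semi-stable torsion sheaf \(\HH\) has \(\HH_\dR=0\), so Theorem~\ref{theo:cohomCoh} gives \(\H^1(\XFF,\HH)=0\) and \(\H^0(\XFF,\HH)\neq0\); and a non-torsion semi-stable sheaf of slope \(\lambda\in\Q\) is isomorphic to a direct sum \(\OO_{\XFF}(\lambda)^{\oplus m}\) of copies of the stable bundle of that slope. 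The claim thus reduces to
\[
	\H^0(\XFF,\OO_{\XFF}(\lambda))\neq0\iff\lambda\geq0,\qquad\H^1(\XFF,\OO_{\XFF}(\lambda))=0\iff\lambda\geq0,
\]
and, writing \(\lambda=d/h\) in lowest terms, \(\OO_{\XFF}(\lambda)\) is the pushforward along the degree-\(h\) cover \(X_h\to\XFF\) of a line bundle of degree \(d\), so this reduces in turn to the cohomology of line bundles on a Fargues-Fontaine curve: their \(\H^0\) is the degree-\(d\) graded piece \((\Bcris^+)^{\phi=p^d}\) of the defining ring, nonzero exactly for \(d\geq0\), while the vanishing of their \(\H^1\) for \(d\geq0\) follows from the fundamental exact sequence~\eqref{eq:funda} and its twists, the slope-zero case being~\eqref{eq:funda} itself.

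I expect the genuine obstacle to be concentrated entirely in the semi-stable case: the dévissage is purely formal and part (a) is immediate, whereas (b), (c) and (d) rest on the classification of semi-stable bundles as sums of the stable \(\OO_{\XFF}(\lambda)\) and on the precise cohomology of these. This is exactly where the special geometry of \(\XFF\) is indispensable — through \(\mathrm{Pic}(\XFF)\simeq\Z\) and, above all, the fundamental exact sequence, whose surjectivity \(\Be+\BdR^+=\BdR\) encodes the vanishing of \(\H^1\) and whose kernel \(\Be\cap\BdR^+=\Qp\) computes \(\H^0\) in slope zero.
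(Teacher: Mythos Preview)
The paper does not give its own proof of this statement: it is quoted, with attribution to Fargues and Fontaine, from \cite[Proposition~8.2.3]{FarguesFontaine18}, so there is no in-paper argument to compare against.

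Your proposal is correct and follows the standard route taken in the cited reference. The d\'evissage along the Harder--Narasimhan filtration is exactly as you describe, part~(a) is the usual slope argument, and the remaining semi-stable cases do reduce to the explicit cohomology of the \(\OO_{\XFF}(\lambda)\) via the graded pieces \((\Bcris^+)^{\phi=p^d}\) and the fundamental exact sequence. One remark on logical order worth keeping in mind: in \cite{FarguesFontaine18} the cohomology of the stable bundles \(\OO_{\XFF}(\lambda)\) is computed directly \emph{before} the classification theorem (Th\'eor\`eme~8.2.10 there), because the classification itself relies on those vanishing results to split extensions. Your use of the classification to reach the \(\OO_{\XFF}(\lambda)\) is therefore not circular --- the dependency runs from the cohomology of the \(\OO_{\XFF}(\lambda)\) to the classification, and then back to the general Harder--Narasimhan statement --- and your final paragraph correctly locates the substantive input in the cohomology of line bundles and the fundamental exact sequence.
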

\end{samepage}

\begin{rema}
	The exact sequence induced by Theorem~\ref{theo:cohomCoh} for the structure sheaf \(\OO_{\XFF}\) is none other than the fundamental exact sequence~\eqref{eq:funda}
	\[
		\begin{tikzcd}
			0 \ar{r} & \H^0(\XFF, \OO_{\XFF}) \ar{r} \ar[equal]{d} & (\OO_{\XFF})_\e \ar{r} \ar[equal]{d} & \frac{(\OO_{\XFF})_\dR}{(\OO_{\XFF})_\dR^+} \ar{r} \ar[equal]{d} & \H^1(\XFF,\OO_{\XFF}) \ar[equal]{d} \\
			0 \ar{r} & \Qp \ar{r} & \Be \ar{r} & \BdR/\BdR^+ \ar{r} & 0.
		\end{tikzcd}
	\]
	The vanishing of \(\H^1(\XFF,\OO_{\XFF})\) is consistent with Theorem~\ref{theo:cohomHN} since the structure sheaf \(\OO_{\XFF}=(\Be, \BdR^+)\) is semi-stable of slope \(0\).
\end{rema}

\subsection{Classification of vector bundles} \label{subsec:classification}
Fargues and Fontaine have classified vector bundles over \(\XFF\) \cite[Théorème~8.2.10]{FarguesFontaine18}.
In particular, they proved that the Harder\-/Narasimhan filtration of a coherent sheaf is split (non-canonically).

Let \(L\) be an algebraic extension of \(K\).
Let \(\FF\) be a \(\GG_L\)\=/equivariant coherent sheaf over \(\XFF\).
The unicity of the Harder\-/Narasimhan filtration of \(\FF\) implies that it is preserved under the action of \(\GG_L\), \ie each piece of the Harder\-/Narasimhan filtration of \(\FF\) is a \(\GG_L\)\=/equivariant coherent subsheaf of \(\FF\).
However, there is in general no \(\GG_L\)\=/equivariant splitting of the Harder\-/Narasimhan filtration.

We recall~\cite{Scholze12} that a complete non-Archimedean field \(F\) of residue characteristic \(p\) is a \emph{perfectoid field} if its valuation group is non-discrete and the \(p\)\=/th power Frobenius map on \(\OO_F /(p)\) is surjective.
If \(\h{L}\) is a perfectoid field, Fargues and Fontaine proved that the Harder\-/Narasimhan filtration of \(\FF\) is split \(\GG_L\)\=/equivariant~\cite[Théorème~9.4.1]{FarguesFontaine18}.

As a by-product of this result, they obtained the following~\cite[Remarque~9.4.2]{FarguesFontaine18} which will be crucial for us.
\begin{prop}[Fargues \& Fontaine] \label{prop:FFsplit}
	If \(\h{L}\) is a perfectoid field and the Harder\-/Narasimhan slopes of \(\FF\) are strictly greater than \(0\), then
	\[
		\H^1(L,\Gamma(\XFF,\FF))=0.
	\]
\end{prop}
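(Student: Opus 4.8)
The plan is to reduce to a single semi-stable sheaf of positive slope by means of the \(\GG_L\)-equivariant splitting of the Harder\-/Narasimhan filtration over perfectoid fields, and then to compute the continuous Galois cohomology of its global sections by a dévissage at the fixed point \(\infty\), peeling off Tate twists of \(\Cp\), whose higher cohomology vanishes because \(L/K\) is deeply ramified. The two structural inputs are the Beauville\-/Laszlo description of cohomology (Theorem~\ref{theo:cohomCoh}) together with its slope criterion (Theorem~\ref{theo:cohomHN}), and the equivariant splitting theorem of Fargues and Fontaine \cite[Théorème~9.4.1]{FarguesFontaine18} invoked just above.

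First I would reduce to the semi-stable case. Since \(\h{L}\) is perfectoid, the Harder\-/Narasimhan filtration of \(\FF\) splits \(\GG_L\)-equivariantly, so \(\FF\) is \(\GG_L\)-equivariantly isomorphic to the direct sum of its Harder\-/Narasimhan graded pieces, each of which is either semi-stable of slope \(>0\) or torsion (of slope \(+\infty\)) by the hypothesis on the slopes. The functors \(\Gamma(\XFF,-)\) and \(\H^1(L,-)\) both commute with finite direct sums of topological \(\GG_L\)-modules, so it suffices to treat one graded piece. The torsion case is immediate, as \(\Gamma(\XFF,\FF)\) is then a successive extension of \(\Cp\)-representations of \(\GG_L\). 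Thus I am reduced to proving the vanishing when \(\FF\) is semi-stable of some slope \(\mu>0\).

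For such an \(\FF\), consider the \(\GG_K\)-equivariant line bundle \(\OO_{\XFF}(-\infty)\) attached to the unique fixed closed point \(\infty\); twisting by it preserves semi-stability and lowers the slope by \(1\). Writing \(\FF(-k\infty)\) for the \(k\)-fold twist, one gets, as long as the slope \(\mu-k\) remains \(\geq 0\), that \(\H^1(\XFF,\FF(-k\infty))=0\) by Theorem~\ref{theo:cohomHN}, whence Theorem~\ref{theo:cohomCoh} yields short exact sequences of topological \(\GG_L\)-modules
\[
	0 \ra \Gamma(\XFF,\FF(-(k+1)\infty)) \ra \Gamma(\XFF,\FF(-k\infty)) \ra W_k \ra 0,
\]
where \(W_k\) is the fibre of \(\FF(-k\infty)\) at \(\infty\), a \(\Cp\)-representation of \(\GG_L\). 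Since \(\h{L}\) is perfectoid, \(L/K\) is deeply ramified and \(\H^i(L,\Cp(j))=0\) for every \(i\geq 1\) and every \(j\), so \(\H^1(L,W_k)=0\); the long exact sequence then produces surjections \(\H^1(L,\Gamma(\XFF,\FF(-(k+1)\infty))) \twoheadrightarrow \H^1(L,\Gamma(\XFF,\FF(-k\infty)))\). As the slope eventually becomes \(<0\), where \(\Gamma(\XFF,-)=0\) by Theorem~\ref{theo:cohomHN}, a descending induction along this tower would yield the desired vanishing \(\H^1(L,\Gamma(\XFF,\FF))=0\).

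The hard part is the single transitional step where the slope crosses from \(\geq 0\) to \(<0\): there \(\H^1(\XFF,\FF(-k\infty))\) no longer vanishes and enters the long exact sequence of global sections, so one must show that this group is itself an iterated extension of Tate twists of \(\Cp\), in order to know that applying \(\H^*(L,-)\) leaves the needed surjectivity intact. Equivalently, running the argument directly through the exact sequence \(0 \ra \Gamma(\XFF,\FF) \ra \FF_\e \oplus \FF_\dR^+ \ra \FF_\dR \ra 0\) of Theorem~\ref{theo:cohomCoh} (valid since \(\H^1(\XFF,\FF)=0\)), the vanishing of \(\H^1(L,\FF_\dR^+)\) and \(\H^1(L,\FF_\dR)\) follows from deep ramification, and the problem collapses to showing \(\H^1(L,\FF_\e)=0\) together with the surjectivity of the resulting period map on \(\GG_L\)-invariants. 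Controlling the cohomology of the \(\Be\)-representation \(\FF_\e\), \ie the sections over \(\XFF\setminus\{\infty\}=\Spec(\Be)\), is where the perfectoid hypothesis must be used most essentially, and I expect it to require the finer Banach\nobreakdash-Colmez/duality analysis of Fargues and Fontaine rather than a purely formal manipulation of the long exact sequences above.
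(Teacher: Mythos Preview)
The paper does not give a proof of this proposition: it is stated with attribution to Fargues and Fontaine and cited from~\cite[Remarque~9.4.2]{FarguesFontaine18}, with the torsion case singled out afterwards in Remark~\ref{rema:FST} as equivalent to a Fontaine--Sen--Tate statement. So there is nothing in the paper itself to compare your argument against beyond that reference.

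That said, your proposal is an honest but incomplete sketch, and you have correctly located the gap. The reduction to a single semi-stable graded piece via the \(\GG_L\)-equivariant splitting of the Harder--Narasimhan filtration is the intended first step, and the torsion case is then Remark~\ref{rema:FST}. Your dévissage at \(\infty\) is also natural, but as you observe it cannot close on its own. Concretely, for \(\FF=\OO_{\XFF}(1)\) the only surjection your tower produces is
\[
	\H^1(L,\Qp)\twoheadrightarrow \H^1\bigl(L,\Gamma(\XFF,\OO_{\XFF}(1))\bigr),
\]
and \(\H^1(L,\Qp)=\Hom_{\mathrm{cont}}(\GG_L,\Qp)\) is typically nonzero even when \(\h{L}\) is perfectoid (for instance, the unramified \(\Zp\)-extension of \(L\) contributes when \(k_L\) is finite). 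The alternative route via the Beauville--Laszlo sequence is equivalent, not easier: showing \(\H^1(L,\FF_\e)=0\) already for \(\FF_\e=\Be\) amounts to the same missing computation.

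The genuine input that Fargues and Fontaine supply is the vanishing \(\H^1\bigl(L,(\Bcris^+)^{\phi^h=p^d}\bigr)=0\) for \(d\geq 1\) when \(\h{L}\) is perfectoid. This is not a formal consequence of the slope yoga: it comes from identifying \(\Gamma(\XFF,\OO_{\XFF}(\lambda))\) for \(\lambda>0\) with the universal cover of a connected \(p\)-divisible group (a Banach--Colmez space of positive dimension) and using that such covers have trivial higher Galois cohomology over a perfectoid base. Your final instinct is therefore correct, but note that this step is \emph{the} content of the proposition rather than a loose end; no amount of long-exact-sequence manipulation will substitute for it.
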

\begin{rema} \label{rema:FST}
	The case of a torsion coherent sheaf in Proposition~\ref{prop:FFsplit} is equivalent to the following result from Fontaine-Sen-Tate theory~\cite[Proposition~7.1.1]{FarguesFontaine18}.
	Let \(\HH_\dR^+\) be \(\BdR^+\)\=/representation of \(\GG_L\) whose underlying \(\BdR^+\)\=/module is of finite length.
	If \(\h{L}\) is perfectoid, then \(\H^1(L,\HH_\dR^+)\) is trivial.
\end{rema}

Another consequence~\cite[Théorème~10.1.7]{FarguesFontaine18} of the classification of vector bundles over the Fargues-Fontaine curve concerns \(p\)\=/adic representations.
Let \(\Bun^0(\GG_K)\) be the full subcategory of \(\Bun(\GG_K)\) of \(\GG_K\)\=/equivariant vector bundles semi-stable of slope \(0\).
There is a functor from \(\Rep_{\Qp}(\GG_K)\) to \(\Bun^0(\GG_K)\) which associates with a \(p\)\=/adic representation \(V\) of \(\GG_K\) the vector bundle
\[
	\EE(V) = \OO_{\XFF} \otimes_{\Qp} V = (\Be \otimes_{\Qp} V, \BdR^+ \otimes_{\Qp} V).
\]
\begin{theo}[Fargues \& Fontaine] \label{theo:RepBun0}
	The functor
	\[
		\begin{split}
			\Rep_{\Qp}(\GG_K) & \ra \Bun^0(\GG_K)\\
			V & \mapsto \EE(V)
		\end{split}
	\]
	is an equivalence of categories, of which the functor
	\[
		\begin{split}
			\Bun^0(\GG_K) & \ra \Rep_{\Qp}(\GG_K)\\
			\EE & \mapsto \Gamma(\XFF,\EE)
		\end{split}
	\]
	is a quasi-inverse.
\end{theo}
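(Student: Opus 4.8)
The plan is to exhibit the two functors as mutually quasi-inverse by checking that the unit and counit of the adjunction they induce are natural isomorphisms. First I would verify that \(V \mapsto \EE(V)\) genuinely lands in \(\Bun^0(\GG_K)\): forgetting the Galois action, \(\EE(V) = \OO_{\XFF} \otimes_{\Qp} V \simeq \OO_{\XFF}^{\oplus \dim_{\Qp} V}\), and since \(\OO_{\XFF}\) is semi-stable of slope \(0\) while a finite direct sum of semi-stable bundles of equal slope is again semi-stable of that slope, \(\EE(V)\) is semi-stable of slope \(0\); it is manifestly \(\GG_K\)-equivariant and functorial. The other functor sends \(\EE\) to \(\Gamma(\XFF, \EE) = \H^0(\XFF, \EE)\), a \(\Qp\)-vector space carrying a continuous linear \(\GG_K\)-action (continuity coming from the Banach topology on global sections recorded after Theorem~\ref{theo:cohomCoh}); its finite-dimensionality will be a by-product of essential surjectivity below.

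Next I would treat the unit. For a \(p\)-adic representation \(V\), Theorem~\ref{theo:cohomCoh} identifies \(\Gamma(\XFF, \EE(V))\) with the kernel of \((\Be \otimes_{\Qp} V) \oplus (\BdR^+ \otimes_{\Qp} V) \to \BdR \otimes_{\Qp} V\), \((x,y) \mapsto x - y\), that is with \((\Be \cap \BdR^+) \otimes_{\Qp} V\). By the fundamental exact sequence~\eqref{eq:funda} (equivalently, \(\Fil^0 \Be = \Qp\)) this intersection is \(\Qp\), so the natural map \(V \to \Gamma(\XFF, \EE(V))\) is a \(\GG_K\)-equivariant isomorphism. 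The same computation yields full faithfulness directly: the internal Hom of \(\EE(V)\) and \(\EE(W)\) is \(\EE(\Hom_{\Qp}(V,W))\), so a morphism of vector bundles \(\EE(V) \to \EE(W)\) is an element of \(\Gamma(\XFF, \EE(\Hom_{\Qp}(V,W))) = \Hom_{\Qp}(V,W)\), and it is \(\GG_K\)-equivariant precisely when the corresponding linear map is; hence \(\Hom_{\Bun^0(\GG_K)}(\EE(V), \EE(W)) = \Hom_{\GG_K}(V,W)\).

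The counit, namely essential surjectivity, is where the real content lies and what I expect to be the main obstacle. Given \(\EE \in \Bun^0(\GG_K)\), the crux is that every semi-stable vector bundle of slope \(0\) over \(\XFF\) is trivial, \(\EE \simeq \OO_{\XFF}^{\oplus n}\); this is the slope-\(0\) case of Fargues and Fontaine's classification, and concretely it can be obtained by induction on the rank, a nonzero global section saturating to a degree-\(0\) sub-line-bundle which, by completeness of \(\XFF\) and \(\mathrm{Pic}(\XFF) \simeq \Z\), is isomorphic to \(\OO_{\XFF}\), hence a saturated trivial sub-bundle whose quotient is again semi-stable of slope \(0\) and whose extension splits since \(\H^1(\XFF, \OO_{\XFF}) = 0\). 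Triviality forces \(V = \Gamma(\XFF, \EE) \simeq \Qp^n\) to be finite-dimensional, completing the well-definedness left open above, and makes the evaluation map \(\OO_{\XFF} \otimes_{\Qp} V \to \EE\) the identity of \(\OO_{\XFF}^{\oplus n}\) under \(\Gamma(\XFF, \OO_{\XFF}^{\oplus n}) = \Qp^n\); since being an isomorphism is tested after forgetting the Galois structure, the evaluation map is a \(\GG_K\)-equivariant isomorphism. Naturality of unit and counit is immediate from functoriality, so the two functors are quasi-inverse equivalences.
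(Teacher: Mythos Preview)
Your argument is correct. The paper itself does not prove this statement: it is quoted as a theorem of Fargues and Fontaine with a reference to \cite[Théorème~10.1.7]{FarguesFontaine18}, so there is no ``paper's proof'' to compare against beyond that citation. What you have written is essentially the standard proof of the slope-\(0\) case of their classification. The unit computation is exactly the fundamental exact sequence, and your inductive splitting argument for essential surjectivity (nonzero global section, saturation to a degree-\(0\) sub-line-bundle, \(\mathrm{Pic}(\XFF)\simeq\Z\), vanishing of \(\H^1(\XFF,\OO_{\XFF})\)) is the natural one. Two small points you might make explicit if you write this out in full: first, that \(\H^0(\XFF,\EE)\neq 0\) for a nonzero semi-stable bundle of slope \(0\) follows from Theorem~\ref{theo:cohomHN}~i); second, that the quotient by the saturated copy of \(\OO_{\XFF}\) remains semi-stable of slope \(0\) requires a one-line check (the preimage of a destabilising subsheaf would destabilise \(\EE\)). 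The fact \(\mathrm{Pic}(\XFF)\simeq\Z\) is not recorded in the present paper but is in the cited reference.
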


\subsection{De Rham vector bundles} \label{subsec:dR}
We recall the definition of the functor \(\DdR\) from the category of \(\GG_K\)\=/equivariant vector bundles to the category of filtered \(K\)\=/vector spaces.
We first review the definition of a filtered \(K\)\=/vector space.

A \emph{filtered \(K\)\=/vector space} is a finite dimensional \(K\)\=/vector space \(D\) endowed with a filtration \(\Fil D = \{\Fil^n D\}_{n \in \Z}\) by sub\=/\(K\)\=/vector spaces which is
\begin{itemize}
	\item decreasing, \ie \(\Fil^{n+1} D \subset \Fil^n D\), for all \(n \in \Z\),
	\item exhaustive, \ie \(\bigcup_{n \in \Z} \Fil^n D = D\),
	\item and separated, \ie \(\bigcap_{n \in \Z} \Fil^n D = \{0\}\).
\end{itemize}
The \emph{weights} of a filtered \(K\)\=/vector space \((D,\Fil D)\) are the integers \(n\) such that \(\Fil^{-n} D /\Fil^{-n+1} D \neq 0\).
The \emph{multiplicity} of an integer \(n\) as a weight of \((D,\Fil D)\) is the dimension \(\dim_K (\Fil^{-n} D /\Fil^{-n+1} D)\).

A morphism of filtered \(K\)\=/vector spaces
\[
	(D, \Fil D) \ra (C, \Fil C)
\]
is a \(K\)\=/linear application \(f: D \ra C\) compatible with the filtrations, \ie \(f(\Fil^n D) \subset \Fil^n C\), for all \(n \in \Z\).
Let \(\Fil_K\) be the category of filtered \(K\)\=/vector spaces.
The category \(\Fil_K\) is not Abelian, nonetheless a sequence
\[
	0 \ra (D^\prime , \Fil D^\prime) \ra (D, \Fil D) \ra (D^\dprime , \Fil D^\dprime) \ra 0
\]
is said to be \emph{exact} if for all \(n \in \Z\) it induces a short exact sequence of \(K\)\=/vector spaces
\[
	0 \ra \Fil^n D^\prime \ra \Fil^n D \ra \Fil^n D^\dprime \ra 0.
\]

\begin{rema} \label{rema:DdR}
	If
	\[
		0 \ra (D^\prime , \Fil D^\prime) \ra (D, \Fil D) \ra (D^\dprime , \Fil D^\dprime) \ra 0
	\]
	is a short exact sequence, then the union of the sets of weights of \((D^\prime,\Fil D^\prime)\) and \((D^\dprime, \Fil D^\dprime)\) (counted with multiplicity) is the set of weights of \((D,\Fil D)\).
	In particular, we have the following criterion which we shall use repeatedly later on: the weights of \((D, \Fil D)\) are less than or equal to \(0\) if and only if the weights of both \((D^\prime, \Fil D^\prime)\) and \((D^\dprime, \Fil D^\dprime)\) are less than or equal to \(0\).
\end{rema}

We proceed to the construction of the functor \(\DdR\).
First, a \(K\)\=/vector space is associated with a \(\BdR\)\=/representation of \(\GG_K\).
Let \(\Vec_K\) be the category of finite dimensional \(K\)\=/vector spaces.
We recall that \(\BdR^{\GG_K}=K\).
Thus, there exists a functor
\begin{equation} \label{eq:BdRVec}
	\begin{split}
		\Rep_{\BdR}(\GG_K) & \ra \Vec_K\\
		\EE_\dR & \mapsto \EE_\dR^{\GG_K},
	\end{split}
\end{equation}
which is left adjoint to the functor
\begin{equation} \label{eq:VecBdR}
	\begin{split}
		\Vec_K & \ra \Rep_{\BdR}(\GG_K)\\
		D & \mapsto \BdR \otimes_K D.
	\end{split}
\end{equation}
A \(\BdR\)\=/representation \(\EE_\dR\) of \(\GG_K\) is \emph{flat} if the injection
\[
	\BdR \otimes_K \EE_\dR^{\GG_K} \ra \EE_\dR
\]
is an isomorphism, equivalently, we have \(\dim_K \EE_\dR^{\GG_K} \leq \dim_{\BdR} \EE_\dR\) and \(\EE_\dR\) is flat if \(\dim_K \EE_\dR^{\GG_K} = \dim_{\BdR} \EE_\dR\).
We denote by \(\Rep_{\BdR}^\fl(\GG_K)\) the full subcategory of \(\Rep_{\BdR}(\GG_K)\) of flat \(\BdR\)\=/representations of \(\GG_K\).
Then, the functor~\eqref{eq:BdRVec} induces an equivalence of categories
\begin{equation} \label{eq:flatVec}
	\Rep_{\BdR}^\fl(\GG_K) \riso \Vec_K,
\end{equation}
of which the functor~\eqref{eq:VecBdR} is a quasi-inverse.

If \(\EE_\dR^+\) is a free \(\BdR^+\)\=/representation of \(\GG_K\), then \(\EE_\dR^+\) is a \(\BdR^+\)\=/lattice in \(\EE_\dR\) stable under the action of \(\GG_K\).
Since the filtration \(\{\Fil^n \BdR = \BdR^+t^n\}_{n \in \Z}\) on \(\BdR\) is exhaustive, separated, \(\GG_K\)\=/stable and satisfies~\eqref{eq:FilGalois}, the \(K\)\=/vector space \(\EE_\dR^{\GG_K}\) equipped with the induced filtration \(\{(t^n \EE_\dR^+)^{\GG_K}\}_{n \in \Z}\) is a filtered \(K\)\=/vector space.
Thus, there is a functor
\begin{equation} \label{eq:freeFil}
	\begin{split}
		\Rep_{\BdR^+}^\fr(\GG_K) & \ra \Fil_K\\
		\EE_\dR^+ & \mapsto \left(\EE_\dR^{\GG_K}, \{(t^n \EE_\dR^+)^{\GG_K}\}_{n \in \Z}\right).
	\end{split}
\end{equation}
Reciprocally, if \((D,\Fil D)\) is a filtered \(K\)\=/vector space, then
\[
	\left(\sum_{n \in \Z} \Fil^n \BdR \otimes_K \Fil^{-n} D\right) \subset \BdR \otimes_K D
\]
is a \(\BdR^+\)\=/lattice in \(\BdR \otimes_K D\) stable under the action of \(\GG_K\), and there is a functor
\begin{equation} \label{eq:Filfree}
	\begin{split}
		\Fil_K & \ra \Rep_{\BdR^+}^\fr(\GG_K)\\
		(D, \Fil D) & \mapsto \sum_{n \in \Z} \Fil^n \BdR \otimes_K \Fil^{-n} D,
	\end{split}
\end{equation}
which is right adjoint to the functor~\eqref{eq:freeFil}.

A free \(\BdR^+\)\=/representation \(\EE_\dR^+\) of \(\GG_K\) is \emph{generically flat} if the \(\BdR\)\=/representation \(\EE_\dR\) is flat.
Let \(\Rep_{\BdR^+}^\fl(\GG_K)\) be the full subcategory of \(\Rep_{\BdR^+}^\fr(\GG_K)\) of generically flat \(\BdR^+\)\=/representation of \(\GG_K\).
Fargues and Fontaine have classified the generically flat \(\BdR^+\)\=/representations of \(\GG_K\) \cite[Théorème~10.4.4]{FarguesFontaine18}.
\begin{theo}[Fargues \& Fontaine] \label{theo:genflatFil}
	The functor~\eqref{eq:freeFil} induces an equivalence of categories
	\[
		\Rep_{\BdR^+}^\fl(\GG_K) \riso \Fil_K,
	\]
	of which the functor~\eqref{eq:Filfree} is a quasi-inverse.
\end{theo}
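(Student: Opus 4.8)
The plan is to prove that the adjoint pair \eqref{eq:freeFil}--\eqref{eq:Filfree} restricts to mutually quasi-inverse equivalences by checking that the unit and the counit of the adjunction are isomorphisms, on \(\Fil_K\) and on \(\Rep_{\BdR^+}^\fl(\GG_K)\) respectively. First I would record that the two functors land in the stated subcategories; the only non-formal point, that \eqref{eq:Filfree} takes values in \(\Rep_{\BdR^+}^\fl(\GG_K)\), will drop out of the unit computation below.

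For the unit, fix \((D,\Fil D)\in\Fil_K\) and choose a basis \(e_1,\dots,e_d\) of \(D\) adapted to the filtration, with \(e_i\) of jump \(m_i\) (so \(\Fil^m D=\bigoplus_{m_i\geq m}Ke_i\)). A direct computation gives
\[
	M := \sum_{n}\Fil^n\BdR\otimes_K\Fil^{-n}D=\bigoplus_i\BdR^+t^{-m_i}e_i,
\]
a free \(\BdR^+\)\=/module with \(M_\dR=\BdR\otimes_K D\); in particular \(M\) is generically flat. Using \(\BdR^{\GG_K}=K\) and \eqref{eq:FilGalois} one reads off \(M_\dR^{\GG_K}=D\) and \((t^nM)^{\GG_K}=\bigoplus_{m_i\geq n}Ke_i=\Fil^nD\), so the unit of the adjunction is an isomorphism at \((D,\Fil D)\).

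The substance is the counit. Let \(\EE_\dR^+\in\Rep_{\BdR^+}^\fl(\GG_K)\), and set \(D=\EE_\dR^{\GG_K}\), \(\Fil^nD=(t^n\EE_\dR^+)^{\GG_K}\). Generic flatness identifies \(\EE_\dR=\BdR\otimes_K D\), so \(\EE_\dR^+\) is a \(\GG_K\)\=/stable \(\BdR^+\)\=/lattice in \(\BdR\otimes_K D\); the counit is the inclusion \(M\hookrightarrow\EE_\dR^+\) (the containment holds since \(\Fil^{-n}D\subset t^{-n}\EE_\dR^+\)), an inclusion of lattices which, by the previous paragraph and the definition of \(\Fil D\), has the same invariants \((t^n\cdot)^{\GG_K}=\Fil^nD\) for every \(n\). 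To prove equality I would analyse the graded pieces \(\Gr^n\Lambda=t^n\Lambda/t^{n+1}\Lambda\) of a stable lattice \(\Lambda\): each is a \(\Cp\)\=/representation, multiplication by \(t^n\) gives \(\Gr^n\Lambda\simeq W(n)\) with \(W=\Lambda/t\Lambda\), and the long exact sequence of \(0\to t^{n+1}\Lambda\to t^n\Lambda\to\Gr^n\Lambda\to0\) yields \(\Gr^nD:=\Fil^nD/\Fil^{n+1}D\hookrightarrow W(n)^{\GG_K}\). Summing over \(n\) and invoking Tate's inequality \(\sum_n\dim_K W(n)^{\GG_K}\leq\dim_{\Cp}W\) forces equality throughout, so \(W\) is Hodge-Tate and \(W(n)^{\GG_K}=\Gr^nD\). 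Applying this to both \(M\) and \(\EE_\dR^+\), I then run the snake lemma on the inclusion of the two sequences above. Writing \(Q=\EE_\dR^+/M\) (of finite length), the cokernel of \(t^nM\hookrightarrow t^n\EE_\dR^+\) is \(\simeq Q(n)\), and the connecting maps identify \(\ker(\Gr^nM\to\Gr^n\EE_\dR^+)\) with a Tate twist of \(Q[t]\). On invariants the map \((\Gr^nM)^{\GG_K}=\Gr^nD\to\Gr^nD=(\Gr^n\EE_\dR^+)^{\GG_K}\) is the identity, hence injective, so every twist of \(Q[t]\) has vanishing invariants. As \(Q[t]\) is a subrepresentation of the Hodge-Tate \(W_M(n)\) it is itself Hodge-Tate, whence \(\dim_{\Cp}Q[t]=\sum_m\dim_K Q[t](m)^{\GG_K}=0\); thus \(Q[t]=0\), and since a nonzero finite-length \(\BdR^+\)\=/module has nonzero \(t\)\=/torsion, \(Q=0\) and \(M=\EE_\dR^+\).

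The main obstacle is exactly this last direction: recovering a \(\GG_K\)\=/stable \(\BdR^+\)\=/lattice from its filtration of invariants, whose essential input is Fontaine-Sen-Tate theory, namely Tate's computation of the invariants of \(\Cp(n)\) together with the Hodge-Tate dimension inequality (compare Remark~\ref{rema:FST}). Once unit and counit are isomorphisms, naturality in both variables is routine and yields the asserted equivalence.
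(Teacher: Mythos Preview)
The paper does not give a proof of this statement; it records it as a theorem of Fargues and Fontaine with a citation to \cite[Théorème~10.4.4]{FarguesFontaine18}. Your argument is correct and is essentially the standard one: the computation in a basis adapted to the filtration handles one direction, and for the other you show that a \(\GG_K\)-stable \(\BdR^+\)-lattice in a flat \(\BdR\)-representation is determined by the filtration \(\{(t^n\Lambda)^{\GG_K}\}_n\), the key input being Tate's theorem that \(\sum_n\dim_K W(n)^{\GG_K}\leq\dim_{\Cp}W\) for a \(\Cp\)-representation \(W\) (with equality characterising Hodge-Tate representations). One cosmetic simplification of your endgame: once both \(M/tM\) and \(\EE_\dR^+/t\EE_\dR^+\) are known to be Hodge-Tate with the same pieces \(\Gr^nD\), the induced map between them is a morphism of Hodge-Tate \(\Cp\)-representations which is the identity on each \((\cdot)(n)^{\GG_K}\), hence an isomorphism; then \(Q/tQ=0\) and Nakayama for finite-length \(\BdR^+\)-modules gives \(Q=0\) directly, avoiding the detour through \(Q[t]\).
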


\begin{rema} \label{rema:diagcommutFilRep}
	There is a commutative diagram
	\[
		\begin{tikzcd}
			\Rep_{\BdR^+}^\fl(\GG_K) \ar{r} \ar{d}[sloped]{\sim} & \Rep_{\BdR}^\fl(\GG_K) \ar{d}[sloped]{\sim} \\
			\Fil_K \ar{r} & \Vec_K,
		\end{tikzcd}
	\]
	where the vertical equivalences are respectively~\eqref{eq:flatVec} and Theorem~\ref{theo:genflatFil}, the top functor is the functor of extension of scalars
	\[
		\begin{split}
			\Rep_{\BdR^+}^\fl(\GG_K) & \ra \Rep_{\BdR}^\fl(\GG_K)\\
			\EE_\dR^+ & \mapsto \EE_\dR,
		\end{split}
	\]
	and the bottom functor is the forgetful functor
	\[
		\begin{split}
			\Fil_K & \ra \Vec_K\\
			(D, \Fil D) & \mapsto D.
		\end{split}
	\]
\end{rema}

The aforementioned functor \(\DdR\) is defined as the composition of functors
\[
	\begin{array}{r@{\ }c@{\ }c@{\ }c@{\ }l}
		\DdR: \Bun(\GG_K) & \ra & \Rep_{\BdR^+}^\fr(\GG_K) & \ra & \Fil_K\\
		\EE & \mapsto & \EE_\dR^+ & \mapsto & (\DdR(\EE),\Fil \DdR(\EE)),
	\end{array}
\]
where we set
\[
	(\DdR(\EE), \Fil \DdR(\EE))=\left(\EE_\dR^{\GG_K}, \{(t^n \EE_\dR^+)^{\GG_K}\}_{n \in \Z}\right).
\]
The filtration \(\Fil \DdR(\EE)\) of \(\DdR(\EE)\) is called its \emph{Hodge-Tate filtration}.
We will omit it and simply denote \((\DdR(\EE), \Fil \DdR(\EE))\) by \(\DdR(\EE)\).

A \(\GG_K\)\=/equivariant vector bundle \(\EE=(\EE_\e , \EE_\dR^+, \iota_{\EE})\) over \(\XFF\) is \emph{de Rham} if \(\EE_\dR^+\) is generically flat, equivalently, we have \(\dim_K \DdR(\EE) \leq \rk \EE\) and \(\EE\) is de Rham if \(\dim_K \DdR(\EE) = \rk \EE\).
The \emph{Hodge-Tate weights} of a de Rham vector bundle \(\EE\) are the weights of \(\DdR(\EE)\).
We denote by \(\Bun(\GG_K)_\dR\) the full subcategory of \(\Bun(\GG_K)\) of \(\GG_K\)\=/equivariant de Rham vector bundles over \(\XFF\).
Under the equivalence of Theorem~\ref{theo:BeauvilleLaszlo}, the category \(\Bun(\GG_K)_\dR\) is equivalent to the fibre product of
\[
	\begin{tikzcd}
		& \Rep_{\BdR^+}^\fl(\GG_K) \ar{d}\\
		\Rep_{\Be}(\GG_K) \ar{r} & \Rep_{\BdR}(\GG_K).
	\end{tikzcd}
\]

\begin{rema} \label{rema:DdRRep}
	The composition of functors
	\[
		\begin{array}{r@{\ }c@{\ }c@{\ }c@{\ }l}
			\Rep_{\Qp}(\GG_K) & \ra & \Bun(\GG_K) & \ra & \Fil_K\\
			V & \mapsto & \EE(V) & \mapsto & \DdR(\EE(V))
		\end{array}
	\]
	is the usual \(\DdR\) functor~\cite{Fontaine94II}.
	In particular, a \(p\)\=/adic representation \(V\) is de Rham if and only if the vector bundle \(\EE(V)\) is de Rham.
	We shall write \(\DdR(V)\) instead of \(\DdR(\EE(V))\).
\end{rema}

The following Proposition, which is essentially contained in~\cite{FarguesFontaine18}, is a generalisation of the analogous result for \(p\)-adic representations~\cite[\S 1.5]{Fontaine94II}, and is proved similarly.
\begin{prop} \label{prop:DdR}\
	Let
	\[
		0 \ra \EE^\prime \ra \EE \ra \EE^\dprime \ra 0
	\]
	be a short exact sequence in \(\Bun(\GG_K)\).
	If \(\EE\) is de Rham, then \(\EE^\prime\) and \(\EE^\dprime\) are de Rham, and the associated sequence in \(\Fil_K\)
	\[
		0 \ra \DdR(\EE^\prime) \ra \DdR(\EE) \ra \DdR(\EE^\dprime) \ra 0
	\]
	is exact.
\end{prop}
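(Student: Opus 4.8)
The plan is to transpose Fontaine's argument for \(p\)\=/adic representations (\cite[\S1.5]{Fontaine94II}) to the curve, using the realisation \(\EE_\dR\) in place of \(\BdR \otimes_{\Qp} V\) and Theorem~\ref{theo:genflatFil} to control the filtration. First I would pass through the equivalence of Theorem~\ref{theo:BeauvilleLaszlo}. Because \(\EE^\prime\), \(\EE\) and \(\EE^\dprime\) are vector bundles, both the restriction to the affine open \(\XFF \setminus \{\infty\} = \Spec(\Be)\) and the completion at \(\infty\) are exact functors; the given short exact sequence therefore yields a \(\GG_K\)\=/equivariant short exact sequence \(0 \ra (\EE^\prime)_\dR^+ \ra \EE_\dR^+ \ra (\EE^\dprime)_\dR^+ \ra 0\) of free \(\BdR^+\)\=/representations, and after inverting \(t\) a short exact sequence \(0 \ra (\EE^\prime)_\dR \ra \EE_\dR \ra (\EE^\dprime)_\dR \ra 0\) of \(\BdR\)\=/representations. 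Freeness of \((\EE^\dprime)_\dR^+\) moreover gives \((\EE^\prime)_\dR^+ = \EE_\dR^+ \cap (\EE^\prime)_\dR\) inside \(\EE_\dR\), a fact I will reuse.

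Next I would take \(\GG_K\)\=/invariants. This is left exact, so \(0 \ra \DdR(\EE^\prime) \ra \DdR(\EE) \ra \DdR(\EE^\dprime)\) is an exact sequence of \(K\)\=/vector spaces. Combining this left-hand exactness with the general inequality \(\dim_K \DdR(-) \leq \rk(-)\) recalled in Subsection~\ref{subsec:dR}, the additivity of the rank, and the hypothesis \(\dim_K \DdR(\EE) = \rk \EE\), I run the estimate \(\rk \EE = \dim_K \DdR(\EE) \leq \dim_K \DdR(\EE^\prime) + \dim_K \DdR(\EE^\dprime) \leq \rk \EE^\prime + \rk \EE^\dprime = \rk \EE\). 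All inequalities are thus equalities: \(\EE^\prime\) and \(\EE^\dprime\) are de Rham, and \(\DdR(\EE) \ra \DdR(\EE^\dprime)\) is surjective, so the underlying sequence of \(K\)\=/vector spaces is short exact.

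The only delicate point, and the one I expect to be the main obstacle, is the strictness for the Hodge-Tate filtrations, \ie the exactness of \(0 \ra \DdR(\EE^\prime) \ra \DdR(\EE) \ra \DdR(\EE^\dprime) \ra 0\) in \(\Fil_K\). For the subobject this is direct: since \(\Fil^n \DdR(\EE^\prime) = (t^n (\EE^\prime)_\dR^+)^{\GG_K}\) and \(t^n (\EE^\prime)_\dR^+ = t^n \EE_\dR^+ \cap (\EE^\prime)_\dR\), one obtains \(\Fil^n \DdR(\EE^\prime) = \DdR(\EE^\prime) \cap \Fil^n \DdR(\EE)\), so that \(\DdR(\EE^\prime) \ra \DdR(\EE)\) is a strict monomorphism in \(\Fil_K\). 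I may therefore form its cokernel \((\tilde D, \Fil \tilde D)\) in \(\Fil_K\), equipped with the quotient filtration; by construction the sequence \(0 \ra \DdR(\EE^\prime) \ra \DdR(\EE) \ra \tilde D \ra 0\) is exact in \(\Fil_K\), and it remains to identify \(\tilde D\) with \(\DdR(\EE^\dprime)\).

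Here I would exploit Theorem~\ref{theo:genflatFil}. A short exact sequence of finite dimensional filtered \(K\)\=/vector spaces splits (choose a basis of \(\DdR(\EE)\) adapted to the filtration that extends one of \(\DdR(\EE^\prime)\)), so the functor~\eqref{eq:Filfree}, being additive and hence carrying a split sequence to a split short exact sequence of \(\BdR^+\)\=/representations, produces \(0 \ra (\EE^\prime)_\dR^+ \ra \EE_\dR^+ \ra \Xi(\tilde D) \ra 0\), where \(\Xi\) denotes~\eqref{eq:Filfree}; here I use that \(\EE^\prime\), \(\EE\), \(\EE^\dprime\) are de Rham together with the fact that the composite of \(\Xi\) with~\eqref{eq:freeFil} is isomorphic to the identity, in order to identify \(\Xi(\DdR(\EE^\prime)) \simeq (\EE^\prime)_\dR^+\) and \(\Xi(\DdR(\EE)) \simeq \EE_\dR^+\) compatibly with the maps. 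Comparing with the sequence of the first step forces \(\Xi(\tilde D) \simeq (\EE^\dprime)_\dR^+\), and applying~\eqref{eq:freeFil} back gives \(\tilde D \simeq \DdR(\EE^\dprime)\) in \(\Fil_K\). Hence \(\DdR(\EE^\dprime)\) carries the quotient filtration and the sequence is exact in \(\Fil_K\). This route sidesteps a naive lifting of filtered invariants, which would instead require the vanishing of an \(\H^1\) of a finite-length \(\BdR^+\)\=/representation as in Remark~\ref{rema:FST}.
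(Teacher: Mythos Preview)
Your proof is correct and follows essentially the same strategy as the paper: the dimension-counting argument establishing that \(\EE^\prime\) and \(\EE^\dprime\) are de Rham is identical, and the exactness in \(\Fil_K\) is in both cases deduced from Theorem~\ref{theo:genflatFil}. The only difference is one of packaging: the paper simply observes that the short exact sequence \(0 \ra (\EE^\prime)_\dR^+ \ra \EE_\dR^+ \ra (\EE^\dprime)_\dR^+ \ra 0\) lies in \(\Rep_{\BdR^+}^\fl(\GG_K)\) and invokes the equivalence of Theorem~\ref{theo:genflatFil} directly (an equivalence preserves cokernels, and the cokernel in \(\Fil_K\) carries the quotient filtration), whereas you unpack this by first checking strictness of the monomorphism by hand, then splitting in \(\Fil_K\) and pushing through the additive functor~\eqref{eq:Filfree}. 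Your route is more explicit but also more laborious; the paper's one-line appeal to the equivalence is the cleaner way to phrase the same idea, and your final remark about \(\H^1\) of finite-length \(\BdR^+\)\=/representations is not needed for either argument.
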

\begin{proof}
	The short exact sequence
	\begin{equation} \label{eq:bun}
		0 \ra \EE^\prime \ra \EE \ra \EE^\dprime \ra 0
	\end{equation}
	implies the equality
	\begin{equation} \label{eq:bunrank}
		\rk \EE = \rk \EE^\prime + \rk \EE^\dprime.
	\end{equation}
	Additionally, the short exact sequence~\eqref{eq:bun} induces a short exact sequence in \(\Rep_{\BdR}(\GG_K)\)
	\begin{equation} \label{eq:BdR}
		0 \ra \EE^\prime_\dR \ra \EE_\dR \ra \EE^\dprime_\dR \ra 0,
	\end{equation}
	and taking \(\GG_K\)-invariants of this last exact sequence~\eqref{eq:BdR} yields an exact sequence of \(K\)-vector spaces
	\begin{equation} \label{eq:DdRham}
				0 \ra \DdR(\EE^\prime) \ra \DdR(\EE) \ra \DdR(\EE^\dprime).
	\end{equation}
	Hence, by~\eqref{eq:DdRham} , we have
	\begin{equation} \label{eq:dimDdR}
		\dim_K \DdR(\EE) \leq \dim_K \DdR(\EE^\prime) + \dim_K \DdR(\EE).
	\end{equation}
	Moreover, we have
	\begin{equation} \label{eq:dimineq}
		\begin{split}
			\dim_K \DdR(\EE^\prime) & \leq \rk \EE^\prime\\
			\dim_K \DdR(\EE^\dprime) & \leq \rk \EE^\dprime,
		\end{split}
	\end{equation}
	and, since \(\EE\) is de Rham, we have
	\begin{equation} \label{eq:dimdR}
		\dim_K \DdR(\EE) = \rk \EE.
	\end{equation}
	The combination of the equations~\eqref{eq:bunrank}, \eqref{eq:dimDdR}, \eqref{eq:dimineq} and~\eqref{eq:dimdR} implies
	\[
		\begin{split}
			\dim_K \DdR(\EE^\prime) & = \rk \EE^\prime\\
			\dim_K \DdR(\EE^\dprime) & = \rk \EE^\dprime,
		\end{split}
	\]
	and therefore, \(\EE^\prime\) and \(\EE^\dprime\) are de Rham.

	Furthermore, since all the terms of the short exact sequence~\eqref{eq:bun} are de Rham, it induces, by definition, a short exact sequence of generically flat \(\BdR^+\)-representations of \(\GG_K\)
	\begin{equation} \label{eq:genflatbun}
		0 \ra (\EE^\prime)_\dR^+ \ra \EE_\dR^+ \ra (\EE^\dprime)_\dR^+ \ra 0,
	\end{equation}
	and thus, by Theorem~\ref{theo:genflatFil}, the sequence of filtered \(K\)-vector spaces associated with~\eqref{eq:genflatbun} is short exact.
\end{proof}

\begin{rema}
	Let \(n \in \Z\).
	With the above convention for weights, the Hodge-Tate weight of the representation \(\Qp(n)\) is \(n\).
\end{rema}

\section{Truncation of the Hodge-Tate filtration} \label{sec:trunc}
In this section, we define and study the \emph{modification by truncation} of a de Rham vector bundle.
This modification is induced by truncation of the Hodge-Tate filtration of the associated filtered vector space.

\subsection{Modification of filtered vector spaces} \label{subsec:truncFil}
Let \(\Fil_K^{\leq 0}\) be the full subcategory of \(\Fil_K\) of filtered \(K\)\=/vector spaces whose weights are less than or equal to \(0\).
We construct a left adjoint to the forgetful functor from \(\Fil_K^{\leq 0}\) to \(\Fil_K\).

\begin{defi}
	Let \((D, \Fil D)\) be a filtered \(K\)\=/vector space.
	The \emph{modification by truncation} of \((D,\Fil D)\) is the filtered \(K\)\=/vector space \((D,\Fil_+ D)\) where
	\[
		\Fil_+^n D = \left\{ \begin{array}{ll}
			D & \text{if } n \leq 0,\\
			\Fil^n D & \text{if } n > 0.
		\end{array} \right.
	\]
\end{defi}

The modification by truncation \((D,\Fil_+ D)\) of a filtered \(K\)\=/vector space \((D, \Fil D)\) is an object of \(\Fil_K^{\leq 0}\).
Moreover, the association \((D, \Fil D) \mapsto (D, \Fil_+ D)\) is functorial: if
\[
	f: (D, \Fil D) \ra (C,\Fil C)
\]
is a morphism of filtered \(K\)\=/vector spaces, that is a \(K\)\=/linear map \(f: D \ra C\) compatible with the filtrations, then the map \(f\) is compatible with the truncated filtrations and induces a morphism between the modifications by truncation
\[
	f: (D, \Fil_+ D) \ra (C,\Fil_+ C).
\]

\begin{defi}
	The functor induced by the modification by truncation is called the \emph{truncation functor} and denoted by
	\[
		\begin{split}
			\trunc_{\Fil} : \Fil_K & \ra \Fil_K^{\leq 0}\\
			(D,\Fil D) & \mapsto (D, \Fil_+ D).
		\end{split}
	\]
\end{defi}

\begin{rema} \label{rema:diagtruncFil}
	The truncation functor fits into the commutative diagram
	\[
		\begin{tikzcd}
			\Fil_K \ar{r}{\trunc_{\Fil}} \ar{d} & \Fil_K^{\leq 0} \ar{d}\\
			\Vec_K \ar{r}{\id} & \Vec_K,
		\end{tikzcd}
	\]
	where the vertical arrows are the forgetful functor, and the bottom arrow is the identity functor.
\end{rema}

\begin{prop} \label{prop:truncFil}
	The truncation functor \(\trunc_{\Fil}\) is exact and left adjoint to the forgetful functor.
\end{prop}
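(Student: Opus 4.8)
The plan is to exhibit the adjunction by hand and then read off exactness directly from the degreewise definition, exploiting that truncation alters nothing in positive filtration degrees and fills everything up below. Write \(U \colon \Fil_K^{\leq 0} \to \Fil_K\) for the forgetful (inclusion) functor. The first thing I would record is a structural remark about the target category: any object \((C, \Fil C)\) of \(\Fil_K^{\leq 0}\) satisfies \(\Fil^n C = C\) for all \(n \leq 0\). Indeed, the hypothesis on weights forces \(\Fil^n C/\Fil^{n+1} C = 0\) for every \(n \leq -1\), so the filtration, being decreasing, exhaustive, and on a finite-dimensional space, is constant equal to \(C\) throughout non-positive degrees. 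This single observation is what drives both halves of the proposition.

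For the adjunction, I would take the unit at \((D, \Fil D)\) to be the identity map of the underlying space, \(\eta \colon (D, \Fil D) \to (D, \Fil_+ D)\); this is a morphism in \(\Fil_K\) because \(\Fil^n D \subset \Fil_+^n D\) for every \(n\) (with equality for \(n > 0\), and \(\Fil^n D \subset D = \Fil_+^n D\) for \(n \leq 0\)). To check the universal property it suffices to compare the two \(\Hom\)-sets as subsets of \(\Hom_K(D, C)\). A \(K\)-linear map \(f \colon D \to C\) lies in \(\Hom_{\Fil_K^{\leq 0}}(\trunc_{\Fil}(D, \Fil D), (C, \Fil C))\) exactly when \(f(\Fil_+^n D) \subset \Fil^n C\) for all \(n\), and in \(\Hom_{\Fil_K}((D, \Fil D), U(C, \Fil C))\) exactly when \(f(\Fil^n D) \subset \Fil^n C\) for all \(n\). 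For \(n > 0\) the two conditions are literally the same, since \(\Fil_+^n D = \Fil^n D\); for \(n \leq 0\) both are vacuous, because \(\Fil^n C = C\) by the remark above. Hence the two subsets coincide, and the identification is manifestly natural in \((D, \Fil D)\) and \((C, \Fil C)\), yielding \(\trunc_{\Fil} \dashv U\).

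For exactness, I would start from an exact sequence \(0 \to (D', \Fil D') \to (D, \Fil D) \to (D'', \Fil D'') \to 0\) in \(\Fil_K\) and verify that after applying \(\trunc_{\Fil}\) the sequence \(0 \to \Fil_+^n D' \to \Fil_+^n D \to \Fil_+^n D'' \to 0\) is short exact for every \(n\). For \(n > 0\) this is precisely the given exactness at level \(n\), as \(\Fil_+^n = \Fil^n\). For \(n \leq 0\) the three terms are the full spaces \(D', D, D''\), so the sequence to be checked is \(0 \to D' \to D \to D'' \to 0\); this is short exact because it agrees with the level-\(m\) sequence of the original datum for any sufficiently negative \(m\) (using exhaustiveness and finite-dimensionality, \(\Fil^m\) is the whole space for \(m \ll 0\)), and that level-\(m\) sequence is short exact by hypothesis. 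Thus every level is short exact, which is exactly exactness of \(\trunc_{\Fil}\).

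The whole argument is elementary, and I do not expect a genuine obstacle: the only point that must be isolated is the constancy \(\Fil^n C = C\) for \(n \leq 0\) on objects of \(\Fil_K^{\leq 0}\), which simultaneously renders the non-positive part of the truncated filtration invisible to morphisms into \(\Fil_K^{\leq 0}\) and makes the truncated pieces equal to the full spaces. Being a left adjoint, \(\trunc_{\Fil}\) automatically preserves colimits, so part of the exactness is formal; but the direct degreewise check given above is cleaner and disposes of both directions at once.
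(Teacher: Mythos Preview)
Your proof is correct and follows essentially the same approach as the paper's: both hinge on the observation that \(\Fil^n C = C\) for \(n \leq 0\) whenever \((C,\Fil C)\in\Fil_K^{\leq 0}\), and both establish the adjunction by checking that the filtration conditions on a morphism \(f\colon D\to C\) coincide for the original and truncated filtrations (equality for \(n>0\), vacuity for \(n\leq 0\)). The paper dispatches exactness in one line (``follows directly from the definition''), while you spell out the degreewise verification, but this is the same content.
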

\begin{proof}
	The exactness follows directly from the definition.
	We prove the adjunction property.
	If \((C, \Fil C)\) is an object of \(\Fil_K^{\leq 0}\), then by definition we have
	\[
		(C, \Fil_+ C) = (C, \Fil C).
	\]
	Therefore, the truncation functor provides a map functorial in \((D,\Fil D) \in \Obj(\Fil_K)\) and \((C,\Fil C) \in \Obj(\Fil_K^{\leq 0})\):
	\begin{equation} \label{eq:truncadj}
		\begin{tikzcd}
			\Hom((D,\Fil D), (C,\Fil C)) \ar{d}{\trunc_{\Fil}} \\
			\Hom((D,\Fil_+ D), (C,\Fil_+ C)) \ar[equal]{d}\\
			\Hom((D,\Fil_+ D), (C,\Fil C)).
		\end{tikzcd}
	\end{equation}
	Reciprocally, if \(f \in \Hom((D,\Fil_+ D), (C,\Fil C))\), then the underlying \(K\)\=/linear map \(f : D \ra C\) satisfies
	\begin{equation} \label{eq:Filpos}
		f(\Fil^n D) \subset \Fil^n C, \text{ if } n > 0.
	\end{equation}
	Moreover, since \((C, \Fil^i C)\) is an object of \(\Fil_K^{\leq 0}\), the map \(f\) also satisfies
	\begin{equation} \label{eq:Filneg}
		f(\Fil^n D) \subset C=\Fil^n C, \text{ if } n \leq 0.
	\end{equation}
	Therefore, the combination of the equations~\eqref{eq:Filpos} and~\eqref{eq:Filneg} implies that the map \(f\) induces a morphism \((D,\Fil D) \ra (C,\Fil C)\).
	Thus, the map~\eqref{eq:truncadj} is bijective and functorial in \((D,\Fil D)\) and \((C,\Fil C)\), so the truncation functor is left adjoint to the forgetful functor.
\end{proof}

If \((D, \Fil D)\) is a filtered \(K\)\=/vector space, then the identity map on \(D\) induced a morphism of filtered \(K\)\=/vector spaces
\[
	\eta_{(D, \Fil D)} : (D, \Fil D) \ra (D, \Fil_+ D).
\]
Moreover, the map \(\eta_{(D, \Fil D)}\) is send to the identity map of \((D,\Fil_+ D)\) by the bijection~\eqref{eq:truncadj}:
\[
	\begin{split}
		\trunc_{\Fil} : \Hom((D,\Fil D), (D,\Fil_+ D)) & \riso \Hom((D,\Fil_+ D), (D,\Fil_+ D))\\
		\eta_{(D, \Fil D)} & \mapsto \id_{(D,\Fil_+ D)}.
	\end{split}
\]
Thus, we have:
\begin{coro} \label{coro:universalmorphFil}
	Let \((D, \Fil D)\) be a filtered \(K\)\=/vector space.
	The map \(\eta_{(D, \Fil D)}\) is the universal morphism from \((D, \Fil D)\) to the forgetful functor from \(\Fil_K^{\leq 0}\) to \(\Fil_K\).
\end{coro}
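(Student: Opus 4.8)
The plan is to recognise this corollary as the object-level reformulation of the adjunction established in Proposition~\ref{prop:truncFil}: the component at \((D,\Fil D)\) of the unit of a left adjoint is always a universal morphism to the right adjoint. Writing \(U : \Fil_K^{\leq 0} \ra \Fil_K\) for the forgetful functor, I must show that for every object \((C,\Fil C)\) of \(\Fil_K^{\leq 0}\) and every morphism \(f : (D,\Fil D) \ra U(C,\Fil C)\) in \(\Fil_K\), there is a unique morphism \(g : (D,\Fil_+ D) \ra (C,\Fil C)\) in \(\Fil_K^{\leq 0}\) satisfying \(U(g) \circ \eta_{(D, \Fil D)} = f\).

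First I would invoke the adjunction bijection~\eqref{eq:truncadj} from Proposition~\ref{prop:truncFil},
\[
	\Phi : \Hom\bigl((D,\Fil D),(C,\Fil C)\bigr) \riso \Hom\bigl((D,\Fil_+ D),(C,\Fil C)\bigr),
\]
and set \(g = \Phi(f)\). This candidate is forced: the content of the universal property is precisely that \(\eta_{(D, \Fil D)}\) corresponds under \(\Phi\) to the identity of \((D,\Fil_+ D)\), which is exactly the computation carried out immediately before the statement.

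It then remains to check the compatibility \(U(g) \circ \eta_{(D, \Fil D)} = f\). The key observation, recorded in Remark~\ref{rema:diagtruncFil}, is that \(\trunc_{\Fil}\) leaves underlying vector spaces and underlying linear maps unchanged, while \(\eta_{(D, \Fil D)}\) is induced by \(\id_D\). Consequently both \(U(g) \circ \eta_{(D, \Fil D)}\) and \(f\) have the same underlying \(K\)\=/linear map \(D \ra C\); since a morphism of filtered \(K\)\=/vector spaces is determined by its underlying linear map, the two morphisms coincide in \(\Fil_K\). For uniqueness, any \(g^\prime\) with \(U(g^\prime) \circ \eta_{(D, \Fil D)} = f\) must have underlying map \(f\) (as \(\eta_{(D, \Fil D)}\) is the identity on \(D\)), hence equals \(g\).

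I do not expect a genuine obstacle here, as the statement merely packages the adjunction of Proposition~\ref{prop:truncFil} in the language of universal morphisms. The only point requiring a little care is the bookkeeping with the forgetful functor \(U\) and the repeated use of the fact that every map in sight is determined by the single underlying linear map \(f : D \ra C\).
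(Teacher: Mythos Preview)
Your proposal is correct and follows essentially the same approach as the paper: the paper simply observes, immediately before stating the corollary, that the bijection~\eqref{eq:truncadj} sends \(\eta_{(D,\Fil D)}\) to \(\id_{(D,\Fil_+ D)}\), which is exactly the standard identification of the unit of an adjunction with the universal morphism to the right adjoint. You have merely spelled out the resulting universal property in slightly more detail than the paper does.
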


Via the equivalence of categories between \(\Fil_K\) and \(\Rep_{\BdR^+}^\fl(\GG_K)\) (Theorem~\ref{theo:genflatFil}), the modification by truncation of filtered \(K\)\=/vector spaces defines a modification of generically flat \(\BdR^+\)\=/representations of \(\GG_K\) which we now compute.
\begin{lemm} \label{lemm:fillattice}
	Let \((D,\Fil D)\) be a filtered \(K\)\=/vector space.
	Then
	\[
		\sum_{n \in \Z} \Fil^n \BdR \otimes_K \Fil_+^{-n} D = \left(\sum_{n \in \Z} \Fil^n \BdR \otimes_K \Fil^{-n} D\right) + \BdR^+ \otimes_K D.
	\]
\end{lemm}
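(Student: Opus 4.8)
The plan is to establish the equality by splitting the summation index $n$ into the two ranges $n \geq 0$ and $n < 0$, all sums being understood as sums of $\BdR^+$\=/submodules inside $\BdR \otimes_K D$. The decisive point is the containment $\Fil^n \BdR = \BdR^+ t^n \subseteq \BdR^+ = \Fil^0 \BdR$, which holds precisely when $n \geq 0$.

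First I would record how the truncated filtration differs from the original one. Writing $m = -n$ in the definition of $\Fil_+$, one has $\Fil_+^{-n} D = D$ for $n \geq 0$ and $\Fil_+^{-n} D = \Fil^{-n} D$ for $n < 0$. Splitting the left\=/hand side according to these two ranges therefore gives
\[
	\sum_{n \in \Z} \Fil^n \BdR \otimes_K \Fil_+^{-n} D = \left(\sum_{n \geq 0} \Fil^n \BdR \otimes_K D\right) + \left(\sum_{n < 0} \Fil^n \BdR \otimes_K \Fil^{-n} D\right).
\]
For the first summand, since $\Fil^n \BdR \subseteq \BdR^+$ for every $n \geq 0$, each term $\Fil^n \BdR \otimes_K D$ lies in $\BdR^+ \otimes_K D$, while the $n = 0$ term equals $\BdR^+ \otimes_K D$ exactly; hence $\sum_{n \geq 0} \Fil^n \BdR \otimes_K D = \BdR^+ \otimes_K D$, and the left\=/hand side reduces to
\[
	\BdR^+ \otimes_K D + \sum_{n < 0} \Fil^n \BdR \otimes_K \Fil^{-n} D.
\]

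Finally I would treat the right\=/hand side in the same spirit. Splitting $\sum_{n \in \Z} \Fil^n \BdR \otimes_K \Fil^{-n} D$ into its $n \geq 0$ and $n < 0$ parts, the containment $\Fil^n \BdR \otimes_K \Fil^{-n} D \subseteq \BdR^+ \otimes_K D$, valid for $n \geq 0$, shows that the non\=/negative part is absorbed once $\BdR^+ \otimes_K D$ is added. Thus the right\=/hand side also collapses to $\BdR^+ \otimes_K D + \sum_{n < 0} \Fil^n \BdR \otimes_K \Fil^{-n} D$, which coincides with the expression obtained for the left\=/hand side.

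The argument is purely a bookkeeping computation inside $\BdR \otimes_K D$; I do not expect any genuine obstacle. The only points demanding attention are the correct handling of the index shift $m = -n$ in the definition of $\Fil_+$ and the observation that the decisive containment $\Fil^n \BdR \subseteq \BdR^+$ holds exactly for $n \geq 0$, so that on both sides the non\=/negative\=/degree contributions are precisely those swallowed by the lattice $\BdR^+ \otimes_K D$.
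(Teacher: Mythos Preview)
Your proof is correct and follows essentially the same approach as the paper: both split the index $n$ according to its sign, use the definition of $\Fil_+$ on the negative side, and absorb the non\=/negative terms into $\BdR^+ \otimes_K D$ via the containment $\Fil^n \BdR \subseteq \BdR^+$ for $n \geq 0$. The only cosmetic difference is that the paper transforms the left\=/hand side stepwise into the right\=/hand side, whereas you reduce both sides to the common form $\BdR^+ \otimes_K D + \sum_{n < 0} \Fil^n \BdR \otimes_K \Fil^{-n} D$.
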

\begin{proof}
	By definition of \(\Fil_+ D\), we have
	\begin{equation}
		\begin{split} \label{eq:Fil}
			& \sum_{n \in \Z} \Fil^n \BdR \otimes_K \Fil_+^{-n} D\\
			= & \sum_{n < 0} \Fil^n \BdR \otimes_K \Fil^{-n} D + \sum_{n \geq 0} \Fil^n \BdR \otimes_K D.
		\end{split}
	\end{equation}
	Since \(\Fil^n \BdR \otimes_K \Fil^{-n} D\) is contained in \(\Fil^n \BdR \otimes_K D\) for all \(n \in \Z\), the equation~\eqref{eq:Fil} gives
	\begin{equation} \label{eq:Fil2}
	\begin{split}
		& \sum_{n \in \Z} \Fil^n \BdR \otimes_K \Fil^{-n}_+ D \\
		= & \sum_{n \in \Z} \Fil^i \BdR \otimes_K \Fil^{-n} D + \sum_{n \geq 0} \Fil^n \BdR \otimes_K D.
	\end{split}
	\end{equation}
	Moreover, we have
	\begin{equation} \label{eq:Fil3}
			\sum_{n \geq 0} \Fil^n \BdR \otimes_K D = \BdR^+ \otimes_K D.
	\end{equation}
	Therefore, the combination of the equations~\eqref{eq:Fil2} and \eqref{eq:Fil3} yields
	\[
		\sum_{n \in \Z} \Fil^n \BdR \otimes_K \Fil^{-n}_+ D = \left(\sum_{n \in \Z} \Fil^n \BdR \otimes_K \Fil^{-n} D\right) + \BdR^+ \otimes_K D.
	\]
\end{proof}

Let \(\Rep_{\BdR^+}^\fl(\GG_K)^{\leq 0}\) be the full subcategory of \(\Rep_{\BdR^+}^\fl(\GG_K)\) equivalent to \(\Fil_K^{\leq 0}\) under the equivalence between \(\Rep_{\BdR^+}^\fl(\GG_K)\) and \(\Fil_K\) of Theorem~\ref{theo:genflatFil}.
By Lemma~\ref{lemm:fillattice}, the composition of functors
\begin{equation} \label{eq:truncRepDef}
	\Rep_{\BdR^+}^\fl(\GG_K) \riso \Fil_K \xra{\trunc_{\Fil}} \Fil_K^{\leq 0} \riso \Rep_{\BdR^+}^\fl(\GG_K)^{\leq 0}
\end{equation}
associates with a generically flat \(\BdR^+\)\=/representation \(\EE_\dR^+\) of \(\GG_K\) the generically flat \(\BdR^+\)\=/representation
\[
	\EE_\dR^+ + \BdR^+ \otimes_K (\EE_\dR)^{\GG_K}.
\]

\begin{defi}
	The functor~\eqref{eq:truncRepDef} is also called the \emph{truncation functor}, and is denoted by
	\[
		\begin{split}
			\trunc_{\dR}: \Rep_{\BdR^+}^\fl(\GG_K) 	& \ra \Rep_{\BdR^+}^\fl(\GG_K)^{\leq 0}\\
								\EE_\dR^+ 		& \mapsto \EE_\dR^+ + \BdR^+ \otimes_K (\EE_\dR)^{\GG_K}.
		\end{split}
	\]
\end{defi}

The next corollary follows immediately from the equivalence of Theorem~\ref{theo:genflatFil} and Remark~\ref{rema:diagcommutFilRep}, and the properties of the truncation functor \(\trunc_{\Fil}\): Remark~\ref{rema:diagtruncFil} and Proposition~\ref{prop:truncFil}.
\begin{coro} \label{coro:truncRep}
	The truncation functor \(\trunc_{\dR}\) is exact and left adjoint to the forgetful functor.
	Furthermore, the diagram
	\[
		\begin{tikzcd}
			\Rep_{\BdR^+}^\fl(\GG_K) \ar{r}{\trunc_{\dR}} \ar{d} & \Rep_{\BdR^+}^\fl(\GG_K)^{\leq 0} \ar{d}\\
			\Rep_{\BdR}^\fl(\GG_K) \ar{r}{\id} & \Rep_{\BdR}^\fl(\GG_K),
		\end{tikzcd}
	\]
	where the vertical arrows are the functors of extension of scalars, is commutative.
\end{coro}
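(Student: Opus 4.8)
The plan is to obtain all three assertions by transporting the corresponding properties of the truncation functor \(\trunc_{\Fil}\) on filtered vector spaces through the equivalence of Theorem~\ref{theo:genflatFil}. Indeed, by the very definition~\eqref{eq:truncRepDef}, the functor \(\trunc_{\dR}\) is the conjugate of \(\trunc_{\Fil}\) by this equivalence: writing \(\Phi \colon \Rep_{\BdR^+}^\fl(\GG_K) \riso \Fil_K\) for the equivalence of Theorem~\ref{theo:genflatFil} and \(\Psi\) for its quasi-inverse restricted to \(\Fil_K^{\leq 0}\), one has \(\trunc_{\dR} = \Psi \circ \trunc_{\Fil} \circ \Phi\). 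Since \(\Phi\) and \(\Psi\) are equivalences, each of the three properties---exactness, the adjunction, and commutativity of the generic-fibre square---will follow formally from the analogous property of \(\trunc_{\Fil}\) recorded in Proposition~\ref{prop:truncFil} and Remark~\ref{rema:diagtruncFil}.

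For exactness and the adjunction, I would first note that the subcategory \(\Rep_{\BdR^+}^\fl(\GG_K)^{\leq 0}\) was defined precisely so that \(\Phi\) restricts to an equivalence \(\Rep_{\BdR^+}^\fl(\GG_K)^{\leq 0} \riso \Fil_K^{\leq 0}\) intertwining the forgetful inclusions. An equivalence of categories both preserves and reflects the declared exact structure---a short exact sequence of generically flat \(\BdR^+\)-representations corresponds under \(\Phi\) to an exact sequence of filtered \(K\)-vector spaces, as is already used in the proof of Proposition~\ref{prop:DdR}---so the exactness of \(\trunc_{\Fil}\) from Proposition~\ref{prop:truncFil} transports to exactness of \(\trunc_{\dR}\). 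Likewise, adjunctions are stable under conjugation by equivalences: since \(\trunc_{\Fil}\) is left adjoint to the forgetful functor \(\Fil_K^{\leq 0} \to \Fil_K\), its conjugate \(\trunc_{\dR}\) is left adjoint to the conjugated forgetful functor, which is exactly the inclusion \(\Rep_{\BdR^+}^\fl(\GG_K)^{\leq 0} \to \Rep_{\BdR^+}^\fl(\GG_K)\).

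For the commutative square, I would glue the square of Remark~\ref{rema:diagcommutFilRep}, which identifies extension of scalars to \(\BdR\) with the forgetful functor \(\Fil_K \to \Vec_K\) under the equivalences~\eqref{eq:flatVec} and Theorem~\ref{theo:genflatFil}, with the square of Remark~\ref{rema:diagtruncFil}, which shows that \(\trunc_{\Fil}\) leaves the underlying \(K\)-vector space unchanged; composing these two squares yields the desired commutativity. Equivalently, and more concretely, one computes directly that \(\trunc_{\dR}(\EE_\dR^+) = \EE_\dR^+ + \BdR^+ \otimes_K (\EE_\dR)^{\GG_K}\) is a sum of two \(\BdR^+\)-lattices in \(\EE_\dR\), so that its extension of scalars to \(\BdR\) is again \(\EE_\dR\); this is precisely the assertion that the square commutes. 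I do not anticipate a genuine obstacle here: the only point requiring care is the bookkeeping verification that \(\Phi\) matches the two exact structures and the two forgetful functors, and this holds by the very definition of \(\Rep_{\BdR^+}^\fl(\GG_K)^{\leq 0}\) together with the definition of exactness in \(\Fil_K\).
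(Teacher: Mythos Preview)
Your proposal is correct and matches the paper's approach exactly: the paper simply states that the corollary follows immediately from the equivalence of Theorem~\ref{theo:genflatFil} together with Remark~\ref{rema:diagcommutFilRep}, Remark~\ref{rema:diagtruncFil}, and Proposition~\ref{prop:truncFil}. Your write-up is a fleshed-out version of precisely this transport argument.
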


We deduce the following description of the universal morphism associated with \(\trunc_{\dR}\) from Corollary~\ref{coro:universalmorphFil}.
\begin{coro} \label{coro:universalmorphdR}
	Let \(\EE_\dR^+\) be a generically flat \(\BdR^+\)\=/representation of \(\GG_K\).
	The inclusion map
	\[
		i_{\EE_\dR^+} : \EE_\dR^+ \subset \EE_\dR^+ + \BdR^+ \otimes_K (\EE_\dR)^{\GG_K}
	\]
	is the universal morphism from \(\EE_\dR^+\) to the forgetful functor from \(\Rep_{\BdR^+}^\fl(\GG_K)^{\leq 0}\) to \(\Rep_{\BdR^+}^\fl(\GG_K)\).
\end{coro}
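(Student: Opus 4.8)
The plan is to observe that this is a purely formal consequence of Corollary~\ref{coro:universalmorphFil}, transported along the equivalence of categories of Theorem~\ref{theo:genflatFil}. A universal morphism from an object to a forgetful functor admitting a left adjoint is precisely the unit of the corresponding adjunction at that object, and such units are preserved by equivalences of categories. Since the functor \(\trunc_\dR\) was defined in \eqref{eq:truncRepDef} exactly so as to correspond to \(\trunc_\Fil\) under the equivalence \(\Rep_{\BdR^+}^\fl(\GG_K) \riso \Fil_K\) of Theorem~\ref{theo:genflatFil}, it suffices to transport the universal morphism \(\eta_{(D,\Fil D)}\) furnished by Corollary~\ref{coro:universalmorphFil} and to identify its image with the inclusion \(i_{\EE_\dR^+}\).

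More precisely, I would first record that the equivalence of Theorem~\ref{theo:genflatFil} restricts, by the very definition of \(\Rep_{\BdR^+}^\fl(\GG_K)^{\leq 0}\), to an equivalence between \(\Rep_{\BdR^+}^\fl(\GG_K)^{\leq 0}\) and \(\Fil_K^{\leq 0}\) intertwining the two forgetful functors. Writing \((D,\Fil D)\) for the filtered \(K\)\=/vector space associated with \(\EE_\dR^+\) by the functor \eqref{eq:freeFil}, so that \(D = (\EE_\dR)^{\GG_K}\) and the quasi-inverse \eqref{eq:Filfree} recovers \(\EE_\dR^+ = \sum_{n \in \Z} \Fil^n \BdR \otimes_K \Fil^{-n} D\), Corollary~\ref{coro:universalmorphFil} identifies \(\eta_{(D,\Fil D)} : (D,\Fil D) \ra (D,\Fil_+ D)\) as the universal morphism from \((D,\Fil D)\) to the forgetful functor \(\Fil_K^{\leq 0} \ra \Fil_K\). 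Applying the quasi-inverse \eqref{eq:Filfree} and using that universality is preserved under equivalences then produces the universal morphism from \(\EE_\dR^+\) to the forgetful functor \(\Rep_{\BdR^+}^\fl(\GG_K)^{\leq 0} \ra \Rep_{\BdR^+}^\fl(\GG_K)\).

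It then remains to compute this image explicitly. The morphism \(\eta_{(D,\Fil D)}\) is, on underlying \(K\)\=/vector spaces, the identity of \(D\); hence its image under \eqref{eq:Filfree} is the map of \(\BdR^+\)\=/lattices inside \(\BdR \otimes_K D = \EE_\dR\) induced by the identity of \(\BdR \otimes_K D\). The source lattice is \(\EE_\dR^+\), while by Lemma~\ref{lemm:fillattice} the target lattice \(\sum_{n \in \Z} \Fil^n \BdR \otimes_K \Fil_+^{-n} D\) equals \(\EE_\dR^+ + \BdR^+ \otimes_K (\EE_\dR)^{\GG_K}\). Thus the map induced by the identity is exactly the inclusion \(i_{\EE_\dR^+}\), which is therefore the sought universal morphism.

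The only point requiring a little care — and the closest thing to an obstacle in an otherwise formal argument — is this last identification: one must check that the functor \eqref{eq:Filfree} carries the morphism \(\eta_{(D,\Fil D)}\), which is the identity on \(D\), to the genuine inclusion of lattices rather than to some isomorphism precomposed with it. This is immediate from the description of \eqref{eq:Filfree} on morphisms, namely the restriction of \(\id_{\BdR} \otimes f\) to the relevant lattices, together with Lemma~\ref{lemm:fillattice}, so no real difficulty arises.
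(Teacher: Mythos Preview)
Your proof is correct and follows exactly the approach the paper intends: the paper simply states that the corollary is deduced from Corollary~\ref{coro:universalmorphFil}, and you have spelled out the transport along the equivalence of Theorem~\ref{theo:genflatFil} together with the explicit identification via Lemma~\ref{lemm:fillattice}. There is nothing to add.
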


\subsection{Modification of de Rham vector bundles} \label{subsec:truncdR}
Let \(\Bun(\GG_K)_\dR^{\leq 0}\) be the full subcategory of \(\Bun(\GG_K)_\dR\) of \(\GG_K\)\=/equivariant de Rham vector bundles over \(\XFF\) whose Hodge-Tate weights are less than or equal to \(0\).
We use the results from the previous subsection to construct a left adjoint to the forgetful functor from \(\Bun(\GG_K)_\dR^{\leq 0}\) to \(\Bun(\GG_K)_\dR\).

We consider the functor
\begin{equation} \label{eq:prototruncFib}
	\begin{split}
		(\id, \trunc_{\dR}) : \Rep_{\Be}(\GG_K) \times \Rep_{\BdR^+}^\fl(\GG_K) & \ra \Rep_{\Be}(\GG_K) \times \Rep_{\BdR^+}^\fl(\GG_K)^{\leq 0}\\
		(\EE_\e, \EE_\dR^+) & \mapsto (\EE_\e, \EE_\dR^+ + \BdR^+ \otimes_K (\EE_\dR)^{\GG_K})
	\end{split}
\end{equation}
formed by the identity functor on \(\Rep_{\Be}(\GG_K)\) together with the truncation functor \(\trunc_{\dR}\).
Under the equivalence of Theorem~\ref{theo:BeauvilleLaszlo}, the category \(\Bun(\GG_K)_\dR\) has been identified with the fibre product of
\[
	\begin{tikzcd}
		& \Rep_{\BdR^+}^\fl(\GG_K) \ar{d}\\
		\Rep_{\Be}(\GG_K) \ar{r} & \Rep_{\BdR}(\GG_K),
	\end{tikzcd}
\]
and the category \(\Bun(\GG_K)_\dR^{\leq 0}\) is identified with the fibre product of
\[
	\begin{tikzcd}
		& \Rep_{\BdR^+}^\fl(\GG_K)^{\leq 0} \ar{d}\\
		\Rep_{\Be}(\GG_K) \ar{r} & \Rep_{\BdR}(\GG_K).
	\end{tikzcd}
\]
Moreover, by Corollary~\ref{coro:truncRep}, the diagram
\[
		\begin{tikzcd}
			\Rep_{\BdR^+}^\fl(\GG_K) \ar{r}{\trunc_{\dR}} \ar{d} & \Rep_{\BdR^+}^\fl(\GG_K)^{\leq 0} \ar{d}\\
			\Rep_{\BdR}(\GG_K) \ar{r}{\id} & \Rep_{\BdR}(\GG_K)
		\end{tikzcd}
\]
is commutative.
Therefore, the functor~\eqref{eq:prototruncFib} induces a well-defined functor
\begin{equation} \label{eq:truncBunDef}
	\Bun(\GG_K)_\dR \ra \Bun(\GG_K)_\dR^{\leq 0},
\end{equation}
which associates with a \(\GG_K\)\=/equivariant de Rham vector bundle \(\EE=(\EE_\e, \EE_\dR^+, \iota_{\EE})\) the vector bundle
\[
		(\EE_\e, \EE_\dR^+ + \BdR^+ \otimes_K \DdR(\EE), \iota_{\EE}).
\]

\begin{defi}
	Let \(\EE=(\EE_\e, \EE_\dR^+, \iota_{\EE})\) be a de Rham \(\GG_K\)\=/equivariant vector bundle over \(\XFF\).
	The vector bundle \((\EE_\e, \EE_\dR^+ + \BdR^+ \otimes_K \DdR(\EE), \iota_{\EE})\) is called the \emph{modification by truncation} of \(\EE\) and is denoted by
	\[
		\EE_+ = (\EE_\e, \EE_\dR^+ + \BdR^+ \otimes_K \DdR(\EE), \iota_{\EE}).
	\]
\end{defi}

\begin{defi}
	The functor~\eqref{eq:truncBunDef} is called the \emph{truncation functor}, and is denoted by
	\[
		\begin{split}
			\trunc_{\HT} : \Bun(\GG_K)_\dR & \ra \Bun(\GG_K)_\dR^{\leq 0}\\
			\EE & \mapsto \EE_+.
		\end{split}
	\]
\end{defi}

The following commutative diagram illustrates the definition of the truncation functor:
\[
	\begin{tikzcd}
		\Bun(\GG_K)_\dR \ar{rr} \ar{dd} \ar{dr}[swap]{\trunc_{\HT}}                         	&                                                                                     	& \Rep_{\Be}(\GG_K)  \ar{dd} \ar{rd}{\id}      &                           \\
																						& \Bun(\GG_K)_\dR^{\leq 0}  \ar[crossing over]{rr}                                    	&                                              & \Rep_{\Be}(\GG_K) \ar{dd} \\
		\Rep_{\BdR^+}^\fl(\GG_K) \ar{rr} \ar{dr}[swap]{\trunc_{\dR}}                       	&                                                                                     	& \Rep_{\BdR}(\GG_K) \ar[from=dd] \ar{rd}{\id} &                           \\
																						& \Rep_{\BdR^+}^\fl(\GG_K)^{\leq 0} \ar[from=uu,crossing over] \ar[crossing over]{rr} 	&                                              & \Rep_{\BdR}(\GG_K)        \\
		\Fil_K \ar{rr} \ar{uu}[sloped]{\sim} \ar{dr}[swap]{\trunc_{\Fil}} 	&                                                                             		    & \Vec_K   \ar{dr}{\id}                        &                           \\
																						& \Fil_K^{\leq 0} \ar[crossing over]{uu}[sloped, near start]{\sim} \ar{rr}		&                                              & \Vec_K.  \ar{uu}
	\end{tikzcd}
\]

From Corollary~\ref{coro:truncRep} follows:
\begin{prop} \label{prop:truncFib}
	The truncation functor \(\trunc_{\HT}\) is exact and left adjoint to the forgetful functor.
\end{prop}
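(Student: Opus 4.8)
The plan is to deduce everything from Corollary~\ref{coro:truncRep} by transporting exactness and the adjunction from the \(\BdR^+\)\=/side through the fibre\=/product description of the two categories. Recall that \(\trunc_{\HT}\) was constructed as the functor induced on fibre products by the pair \((\id, \trunc_{\dR})\): the identity acts on the common factor \(\Rep_{\Be}(\GG_K)\) and \(\trunc_{\dR}\) acts on the \(\BdR^+\)\=/factor. The compatibility that makes this descend to vector bundles is that both \(\id\) and \(\trunc_{\dR}\) commute with the functors of extension of scalars to \(\Rep_{\BdR}(\GG_K)\); for \(\trunc_{\dR}\) this is exactly the commutative square of Corollary~\ref{coro:truncRep}, and for \(\id\) it is trivial. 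The whole argument rests on exploiting this compatibility to glue componentwise data.

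For exactness, I would first record, using Theorem~\ref{theo:BeauvilleLaszlo}, Proposition~\ref{prop:DdR} and Theorem~\ref{theo:genflatFil}, that a sequence \(0 \ra \EE^\prime \ra \EE \ra \EE^\dprime \ra 0\) in \(\Bun(\GG_K)_\dR\) is short exact if and only if the induced sequences of \(\Be\)\=/representations and of generically flat \(\BdR^+\)\=/representations are both short exact (exactness of sheaves being checked on the two charts \(\XFF \setminus \{\infty\}\) and the completion at \(\infty\)). Applying \(\trunc_{\HT}\) leaves the \(\Be\)\=/component unchanged, hence still exact, and applies \(\trunc_{\dR}\) to the \(\BdR^+\)\=/component, which stays exact by Corollary~\ref{coro:truncRep}; reassembling via Theorem~\ref{theo:BeauvilleLaszlo} gives a short exact sequence in \(\Bun(\GG_K)_\dR^{\leq 0}\).

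For the adjunction, I would exhibit the unit explicitly. Given \(\EE = (\EE_\e, \EE_\dR^+, \iota_{\EE})\), the pair \((\id_{\EE_\e}, i_{\EE_\dR^+})\), with \(i_{\EE_\dR^+}\) the inclusion of Corollary~\ref{coro:universalmorphdR}, defines a morphism \(\eta_{\EE} : \EE \ra \EE_+\): it is a legitimate morphism of vector bundles because \(i_{\EE_\dR^+}\) becomes an isomorphism after inverting \(t\), so it is compatible with the shared glueing datum \(\iota_{\EE}\). Now take \(\FF \in \Bun(\GG_K)_\dR^{\leq 0}\) and any morphism \((f_\e, f_\dR^+) : \EE \ra \FF\). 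On the \(\Be\)\=/component there is nothing to factor, while on the \(\BdR^+\)\=/component the universal property of \(i_{\EE_\dR^+}\) from Corollary~\ref{coro:universalmorphdR}, valid since \(\FF_\dR^+\) lies in \(\Rep_{\BdR^+}^\fl(\GG_K)^{\leq 0}\), produces a unique \(g_\dR^+ : (\EE_+)_\dR^+ \ra \FF_\dR^+\) with \(g_\dR^+ \circ i_{\EE_\dR^+} = f_\dR^+\). The pair \((f_\e, g_\dR^+)\) is the unique factorisation \(\EE \ra \EE_+ \ra \FF\), so \(\eta_{\EE}\) is the universal morphism and \(\trunc_{\HT}\) is left adjoint to the forgetful functor.

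The only genuinely delicate point, which I expect to be the main obstacle, is verifying that the componentwise factorisation \((f_\e, g_\dR^+)\) is compatible with the glueing isomorphisms, i.e. that it is truly a morphism of vector bundles rather than merely a pair of module maps. I would dispatch this by the same observation used to define \(\trunc_{\HT}\): truncation does not alter the generic (\(\BdR\)\=/)fibre, so after inverting \(t\) the square of glueing data that must commute collapses to a square of identities, which is precisely the commutativity recorded in Corollary~\ref{coro:truncRep}. Everything else is formal transport along the equivalence of Theorem~\ref{theo:BeauvilleLaszlo}.
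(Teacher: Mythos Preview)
Your proposal is correct and follows the same approach as the paper: the paper's proof is the single line ``From Corollary~\ref{coro:truncRep} follows'', and your argument is precisely the unpacking of that sentence---checking exactness and the adjunction componentwise on the fibre\=/product description, with the \(\Be\)\=/side trivial and the \(\BdR^+\)\=/side handled by Corollary~\ref{coro:truncRep}. Your explicit description of the unit \(\eta_{\EE}=(\id_{\EE_\e}, i_{\EE_\dR^+})\) and the verification of compatibility with the glueing data is exactly what the paper records immediately afterwards as Corollary~\ref{coro:universalmorphFib}.
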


If \(\EE\) is a de Rham \(\GG_K\)\=/equivariant vector bundle, then, by construction, we have
\[
	(\DdR(\EE_+), \Fil \DdR(\EE_+)) \simeq (\DdR(\EE), \Fil_+ \DdR(\EE)).
\]
We note that if the Hodge-Tate weights of \(\EE\) are less than or equal to \(0\), then \((\DdR(\EE), \Fil_+ \DdR(\EE)) = (\DdR(\EE), \Fil \DdR(\EE))\), and \(\EE_+ \simeq \EE\).
Precisely:
\begin{coro} \label{coro:truncFib}
	The composition of the truncation functor with the forgetful functor
	\[
		\Bun^{\leq 0}(\GG_K)_\dR \ra \Bun(\GG_K)_\dR \xra{\trunc_{\HT}} \Bun^{\leq 0}(\GG_K)_\dR
	\]
	is isomorphic to the identity functor on \(\Bun^{\leq 0}(\GG_K)_\dR\).
\end{coro}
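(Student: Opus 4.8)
The plan is to verify that the truncation functor literally fixes every object of \(\Bun^{\leq 0}(\GG_K)_\dR\), which is the object-level shadow of the fact — recorded in the proof of Proposition~\ref{prop:truncFil} — that \(\trunc_{\Fil}\) restricts to the identity on \(\Fil_K^{\leq 0}\). Let \(\EE = (\EE_\e, \EE_\dR^+, \iota_\EE)\) be a \(\GG_K\)-equivariant de Rham vector bundle with Hodge-Tate weights less than or equal to \(0\). Since \(\trunc_{\HT}\) leaves the \(\Be\)-component and the glueing datum \(\iota_\EE\) untouched, it suffices to show that \(\trunc_{\dR}\) fixes the generically flat \(\BdR^+\)-representation \(\EE_\dR^+\). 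As \(\EE_\dR^+\) is generically flat, it is recovered from its associated filtered \(K\)-vector space through the quasi-inverse~\eqref{eq:Filfree}, so \(\EE_\dR^+ = \sum_{n \in \Z} \Fil^n \BdR \otimes_K \Fil^{-n} \DdR(\EE)\). The hypothesis on the Hodge-Tate weights means exactly that \(\Fil^0 \DdR(\EE) = \DdR(\EE)\), whence the term indexed by \(n = 0\) in this sum equals \(\BdR^+ \otimes_K \DdR(\EE)\); in particular \(\BdR^+ \otimes_K \DdR(\EE) \subset \EE_\dR^+\). Consequently \(\trunc_{\dR}(\EE_\dR^+) = \EE_\dR^+ + \BdR^+ \otimes_K \DdR(\EE) = \EE_\dR^+\), and therefore \(\EE_+ = \EE\). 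This equality is evidently natural in \(\EE\), giving the desired isomorphism with the identity functor.

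A slicker, purely formal route is available and I would mention it as well. Write \(U\) for the forgetful functor from \(\Bun^{\leq 0}(\GG_K)_\dR\) to \(\Bun(\GG_K)_\dR\); being the inclusion of a full subcategory, \(U\) is fully faithful, and by Proposition~\ref{prop:truncFib} the functor \(\trunc_{\HT}\) is left adjoint to \(U\). For any adjunction \(L \dashv R\), the counit \(\varepsilon : L R \Ra \id\) is a natural isomorphism precisely when the right adjoint \(R\) is fully faithful; in other words \(\Bun^{\leq 0}(\GG_K)_\dR\) is a coreflective subcategory. Taking \(L = \trunc_{\HT}\) and \(R = U\), the counit provides the natural isomorphism \(\trunc_{\HT} \circ U \simeq \id\) asserted by the corollary.

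There is no genuine obstacle in this corollary: it is a coreflection identity and both arguments are immediate. The only points demanding a little care are, for the concrete argument, the bookkeeping of the sign convention — that Hodge-Tate weights less than or equal to \(0\) translate into \(\Fil^0 \DdR(\EE) = \DdR(\EE)\), exactly as recorded in the discussion preceding the corollary — and, for the formal argument, the correct invocation of the standard fact that a fully faithful right adjoint has invertible counit. I would favour the direct computation in the write-up, since it yields the literal equality \(\EE_+ = \EE\) and thus an identity natural transformation, in harmony with the commutative diagram of Remark~\ref{rema:diagtruncFil} and the definition of \(\trunc_{\HT}\).
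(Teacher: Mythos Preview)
Your proposal is correct. The paper's own proof is precisely your second, formal argument: it invokes only the adjunction from Proposition~\ref{prop:truncFib} together with the full faithfulness of the forgetful functor, citing Mac Lane for the standard fact that the counit of such an adjunction is invertible. Your first, explicit computation is a perfectly valid alternative that the paper does not spell out; it has the advantage of producing the literal equality \(\EE_+ = \EE\) rather than merely a natural isomorphism, and it makes transparent why the truncation does nothing when \(\Fil^0 \DdR(\EE) = \DdR(\EE)\).

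One minor terminological slip: since \(U\) is the inclusion and \(\trunc_{\HT}\) is its \emph{left} adjoint, the subcategory \(\Bun^{\leq 0}(\GG_K)_\dR\) is \emph{reflective}, not coreflective. This does not affect the argument, as you correctly state and use that the counit \(LR \Rightarrow \id\) is invertible when \(R\) is fully faithful.
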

\begin{proof}
	The assertion is a formal consequence of the adjunction property from Proposition~\ref{prop:truncFib} and the fully faithfulness of the forgetful functor (see \cite[IV \S 3 Theorem~1]{MacLane98}).
\end{proof}

	From Corollary~\ref{coro:universalmorphdR}, we obtain a description of the universal morphism associated with \(\trunc_{\HT}\).
	If \(\EE=(\EE_\e, \EE_\dR^+, \iota_{\EE})\) is a \(\GG_K\)\=/equivariant de Rham vector bundle over \(\XFF\),
	then the identity map \(\id_{\EE_\e}\) on \(\EE_\e\) together with the natural inclusion
	\[
		i_{\EE_\dR^+} : \EE_\dR^+ \subset \EE_\dR^+ + \BdR^+ \otimes_K \DdR(\EE)
	\]
	define an injective morphism of \(\GG_K\)\=/equivariant vector bundles
	\[
		\eta_\EE = (\id_{\EE_\e}, i_{\EE_\dR^+}) : \EE \ra \EE_+.
	\]
\begin{coro} \label{coro:universalmorphFib}
	Let \(\EE=(\EE_\e, \EE_\dR^+, \iota_{\EE})\) be a \(\GG_K\)\=/equivariant de Rham vector bundle over \(\XFF\).
	The map \(\eta_\EE\) is the universal morphism from \(\EE\) to the forgetful functor from \(\Bun^{\leq 0}(\GG_K)_\dR\) to \(\Bun(\GG_K)_\dR\).
\end{coro}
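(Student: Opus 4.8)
The plan is to recognise $\eta_\EE$ as the unit of the adjunction $\trunc_{\HT} \dashv$ forgetful established in Proposition~\ref{prop:truncFib}, and then invoke the same general principle that underlies Corollaries~\ref{coro:universalmorphFil} and~\ref{coro:universalmorphdR}: the component at an object of the unit of an adjunction is exactly the universal morphism from that object to the right adjoint (\cite[IV \S 3 Theorem~1]{MacLane98}). Concretely, this universal morphism is the image of $\id_{\EE_+}$ under the adjunction bijection
\[
	\Hom_{\Bun^{\leq 0}(\GG_K)_\dR}(\trunc_{\HT}(\EE), \EE_+) \riso \Hom_{\Bun(\GG_K)_\dR}(\EE, \EE_+),
\]
so the entire argument reduces to computing this image and checking that it equals $\eta_\EE = (\id_{\EE_\e}, i_{\EE_\dR^+})$.

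To carry out the computation I would use that, under the equivalence of Theorem~\ref{theo:BeauvilleLaszlo}, both $\Bun(\GG_K)_\dR$ and $\Bun^{\leq 0}(\GG_K)_\dR$ are realised as fibre products over $\Rep_{\BdR}(\GG_K)$, and that $\trunc_{\HT}$ is the functor these fibre products inherit from the pair $(\id, \trunc_{\dR})$. A morphism in either fibre product is a pair $(f_\e, f_\dR^+)$ subject to the gluing-compatibility condition over $\BdR$, so the adjunction bijection is simply the product of the trivial adjunction bijection for the identity functor on $\Rep_{\Be}(\GG_K)$ with the adjunction bijection for $\trunc_{\dR}$ from Corollary~\ref{coro:truncRep}, restricted to compatible pairs. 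Under the first factor $\id_{\EE_\e}$ is fixed, and under the second, Corollary~\ref{coro:universalmorphdR} identifies the preimage of the identity of $\trunc_{\dR}(\EE_\dR^+) = \EE_\dR^+ + \BdR^+ \otimes_K \DdR(\EE)$ with the inclusion $i_{\EE_\dR^+}$. Hence $\id_{\EE_+}$ pulls back to $(\id_{\EE_\e}, i_{\EE_\dR^+}) = \eta_\EE$, as wanted.

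The step requiring care — and the only non-formal point — is justifying that the adjunction bijection genuinely decomposes componentwise along the fibre product, \ie that $\eta_\EE$ is a legitimate morphism in $\Bun(\GG_K)_\dR$ and is the correct pair. This rests on the commutativity of the square in Corollary~\ref{coro:truncRep}, which ensures that applying $\trunc_{\dR}$ leaves the gluing isomorphism $\iota_{\EE}$ untouched over $\BdR$: the extension of scalars of $i_{\EE_\dR^+}$ to $\BdR$ is an isomorphism, the modification being generically trivial, so the pair $(\id_{\EE_\e}, i_{\EE_\dR^+})$ indeed satisfies the compatibility condition defining a morphism $\EE \ra \EE_+$. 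Once this compatibility is in place, the two factor adjunctions assemble into the adjunction of Proposition~\ref{prop:truncFib}, its unit is forced to be the pair of the two factor units, and $\eta_\EE$ is therefore the universal morphism.
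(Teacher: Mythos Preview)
Your proposal is correct and follows essentially the same approach as the paper: the paper derives the corollary directly from Corollary~\ref{coro:universalmorphdR} via the fibre-product description of \(\trunc_{\HT}\), without spelling out the details. Your argument is simply a fleshed-out version of this, making explicit that the unit of the adjunction from Proposition~\ref{prop:truncFib} decomposes componentwise as \((\id_{\EE_\e}, i_{\EE_\dR^+})\) thanks to the commutative square of Corollary~\ref{coro:truncRep}.
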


\subsection{Hodge-Tate and Harder-Narasimhan filtrations} \label{subsec:HTHN}
If \(V\) is a de Rham \(p\)\=/adic representation of \(\GG_K\), we recall (Theorem~\ref{theo:RepBun0} and Remark~\ref{rema:DdRRep}) that the associated vector bundle
\[
	\EE(V) = (\Be \otimes_{\Qp} V, \BdR^+ \otimes_{\Qp} V)
\]
is \(\GG_K\)\=/equivariant, de Rham and semi-stable of slope \(0\).
In particular, we can consider the modification by truncation of \(\EE(V)\) which we denote by
\[
	\EE_+(V) = \left(\Be \otimes_{\Qp} V, \BdR^+ \otimes_{\Qp} V + \BdR^+ \otimes_K \DdR(V)\right).
\]
We also denote by
\[
	\eta_V : \EE(V) \ra \EE_+(V).
\]
the universal morphism from Corollary~\ref{coro:universalmorphFib}.
The cokernel of \(\eta_V\), which we denote by \(\FF_+(V)\), is a \(\GG_K\)\=/equivariant torsion coherent sheaf defined by
\begin{equation} \label{eq:Fplus}
	\FF_+(V) = \left( 0, \frac{\BdR^+ \otimes_{\Qp} V + \BdR^+ \otimes_K \DdR(V)}{\BdR^+ \otimes_{\Qp} V}\right),
\end{equation}
and there is a short exact sequence of \(\GG_K\)\=/equivariant coherent sheaves
\begin{equation} \label{eq:EV}
	0 \ra \EE(V) \xra{\eta_V} \EE_+(V) \ra \FF_+(V) \ra 0.
\end{equation}

In the remainder of this section, we study the Harder\-/Narasimhan slopes of \(\EE_+(V)\).

\begin{lemm} \label{lemm:HN0}
	Let \(V\) be a de Rham \(p\)\=/adic representation of \(\GG_K\).
	The Harder\-/Narasimhan slopes of \(\EE_+(V)\) are greater than or equal to \(0\).
\end{lemm}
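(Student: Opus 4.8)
The plan is to reduce the statement about Harder-Narasimhan slopes to a cohomological vanishing and then invoke Theorem~\ref{theo:cohomHN}. By its second point, the Harder-Narasimhan slopes of \(\EE_+(V)\) are greater than or equal to \(0\) if and only if \(\H^1(\XFF,\EE_+(V))=0\), so it suffices to establish this vanishing. I would extract it from the short exact sequence~\eqref{eq:EV}
\[
	0 \ra \EE(V) \xra{\eta_V} \EE_+(V) \ra \FF_+(V) \ra 0
\]
of \(\GG_K\)\=/equivariant coherent sheaves by passing to the long exact sequence of cohomology over \(\XFF\).

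First I would dispose of the two outer terms. The bundle \(\EE(V)=\OO_{\XFF}\otimes_{\Qp}V\) is semi-stable of slope \(0\) (Theorem~\ref{theo:RepBun0}), so its unique Harder-Narasimhan slope is \(0\geq 0\), and the second point of Theorem~\ref{theo:cohomHN} gives \(\H^1(\XFF,\EE(V))=0\); this is just the vanishing of \(\H^1(\XFF,\OO_{\XFF})\) recorded after Theorem~\ref{theo:cohomHN}, tensored with \(V\). For the cokernel, the description~\eqref{eq:Fplus} shows that \(\FF_+(V)=(0,M)\) with \(M\) a finite-length \(\BdR^+\)\=/module. Hence \(\BdR\otimes_{\BdR^+}M=0\), and the Beauville-Laszlo complex of Theorem~\ref{theo:cohomCoh} attached to \(\FF_+(V)\) collapses to \([\,M\ra 0\,]\); in particular \(\H^1(\XFF,\FF_+(V))=0\) (one may equally note that the only Harder-Narasimhan slope of a torsion sheaf is \(+\infty\)).

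With these two inputs in hand, and since \(\XFF\) is a one-dimensional scheme, the long exact sequence terminates as
\[
	\cdots \ra \H^1(\XFF,\EE(V)) \ra \H^1(\XFF,\EE_+(V)) \ra \H^1(\XFF,\FF_+(V)) \ra 0,
\]
so that the vanishing of the two flanking groups forces \(\H^1(\XFF,\EE_+(V))=0\). The second point of Theorem~\ref{theo:cohomHN} then yields the claim.

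I do not expect a genuine obstacle here; the only point that requires care is that a degree computation does not suffice on its own. The sequence~\eqref{eq:EV} only gives \(\deg\EE_+(V)=\length(M)\geq 0\), and a bundle of non-negative degree may nonetheless admit a quotient of negative slope, so positivity of the degree alone does not control the \emph{minimal} Harder-Narasimhan slope. It is precisely the cohomological criterion of Theorem~\ref{theo:cohomHN}, rather than any degree bound, that turns the torsion modification at \(\infty\) into the desired statement about all the slopes.
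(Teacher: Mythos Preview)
Your proof is correct and follows essentially the same route as the paper: both use the long exact sequence in cohomology attached to~\eqref{eq:EV}, kill the outer \(\H^1\) terms via Theorem~\ref{theo:cohomHN} (slope \(0\) for \(\EE(V)\), slope \(+\infty\) for the torsion sheaf \(\FF_+(V)\)), and then invoke Theorem~\ref{theo:cohomHN} once more to conclude. Your additional remarks (the explicit Beauville--Laszlo computation for \(\FF_+(V)\) and the caveat about degree versus minimal slope) are correct but not needed for the argument.
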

\begin{proof}
	The short exact sequence~\eqref{eq:EV} induces the cohomological exact sequence
	\begin{equation} \label{eq:cohomHN0}
		\H^1(\XFF,\EE(V)) \ra \H^1(\XFF,\EE_+(V)) \ra \H^1(\XFF,\FF_+(V)) \ra 0.
	\end{equation}
	Since \(\EE(V)\) and \(\FF_+(V)\) are semi-stable of slopes \(0\) and \(+\infty\) respectively, the groups \(\H^1(\XFF,\EE(V))\) and \(\H^1(\XFF,\FF_+(V))\) vanish by Theorem~\ref{theo:cohomHN}.
	Therefore, the exact sequence~\eqref{eq:cohomHN0} forces the group \(\H^1(\XFF,\EE_+(V))\) to be trivial, and the lemma follows from another application of Theorem~\ref{theo:cohomHN}.
\end{proof}

\begin{defi} \label{defi:V0}
	Let \(V\) be a de Rham \(p\)\=/adic representation of \(\GG_K\).
	The representation \(V_0\) is the minimal sub\=/\(\GG_K\)\=/representation of \(V\) such that the Hodge-Tate weights of \(V/V_0\) are less than or equal to \(0\).
\end{defi}

\begin{rema}
	The representation \(V_0\) is well-defined.
	Indeed, if \(U\) and \(W\) are sub\=/\(\GG_K\)\=/representations of \(V\) such that the Hodge-Tate weights of both \(V/U\) and \(V/W\) are less than or equal to \(0\), then \(V/(U \cap W)\) injects into \(V/U \oplus V/W\) which implies, by Proposition~\ref{prop:DdR} and Remark~\ref{rema:DdR}, that the Hodge-Tate weights of \(V/(U \cap W)\) are less than or equal to \(0\).
\end{rema}

\begin{lemm} \label{lemm:V0}
	Let \(V\) be a de Rham \(p\)\=/adic representation of \(\GG_K\).
	There exists no proper sub\=/\(\GG_K\)\=/representation \(V_0^\prime\) of \(V_0\) such that the Hodge-Tate weights of \(V_0/V_0^\prime\) are less than or equal to \(0\), \ie
	\[
		(V_0)_0=V_0.
	\]
\end{lemm}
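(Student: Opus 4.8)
The plan is to reduce the statement to the minimality property that \emph{defines} \(V_0\). So suppose \(V_0^\prime \subseteq V_0\) is a sub\=/\(\GG_K\)\=/representation for which the Hodge-Tate weights of \(V_0/V_0^\prime\) are less than or equal to \(0\). I would show that \(V_0^\prime\) then automatically satisfies the defining property of \(V_0\) inside \(V\), namely that the Hodge-Tate weights of \(V/V_0^\prime\) are less than or equal to \(0\). The minimality in Definition~\ref{defi:V0} forces \(V_0 \subseteq V_0^\prime\), and together with \(V_0^\prime \subseteq V_0\) this yields \(V_0^\prime = V_0\); since this holds for every such \(V_0^\prime\), it is precisely the assertion \((V_0)_0 = V_0\).

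To carry out the key step I would use the inclusions \(V_0^\prime \subseteq V_0 \subseteq V\) to form the short exact sequence of \(p\)\=/adic representations
\[
	0 \ra V_0/V_0^\prime \ra V/V_0^\prime \ra V/V_0 \ra 0.
\]
Because \(V\) is de Rham, all the sub\=/ and subquotient representations appearing here are de Rham: applying Proposition~\ref{prop:DdR} (via the equivalence of Theorem~\ref{theo:RepBun0} and Remark~\ref{rema:DdRRep}, under which short exact sequences of representations correspond to short exact sequences in \(\Bun^0(\GG_K)\)) to \(0 \ra \EE(V_0^\prime) \ra \EE(V) \ra \EE(V/V_0^\prime) \ra 0\) shows \(V/V_0^\prime\) is de Rham, and a second application to the displayed sequence shows that the induced sequence of filtered \(K\)\=/vector spaces
\[
	0 \ra \DdR(V_0/V_0^\prime) \ra \DdR(V/V_0^\prime) \ra \DdR(V/V_0) \ra 0
\]
is exact in \(\Fil_K\).

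Finally I would invoke the weights criterion of Remark~\ref{rema:DdR}: for an exact sequence in \(\Fil_K\) the weights of the middle term are less than or equal to \(0\) if and only if the weights of both outer terms are. The outer terms \(\DdR(V_0/V_0^\prime)\) and \(\DdR(V/V_0)\) have weights less than or equal to \(0\) — by hypothesis and by the definition of \(V_0\) respectively — so the same holds for \(\DdR(V/V_0^\prime)\), which completes the key step. I do not anticipate a genuine obstacle: the argument is a formal consequence of the minimality in Definition~\ref{defi:V0} together with the additivity of Hodge-Tate weights in exact sequences. The only point requiring care is to ensure that every representation in sight is de Rham, so that the weights criterion of Remark~\ref{rema:DdR} applies; this is exactly what Proposition~\ref{prop:DdR} guarantees.
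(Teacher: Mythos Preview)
Your proof is correct and follows essentially the same approach as the paper: form the short exact sequence \(0 \ra V_0/V_0^\prime \ra V/V_0^\prime \ra V/V_0 \ra 0\), invoke Proposition~\ref{prop:DdR} and Remark~\ref{rema:DdR} to deduce that the Hodge-Tate weights of \(V/V_0^\prime\) are \(\leq 0\), and then conclude by minimality of \(V_0\). Your write-up is in fact slightly more careful than the paper's in spelling out why every representation involved is de Rham before applying the weights criterion.
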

\begin{proof}
	If \(V_0^\prime\) is a sub\=/\(\GG_K\)\=/representation of \(V_0\) such that the Hodge-Tate weights of \(V_0/V_0^\prime\) are less than or equal to \(0\), then we have the short exact sequence
	\begin{equation} \label{eq:seqV0}
		0 \ra V_0/V_0^\prime \ra V/V_0^\prime \ra V/V_0 \ra 0.
	\end{equation}

	On the one hand, by hypothesis, the Hodge-Tate weights of \(V_0/V_0^\prime\) are less than or equal to \(0\).
	On the other hand, the Hodge-Tate weights of \(V/V_0\) are less than or equal to \(0\) by definition of \(V_0\).
	Thus, by Proposition~\ref{prop:DdR} and Remark~\ref{rema:DdR}, the short exact sequence~\eqref{eq:seqV0} forces the Hodge-Tate weights of \(V/V_0^\prime\) to be less than or equal to \(0\).
	By minimality of \(V_0\), we then have \(V_0^\prime = V_0\).
\end{proof}

\begin{prop} \label{prop:HN0V0}
	Let \(V\) be a de Rham \(p\)\=/adic representation of \(\GG_K\).
	The Harder\-/Narasimhan slopes of \(\EE_+(V)\) are strictly greater than \(0\) if and only if \(V=V_0\).
\end{prop}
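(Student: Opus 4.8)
The plan is to translate the statement about slopes into one about quotient representations. By Lemma~\ref{lemm:HN0} the Harder-Narasimhan slopes of \(\EE_+(V)\) are already \(\geq 0\), and since they are strictly decreasing along the filtration of Theorem~\ref{theo:HN}, the smallest slope \(\mu_{\min}(\EE_+(V))\) is the only one that can vanish. Hence all slopes are strictly positive if and only if \(\mu_{\min}(\EE_+(V)) \neq 0\), i.e. if and only if \(\EE_+(V)\) admits no nonzero semi-stable quotient of slope \(0\): when \(\mu_{\min} = 0\) the bottom graded piece of the Harder-Narasimhan filtration is such a quotient, and conversely, for any semi-stable slope-\(0\) quotient \(\QQ\), the minimal slope of a quotient being at least that of the bundle forces \(\mu_{\min}(\EE_+(V)) \leq \mu(\QQ) = 0\). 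By the uniqueness in Theorem~\ref{theo:HN} this bottom piece is \(\GG_K\)-equivariant, so by Theorem~\ref{theo:RepBun0} every such quotient is \(\EE(W)\) for a nonzero \(p\)-adic representation \(W\). I therefore aim to show that \(\EE_+(V)\) has a nonzero \(\GG_K\)-equivariant semi-stable slope-\(0\) quotient if and only if \(V \neq V_0\).

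For the implication \(V \neq V_0 \Rightarrow \mu_{\min}(\EE_+(V)) = 0\), I take \(W = V/V_0\), which is nonzero and has Hodge-Tate weights \(\leq 0\) by Definition~\ref{defi:V0}. Applying the exact truncation functor \(\trunc_{\HT}\) (Proposition~\ref{prop:truncFib}) to the short exact sequence extending the surjection \(\EE(V) \twoheadrightarrow \EE(W)\), and using that \(\EE_+(W) \simeq \EE(W)\) since \(W\) has Hodge-Tate weights \(\leq 0\) (Corollary~\ref{coro:truncFib}), produces a surjection \(\EE_+(V) \twoheadrightarrow \EE(W)\) onto the semi-stable slope-\(0\) bundle \(\EE(W)\). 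Together with the slope inequality above and Lemma~\ref{lemm:HN0} this gives \(\mu_{\min}(\EE_+(V)) = 0\).

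For the converse I argue by contraposition: assuming a surjection \(\EE_+(V) \twoheadrightarrow \EE(W)\) with \(W \neq 0\), I produce a proper subrepresentation \(V^\prime \subsetneq V\) with \(V/V^\prime\) of Hodge-Tate weights \(\leq 0\), which contradicts \(V = V_0\) by minimality. First, since \(\EE_+(V) = \trunc_{\HT}(\EE(V))\) lies in \(\Bun(\GG_K)_\dR^{\leq 0}\), Proposition~\ref{prop:DdR} applied to \(0 \to \Ker \to \EE_+(V) \to \EE(W) \to 0\) shows \(\EE(W)\) is de Rham and, via Remark~\ref{rema:DdR}, has Hodge-Tate weights \(\leq 0\); by Remark~\ref{rema:DdRRep} the representation \(W\) then has Hodge-Tate weights \(\leq 0\). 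Second, composing with the universal morphism \(\eta_V : \EE(V) \to \EE_+(V)\) gives a morphism \(\EE(V) \to \EE(W)\) between semi-stable slope-\(0\) bundles, which by Theorem~\ref{theo:RepBun0} is \(\EE(f)\) for a unique \(f : V \to W\); as \(\eta_V\) is the identity on the \(\Be\)-component, the \(\Be\)-component of \(\EE(f)\) coincides with the surjective \(\Be\)-component of \(\EE_+(V) \twoheadrightarrow \EE(W)\), so \(f\) is surjective. Taking \(V^\prime = \Ker f\) yields \(V/V^\prime \simeq W \neq 0\) with Hodge-Tate weights \(\leq 0\) and \(V^\prime \subsetneq V\), the desired contradiction.

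The main obstacle is the last paragraph: passing from a slope-\(0\) quotient bundle back to a surjection of representations with controlled Hodge-Tate weights. The two ingredients are genuinely independent. Controlling the weights uses crucially that \(\EE_+(V)\) lies in \(\Bun(\GG_K)_\dR^{\leq 0}\) (a semi-stable slope-\(0\) bundle \(\EE(W)\) may a priori have positive Hodge-Tate weights, e.g. \(\EE(\Qp(1))\), so the truncation is essential), while the surjectivity of \(f\) rests on Theorem~\ref{theo:RepBun0} and on \(\eta_V\) leaving the \(\Be\)-component unchanged. In carrying this out I would verify the one technical point left implicit here, namely that a surjection of coherent sheaves over \(\XFF\) restricts to a surjection on \(\Be\)-components, i.e. on global sections over the affine open \(\XFF \setminus \{\infty\} = \Spec(\Be)\), which is what legitimises the \(\Be\)-component argument.
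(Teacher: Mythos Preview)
Your proof is correct and follows essentially the same strategy as the paper's: both directions rely on the exactness of the truncation functor, Corollary~\ref{coro:truncFib}, and the equivalence of Theorem~\ref{theo:RepBun0} in the same way. The only difference is in packaging the harder implication: the paper invokes the adjunction of Proposition~\ref{prop:truncFib} abstractly together with Lemma~\ref{lemm:V0} to conclude that $\Hom(\EE_+(V), \EE(W)) \simeq \Hom_{\Rep_{\Qp}(\GG_K)}(V, W) = 0$ outright, whereas you unpack that adjunction as composition with $\eta_V$ and then verify surjectivity of the resulting $f$ via the $\Be$-component---a step the paper's formulation makes unnecessary, since any nonzero morphism already yields the contradiction.
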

\begin{proof}[Proof. The condition \(V=V_0\) is sufficient.]
	We first prove that the condition \(V=V_0\) is sufficient.
	We proceed by contradiction, so we assume that \(V=V_0\) and that \(0\) is a Harder\-/Narasimhan slope of \(\EE_+(V)\).
	Therefore, by Lemma~\ref{lemm:HN0}, the smallest Harder\-/Narasimhan slope of \(\EE_+(V)\) is \(0\).
	Let
	\begin{equation} \label{eq:HN1step}
		0 \ra \EE^\prime \ra \EE_+(V) \ra \EE^\dprime \ra 0
	\end{equation}
	be the first step of the Harder\-/Narasimhan filtration of \(\EE_+(V)\), so the vector bundle \(\EE^\dprime\) is \(\GG_K\)\=/equivariant and semi-stable of slope \(0\).
	Thus, by Theorem~\ref{theo:RepBun0}, there exists a \(p\)\=/adic representation \(V^\dprime\) of \(\GG_K\) such that
	\[
		\EE^\dprime = \EE(V^\dprime).
	\]

	Since \(\EE_+(V)\) is de Rham with Hodge-Tate weights less than or equal to \(0\), Proposition~\ref{prop:DdR} and Remark~\ref{rema:DdR} applied to the exact sequence~\eqref{eq:HN1step} imply that the representation \(V^\dprime\) is de Rham with Hodge-Tate weights less than or equal to \(0\).

	Therefore, the adjunction property of the truncation functor (Proposition~\ref{prop:truncFib}) gives
	\begin{equation} \label{eq:adjonction}
		\Hom(\EE_+(V), \EE(V^\dprime)) \simeq \Hom(\EE(V), \EE(V^\dprime)).
	\end{equation}
	By Theorem~\ref{theo:RepBun0}, the equation~\eqref{eq:adjonction} gives
	\begin{equation} \label{eq:rep}
		\Hom(\EE_+(V), \EE(V^\dprime)) \simeq \Hom_{\Rep_{\Qp}(\GG_K)}(V, V^\dprime).
	\end{equation}
	Since \(V=V_0\) and the Hodge-Tate weights of \(V^\dprime\) less than or equal to \(0\), by Lemma~\ref{lemm:V0}, we have
	\begin{equation} \label{eq:HomNULL}
		\Hom_{\Rep_{\Qp}(\GG_K)}(V, V^\dprime)=0.
	\end{equation}
	We reach a contradiction: the combination of the equations~\eqref{eq:rep} and \eqref{eq:HomNULL} yields
	\[
		\Hom(\EE_+(V), \EE(V^\dprime)) = 0,
	\]
	which contradicts the existence of the exact sequence~\eqref{eq:HN1step}, and consequently, the assumption that \(0\) is a Harder\-/Narasimhan slope of \(\EE_+(V)\).
\end{proof}

\begin{proof}[Proof. The condition \(V=V_0\) is necessary.]
	We prove that the condition \(V=V_0\) is necessary.
	We proceed by contraposition, so we assume \(V \neq V_0\).
	By exactness of the truncation functor (Proposition~\ref{prop:truncFib}), the short exact sequence
	\[
		0 \ra V_0 \ra V \ra V/V_0 \ra 0
	\]
	induces a short exact sequence
	\begin{equation} \label{eq:HNV0}
		0 \ra \EE_+(V_0) \ra \EE_+(V) \ra \EE_+(V/V_0) \ra 0.
	\end{equation}

	On the one hand, since the Hodge-Tate weights of \(V/V_0\) are less than or equal to \(0\) by definition of \(V_0\), by Corollary~\ref{coro:truncFib}, we have \(\EE_+(V/V_0) \simeq \EE(V/V_0)\); in particular, the vector bundle \(\EE_+(V/V_0)\) is semi-stable of slope \(0\).
	On the other hand, since \((V_0)_0 = V_0\) (Lemma~\ref{lemm:V0}), the already proved part of Proposition~\ref{prop:HN0V0} implies that the Harder\-/Narasimhan slopes of \(\EE_+(V_0)\) are strictly greater than \(0\).
	The unicity of the Harder\-/Narasimhan filtration implies that the short exact sequence~\eqref{eq:HNV0} is the first step of the Harder\-/Narasimhan filtration of \(\EE_+(V)\).
	In particular, \(0\) is one of the Harder\-/Narasimhan slopes of \(\EE_+(V)\).
\end{proof}

\begin{coro} \label{coro:HN}
	Let \(V\) be a de Rham \(p\)\=/adic representation of \(\GG_K\).
	The short exact sequence
	\[
		0 \ra \EE_+(V_0) \ra \EE_+(V) \ra \EE_+(V/V_0) \ra 0
	\]
	is the first step of the Harder-Narasimhan filtration of \(\EE_+(V)\) with
	\[
		\EE_+(V/V_0) \simeq \EE(V/V_0)
	\]
	semi-stable of slope \(0\).
\end{coro}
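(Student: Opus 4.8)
The plan is to recognise this corollary as a repackaging of the ingredients already assembled in the proof of Proposition~\ref{prop:HN0V0}, and to make the application of the uniqueness of the Harder-Narasimhan filtration explicit. First I would produce the short exact sequence: since the truncation functor \(\trunc_{\HT}\) is exact (Proposition~\ref{prop:truncFib}), applying it to the defining short exact sequence \(0 \ra V_0 \ra V \ra V/V_0 \ra 0\) yields
\[
	0 \ra \EE_+(V_0) \ra \EE_+(V) \ra \EE_+(V/V_0) \ra 0.
\]

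Next I would identify the two outer terms. For the quotient: by definition of \(V_0\) (Definition~\ref{defi:V0}) the Hodge-Tate weights of \(V/V_0\) are less than or equal to \(0\), so Corollary~\ref{coro:truncFib} gives \(\EE_+(V/V_0) \simeq \EE(V/V_0)\), which is semi-stable of slope \(0\) by Theorem~\ref{theo:RepBun0}. For the sub: Lemma~\ref{lemm:V0} gives \((V_0)_0 = V_0\), whence the already-proved sufficiency direction of Proposition~\ref{prop:HN0V0}, applied to \(V_0\) in place of \(V\), shows that the Harder-Narasimhan slopes of \(\EE_+(V_0)\) are strictly greater than \(0\).

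It then remains to invoke the uniqueness in Theorem~\ref{theo:HN}. Concatenating the Harder-Narasimhan filtration \(0 = \EE_0 \subset \cdots \subset \EE_m = \EE_+(V_0)\) (all of whose graded slopes are strictly positive) with the sub \(\EE_+(V_0) \subset \EE_+(V)\), whose quotient \(\EE_+(V/V_0)\) is semi-stable of slope \(0\), produces a filtration of \(\EE_+(V)\) all of whose graded pieces are semi-stable and whose slopes are strictly decreasing, the final slope \(0\) being strictly smaller than all the preceding positive ones. By uniqueness this is the Harder-Narasimhan filtration of \(\EE_+(V)\), and the displayed short exact sequence is precisely its first step.

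The only genuinely delicate point---the part that needs care rather than effort---is bookkeeping the slope conventions of Theorem~\ref{theo:HN}: there the slopes \emph{decrease} along the filtration, so the step exhibited here is the one splitting off the minimal-slope (slope \(0\)) semi-stable quotient, and I must check that prepending a strictly-greater-slope filtration preserves the strict monotonicity required for uniqueness. I would also dispose of the degenerate case \(V = V_0\): then \(V/V_0 = 0\) and \(\EE_+(V/V_0) = 0\), so the assertion that the sequence is the first step is vacuous, consistent with Proposition~\ref{prop:HN0V0}, which in that case forces every Harder-Narasimhan slope of \(\EE_+(V)\) to be strictly positive.
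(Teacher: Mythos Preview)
Your proposal is correct and follows essentially the same approach as the paper: the argument you give is precisely the content of the necessity direction of the proof of Proposition~\ref{prop:HN0V0}, which the paper simply repackages as this corollary without a separate proof. Your explicit concatenation argument for invoking uniqueness of the Harder--Narasimhan filtration, and your handling of the degenerate case \(V = V_0\), are both sound elaborations of what the paper leaves implicit.
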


\section{Universal norms of \texorpdfstring{\(p\)\=/}{p-}adic Galois representations} \label{sec:universalnorms}
In this final section, we prove the main results of the article regarding universal norms of \(p\)\=/adic Galois representations.
To do so, we first recall the definition of the group \(E_\discu(V/T)\) and its Hausdorff completion \(E_+(V/T)\), associated by Fontaine~\cite[\S 8]{Fontaine03} with a \(p\)\=/adic representation \(V\) of \(\GG_K\) and a \(\Zp\)\=/lattice \(T\) in \(V\) stable under the action of \(\GG_K\).
These groups are respectively related to the Bloch-Kato subgroups (see \S \ref{subsec:BK}) and the modification by truncation from the previous section (see \S \ref{subsec:almostCpRep}).
A study of their Galois cohomology will implies the main results.

\subsection{Almost \texorpdfstring{\(\Cp\)\=/}{Cp-}representations and groups of points} \label{subsec:points}
Let \(V\) be a \(p\)\=/adic representation of \(\GG_K\).
The \emph{tangent space} of \(V\) is the finite dimensional \(K\)\=/vector space
\[
	t_V = \left((\BdR/\BdR^+) \otimes_{\Qp} V \right)^{\GG_K}.
\]
For all \(K\)\=/algebra \(R\), we set \(t_V(R) = R \otimes_K t_V\).
The inclusion
\[
	t_V \subset (\BdR/\BdR^+) \otimes_{\Qp} V
\]
induces a \(\Qpbar\)\=/linear map
\begin{equation} \label{eq:tVQpbar}
	t_V(\Qpbar) \ra (\BdR/\BdR^+) \otimes_{\Qp} V,
\end{equation}
and a \(\BdR^+\)\=/linear map
\begin{equation} \label{eq:tVBdR}
	t_V(\BdR^+) \ra (\BdR/\BdR^+) \otimes_{\Qp} V.
\end{equation}
The first map~\eqref{eq:tVQpbar} is injective, and we use it to identify \(t_V(\Qpbar)\) with a sub\=/\(\Qpbar\)\=/vector space of \((\BdR/\BdR^+)\otimes_{\Qp} V\) stable under the action of \(\GG_K\).
The image of the second map~\eqref{eq:tVBdR}, which is denoted by \(\h{t}_V(\Qpbar)\), identifies with the topological closure of \(t_V(\Qpbar)\) in \((\BdR/\BdR^+)\otimes_{\Qp} V\).
Moreover, \(\h{t}_V(\Qpbar)\) is a \(\BdR^+\)\=/representation of \(\GG_K\) whose underlying \(\BdR^+\)\=/module is of finite length.

The fundamental exact sequence~\eqref{eq:funda} tensored with \(V\) gives a short exact sequence
\[
	0 \ra V \ra E_\e(V) \ra (\BdR/\BdR^+) \otimes_{\Qp} V \ra 0,
\]
where we set \(E_\e(V)=\Be \otimes_{\Qp} V\).
The group \(E_\disc(V)\) (respectively \(E_+(V)\)) is defined as the inverse image of \(t_V(\Qpbar)\) (respectively \(\h{t}_V(\Qpbar)\)) in \(E_\e(V)\), so that there is a commutative diagram with exact rows
\begin{equation} \label{eq:EastV}
	\begin{tikzcd}
		0 \ar{r} & V \ar{r} & E_\e(V) \ar{r} & (\BdR/\BdR^+) \otimes_{\Qp} V \ar{r} & 0\\
		0 \ar{r} & V \ar{r} \ar[equal]{u} & E_+(V) \ar{r} \ar[hook]{u} & \h{t}_V(\Qpbar) \ar{r} \ar[hook]{u} & 0\\
		0 \ar{r} & V \ar{r} \ar[equal]{u} & E_\disc(V) \ar{r} \ar[hook]{u} & t_V(\Qpbar) \ar{r} \ar[hook]{u} & 0.
	\end{tikzcd}
\end{equation}

Let \(T\) be a \(\Zp\)\=/lattice in \(V\) stable under the action of \(\GG_K\).
For each \(\ast \in \{\e, +, \disc\}\), let
\[
	E_\ast(V/T) = E_\ast(V)/T.
\]
The diagram~\eqref{eq:EastV} induces a commutative diagram with exact rows
\[
	\begin{tikzcd}
		0 \ar{r} & V/T \ar{r} & E_\e(V/T) \ar{r} & (\BdR/\BdR^+) \otimes_{\Qp} V \ar{r} & 0\\
		0 \ar{r} & V/T \ar{r} \ar[equal]{u} & E_+(V/T) \ar{r} \ar[hook]{u} & \h{t}_V(\Qpbar) \ar{r} \ar[hook]{u} & 0\\
		0 \ar{r} & V/T \ar{r} \ar[equal]{u} & E_\disc(V/T) \ar{r} \ar[hook]{u} & t_V(\Qpbar) \ar{r} \ar[hook]{u} & 0.
	\end{tikzcd}
\]

The groups \(E_\disc(V/T)\) and \(E_+(V/T)\) are characterized as sub\-/topological\-/\(\GG_K\)\=/modules of \(E_\e(V/T)\) \cite[Proposition~8.4]{Fontaine03}.
\begin{prop}[Fontaine] \label{prop:Ediscplus}
	The group \(E_\disc(V/T)\) is the maximal subgroup stable under the action of \(\GG_K\) of \(E_\e(V/T)\) on which the action of \(\GG_K\) is discrete.
	The group \(E_+(V/T)\) is the topological closure of \(E_\disc(V/T)\) in \(E_\e(V/T)\).
\end{prop}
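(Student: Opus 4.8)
The plan is to treat the two assertions separately. Write $W=(\BdR/\BdR^+)\otimes_{\Qp}V$ and let $\pi\colon E_\e(V/T)\to W$ be the surjection appearing in the defining diagram, with kernel $V/T$. The first assertion I would reduce to a computation of the maximal discrete $\GG_K$-submodule of $W$, and the second to a topological approximation argument along $\pi$.

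For the first assertion, I would first record the soft fact that
\[
	M=\{x\in E_\e(V/T)\ :\ \text{the stabiliser of }x\text{ in }\GG_K\text{ is open}\}=\bigcup_{K'}E_\e(V/T)^{\GG_{K'}},
\]
the union ranging over the finite extensions $K'/K$ in $\Qpbar$, is a $\GG_K$-stable subgroup on which $\GG_K$ acts discretely, and that it contains every $\GG_K$-stable subgroup with discrete action; hence $M$ is the maximal such subgroup and the task becomes to prove $M=E_\disc(V/T)$. To describe $M$ I would pass to the colimit over $K'$ of the long exact cohomology sequences attached to $0\to V/T\to E_\e(V/T)\to W\to 0$. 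Since $V/T$ is a discrete $\GG_K$-module, $\bigcup_{K'}(V/T)^{\GG_{K'}}=V/T$, and by continuity of Galois cohomology together with $\GG_{\Qpbar}=1$ one has $\varinjlim_{K'}\H^1(K',V/T)=0$; filtered colimits being exact, this yields a short exact sequence $0\to V/T\to M\to W^{\mathrm{disc}}\to 0$, where $W^{\mathrm{disc}}=\bigcup_{K'}W^{\GG_{K'}}$. As $V/T=\ker\pi$, this identifies $M$ with $\pi^{-1}(W^{\mathrm{disc}})$, so the whole first assertion comes down to the identity $W^{\mathrm{disc}}=t_V(\Qpbar)$.

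This identity is the main obstacle, and it is where Tate--Sen theory enters. The inclusion $t_V(\Qpbar)=\Qpbar\otimes_K t_V\subseteq W^{\mathrm{disc}}$ is clear, the left side carrying a discrete action. For the reverse inclusion I would observe that $W^{\mathrm{disc}}$ is a $\Qpbar$-subspace of $W$, stable under the semilinear $\GG_K$-action, with $(W^{\mathrm{disc}})^{\GG_K}=W^{\GG_K}=t_V$; by Galois descent it then suffices to show that $W^{\mathrm{disc}}$ is finite-dimensional over $\Qpbar$, as descent forces $W^{\mathrm{disc}}=\Qpbar\otimes_K t_V$. Equivalently, I must show that $\dim_{K'}W^{\GG_{K'}}$ is independent of the finite extension $K'/K$. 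This base-change invariance I would establish by dévissage along the $\GG_K$-stable filtration of $W$ realising it as the increasing union of the finite-length $\BdR^+$-modules $(t^{-n}\BdR^+/\BdR^+)\otimes V$, whose graded pieces are the $\Cp$-representations $\Cp(-j)\otimes_{\Qp}V$ with $1\leq j\leq n$: for each such piece Tate--Sen theory computes $\dim_{K'}(\mathrm{gr})^{\GG_{K'}}$ through the Sen operator, which is unchanged under finite base change, so that $K'\otimes_K(\mathrm{gr})^{\GG_K}\riso(\mathrm{gr})^{\GG_{K'}}$; propagating this through the finite-length quotients gives the desired constancy. With $W^{\mathrm{disc}}=t_V(\Qpbar)$ in hand, $M=\pi^{-1}(t_V(\Qpbar))=E_\disc(V/T)$, proving the first assertion.

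For the second assertion I would use that $\pi$ is a continuous open surjection with kernel $V/T$ --- it is the reduction modulo $T$ of the map $E_\e(V)\to W$ obtained by tensoring the strict fundamental exact sequence with $V$ --- and that $\h{t}_V(\Qpbar)$ is by construction the closure of $t_V(\Qpbar)$ in $W$, hence closed. Writing $C=\overline{E_\disc(V/T)}$, continuity of $\pi$ gives $\pi(C)\subseteq\overline{\pi(E_\disc(V/T))}=\overline{t_V(\Qpbar)}=\h{t}_V(\Qpbar)$, so $C\subseteq\pi^{-1}(\h{t}_V(\Qpbar))=E_+(V/T)$, the last set being closed as the preimage of a closed set. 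For the reverse inclusion, let $x\in E_+(V/T)$, so $\pi(x)\in\h{t}_V(\Qpbar)=\overline{t_V(\Qpbar)}$; for any neighbourhood $U$ of $x$ the image $\pi(U)$ is a neighbourhood of $\pi(x)$ by openness of $\pi$, hence meets $t_V(\Qpbar)$, and any $u\in U$ with $\pi(u)\in t_V(\Qpbar)$ lies in $U\cap E_\disc(V/T)$. Thus $x\in C$, giving $E_+(V/T)=\overline{E_\disc(V/T)}$ and completing the proof.
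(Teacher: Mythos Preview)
The paper does not supply its own proof of this proposition: it is quoted as a result of Fontaine with a reference to \cite[Proposition~8.4]{Fontaine03}, so there is no in-paper argument to compare against. Your proposal therefore stands on its own, and the overall strategy for both assertions is the correct one.

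For the first assertion, the step ``propagating this through the finite-length quotients'' is not quite as automatic as written. Knowing the base-change isomorphism \(K'\otimes_K(\mathrm{gr})^{\GG_K}\riso(\mathrm{gr})^{\GG_{K'}}\) for each graded piece \(\Cp(-j)\otimes_{\Qp}V\) does not by itself yield the same for the filtered pieces \(W_n=(t^{-n}\BdR^+/\BdR^+)\otimes_{\Qp}V\): the inductive five-lemma comparison of \(K'\otimes_K(W_n)^{\GG_K}\) with \((W_n)^{\GG_{K'}}\) also requires control of the base-change map on \(\H^1(K,W_{n-1})\). What makes the induction close is that Tate's results give base-change for \(\H^1(K,\Cp(-j)\otimes_{\Qp}V)\) as well, so a simultaneous induction on \(\H^0\) and \(\H^1\) works. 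Alternatively, you can bypass the d\'evissage and invoke Fontaine's extension of Sen theory to finite-length \(\BdR^+\)-representations, which gives \(K'\otimes_K(W_n)^{\GG_K}\riso(W_n)^{\GG_{K'}}\) in one stroke.

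For the second assertion your argument is correct; the only point worth making explicit is that the openness of \(\pi\) does follow from strictness of the fundamental exact sequence~\eqref{eq:funda}, since tensoring with the finite-dimensional \(\Qp\)-vector space \(V\) and then passing to the quotient by the discrete subgroup \(T\) both preserve strictness.
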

\begin{rema}
	The topological \(\GG_K\)\=/module \(E_+(V)\) is an \emph{almost \(\Cp\)\=/representation of \(\GG_K\)}, a category introduced by Fontaine~\cite{Fontaine03}.
	In particular, \(E_+(V)\) is a \(p\)\=/adic Banach space endowed with a continuous and linear action of \(\GG_K\).
	As such, \(E_+(V)\) contains a sub\=/\(\Zp\)\=/module \(\SS_+\) which is separated and complete for the \(p\)\=/adic topology, stable under the action of \(\GG_K\), and such that \(E_+(V) = \SS_+ \otimes_{\Zp} \Qp\) with the corresponding topology, \ie \(\{p^n \SS_+, n \in \N \}\) is a fundamental system of neighbourhood of \(0\) in \(E_+(V)\).
	Let \(S\) be the image of \(\SS_+ \cap E_\disc(V)\) in \(E_\disc(V/T)\).
	Since \(E_\disc(V/T)\) is dense in \(E_+(V/T)\), there is an isomorphism of topological \(\GG_K\)\=/modules
	\[
		\limproj_{n \in \N} E_\disc(V/T)/p^n S \simeq E_+(V/T),
	\]
	where the left-hand side is equipped with the inverse limit topology and each term \(E_\disc(V/T)/p^n S\) is discrete.
	Hence, \(E_+(V/T)\) is the Hausdorff completion of \(E_\disc(V/T)\).
\end{rema}

The topological \(\GG_K\)\=/module \(E_\disc(V/T)\) is the \emph{group of points with values in \(\Qpbar\)} associated with \(V/T\).
By Proposition~\ref{prop:Ediscplus}, the action of \(\GG_K\) on \(E_\disc(V/T)\) is compatible with the discrete topology.
\begin{defi}
	The topological \(\GG_K\)\=/module \(E_\discu(V/T)\) is defined as the \(\GG_K\)\=/module \(E_\disc(V/T)\) endowed with the discrete topology.
\end{defi}
We will abusively also denote by \(t_V(\Qpbar)\) the \(\GG_K\)\=/module \(t_V(\Qpbar)\) equipped with the discrete topology, so that \(E_\discu(V/T)\) fits into the short exact sequence of discrete \(\GG_K\)\=/modules
\[
	0 \ra V/T \ra E_\discu(V/T) \ra t_V(\Qpbar) \ra 0.
\]

\begin{rema} \label{rema:VarAbel}
	Let \(A\) be an Abelian variety defined over \(K\).
	Let \(t_A\) be the tangent space of \(A\).
	Let \(A[p^n]\) be the subgroup of \(p^n\)\=/torsion points of \(A(\Cp)\).
	The \(p\)\=/adic Tate module \(T_p(A) = \limproj_{\times p} A[p^n]\) is a free \(\Zp\)\=/module of rank \(2\dim(A)\) equipped with a continuous and linear action of \(\GG_K\).
	Thus, \(V_p(A) = \Qp \otimes_{\Zp} T_p(A)\) is a \(p\)\=/adic representation of \(\GG_K\), and \(V_p(A)/T_p(A) = \bigcup_{n \in \N} A[p^n] = A[p^\infty]\) is the discrete \(\GG_K\)\=/module of \(p\)\=/power torsion points of \(A(\Cp)\).
	The exponential map on \(A\) enables to construct \(\GG_K\)\=/equivariant splittings
	\[
		\begin{split}
			A(\Cp) & = A^{(p)}(\Cp) \oplus A[p^\prime],\\
			A(\Qpbar) & = A^{(p)}(\Qpbar) \oplus A[p^\prime],
		\end{split}
	\]
	where \(A[p^\prime]\) denotes the subgroup of prime-to\=/\(p\) torsion points of \(A(\Cp)\) (see \cite[\S 1.1]{Fontaine03}).
	Fontaine proved~\cite[Proposition~8.6]{Fontaine03} that there are canonical and functorial isomorphisms of topological \(\GG_K\)\=/modules
	\begin{align*}
		t_{V_p(A)}(\Qpbar) & \simeq t_A(\Qpbar),& \h{t}_{V_p(A)}(\Qpbar) & \simeq t_A(\Cp),\\
		E_\disc(A[p^\infty]) & \simeq A^{(p)}(\Qpbar),& E_+(A[p^\infty]) & \simeq A^{(p)}(\Cp).
	\end{align*}

	Fontaine also proved analogous results for \(p\)\=/divisible groups.
	Let \(G\) be a \(p\)\=/divisible group defined over \(\OO_K\).
	Let \(t_G\) be the tangent space of \(G\).
	Let \(T_p(G)\) be the \(p\)\=/adic Tate module of \(G\), and let \(V_p(G)=\Qp \otimes_{\Zp} T_p(G)\), so that \(V_p(G)/T_p(G)=G[p^\infty]\) is the discrete \(\GG_K\)\=/module of \(p\)\=/power torsion points of \(G(\OO_{\Cp})\).
	There are canonical and functorial isomorphisms of topological \(\GG_K\)\=/modules
	\begin{align*}
		t_{V_p(G)}(\Qpbar) & \simeq t_G(\Qpbar), & \h{t}_{V_p(G)}(\Qpbar) & \simeq t_G(\Cp),\\
		E_\disc(G[p^\infty]) & \simeq G(\OO_{\Qpbar}),& E_+(G[p^\infty]) & \simeq G(\OO_{\Cp}).
	\end{align*}
\end{rema}

\subsection{The Bloch-Kato subgroups} \label{subsec:BK}
We recall the definition of the Bloch-Kato subgroups~\cite[\S 3]{BlochKato90}.
Let \(V\) be a \(p\)\=/adic representation of \(\GG_K\).
For each finite extension \(K^\prime\) of \(K\), the \emph{exponential}, \emph{finite} and \emph{geometric Bloch-Kato subgroups} of \(\H^1(K^\prime,V)\) are respectively defined by
\[
	\begin{split}
		\H^1_e(K^\prime,V) & = \Ker\left(\H^1(K^\prime,V) \ra \H^1(K^\prime,\Be \otimes_{\Qp} V)\right),\\
		\H^1_f(K^\prime,V) & = \Ker\left(\H^1(K^\prime,V) \ra \H^1(K^\prime,\Bcris \otimes_{\Qp} V)\right),\\
		\H^1_g(K^\prime,V) & = \Ker\left(\H^1(K^\prime,V) \ra \H^1(K^\prime,\BdR \otimes_{\Qp} V)\right).
	\end{split}
\]
There are inclusions
\[
	\H^1_e(K^\prime, V) \subset \H^1_f(K^\prime, V) \subset \H^1_g(K^\prime, V) \subset \H^1(K^\prime, V).
\]

Let \(T\) be a \(\Zp\)\=/lattice in \(V\) stable under the action of \(\GG_K\).
The short exact sequence
\[
	0 \ra T \ra V \ra V/T \ra 0
\]
induces a cohomological exact sequence
\[
	\H^1(K^\prime,T) \xra{\alpha_{K^\prime}} \H^1(K^\prime,V) \xra{\beta_{K^\prime}} \H^1(K^\prime,V/T).
\]
For \(\ast \in \{e,f,g\}\), the Bloch-Kato subgroups of \(\H^1(K^\prime,T)\) and \(\H^1(K^\prime,V/T)\) are respectively defined by
\[
	\begin{split}
		\H^1_\ast(K^\prime,T) & = \alpha_{K^\prime}^{-1}(\H^1_\ast(K^\prime,V)),\\
		\H^1_\ast(K^\prime,V/T) & = \beta_{K^\prime}(\H^1_\ast(K^\prime,V)).
	\end{split}
\]

For each \(\ast \in \{e,f,g\}\), the groups \(\H^1_\ast(K^\prime,V/T)\) are compatible under the restriction maps.
If \(L\) is an algebraic extension of \(K\), then the Bloch-Kato subgroups of \(\H^1(L,V/T)\) are defined by
\[
	\H^1_\ast(L,V/T) = \liminj_{\res, K^\prime} \H^1_\ast(K^\prime, V/T)
\]
where the limit is taken relatively to the restriction maps and \(K^\prime\) runs through the finite extensions of \(K\) contained in \(L\).

The following proposition, which relates the exponential Bloch-Kato subgroup to \(E_\discu(V/T)\), is essentially contained in~\cite[\S 8]{Fontaine03} or \cite[Corollary~3.8.4]{BlochKato90}.
\begin{prop} \label{prop:BK}
	Let \(L\) be an algebraic extension of \(K\).
	The short exact sequence
	\[
		0 \ra V/T \ra E_\discu(V/T) \ra t_V(\Qpbar) \ra 0
	\]
	induces a cohomological short exact sequence
	\[
		0 \ra \H^1_e(L,V/T) \ra \H^1(L,V/T) \ra \H^1(L,E_\discu(V/T)) \ra 0.
	\]
\end{prop}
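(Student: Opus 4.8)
The plan is to apply continuous Galois cohomology to the short exact sequence of discrete \(\GG_K\)\=/modules
\[
	0 \ra V/T \ra E_\discu(V/T) \ra t_V(\Qpbar) \ra 0
\]
and to read off the asserted four\=/term sequence from the resulting long exact sequence. Since \(\H^1_e(L,V/T)\) is by definition the colimit over the finite subextensions \(K^\prime/K\) of \(\H^1_e(K^\prime,V/T)\), and continuous cohomology of a discrete module commutes with this (exact) colimit, it suffices to prove the statement at each finite level and then pass to the limit. So I fix a finite \(K^\prime/K\) inside \(L\), write \(\phi\) for the induced map \(\H^1(K^\prime,V/T) \ra \H^1(K^\prime,E_\discu(V/T))\), and write \(\delta\) for the connecting map \(\H^0(K^\prime,t_V(\Qpbar)) \ra \H^1(K^\prime,V/T)\). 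The long exact sequence shows that \(\phi\) is surjective as soon as \(\H^1(K^\prime,t_V(\Qpbar))=0\), and that \(\Ker(\phi)=\Img(\delta)\); the whole proposition therefore reduces to the two assertions \(\H^1(K^\prime,t_V(\Qpbar))=0\) and \(\Img(\delta)=\H^1_e(K^\prime,V/T)\).

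For the first assertion, note that \(t_V\) carries the trivial Galois action, so the inclusion \(t_V \subset t_V(\Qpbar)=\Qpbar \otimes_K t_V\) exhibits \(t_V(\Qpbar)\) as a finite direct sum of copies of \(\Qpbar\) with its natural \(\GG_{K^\prime}\)\=/action and the discrete topology. By the additive form of Hilbert's Theorem~90 one has \(\H^1(K^\prime,\Qpbar)=0\), whence \(\H^1(K^\prime,t_V(\Qpbar))=0\) and \(\phi\) is surjective. Taking the colimit gives \(\H^1(L,t_V(\Qpbar))=0\) as well.

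For the second assertion I would compare the sequence above with the fundamental exact sequence~\eqref{eq:funda} tensored with \(V\),
\[
	0 \ra V \ra \Be \otimes_\Qp V \ra (\BdR/\BdR^+) \otimes_\Qp V \ra 0,
\]
and with its reduction modulo \(T\),
\[
	0 \ra V/T \ra E_\e(V/T) \ra (\BdR/\BdR^+) \otimes_\Qp V \ra 0.
\]
By definition \(\H^1_e(K^\prime,V)=\Ker(\H^1(K^\prime,V) \ra \H^1(K^\prime,\Be\otimes V))\) is the image of the connecting map of the first sequence, a map out of \(((\BdR/\BdR^+)\otimes V)^{\GG_{K^\prime}}\); reduction modulo \(T\) carries this connecting map to the connecting map \(\delta^\prime\) of the second sequence compatibly with \(\beta_{K^\prime}\), giving \(\H^1_e(K^\prime,V/T)=\beta_{K^\prime}(\H^1_e(K^\prime,V))=\Img(\delta^\prime)=\Ker(\H^1(K^\prime,V/T)\ra\H^1(K^\prime,E_\e(V/T)))\). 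The inclusion \(E_\discu(V/T) \subset E_\e(V/T)\) (continuous from the discrete topology) factors this last arrow through \(\H^1(K^\prime,E_\discu(V/T))\), so \(\Ker(\phi)=\Img(\delta) \subseteq \H^1_e(K^\prime,V/T)\) is immediate; and by naturality of connecting maps \(\Img(\delta)=\delta^\prime(t_V(K^\prime))\), where \(t_V(K^\prime)=t_V(\Qpbar)^{\GG_{K^\prime}}=K^\prime \otimes_K t_V\).

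The main obstacle is thus the reverse inclusion \(\H^1_e(K^\prime,V/T) \subseteq \Img(\delta)\): one must show that the smaller source \(t_V(K^\prime)\) already realizes the full image \(\delta^\prime(((\BdR/\BdR^+)\otimes V)^{\GG_{K^\prime}})=\H^1_e(K^\prime,V/T)\), i.e. that \(((\BdR/\BdR^+)\otimes_\Qp V)^{\GG_{K^\prime}}=K^\prime\otimes_K t_V\) modulo \(\Ker(\delta^\prime)\). When \(V\) is de Rham — the case driving the rest of the article — this is precisely de Rham base change (\(\DdR\) over \(K^\prime\) is \(K^\prime \otimes_K \DdR\) over \(K\)), so the two tangent spaces coincide and the reverse inclusion is automatic; for a general \(V\) one instead invokes Fontaine's finer study of \(E_\discu\) together with the \(\Be\)\=/description of \(\H^1_e\) (\cite[\S 8]{Fontaine03}, \cite[Corollary~3.8.4]{BlochKato90}) to see that the remaining \(\GG_{K^\prime}\)\=/invariants lie in \(\Ker(\delta^\prime)\). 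Finally, taking the colimit over \(K^\prime\) and using exactness of filtered colimits assembles the finite\=/level statements into the asserted short exact sequence over \(L\).
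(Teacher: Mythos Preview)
Your argument is correct and follows the same route as the paper: reduce to a finite subextension \(K'/K\), take the long exact sequence, obtain surjectivity from \(\H^1(K',\Qpbar)=0\), and identify the kernel with \(\H^1_e(K',V/T)\) by comparing with the sequence coming from \(E_\e(V/T)\) via naturality of the connecting maps; then pass to the colimit.

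The one point worth sharpening is your ``main obstacle''. You phrase the needed fact as
\(\bigl((\BdR/\BdR^+)\otimes_{\Qp} V\bigr)^{\GG_{K'}}=t_V(K')\) \emph{modulo} \(\Ker(\delta')\), prove it in the de Rham case via base change for \(\DdR\), and defer the general case to a citation. In fact the genuine equality
\[
\bigl((\BdR/\BdR^+)\otimes_{\Qp} V\bigr)^{\GG_{K'}}=t_V(K')
\]
holds for \emph{every} \(p\)-adic representation \(V\), and this is exactly what the paper uses (without comment) when it writes that the two rows of the diagram~\eqref{eq:EastV} induce ``the same cohomological exact sequence''. A clean way to see it is via Proposition~\ref{prop:Ediscplus}: if \(x\) is a \(\GG_{K'}\)-invariant element of \((\BdR/\BdR^+)\otimes_{\Qp} V\), pick any lift \(\tilde{x}\in E_\e(V/T)\); then \(g\mapsto g\tilde{x}-\tilde{x}\) is a continuous cocycle from the compact group \(\GG_{K'}\) to the discrete group \(V/T\), hence has finite image, so the \(\GG_K\)-orbit of \(\tilde{x}\) is finite. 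Thus \(\tilde{x}\in E_\disc(V/T)\), whence \(x\in t_V(\Qpbar)^{\GG_{K'}}=t_V(K')\). With this in hand your de Rham/non-de Rham dichotomy becomes unnecessary and the proof is uniform.
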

\begin{proof}
	Let \(K^\prime\) be a finite extension of \(K\).
	Each short exact sequence of the diagram~\eqref{eq:EastV} induces the same cohomological exact sequence
	\[
		\begin{tikzcd}
			0 \ar{r} & V^{\GG_{K^\prime}} \ar{r} & E_\e(V)^{\GG_{K^\prime}} \ar{r} & \left((\BdR/\BdR^+) \otimes_{\Qp} V\right)^{\GG_{K^\prime}} \ar{r} & \H^1(K^\prime,V)\\
			0 \ar{r} & V^{\GG_{K^\prime}} \ar{r} \ar[equal]{u} & E_\disc(V)^{\GG_{K^\prime}} \ar{r} \ar[equal]{u} & t_V(K^\prime) \ar{r} \ar[equal]{u} & \H^1(K^\prime,V). \ar[equal]{u}
		\end{tikzcd}
	\]
	Hence, by definition of the exponential Bloch-Kato subgroups of \(\H^1(K^\prime,V)\), there is an exact sequence
	\begin{equation} \label{eq:EdiscHe}
		0 \ra  V^{\GG_{K^\prime}} \ra E_\disc(V)^{\GG_{K^\prime}} \ra t_V(K^\prime) \ra \H^1_e(K^\prime,V) \ra 0.
	\end{equation}
	The commutative diagram with exact rows
	\[
		\begin{tikzcd}
			0 \ar{r} & V \ar{r} \ar{d} & E_\disc(V) \ar{r} \ar{d} & t_V(\Qpbar) \ar{r} \ar[equal]{d} & 0 \\
			0 \ar{r} & V/T \ar{r} & E_\disc(V/T) \ar{r} & t_V(\Qpbar) \ar{r} & 0,
		\end{tikzcd}
	\]
	induces the exact commutative diagram
	\begin{equation} \label{eq:He}
		\begin{tikzcd}
			0 \ar{r} & V^{\GG_{K^\prime}} \ar{r} \ar{d} & E_\disc(V)^{\GG_{K^\prime}} \ar{r} \ar{d} & t_V(K^\prime) \ar{r} \ar[equal]{d} & \H^1(K^\prime,V) \ar{d}{\beta_{K^\prime}} \\
			0 \ar{r} & (V/T)^{\GG_{K^\prime}} \ar{r} & E_\disc(V/T)^{\GG_{K^\prime}} \ar{r} & t_V(K^\prime) \ar{r} & \H^1(K^\prime,V/T).
		\end{tikzcd}
	\end{equation}
	The definition of \(\H^1_e(K^\prime,V/T)= \beta_{K^\prime}(\H^1_e(K^\prime,V))\) together with the exact sequence~\eqref{eq:EdiscHe} and the commutativity of the diagram~\eqref{eq:He} gives an exact sequence
	\begin{equation} \label{eq:Heleft}
		0 \ra  (V/T)^{\GG_{K^\prime}} \ra E_\disc(V/T)^{\GG_{K^\prime}} \ra t_V(K^\prime) \ra \H^1_e(K^\prime,V/T) \ra 0.
	\end{equation}
	This last exact sequence~\eqref{eq:Heleft} implies that the long cohomological exact sequence associated with the short exact sequence of discrete \(\GG_K\)\=/modules
	\[
		0 \ra V/T \ra E_\discu(V/T) \ra t_V(\Qpbar) \ra 0
	\]
	induces the short exact sequence
	\begin{equation} \label{eq:HeIV}
		0 \ra \H^1_e(K^\prime,V/T) \ra \H^1(K^\prime,V/T) \ra \H^1(K^\prime,E_\discu(V/T)) \ra 0,
	\end{equation}
	where the final zero follows from the vanishing of \(\H^1(K^\prime, \Qpbar)\).
	Since \(V/T\) and \(E_\discu(V/T)\) are discrete \(\GG_K\)\=/modules, we deduce the lemma for a general algebraic extension \(L\) of \(K\) by taking the limit relative to the restriction maps of the exact sequence \eqref{eq:HeIV}.
\end{proof}

\begin{rema} \label{rema:Kummer}
	Let \(G\) be a \(p\)\=/divisible group defined over \(\OO_K\).
	Let \(L\) be an algebraic extension of \(K\).
	The logarithm short exact sequence~\cite{Tate67}
	\[
		0 \ra G[p^\infty] \ra G(\OO_{\Qpbar}) \xra{\log_G} t_G(\Qpbar) \ra 0
	\]
	induces the cohomological short exact sequence
	\[
		0 \ra G(\OO_L) \otimes_{\Zp} \Qp/\Zp \xra{\kappa_L^G} \H^1(L,G[p^\infty]) \ra \H^1(L,G) \ra 0.
	\]
	The application \(\kappa_L^G\) is the Kummer map.
	By combining Proposition~\ref{prop:BK} with Fontaine's results from Remark~\ref{rema:VarAbel}, we recover the fact~\cite[Example~3.10]{BlochKato90} that the image of the Kummer map \(\kappa_L^G\) coincides with the exponential Bloch-Kato subgroups \(\H^1_e(L,G[p^\infty])\).

	Similarly, for an Abelian variety \(A\) defined over \(K\), we recover the fact that the image of the Kummer map
	\[
		\kappa_L^A : A(L) \otimes_{\Zp} \Qp/\Zp \ra \H^1(L,A[p^\infty])
	\]
	coincides with \(\H^1_e(L,A[p^\infty])\) (we recall that for an Abelian variety the Bloch-Kato subgroups are all equal, see~\cite[Example~3.11]{BlochKato90}).

	For a general \(p\)\=/adic representation \(V\) of \(\GG_K\), Proposition~\ref{prop:BK}, precisely the exact sequence~\eqref{eq:EdiscHe}, induces an isomorphism
	\[
		\H^0(L,E_\disc(V/T)) \otimes_{\Zp} \Qp/\Zp\simeq \H^1_e(L,V/T).
	\]
\end{rema}

\subsection{Almost \texorpdfstring{\(\Cp\)\=/}{Cp-}representations and vector bundles} \label{subsec:almostCpRep}
We now establish the link with the modification by truncation from the previous section.
\begin{prop} \label{prop:almostcoh}
	Let \(V\) be a de Rham \(p\)\=/adic representation of \(\GG_K\).
	There is a canonical and functorial isomorphism of topological \(\GG_K\)\=/modules
	\[
		E_+(V) \simeq \Gamma(\XFF, \EE_+(V)).
	\]
\end{prop}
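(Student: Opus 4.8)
The plan is to compute \(\Gamma(\XFF,\EE_+(V))\) from the Beauville--Laszlo description and to recognise it, as a subgroup of \(E_\e(V)=\Be\otimes_{\Qp}V\), as the group \(E_+(V)\). First I would apply Theorem~\ref{theo:cohomCoh} to
\[
	\EE_+(V)=\left(\Be\otimes_{\Qp}V,\ \BdR^+\otimes_{\Qp}V+\BdR^+\otimes_K\DdR(V)\right).
\]
Since the \(\Be\)\=/part is unchanged by truncation, the glueing map is the identity of \(\BdR\otimes_{\Qp}V\), so the degree\=/zero cohomology is the kernel of \((x,y)\mapsto x-y\), that is
\[
	\Gamma(\XFF,\EE_+(V))=\left(\Be\otimes_{\Qp}V\right)\cap\left(\BdR^+\otimes_{\Qp}V+\BdR^+\otimes_K\DdR(V)\right)
\]
computed inside \(\BdR\otimes_{\Qp}V\).

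Next I would match this intersection with \(E_+(V)\). Because \(\BdR^+\otimes_{\Qp}V\) is contained in the truncated lattice, an element \(x\in\Be\otimes_{\Qp}V\) lies in the intersection if and only if its image in \((\BdR/\BdR^+)\otimes_{\Qp}V\) lies in the image of \(\BdR^+\otimes_K\DdR(V)\); and by definition \(E_+(V)\) is exactly the preimage in \(\Be\otimes_{\Qp}V\) of \(\h{t}_V(\Qpbar)\). So everything reduces to identifying this image with \(\h{t}_V(\Qpbar)\). This is where the de Rham hypothesis enters: the natural map \(\DdR(V)=(\BdR\otimes_{\Qp}V)^{\GG_K}\to\left((\BdR/\BdR^+)\otimes_{\Qp}V\right)^{\GG_K}=t_V\) is surjective (an isomorphism \(\DdR(V)/\Fil^0\DdR(V)\riso t_V\)), so extending \(\BdR^+\)\=/linearly, the image of \(\BdR^+\otimes_K\DdR(V)\) equals that of \(\BdR^+\otimes_K t_V=t_V(\BdR^+)\), which is \(\h{t}_V(\Qpbar)\) by definition. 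Equivalently, and more conceptually, I would apply \(\RGamma(\XFF,-)\) to the short exact sequence~\eqref{eq:EV}: Theorem~\ref{theo:RepBun0} gives \(\Gamma(\XFF,\EE(V))=V\), Theorem~\ref{theo:cohomHN} gives \(\H^1(\XFF,\EE(V))=0\), and \(\Gamma(\XFF,\FF_+(V))\) is the finite\=/length module~\eqref{eq:Fplus}, identified above with \(\h{t}_V(\Qpbar)\); the resulting short exact sequence then coincides with the bottom row defining \(E_+(V)\) in~\eqref{eq:EastV}. Either way I obtain a canonical identification of the underlying \(\GG_K\)\=/modules, functorial in \(V\) since both constructions are.

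It remains to compare the topologies, which I expect to be the main obstacle. Both spaces carry \(p\)\=/adic Banach topologies: \(E_+(V)\) as a closed subspace of the Banach space \(\Be\otimes_{\Qp}V\) (it is the preimage of the closed submodule \(\h{t}_V(\Qpbar)\)), and \(\Gamma(\XFF,\EE_+(V))\) by the remark following Theorem~\ref{theo:cohomCoh}. Projecting \(\Gamma(\XFF,\EE_+(V))\), which sits in \((\Be\otimes_{\Qp}V)\oplus(\BdR^+\otimes_{\Qp}V+\BdR^+\otimes_K\DdR(V))\), onto its first factor is continuous and realises precisely the set\=/theoretic identification with \(E_+(V)\); hence the comparison map is a continuous \(\Qp\)\=/linear bijection of Banach spaces. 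The subtlety is that the topology on the \(\dR^+\)\=/factor is the valuation topology, a priori strictly finer than the canonical topology inherited from \(\Be\), so the topology of \(\Gamma(\XFF,\EE_+(V))\) is only a priori finer than that of \(E_+(V)\) and continuity of the inverse is not formal. I would resolve this by invoking the open mapping theorem for \(p\)\=/adic Banach spaces: a continuous linear bijection of Banach spaces is a homeomorphism. This upgrades the algebraic identification to an isomorphism of topological \(\GG_K\)\=/modules and completes the proof.
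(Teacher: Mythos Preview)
Your proof is correct and essentially matches the paper's argument. Your ``more conceptual'' approach via \(\RGamma(\XFF,-)\) applied to the short exact sequence~\eqref{eq:EV} is exactly what the paper does: it reduces the statement to identifying \(\Gamma(\XFF,\FF_+(V))\) with \(\h{t}_V(\Qpbar)\), which the paper isolates as a separate lemma (Lemma~\ref{lemm:almostcoh}) and proves by the same computation you give (namely \(t_V\simeq\DdR(V)/\Fil^0\DdR(V)\) for \(V\) de Rham, whence the image of \(\BdR^+\otimes_K\DdR(V)\) in \((\BdR/\BdR^+)\otimes_{\Qp}V\) is \(\h{t}_V(\Qpbar)\)). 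Your first, direct-intersection approach is just an unpacking of the same diagram chase. Your treatment of the topology via the open mapping theorem is more explicit than the paper's, which leaves the topological compatibility implicit in the diagram.
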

\begin{proof}
	We recall the short exact sequence~\eqref{eq:EV} of \(\GG_K\)\=/equivariant coherent sheaves:
	\[
		0 \ra \EE(V) \xra{\eta_V} \EE_+(V) \ra \FF_+(V) \ra 0.
	\]
	The Harder-Narasimhan slopes of these coherent sheaves are all greater than or equal to \(0\) (Lemma~\ref{lemm:HN0}), and we know (Theorem~\ref{theo:RepBun0}) that
	\[
		\Gamma(\XFF,\EE(V)) \simeq V.
	\]
	Therefore, by Theorems~\ref{theo:cohomCoh} and~\ref{theo:cohomHN}, the cohomology of the exact sequence~\eqref{eq:EV} gives the exact commutative diagram
	\[
		\begin{tikzcd}
			0 \ar{r} & V \ar{r} & E_\e(V) \ar{r} & (\BdR/\BdR^+) \otimes_{\Qp} V \ar{r} & 0\\
			0 \ar{r} & \Gamma(\XFF, \EE(V)) \ar{r} \ar{u}[sloped]{\sim} & \Gamma(\XFF,\EE_+(V)) \ar{r} \ar{u} & \Gamma(\XFF,\FF_+(V)) \ar{r} \ar{u} & 0\\
					& & 0 \ar{u} & 0 \ar{u} & .
		\end{tikzcd}
	\]
	Hence, it is enough to prove that \(\Gamma(\XFF,\FF_+(V))\) is isomorphic to \(\h{t}_V(\Qpbar)\), which is done in the next lemma~\ref{lemm:almostcoh}.
\end{proof}

\begin{lemm} \label{lemm:almostcoh}
	Let \(V\) be a de Rham \(p\)\=/adic representation of \(\GG_K\).
	There is a canonical and functorial isomorphism of topological \(\GG_K\)\=/modules
	\[
		\h{t}_V(\Qpbar) \simeq \Gamma(\XFF,\FF_+(V)).
	\]
\end{lemm}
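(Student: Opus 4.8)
The plan is to compute $\Gamma(\XFF, \FF_+(V))$ directly from the Beauville--Laszlo description of cohomology, and then match the resulting $\BdR^+$-module with $\h{t}_V(\Qpbar)$ term by term. First I would record that $\FF_+(V) = (0, M)$ is a torsion coherent sheaf, with
\[
	M = \frac{\BdR^+ \otimes_{\Qp} V + \BdR^+ \otimes_K \DdR(V)}{\BdR^+ \otimes_{\Qp} V}
\]
a finite length $\BdR^+$-module by~\eqref{eq:Fplus}. Since $M$ is $t$-power torsion, we have $(\FF_+(V))_\dR = \BdR \otimes_{\BdR^+} M = 0$, so the complex of Theorem~\ref{theo:cohomCoh} reduces to $0 \oplus M \to 0$. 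Hence $\Gamma(\XFF, \FF_+(V)) = M$, endowed with its subspace topology as a finite length $\BdR^+$-module (and $\H^1(\XFF, \FF_+(V)) = 0$, consistent with the slope $+\infty$ via Theorem~\ref{theo:cohomHN}).

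Next I would realise $M$ as a submodule of $(\BdR/\BdR^+) \otimes_{\Qp} V$. Write $A = \BdR^+ \otimes_{\Qp} V$ and let $B$ be the image of $\BdR^+ \otimes_K \DdR(V)$ inside $\BdR \otimes_{\Qp} V$; this is an honest $\BdR^+$-lattice because $V$ de Rham makes $\BdR \otimes_K \DdR(V) \to \BdR \otimes_{\Qp} V$ an isomorphism. The projection $\pi \colon \BdR \otimes_{\Qp} V \to (\BdR/\BdR^+) \otimes_{\Qp} V$ has kernel exactly $A$, so the second isomorphism theorem yields a canonical $\GG_K$-equivariant isomorphism $M = (A+B)/A \simeq \pi(B)$, and $\pi(B) = \BdR^+ \cdot \pi(\DdR(V))$ since $\pi$ is $\BdR^+$-linear.

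It then remains to identify $\pi(B)$ with $\h{t}_V(\Qpbar)$. The crucial observation is that $\Fil^0 \DdR(V) = (\BdR^+ \otimes_{\Qp} V)^{\GG_K}$ is contained in $A$, so $\BdR^+ \otimes_K \Fil^0 \DdR(V)$ maps to $0$ under $\pi$; hence $\pi(B)$ depends only on $\DdR(V)/\Fil^0 \DdR(V)$. Because $V$ is de Rham, the natural injection $\DdR(V)/\Fil^0 \DdR(V) \hookrightarrow t_V = ((\BdR/\BdR^+) \otimes_{\Qp} V)^{\GG_K}$ is an isomorphism (the standard computation of the tangent space of a de Rham representation, following Fontaine~\cite{Fontaine94II}), so $\pi(\DdR(V)) = t_V$ and $\pi(B) = \BdR^+ \cdot t_V$. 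By construction the latter is precisely the image of $t_V(\BdR^+) = \BdR^+ \otimes_K t_V$ in $(\BdR/\BdR^+) \otimes_{\Qp} V$, that is, $\h{t}_V(\Qpbar)$.

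Finally I would verify that the composite isomorphism $\Gamma(\XFF, \FF_+(V)) \simeq \h{t}_V(\Qpbar)$ is topological and functorial: each map above is $\GG_K$-equivariant and natural in $V$, and both sides carry their canonical topology as finite length $\BdR^+$-modules, for which any $\BdR^+$-linear isomorphism is automatically a homeomorphism. I expect the only genuinely non-formal step to be the identification $\DdR(V)/\Fil^0 \DdR(V) = t_V$, i.e. the surjectivity of $\DdR(V) \to t_V$; this is exactly where the de Rham hypothesis enters, and without it one would recover only an inclusion $\Gamma(\XFF, \FF_+(V)) \hookrightarrow \h{t}_V(\Qpbar)$.
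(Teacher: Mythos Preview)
Your proof is correct and follows essentially the same route as the paper's: both hinge on the identification \(t_V \simeq \DdR(V)/\Fil^0\DdR(V)\) for de Rham \(V\), then use the second isomorphism theorem to match the image of \(\BdR^+\otimes_K\DdR(V)\) in \((\BdR/\BdR^+)\otimes_{\Qp}V\) with the quotient defining \(\FF_+(V)_\dR^+\). The only cosmetic difference is direction---you start from \(\Gamma(\XFF,\FF_+(V))\) and unwind toward \(\h{t}_V(\Qpbar)\), while the paper writes the chain of isomorphisms starting from \(\h{t}_V(\Qpbar)\)---and you make the application of Theorem~\ref{theo:cohomCoh} to the torsion sheaf more explicit.
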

\begin{proof}
	Since \(V\) is de Rham, we have
	\[
		t_V \simeq \DdR(V)/\Fil^0 \DdR(V).
	\]
	Therefore, we have
	\[
		\begin{split}
			\h{t}_V(\Qpbar) & = \Img\left(t_V(\BdR^+) \ra (\BdR/\BdR^+) \otimes_{\Qp} V\right)\\
							& \simeq \Img\left(\BdR^+ \otimes_K (\DdR(V)/\Fil^0 \DdR(V)) \ra (\BdR/\BdR^+) \otimes_{\Qp} V\right)\\
							& \simeq \Img\left(\frac{\BdR^+ \otimes_K \DdR(V)}{\BdR^+ \otimes_K \Fil^0 \DdR(V)} \ra \frac{\BdR \otimes_{\Qp} V}{\BdR^+ \otimes_{\Qp} V}\right)\\
							& \simeq \frac{\BdR^+ \otimes_{\Qp} V + \BdR^+ \otimes_K \DdR(V)}{\BdR^+ \otimes_{\Qp} V}\\
							& \simeq \Gamma(\XFF, \FF_+(V)),
		\end{split}
	\]
	where the last isomorphism is given by Theorem~\ref{theo:cohomCoh} and the definition~\eqref{eq:Fplus} of \(\FF_+(V)\).
\end{proof}

\begin{rema}
	Fontaine~\cite{Fontaine20} has related the categories of almost \(\Cp\)\=/representations of \(\GG_K\) and \(\GG_K\)\=/equivariant coherent sheaves over \(\XFF\); these two categories are not equivalent, but can be reconstruct out of one another (see also~\cite{LeBras18} for an analogous result without Galois action).
\end{rema}

\begin{coro} \label{coro:Bn}
	Let \(V\) be a de Rham \(p\)\=/adic representation of \(\GG_K\).
	There is an isomorphism of \(\BdR^+\)\=/representations of \(\GG_K\)
	\[
		\h{t}_V(\Qpbar) \simeq \bigoplus_{n \in \N} \B_n^{m_n(V)},
	\]
	where \(m_n(V)\) is the multiplicity of \(n\) as a Hodge-Tate weight of \(V\).
\end{coro}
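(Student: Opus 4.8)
The plan is to reduce the statement to an explicit bookkeeping with \(\BdR^+\)\=/lattices, exploiting that a de Rham representation is trivialised by the de Rham comparison and that \(\GG_K\) acts trivially on \(\DdR(V)\).

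First I would invoke Lemma~\ref{lemm:almostcoh}, which already identifies \(\h{t}_V(\Qpbar)\) with \(\Gamma(\XFF,\FF_+(V))\), and whose proof (via Theorem~\ref{theo:cohomCoh} and the definition~\eqref{eq:Fplus} of \(\FF_+(V)\)) computes the latter as the finite length \(\BdR^+\)\=/representation
\[
	N = \frac{\BdR^+\otimes_{\Qp}V + \BdR^+\otimes_K\DdR(V)}{\BdR^+\otimes_{\Qp}V}.
\]
Thus it suffices to determine the isomorphism class of \(N\) as a \(\BdR^+\)\=/representation of \(\GG_K\).

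Next I would choose a basis adapted to the Hodge\-/Tate filtration. As \(V\) is de Rham, the de Rham comparison gives a \(\GG_K\)\=/equivariant identification \(\BdR\otimes_{\Qp}V\simeq\BdR\otimes_K\DdR(V)\) under which, by Theorem~\ref{theo:genflatFil}, the lattice \(\BdR^+\otimes_{\Qp}V\) corresponds to \(\sum_{n}\Fil^n\BdR\otimes_K\Fil^{-n}\DdR(V)\). Since every filtration of a finite dimensional \(K\)\=/vector space splits, I pick a \(K\)\=/basis \(e_1,\dots,e_d\) of \(\DdR(V)\) with each \(e_i\) homogeneous of weight \(n_i\), i.e. \(e_i\in\Fil^{-n_i}\DdR(V)\setminus\Fil^{-n_i+1}\DdR(V)\), so that \(m_n(V)=\#\{i:n_i=n\}\). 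A direct computation then gives
\[
	\BdR^+\otimes_{\Qp}V=\bigoplus_{i}t^{n_i}\BdR^+ e_i,\qquad \BdR^+\otimes_{\Qp}V+\BdR^+\otimes_K\DdR(V)=\bigoplus_{i}t^{\min(n_i,0)}\BdR^+ e_i,
\]
the second equality by Lemma~\ref{lemm:fillattice}, whence
\[
	N=\bigoplus_{i}\frac{t^{\min(n_i,0)}\BdR^+}{t^{n_i}\BdR^+}\,e_i,
\]
the \(i\)\=/th summand being \(\B_{n_i}\) when \(n_i\geq 0\) and \(0\) when \(n_i<0\). Grouping by weight and using \(\B_0=0\) yields \(N\simeq\bigoplus_{n\in\N}\B_n^{m_n(V)}\).

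The only genuinely delicate point — and the step I would be most careful about — is the \(\GG_K\)\=/equivariance, since the splitting of the Hodge\-/Tate filtration is non\=/canonical. Here the key observation is that \(\GG_K\) acts trivially on \(\DdR(V)=(\BdR\otimes_{\Qp}V)^{\GG_K}\), so the chosen vectors \(e_i\) are \(\GG_K\)\=/fixed; consequently each summand \(t^{\min(n_i,0)}\BdR^+ e_i\) is \(\GG_K\)\=/stable and the induced action on it is precisely the semilinear action on \(\B_{n_i}=\BdR^+/t^{n_i}\BdR^+\) inherited from \(\BdR^+\) (recall that \(t\) generates \(\Zp(1)\)). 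Hence the displayed \(\BdR^+\)\=/linear decomposition is automatically a decomposition of \(\BdR^+\)\=/representations of \(\GG_K\), which is what the statement requires. I do not expect any deeper obstacle: once the equivariance is seen to be free of charge, the result is a matter of matching \(t\)\=/powers against the Hodge\-/Tate weights.
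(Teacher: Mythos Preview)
Your proof is correct and follows essentially the same route as the paper: both start from Lemma~\ref{lemm:almostcoh}, use the de Rham comparison to identify \(\BdR^+\otimes_{\Qp}V\) with \(\sum_n\Fil^n\BdR\otimes_K\Fil^{-n}\DdR(V)\), split the Hodge--Tate filtration, and read off the elementary divisors. The only cosmetic difference is that the paper passes through the intersection \(\BdR^+\otimes_{\Qp}V\cap\BdR^+\otimes_K\DdR(V)\) and writes the answer as \(\bigoplus_n\B_n\otimes_K\Gr^{-n}\DdR(V)\) before counting dimensions, whereas you work directly with a homogeneous basis; your explicit remark that the \(e_i\) are \(\GG_K\)-fixed (whence the decomposition is automatically equivariant) is a point the paper leaves implicit.
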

\begin{proof}
	By Lemma~\ref{lemm:almostcoh}, we have
	\begin{equation} \label{eq:tvDdR}
		\begin{split}
			\h{t}_V(\Qpbar) & \simeq \frac{\BdR^+ \otimes_{\Qp} V + \BdR^+ \otimes_K \DdR(V)}{\BdR^+ \otimes_{\Qp} V}\\
			& \simeq \frac{\BdR^+ \otimes_K \DdR(V)}{\BdR^+ \otimes_{\Qp} V \cap \BdR^+ \otimes_K \DdR(V)}.
		\end{split}
	\end{equation}
	We compute \(\BdR^+ \otimes_{\Qp} V \cap \BdR^+ \otimes_K \DdR(V)\).
	Since \(V\) is de Rham, we have
	\[
		\BdR^+ \otimes_{\Qp} V = \sum_{n \in \Z} \Fil^n \BdR \otimes_K \Fil^{-n} \DdR(V).
	\]
	Therefore, we have
	\begin{equation} \label{eq:cap}
		\begin{split}
			& \BdR^+ \otimes_{\Qp} V \cap \BdR^+ \otimes_K \DdR(V)\\
			\simeq & \left(\sum_{n \in \Z} \Fil^n \BdR \otimes_K \Fil^{-n} \DdR(V)\right) \cap \BdR^+ \otimes_K \DdR(V)\\
			\simeq & \left(\sum_{n \geq 0} \Fil^n \BdR \otimes_K \Fil^{-n} \DdR(V) \right) + \left(\sum_{n < 0} \BdR^+ \otimes_K \Fil^{-n} \DdR(V) \right).
		\end{split}
	\end{equation}
	We choose a splitting of the Hodge-Tate filtration of \(\DdR(V)\) so that
	\[
		\DdR(V) \simeq \bigoplus_{n \in \Z} \Gr^{-n} \DdR(V),
	\]
	where \(\Gr^{-n} \DdR(V) = \Fil^{-n} \DdR(V)/\Fil^{-n+1} \DdR(V)\).
	Then, the equation~\eqref{eq:cap} gives
	\begin{equation} \label{eq:oplus}
		\begin{split}
			& \BdR^+ \otimes_{\Qp} V \cap \BdR^+ \otimes_K \DdR(V)\\
			\simeq & \left( \bigoplus_{n \geq 0} \Fil^n \BdR \otimes_K \Gr^{-n} \DdR(V) \right) \oplus \left( \bigoplus_{n < 0} \BdR^+ \otimes_K \Gr^{-n} \DdR(V) \right).
		\end{split}
	\end{equation}
	By the equations~\eqref{eq:tvDdR} and~\eqref{eq:oplus}, we have
	\begin{equation} \label{eq:fracoplus}
		\begin{split}
			\h{t}_V(\Qpbar) & \simeq \frac{\BdR^+ \otimes_K \DdR(V)}{\BdR^+ \otimes_{\Qp} V \cap \BdR^+ \otimes_K \DdR(V)}\\
			& \simeq \bigoplus_{n \in \N} \frac{\BdR^+ \otimes_K \Gr^{-n} \DdR(V)}{\Fil^n \BdR \otimes_K \Gr^{-n} \DdR(V)} \oplus \bigoplus_{n < 0} \frac{\BdR^+ \otimes_K \Gr^{-n} \DdR(V)}{\BdR^+ \otimes_K \Gr^{-n} \DdR(V)}\\
			& \simeq \bigoplus_{n \in \N} \frac{\BdR^+ \otimes_K \Gr^{-n} \DdR(V)}{\Fil^n \BdR \otimes_K \Gr^{-n} \DdR(V)}\\
			& \simeq \bigoplus_{n \in \N} \B_n \otimes_K \Gr^{-n} \DdR(V).
		\end{split}
	\end{equation}
	Finally, since the multiplicity \(m_n(V)\) of \(n\) as a Hodge-Tate weight of \(V\) is the dimension \(\dim_K \Gr^{-n} \DdR(V)\), the equation~\eqref{eq:fracoplus} yields
	\[
		\h{t}_V(\Qpbar) \simeq \bigoplus_{n \in \N} \B_n^{m_n(V)}.
	\]
\end{proof}

If \(V\) is a de Rham \(p\)\=/adic representation of \(\GG_K\) and \(T\) is a \(\Zp\)\=/lattice in \(V\) stable under the action of \(\GG_K\), we have the sub\=/\(\GG_K\)\=/representation \(V_0\) from Definition~\ref{defi:V0}, and we set \(T_0 = T \cap V_0\) which is a \(\Zp\)\=/lattice in \(V_0\) stable under the action of \(\GG_K\).
\begin{coro} \label{coro:exactV0}
	Let \(V\) be a de Rham \(p\)\=/adic representation of \(\GG_K\).
	Let \(T\) be a \(\Zp\)\=/lattice in \(V\) stable under the action of \(\GG_K\).
	There is an exact commutative diagram
	\[
		\begin{tikzcd}
			& 0 \ar{d} & 0 \ar{d} &  & \\
			0 \ar{r} & V_0/T_0 \ar{r} \ar{d} & E_+(V_0/T_0) \ar{r} \ar{d} & \h{t}_{V_0}(\Qpbar) \ar{r} \ar{d}[sloped]{\sim} & 0\\
			0 \ar{r} & V/T \ar{r} \ar{d} & E_+(V/T) \ar{r} \ar{d} & \h{t}_{V}(\Qpbar) \ar{r} & 0\\
						& \frac{V/V_0}{T/T_0} \ar{r}{\sim} \ar{d} & E_+\left(\frac{V/V_0}{T/T_0}\right) \ar{d} &  & \\
			& 0  & 0 &  & .
		\end{tikzcd}
	\]
\end{coro}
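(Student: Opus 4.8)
The plan is to realise the whole diagram as the image of the short exact sequence of representations-equipped-with-lattices
\[
	0 \ra (V_0, T_0) \ra (V, T) \ra (V/V_0, T/T_0) \ra 0,
\]
where \(T_0 = T \cap V_0\) and \(T/T_0 \simeq (T+V_0)/V_0\) is a lattice in \(V/V_0\), under the three functors \(V/T \mapsto V/T\), \(V/T \mapsto E_+(V/T)\) and \(V \mapsto \h{t}_V(\Qpbar)\) that make up the defining short exact sequence of \(E_+\) from \S\ref{subsec:points}. The three horizontal rows are then precisely these defining sequences, hence exact, and commutativity everywhere is functoriality; so the only genuine content is the exactness of the three vertical columns.

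First I would dispose of the two easy columns. The left column \(0 \ra V_0/T_0 \ra V/T \ra \frac{V/V_0}{T/T_0} \ra 0\) is elementary: the first map is injective since \(T \cap V_0 = T_0\), and its cokernel is \(V/(V_0+T)\), which is canonically \(\frac{V/V_0}{T/T_0}\). For the right column, the Hodge-Tate weights of \(V/V_0\) are \(\leq 0\) by definition of \(V_0\), so Corollary~\ref{coro:Bn} gives \(\h{t}_{V/V_0}(\Qpbar) = 0\) (equivalently \(\EE_+(V/V_0) \simeq \EE(V/V_0)\) by Corollary~\ref{coro:truncFib}); the defining sequence for \(V/V_0\) therefore collapses to \(E_+(V/V_0) = V/V_0\), which already yields the bottom isomorphism \(\frac{V/V_0}{T/T_0} \riso E_+\!\left(\frac{V/V_0}{T/T_0}\right)\).

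The heart of the matter is the middle column, which I would obtain by quotienting the lattices out of a short exact sequence at the level of \(V\). Corollary~\ref{coro:HN} provides the short exact sequence of vector bundles \(0 \ra \EE_+(V_0) \ra \EE_+(V) \ra \EE_+(V/V_0) \ra 0\), in which \(\EE_+(V_0)\) has strictly positive Harder-Narasimhan slopes. By Theorem~\ref{theo:cohomHN} this forces \(\H^1(\XFF, \EE_+(V_0)) = 0\), so applying \(\Gamma(\XFF,-)\) together with the canonical and functorial isomorphism of Proposition~\ref{prop:almostcoh} gives a short exact sequence \(0 \ra E_+(V_0) \ra E_+(V) \ra E_+(V/V_0) \ra 0\). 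The compatible lattice sequence \(0 \ra T_0 \ra T \ra T/T_0 \ra 0\) maps into it through the inclusions \(T_\ast \subset V_\ast \subset E_+(V_\ast)\) with injective vertical arrows, and the snake lemma then produces the exact middle column \(0 \ra E_+(V_0/T_0) \ra E_+(V/T) \ra E_+\!\left(\frac{V/V_0}{T/T_0}\right) \ra 0\), recalling that \(E_+(V_\ast/T_\ast) = E_+(V_\ast)/T_\ast\) by definition.

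Finally, with all rows and both the left and middle columns exact, the nine lemma forces the right column exact as well; since its third term \(\h{t}_{V/V_0}(\Qpbar)\) vanishes, the map \(\h{t}_{V_0}(\Qpbar) \ra \h{t}_V(\Qpbar)\) is an isomorphism, completing the diagram. The main obstacle is the surjectivity of \(E_+(V) \ra E_+(V/V_0)\), i.e. the vanishing \(\H^1(\XFF, \EE_+(V_0)) = 0\); this rests on the strict positivity of the Harder-Narasimhan slopes of \(\EE_+(V_0)\), which is exactly Proposition~\ref{prop:HN0V0} applied to \(V_0\) via the equality \((V_0)_0 = V_0\) of Lemma~\ref{lemm:V0}. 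Everything else is formal diagram-chasing and functoriality.
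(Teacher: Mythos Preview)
Your proof is correct and follows essentially the same route as the paper: both arguments assemble the diagram from Corollary~\ref{coro:HN}, pass to global sections via Proposition~\ref{prop:almostcoh} using the vanishing of \(\H^1(\XFF,\EE_+(V_0))\) from Theorem~\ref{theo:cohomHN}, and then quotient by the lattices. The only cosmetic difference is that the paper takes global sections of the full \(3\times 3\) diagram of sheaves at once (invoking Lemma~\ref{lemm:HN0} uniformly), whereas you treat the columns separately and close with the nine lemma; note also that for the vanishing of \(\H^1(\XFF,\EE_+(V_0))\) you only need slopes \(\geq 0\) (Lemma~\ref{lemm:HN0}), not the strict positivity of Proposition~\ref{prop:HN0V0}.
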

\begin{proof}
	By Corollary~\ref{coro:HN}, we have the exact commutative diagram
	\begin{equation} \label{eq:truncV0}
		\begin{tikzcd}
			& 0 \ar{d} & 0 \ar{d} & 0 \ar{d} & \\
			0 \ar{r} & \EE(V_0) \ar{d} \ar{r}{\eta_{V_0}} & \EE_+(V_0) \ar{r} \ar{d} & \FF_+(V_0) \ar{r} \ar{d}[sloped]{\sim} & 0\\
			0 \ar{r} & \EE(V) \ar{r}{\eta_{V}} \ar{d} & \EE_+(V) \ar{r} \ar{d} & \FF_+(V) \ar{r} \ar{d} & 0\\
			0 \ar{r} & \EE(V/V_0) \ar{r}{\sim}[swap]{\eta_{V/V_0}} \ar{d} & \EE_+(V/V_0) \ar{r} \ar{d} & 0 & \\
			& 0 & 0 & & .
		\end{tikzcd}
	\end{equation}
	Once again, the Harder-Narasimhan slopes of all the coherent sheaves in the diagram~\eqref{eq:truncV0} are all greater than or equal to \(0\) (Lemma~\ref{lemm:HN0}), so by the isomorphisms established in Theorem~\ref{theo:RepBun0}, Proposition~\ref{prop:almostcoh} and Lemma~\ref{lemm:almostcoh}, the cohomology of the diagram~\eqref{eq:truncV0} yields the exact commutative diagram
	\begin{equation} \label{eq:truncV0II}
		\begin{tikzcd}
			& 0 \ar{d} & 0 \ar{d} &  & \\
			0 \ar{r} & V_0 \ar{r} \ar{d} & E_+(V_0) \ar{r} \ar{d} & \h{t}_{V_0}(\Qpbar) \ar{r} \ar{d}[sloped]{\sim} & 0\\
			0 \ar{r} & V \ar{r} \ar{d} & E_+(V) \ar{r} \ar{d} & \h{t}_{V}(\Qpbar) \ar{r} & 0\\
						& V/V_0 \ar{r}[sloped]{\sim} \ar{d} & E_+(V/V_0) \ar{d} &  & \\
			& 0  & 0 &  & .
		\end{tikzcd}
	\end{equation}
	Taking the quotient by \(T\) in the diagram~\eqref{eq:truncV0II}, we obtain the desired exact commutative diagram.
\end{proof}

\begin{rema} \label{rema:connectedetale}
	Let \(G\) be a \(p\)\=/divisible group defined over \(\OO_K\).
	Let \(G^\circ\) be the connected component of \(G\), and let \(G^\et=G/G^\circ\) be the maximal étale quotient of \(G\).
	The \(p\)\=/adic representation \(V_p(G)\) of \(\GG_K\) is de Rham with Hodge-Tate weights \(0\) and \(1\), and \(V_p(G^\et)=V_p(G)/V_p(G^\circ)\) is the maximal quotient of \(V_p(G)\) with unique Hodge-Tate weight \(0\).
	Thus, we have \[
		V_p(G)_0 = V_p(G^\circ),
	\]
	so \(G^\circ[p^\infty]=V_p(G)_0/T_p(G)_0\) and \(G^\et[p^\infty]=(V_p(G)/V_p(G)_0)/(T_p(G)/T_p(G)_0)\).
	Then, by Fontaine's result from Remark~\ref{rema:VarAbel}, the short exact sequence from Corollary~\ref{coro:exactV0} is isomorphic to the connected-étale short exact sequence~\cite[Proposition~4]{Tate67}
	\[
		\begin{tikzcd}
			0 \ar{r} & E_+(G^\circ[p^\infty]) \ar{r} \ar{d}[sloped]{\sim} & E_+(G[p^\infty]) \ar{r} \ar{d}[sloped]{\sim} & E_+(G^\et[p^\infty]) \ar{r} \ar{d}[sloped]{\sim} & 0\\
		0 \ar{r} & G^\circ(\OO_{\Cp}) \ar{r} & G(\OO_{\Cp}) \ar{r} & G^\et(\OO_{\Cp}) \ar{r} & 0.
		\end{tikzcd}
	\]

	Similarly, let \(A\) be an Abelian variety defined over \(K\).
	Then, the \(p\)\=/adic representation \(V_p(A)\) of \(\GG_K\) is de Rham with Hodge-Tate weights \(0\) and \(1\).
	Assume \(A\) has semi-stable reduction. Let \(\AA\) be a Néron model of \(A\), and let \(\h{\AA}\) be the commutative formal group associated with \(\AA\).
	Then, we have \(V_p(\h{\AA}) = V_p(A)_0 \subset V_p(A)\).
	Furthermore, if \(A\) has good reduction and \(\tilde{A}\) is its reduction over \(k_K\), then the short exact sequence from Corollary~\ref{coro:exactV0} is isomorphic to the reduction short exact sequence
	\[
		\begin{tikzcd}
			0 \ar{r} & E_+(\h{\AA}[p^\infty]) \ar{r} \ar{d}[sloped]{\sim} &
			E_+(A[p^\infty]) \ar{r} \ar{d}[sloped]{\sim} & E_+(\tilde{A}[p^\infty]) \ar{r} \ar{d}[sloped]{\sim} & 0\\
			0 \ar{r} & \h{\AA}(\mg_{\Cp}) \ar{r} & A^{(p)}(\Cp) \ar{r} & \tilde{A}(k_{\Cp})[p^\infty] \ar{r} & 0.
		\end{tikzcd}
	\]
\end{rema}

\subsection{Cohomology of perfectoid fields} \label{subsec:perfectoid}
Let \(L\) be an algebraic extension of \(K\).
If \(V\) is a de Rham \(p\)\=/adic representation of \(\GG_K\) and \(T\) is a \(\Zp\)\=/lattice in \(V\) stable under the action of \(\GG_K\), in this subsection we compute the cohomology group \(\H^1(L,E_+(V/T))\) when \(\h{L}\) is a perfectoid field.
We shall need some facts about Galois cohomology of perfectoid fields.
\begin{prop} \label{prop:cohomdim}
	If \(\h{L}\) is a perfectoid field, then the \(p\)\=/cohomological dimension of \(L\) is less than or equal to \(1\).
\end{prop}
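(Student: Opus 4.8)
The plan is to reduce the statement, via Scholze's tilting correspondence, to the classical vanishing of higher Galois cohomology in characteristic \(p\).

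First I would observe that passing to the completion does not change the absolute Galois group. Since \(L\) is an algebraic extension of \(K\) inside \(\Qpbar\), Krasner's lemma identifies the finite separable extensions of \(L\) with those of \(\h{L}\), compatibly with composita, so restriction yields a topological isomorphism \(\GG_L \simeq \GG_{\h{L}}\) (equivalently, \(\Cp^{\GG_L} = \h{L}\) by Ax--Sen--Tate). In particular \(\GG_L\) and \(\GG_{\h{L}}\) have the same continuous cohomology with torsion coefficients, so it suffices to bound the \(p\)\=/cohomological dimension of \(\h{L}\).

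Next, since \(\h{L}\) is a perfectoid field of characteristic \(0\), its tilt \(\h{L}^\flat\) is a perfectoid field of characteristic \(p\), and the theorem of Fontaine--Wintenberger, as generalised by Scholze, furnishes a canonical isomorphism of absolute Galois groups \(\GG_{\h{L}} \simeq \GG_{\h{L}^\flat}\). This reduces the claim to showing that the \(p\)\=/cohomological dimension of \(\GG_{\h{L}^\flat}\) is at most \(1\). Finally, \(\h{L}^\flat\) is a field of characteristic \(p\), and for any such field \(F\) the \(p\)\=/cohomological dimension is at most \(1\): the Artin--Schreier sequence \(0 \ra \Fp \ra F^{\mathrm{sep}} \xra{x \mapsto x^p - x} F^{\mathrm{sep}} \ra 0\), together with the additive Hilbert~90 vanishing \(\H^i(F, F^{\mathrm{sep}}) = 0\) for \(i \geq 1\), forces \(\H^2(F, \Fp) = 0\), whence the bound on the cohomological dimension (this is a classical result of Serre). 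Combining the three isomorphisms gives the desired inequality for \(L\).

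The main obstacle is conceptual rather than computational: one must make sure that the perfectoid hypothesis is precisely what licenses the tilting step, and that the identification \(\GG_L \simeq \GG_{\h{L}}\) genuinely preserves the cohomological invariant in question (a standard but essential point, since the whole argument hinges on transporting \(\mathrm{cd}_p\) across completion and tilting). An alternative route, avoiding tilting altogether, would be to invoke Coates and Greenberg's theory of deeply ramified extensions: a perfectoid completion forces \(L/K\) to be deeply ramified, and they prove directly that deeply ramified extensions have \(p\)\=/cohomological dimension at most \(1\).
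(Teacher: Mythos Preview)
Your argument is correct and coincides with the paper's own proof: both pass from \(\GG_L\) to the Galois group of the tilt \(\h{L}^\flat\) via Scholze's tilting equivalence, and then invoke Serre's result that any field of characteristic \(p\) has \(p\)\=/cohomological dimension at most \(1\). You are simply more explicit than the paper about the intermediate identification \(\GG_L \simeq \GG_{\h{L}}\) and about the Artin--Schreier mechanism behind Serre's theorem.
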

\begin{proof}
	The tilt \(\h{L}^\flat\) of \(\h{L}\) is a field of characteristic \(p\) whose absolute Galois group is canonically isomorphic to \(\GG_L\) \cite[\S 3]{Scholze12}.
	Furthermore, the \(p\)\=/cohomological dimension of a field of characteristic \(p\) is less than or equal to \(1\) \cite[II \S 2.2 Proposition~3]{Serre94}.
\end{proof}

\begin{lemm} \label{lemm:tvhat}
	Let \(V\) be a \(p\)\=/adic representation of \(\GG_K\).
	If \(\h{L}\) is a perfectoid field, then
	\[
		\H^1(L,\h{t}_V(\Qpbar))=0.
	\]
\end{lemm}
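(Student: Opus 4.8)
The plan is to recognise $\h{t}_V(\Qpbar)$ as precisely the kind of object governed by Fontaine-Sen-Tate theory, so that the vanishing becomes a direct application of the results recalled in \S\ref{subsec:classification}. First I would recall from the construction in \S\ref{subsec:points} that $\h{t}_V(\Qpbar)$, defined as the image of the $\BdR^+$\=/linear map~\eqref{eq:tVBdR}, is a $\BdR^+$\=/representation of $\GG_K$ whose underlying $\BdR^+$\=/module is of finite length. I stress that this holds for an arbitrary $p$\=/adic representation $V$, with no de Rham hypothesis required; the explicit decomposition of Corollary~\ref{coro:Bn} is not needed here, only the finite-length property. Restricting the action of $\GG_K$ along the inclusion $\GG_L \subset \GG_K$, I may then regard $\h{t}_V(\Qpbar)$ as a $\BdR^+$\=/representation of $\GG_L$ whose underlying $\BdR^+$\=/module is of finite length.

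Since $\h{L}$ is a perfectoid field, I would invoke the Fontaine-Sen-Tate vanishing recorded in Remark~\ref{rema:FST}, namely the case of a torsion coherent sheaf in Proposition~\ref{prop:FFsplit}, which asserts exactly that $\H^1(L,\HH_\dR^+)=0$ for every $\BdR^+$\=/representation $\HH_\dR^+$ of $\GG_L$ of finite length over $\BdR^+$. Applying this with $\HH_\dR^+ = \h{t}_V(\Qpbar)$ immediately yields $\H^1(L,\h{t}_V(\Qpbar))=0$, which is the desired conclusion.

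There is essentially no obstacle in this argument, since all the substantive content has been absorbed into the cited Fontaine-Sen-Tate input and into the earlier identification of $\h{t}_V(\Qpbar)$ as a finite-length $\BdR^+$\=/module. The only point I would take care to verify is that the subspace topology carried by $\h{t}_V(\Qpbar)$ as a submodule of $(\BdR/\BdR^+)\otimes_{\Qp} V$ coincides with the intrinsic $\BdR^+$\=/module topology implicit in Remark~\ref{rema:FST}, so that the two resulting notions of continuous cohomology agree; this is immediate, because a $\BdR^+$\=/module of finite length carries a canonical topology that is independent of any chosen embedding.
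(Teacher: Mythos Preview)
Your proof is correct and follows essentially the same approach as the paper: recognise that \(\h{t}_V(\Qpbar)\) is a \(\BdR^+\)\=/representation of \(\GG_L\) of finite length (as recorded in \S\ref{subsec:points}) and then apply the Fontaine--Sen--Tate vanishing of Proposition~\ref{prop:FFsplit} via Remark~\ref{rema:FST}. Your additional remarks about not needing the de Rham hypothesis or Corollary~\ref{coro:Bn}, and about the topology, are correct but not strictly necessary for the argument.
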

\begin{proof}
	The \(\BdR^+\)\=/module underlying \(\h{t}_V(\Qpbar)\) is of finite length.
	Therefore, we can apply Proposition~\ref{prop:FFsplit} (and Remark~\ref{rema:FST}).
\end{proof}

\begin{lemm} \label{lemm:perfectoid}
	Let \(V\) be a \(p\)\=/adic representation of \(\GG_K\).
	Let \(T\) be a \(\Zp\)\=/lattice in \(V\) stable under the action of \(\GG_K\).
	If \(\h{L}\) is a perfectoid field, then the short exact sequence
	\[
		0 \ra T \ra E_+(V) \ra E_+(V/T) \ra 0
	\]
	induces a surjection
	\[
		\H^1(L,E_+(V)) \ra \H^1(L,E_+(V/T)) \ra 0.
	\]
\end{lemm}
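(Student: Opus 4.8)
The plan is to extract the surjectivity directly from the long exact sequence in continuous cohomology attached to the given short exact sequence of topological \(\GG_K\)\=/modules
\[
	0 \ra T \ra E_+(V) \ra E_+(V/T) \ra 0,
\]
using the topological cohomology formalism of the appendix to guarantee that this admissible extension does produce a long exact sequence. The relevant portion reads
\[
	\H^1(L,E_+(V)) \ra \H^1(L,E_+(V/T)) \xra{\delta} \H^2(L,T),
\]
and the cokernel of the first map is exactly the image of the connecting map \(\delta\). Hence it suffices to prove that \(\H^2(L,T)=0\). This is the only point where the perfectoid hypothesis intervenes, through Proposition~\ref{prop:cohomdim}.

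To establish \(\H^2(L,T)=0\) I would pass to finite levels. Since \(T\) is a finitely generated free \(\Zp\)\=/module it is compact, and \(T=\limproj_n T/p^n T\) with surjective transition maps, each \(T/p^n T\) being a finite \(p\)\=/primary discrete \(\GG_L\)\=/module. Because \(\h{L}\) is perfectoid, the \(p\)\=/cohomological dimension of \(L\) is at most \(1\) by Proposition~\ref{prop:cohomdim}, so \(\H^2(L,T/p^n T)=0\) for every \(n\). Feeding this into the standard \(\limproj\)--\(\limproj^1\) exact sequence for the continuous cohomology of a countable inverse limit,
\[
	0 \ra \limproj^1_n \H^1(L,T/p^n T) \ra \H^2(L,T) \ra \limproj_n \H^2(L,T/p^n T) \ra 0,
\]
collapses the right\=/hand term to \(0\) and reduces the problem to the vanishing of the \(\limproj^1\)\=/term.

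The final step is a Mittag-Leffler argument. The short exact sequences
\[
	0 \ra T/pT \ra T/p^{n+1}T \ra T/p^n T \ra 0
\]
have long exact cohomology sequences ending in \(\H^2(L,T/pT)\), which again vanishes by Proposition~\ref{prop:cohomdim}; hence the transition maps \(\H^1(L,T/p^{n+1}T) \ra \H^1(L,T/p^n T)\) are all surjective. The inverse system \(\{\H^1(L,T/p^n T)\}_n\) therefore satisfies the Mittag-Leffler condition, so \(\limproj^1=0\), whence \(\H^2(L,T)=0\) and the proof concludes. I expect the main obstacle to be precisely this passage from the finite\=/level vanishing to the integral statement \(\H^2(L,T)=0\): because \(L\) is an infinite extension the groups \(\H^1(L,T/p^n T)\) need not be finite, so the vanishing of the \(\limproj^1\)\=/term is not automatic and must be secured through the surjectivity of the transition maps rather than through a finiteness hypothesis.
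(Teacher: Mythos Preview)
Your argument is correct, and it is more direct than the paper's. The paper does not pass through \(\H^2(L,T)\) via the short exact sequence in the statement; instead it uses the full \(3\times 3\) diagram built from the two rows
\[
0 \ra V \ra E_+(V) \ra \h{t}_V(\Qpbar) \ra 0, \qquad 0 \ra V/T \ra E_+(V/T) \ra \h{t}_V(\Qpbar) \ra 0
\]
and the column \(0 \ra T \ra V \ra V/T \ra 0\). Applying \(\H^1(L,-)\) and using both Lemma~\ref{lemm:tvhat} (to kill \(\H^1(L,\h{t}_V(\Qpbar))\), making the horizontal maps surjective) and Proposition~\ref{prop:cohomdim} (to make \(\H^1(L,V)\ra\H^1(L,V/T)\) surjective) then gives the result by a diagram chase. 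Your route bypasses Lemma~\ref{lemm:tvhat} entirely and isolates the single input \(\H^2(L,T)=0\); your Mittag--Leffler argument for this vanishing is exactly what the paper leaves implicit when it invokes Proposition~\ref{prop:cohomdim}. The one point you pass over a bit quickly is why the sequence \(0\ra T\ra E_+(V)\ra E_+(V/T)\ra 0\) admits a continuous set-theoretic section, which the appendix requires to extend the long exact sequence past degree~\(1\): this holds because the finite-dimensional subspace \(V\subset E_+(V)\) is complemented in the Banach space \(E_+(V)\), and \(V\ra V/T\) has a continuous section since \(V/T\) is discrete. The paper's detour through the \(3\times 3\) diagram has the small advantage of only using sequences whose quotient is either discrete or for which only the six-term sequence is needed, so this section issue never arises there.
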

\begin{proof}
	By Proposition~\ref{prop:cohomdim} and Lemma~\ref{lemm:tvhat}, the exact commutative diagram
	\[
		\begin{tikzcd}
			& 0 \ar{d} & 0 \ar{d} & & \\
			& T \ar[equal]{r} \ar{d} & T \ar{d} & & \\
			0 \ar{r} & V \ar{r} \ar{d} & E_+(V) \ar{r} \ar{d} & \h{t}_V(\Qpbar) \ar{r} \ar[equal]{d} & 0\\
			0 \ar{r} & V/T \ar{r} \ar{d} & E_+(V/T) \ar{r} \ar{d} & \h{t}_V(\Qpbar) \ar{r} & 0\\
			& 0 & 0 & &
		\end{tikzcd}
	\]
	induces the cohomological exact commutative diagram
	\[
		\begin{tikzcd}
			\H^1(L,V) \ar{r} \ar{d} & \H^1(L,E_+(V)) \ar{r} \ar{d} & 0 \\
			\H^1(L, V/T) \ar{r} \ar{d} & \H^1(L,E_+(V/T)) \ar{r} & 0\\
			0 & & ,
		\end{tikzcd}
	\]
	from which we deduce the surjectivity of the rightmost vertical map
	\[
		\H^1(L,E_+(V)) \ra \H^1(L,E_+(V/T)) \ra 0.
	\]
\end{proof}

\begin{prop} \label{prop:perfectoid}
	Let \(V\) be a de Rham \(p\)\=/adic representation of \(\GG_K\).
	If \(\h{L}\) is a perfectoid field, then
	\[
		\H^1(L,E_+(V_0)) = 0.
	\]
\end{prop}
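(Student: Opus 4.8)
The plan is to combine the geometric description of $E_+(V_0)$ as the global sections of a vector bundle with the vanishing criterion of Fargues and Fontaine for sheaves of positive slope. The whole argument is short: it is essentially an assembly of results already established, and the key conceptual point is to recognise that $V_0$ is \emph{already equal to its own truncation datum}, which is exactly what forces positivity of all slopes.

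First I would observe that $V_0$ is itself de Rham. Indeed, $V_0 \subset V$ is a sub-$\GG_K$-representation, so $\EE(V_0) \subset \EE(V)$ is a sub-object in $\Bun(\GG_K)$; since $\EE(V)$ is de Rham (Remark~\ref{rema:DdRRep}), Proposition~\ref{prop:DdR} shows $\EE(V_0)$ is de Rham, and hence so is $V_0$. Consequently the modification by truncation $\EE_+(V_0)$ is defined and Proposition~\ref{prop:almostcoh} applies, giving a canonical isomorphism of topological $\GG_K$-modules
\[
	E_+(V_0) \simeq \Gamma(\XFF, \EE_+(V_0)).
\]

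Next I would invoke Lemma~\ref{lemm:V0}, which asserts precisely that $(V_0)_0 = V_0$. This is the crucial input: it lets me apply the \emph{sufficiency} direction of Proposition~\ref{prop:HN0V0} with $V_0$ in place of $V$. That proposition states that the Harder-Narasimhan slopes of $\EE_+(W)$ are strictly greater than $0$ if and only if $W = W_0$; taking $W = V_0$ and using $(V_0)_0 = V_0$, I conclude that all Harder-Narasimhan slopes of $\EE_+(V_0)$ are strictly positive.

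Finally, with the slopes of $\EE_+(V_0)$ all strictly greater than $0$ and $\h{L}$ a perfectoid field, Proposition~\ref{prop:FFsplit} yields $\H^1(L, \Gamma(\XFF, \EE_+(V_0))) = 0$. Transporting this through the isomorphism $E_+(V_0) \simeq \Gamma(\XFF, \EE_+(V_0))$ gives $\H^1(L, E_+(V_0)) = 0$, as desired. There is no genuine obstacle in this proof; the only subtlety worth flagging explicitly is the verification that $V_0$ is de Rham (so that $\EE_+(V_0)$ even makes sense), together with the correct bookkeeping that Lemma~\ref{lemm:V0} is exactly the hypothesis needed to activate the strict-positivity half of Proposition~\ref{prop:HN0V0}.
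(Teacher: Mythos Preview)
Your proof is correct and follows essentially the same approach as the paper: identify $E_+(V_0)$ with $\Gamma(\XFF,\EE_+(V_0))$ via Proposition~\ref{prop:almostcoh}, use Proposition~\ref{prop:HN0V0} (together with $(V_0)_0=V_0$ from Lemma~\ref{lemm:V0}) to get strict positivity of the slopes, and conclude with Proposition~\ref{prop:FFsplit}. The only addition you make is the explicit verification that $V_0$ is de Rham, which the paper leaves implicit; this is harmless and indeed a reasonable point to spell out.
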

\begin{proof}
	By Proposition~\ref{prop:almostcoh}, we have
	\[
		E_+(V_0) \simeq \Gamma(\XFF, \EE_+(V_0)),
	\]
	and the Harder-Narasimhan slopes of \(\EE_+(V_0)\) are strictly greater than \(0\) by Proposition~\ref{prop:HN0V0}.
	Therefore, by Proposition~\ref{prop:FFsplit}, we have
	\[
		\H^1(L,E_+(V_0)) \simeq \H^1(L,\Gamma(\XFF,\EE_+(V_0)) = 0.
	\]
\end{proof}

\begin{coro} \label{coro:perfectoidplus}
	Let \(V\) be a de Rham \(p\)\=/adic representation of \(\GG_K\).
	Let \(T\) be a \(\Zp\)\=/lattice in \(V\) stable under the action of \(\GG_K\).
	If \(\h{L}\) is a perfectoid field, then
	\begin{enumerate}
		\item \(\H^1(L,E_+(V_0/T_0)) = 0\),
		\item \(\H^1(L,E_+(V/T)) \simeq \H^1(L,(V/V_0)/(T/T_0))\).
	\end{enumerate}
\end{coro}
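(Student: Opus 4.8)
The plan is to deduce both statements from Corollary~\ref{coro:exactV0}, Lemma~\ref{lemm:perfectoid} and Proposition~\ref{prop:perfectoid}, the last two of which already package the perfectoid hypothesis into vanishing of cohomology.

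For (i) I would specialise Lemma~\ref{lemm:perfectoid} to the pair \((V_0,T_0)\) in place of \((V,T)\): the short exact sequence \(0 \ra T_0 \ra E_+(V_0) \ra E_+(V_0/T_0) \ra 0\) induces a surjection \(\H^1(L,E_+(V_0)) \twoheadrightarrow \H^1(L,E_+(V_0/T_0))\). Since \(\H^1(L,E_+(V_0))=0\) by Proposition~\ref{prop:perfectoid}, the target vanishes, which is exactly (i).

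For (ii) I would extract from the middle column of the diagram in Corollary~\ref{coro:exactV0} the short exact sequence of topological \(\GG_K\)-modules
\[
	0 \ra E_+(V_0/T_0) \ra E_+(V/T) \ra (V/V_0)/(T/T_0) \ra 0,
\]
where I use the identification \(E_+((V/V_0)/(T/T_0)) \simeq (V/V_0)/(T/T_0)\) recorded there, valid because the Hodge-Tate weights of \(V/V_0\) are \(\leq 0\). Restricting to \(\GG_L\) and taking continuous cohomology gives the exact sequence
\[
	\H^1(L,E_+(V_0/T_0)) \ra \H^1(L,E_+(V/T)) \ra \H^1(L,(V/V_0)/(T/T_0)) \xra{\partial} \H^2(L,E_+(V_0/T_0)).
\]
Part (i) kills the first term, so the middle arrow is injective; it is the desired isomorphism exactly when \(\partial=0\), and for that it suffices to prove \(\H^2(L,E_+(V_0/T_0))=0\).

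This degree-two vanishing is the crux, and I would read it off the defining sequence \(0 \ra V_0/T_0 \ra E_+(V_0/T_0) \ra \h{t}_{V_0}(\Qpbar) \ra 0\). The module \(V_0/T_0\) is discrete and \(p\)-primary torsion, and the \(p\)-cohomological dimension of \(L\) is \(\leq 1\) (Proposition~\ref{prop:cohomdim}), so \(\H^2(L,V_0/T_0)=0\); hence \(\H^2(L,E_+(V_0/T_0))\) injects into \(\H^2(L,\h{t}_{V_0}(\Qpbar))\). Now \(\h{t}_{V_0}(\Qpbar)\) is a finite-length \(\BdR^+\)-representation, isomorphic to \(\bigoplus_n \B_n^{m_n(V_0)}\) by Corollary~\ref{coro:Bn}, so by filtering each \(\B_n\) with graded pieces \(\Cp(j)\) one reduces to \(\H^2(L,\Cp(j))=0\). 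The main obstacle is precisely this last vanishing: Remark~\ref{rema:FST} as stated only provides \(\H^1(L,\h{t}_{V_0}(\Qpbar))=0\), so to reach degree two one must either invoke the Fontaine--Sen--Tate vanishing in all positive degrees for finite-length \(\BdR^+\)-modules over a perfectoid field, or re-derive the degree-two case directly --- for instance by passing to the tilt \(\h{L}^\flat\), whose absolute Galois group is \(\GG_L\), and combining almost purity with an inverse-limit argument on \(\OO_{\Cp}/p^n\). Granting this, \(\partial=0\) and the middle arrow above is the claimed isomorphism of (ii).
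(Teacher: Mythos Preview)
Your proof of (i) is correct and identical to the paper's.

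For (ii), your argument establishes injectivity of \(\H^1(L,E_+(V/T)) \ra \H^1(L,(V/V_0)/(T/T_0))\) exactly as needed, but surjectivity is left resting on the vanishing of \(\H^2(L,\h{t}_{V_0}(\Qpbar))\), which --- as you yourself note --- is not provided anywhere in the paper. Proposition~\ref{prop:cohomdim} only controls cohomology of \emph{discrete} \(p\)\nobreakdash-primary torsion modules, and Remark~\ref{rema:FST} only gives degree-one vanishing for finite-length \(\BdR^+\)-modules. So as written this is a genuine gap: you would have to import an external input (higher Tate--Sen vanishing over perfectoid bases) that the paper has deliberately avoided stating or proving.

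The paper sidesteps this entirely, and the trick is worth knowing: instead of using only the middle column of the diagram in Corollary~\ref{coro:exactV0}, use the whole diagram. Its \emph{left} column is the short exact sequence of discrete \(p\)\nobreakdash-torsion modules \(0 \ra V_0/T_0 \ra V/T \ra (V/V_0)/(T/T_0) \ra 0\), so Proposition~\ref{prop:cohomdim} gives \(\H^2(L,V_0/T_0)=0\) and hence \(\H^1(L,V/T) \twoheadrightarrow \H^1(L,(V/V_0)/(T/T_0))\). By commutativity this surjection factors through \(\H^1(L,E_+(V/T))\), which immediately forces \(\H^1(L,E_+(V/T)) \ra \H^1(L,(V/V_0)/(T/T_0))\) to be surjective --- no \(\H^2\) of \(E_+\) or of \(\h{t}_{V_0}(\Qpbar)\) is ever needed. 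Combined with your injectivity argument (or an equally short diagram chase using (i) and the surjectivity of \(\H^1(L,V/T)\ra\H^1(L,E_+(V/T))\)), this finishes (ii) with only the tools already on the table.
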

\begin{proof}
	The first assertion follows from Lemma~\ref{lemm:perfectoid} and Proposition~\ref{prop:perfectoid}.
	We prove the second one.
	By Corollary~\ref{coro:exactV0}, there is an exact commutative diagram
	\[
		\begin{tikzcd}
			& 0 \ar{d} & 0 \ar{d} &  & \\
			0 \ar{r} & V_0/T_0 \ar{r} \ar{d} & E_+(V_0/T_0) \ar{r} \ar{d} & \h{t}_{V_0}(\Qpbar) \ar{r} \ar{d}[sloped]{\sim} & 0\\
			0 \ar{r} & V/T \ar{r} \ar{d} & E_+(V/T) \ar{r} \ar{d} & \h{t}_{V}(\Qpbar) \ar{r} & 0\\
						& \frac{V/V_0}{T/T_0} \ar{r}{\sim} \ar{d} & E_+\left(\frac{V/V_0}{T/T_0}\right) \ar{d} &  & \\
			& 0  & 0 &  & ,
		\end{tikzcd}
	\]
	which, by Proposition~\ref{prop:cohomdim} and Lemma~\ref{lemm:tvhat}, induces the exact commutative diagram
	\begin{equation} \label{eq:diagV0}
		\begin{tikzcd}
			\H^1(L,V_0/T_0) \ar{r} \ar{d} & \H^1(L,E_+(V_0/T_0)) \ar{r} \ar{d} & 0\\
			\H^1(L,V/T) \ar{r} \ar{d} & \H^1(L,E_+(V/T)) \ar{r} \ar{d} & 0\\
		\H^1\left(L,\frac{V/V_0}{T/T_0}\right) \ar{r}{\sim} \ar{d} & \H^1\left(L,E_+\left(\frac{V/V_0}{T/T_0}\right)\right) &  & \\
			0  &  & .
		\end{tikzcd}
	\end{equation}
	The group \(\H^1(L,E_+(V_0/T_0))\) is trivial by the first point and we conclude using the commutativity of the diagram~\eqref{eq:diagV0}.
\end{proof}

\subsection{Universal norms} \label{subsec:universalnorms}
If \(V\) is a de Rham \(p\)\=/adic representation of \(\GG_K\), and \(T\) is a \(\Zp\)\=/lattice in \(V\) stable under the action of \(\GG_K\), and if \(L\) is an algebraic extension of \(K\), then the short exact sequence
\[
	0 \ra V_0/T_0 \ra V/T \ra (V/V_0)/(T/T_0) \ra 0
\]
induces the cohomological exact sequence
\[
	\H^1(L,V_0/T_0) \xra{\lambda_L} \H^1(L,V/T) \xra{\pi_L} \H^1(L,(V/V_0)/(T/T_0)).
\]

\begin{theo} \label{theo:principal}
	Let \(V\) be a de Rham \(p\)\=/adic representation of \(\GG_K\).
	Let \(T\) be a \(\Zp\)\=/lattice in \(V\) stable under the action of \(\GG_K\).
	Let \(L\) be an algebraic extension of \(K\).
	If \(\h{L}\) is a perfectoid field, then the exact commutative diagram
	\[
		\begin{tikzcd}
			0 \ar{r} & V/T \ar{r} & E_+(V/T) \ar{r} & \h{t}_V(\Qpbar) \ar{r} & 0\\
			0 \ar{r} & V/T \ar{r} \ar[equal]{u} & E_\discu(V/T) \ar{r} \ar{u} & t_V(\Qpbar) \ar{r} \ar{u} & 0
		\end{tikzcd}
	\]
	induces the exact commutative diagram
	\[
		\begin{tikzcd}
			0 \ar{r} & \Ker(\pi_L) \ar{r} & \H^1(L,V/T) \ar{r} & \H^1(L,E_+(V/T)) \ar{r} & 0\\
			0 \ar{r} & \H^1_e(L,V/T) \ar{r} \ar{u} & \H^1(L,V/T) \ar{r} \ar[equal]{u} & \H^1(L,E_\discu(V/T)) \ar{r} \ar{u} & 0.
		\end{tikzcd}
	\]
\end{theo}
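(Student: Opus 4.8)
The plan is to obtain the asserted commutative diagram by feeding the given morphism of short exact sequences into the long exact cohomology sequence and then invoking the vanishing and comparison results already proved. The bottom row is exactly Proposition~\ref{prop:BK}: the short exact sequence of discrete \(\GG_K\)\=/modules \(0 \to V/T \to E_\discu(V/T) \to t_V(\Qpbar) \to 0\) yields
\[
	0 \ra \H^1_e(L,V/T) \ra \H^1(L,V/T) \xra{a'} \H^1(L,E_\discu(V/T)) \ra 0,
\]
so that \(\H^1_e(L,V/T) = \Ker(a')\). For the top row I would apply the long exact sequence (in the continuous cohomology of the appendix) to \(0 \to V/T \to E_+(V/T) \to \h{t}_V(\Qpbar) \to 0\); writing \(a\) for the induced map \(\H^1(L,V/T) \to \H^1(L,E_+(V/T))\), the relevant segment is \(\H^1(L,V/T) \xra{a} \H^1(L,E_+(V/T)) \to \H^1(L,\h{t}_V(\Qpbar))\). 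By Lemma~\ref{lemm:tvhat} the last group vanishes, so \(a\) is surjective, giving exactness of the top row at its right-hand end.

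The crux is to identify \(\Ker(a)\) with \(\Ker(\pi_L)\). For this I would reuse the commutative diagram constructed in the proof of Corollary~\ref{coro:perfectoidplus}, which places \(a\) in the commutative square
\[
	\begin{tikzcd}
		\H^1(L,V/T) \ar{r}{a} \ar{d}[swap]{\pi_L} & \H^1(L,E_+(V/T)) \ar{d}{\sim}\\
		\H^1\left(L,\frac{V/V_0}{T/T_0}\right) \ar{r}{\sim} & \H^1\left(L,E_+\left(\frac{V/V_0}{T/T_0}\right)\right),
	\end{tikzcd}
\]
in which the right vertical arrow is the isomorphism of Corollary~\ref{coro:perfectoidplus}~(ii) and the bottom arrow is the isomorphism induced by the identification \((V/V_0)/(T/T_0) \simeq E_+((V/V_0)/(T/T_0))\) of Corollary~\ref{coro:exactV0} (valid because the Hodge-Tate weights of \(V/V_0\) are \(\leq 0\)). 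Since both are isomorphisms, the square forces \(\Ker(a) = \Ker(\pi_L)\); this gives exactness of the top row in the middle, and (using that \(\pi_L\) is surjective because \(\H^2(L,V_0/T_0)=0\) by Proposition~\ref{prop:cohomdim}) re-proves the surjectivity of \(a\).

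It then remains to check the two squares of the target diagram. The middle vertical arrow is the identity and the right vertical arrow is induced by the completion map \(E_\discu(V/T) \to E_+(V/T)\); since the triangle \(V/T \to E_\discu(V/T) \to E_+(V/T)\) of \(\GG_K\)\=/modules commutes, functoriality of \(\H^1\) shows that \(a\) factors as \(\H^1(L,V/T) \xra{a'} \H^1(L,E_\discu(V/T)) \to \H^1(L,E_+(V/T))\), so the right square commutes. Consequently every class in \(\H^1_e(L,V/T)=\Ker(a')\) lies in \(\Ker(a)=\Ker(\pi_L)\); this containment lets the inclusion \(\H^1_e(L,V/T)\hookrightarrow \H^1(L,V/T)\) factor through \(\Ker(\pi_L)\), defining the left vertical arrow and making the left square commute. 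Assembling these observations produces the diagram.

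The one step I expect to require genuine care, rather than formal manipulation, is the kernel identification \(\Ker(a)=\Ker(\pi_L)\): one must verify that the map \(a\) arising from \(V/T\hookrightarrow E_+(V/T)\) is literally the arrow appearing in the diagram behind Corollary~\ref{coro:perfectoidplus}, and that both comparison arrows there are isomorphisms. A secondary technical point is the availability of the long exact sequence for the topological short exact sequence \(0 \to V/T \to E_+(V/T) \to \h{t}_V(\Qpbar) \to 0\), where \(V/T\) is discrete while \(E_+(V/T)\) carries the completed topology; this is exactly what the formalism of continuous cohomology in the appendix is designed to supply.
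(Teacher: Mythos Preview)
Your proof is correct and follows precisely the route the paper takes: its proof of Theorem~\ref{theo:principal} is a one-line invocation of Proposition~\ref{prop:BK}, Lemma~\ref{lemm:tvhat}, and Corollary~\ref{coro:perfectoidplus}, and you have simply unpacked how those three ingredients assemble into the diagram. Your identification $\Ker(a)=\Ker(\pi_L)$ via the commutative square from the proof of Corollary~\ref{coro:perfectoidplus} is exactly what the paper has in mind, and your remarks on the commutativity of the two squares and on the continuous-cohomology formalism are accurate (note that the six-term sequence~\eqref{eq:h0h1} of the appendix does not even require a continuous section, so that technical worry is milder than you suggest).
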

\begin{proof}
	The theorem follows from the combination of Proposition~\ref{prop:BK}, Lemma~\ref{lemm:tvhat} and Corollary~\ref{coro:perfectoidplus}.
\end{proof}

\begin{coro} \label{coro:snake}
	Under the hypothesis of Theorem~\ref{theo:principal}, there is an isomorphism
	\[
		\Ker(\pi_L)/\H^1_e(L,V/T) \simeq \Ker\left(\H^1(L,E_\discu(V/T)) \ra \H^1(L,E_+(V/T))\right).
	\]
\end{coro}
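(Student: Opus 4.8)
The plan is to read the statement off the snake lemma applied to the commutative diagram with exact rows supplied by Theorem~\ref{theo:principal}. Viewing the three vertical arrows---which point upward, from the bottom short exact sequence to the top one---as a morphism of short exact sequences, write $f \colon \H^1_e(L,V/T) \ra \Ker(\pi_L)$ for the left arrow, $g \colon \H^1(L,V/T) \ra \H^1(L,V/T)$ for the middle arrow, and $h \colon \H^1(L,E_\discu(V/T)) \ra \H^1(L,E_+(V/T))$ for the right arrow. The snake lemma yields the exact sequence
\[
	0 \ra \Ker f \ra \Ker g \ra \Ker h \ra \Coker f \ra \Coker g \ra \Coker h \ra 0.
\]

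First I would identify the six terms. The middle arrow $g$ is the identity (the equality in the diagram), so $\Ker g = \Coker g = 0$. The left arrow $f$ is the inclusion $\H^1_e(L,V/T) \hookrightarrow \Ker(\pi_L)$: since $g = \id$ and the right-hand square commutes, the image in $\H^1(L,E_+(V/T))$ of a class $x \in \H^1_e(L,V/T) \subset \H^1(L,V/T)$ is $h$ evaluated at the image of $x$ in $\H^1(L,E_\discu(V/T))$, and the latter vanishes because $x$ lies in the kernel $\H^1_e(L,V/T)$ of the bottom-row map; hence $x \in \Ker(\pi_L)$, so that $f$ is injective with $\Coker f = \Ker(\pi_L)/\H^1_e(L,V/T)$. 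Finally, by construction $\Ker h$ is the kernel $\Ker\bigl(\H^1(L,E_\discu(V/T)) \ra \H^1(L,E_+(V/T))\bigr)$ appearing in the statement.

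Substituting $\Ker f = \Ker g = \Coker g = 0$ into the six-term sequence collapses it to $0 \ra \Ker h \ra \Coker f \ra 0$ (and yields $\Coker h = 0$ as a by-product), whence the connecting homomorphism furnishes the desired isomorphism
\[
	\Ker(\pi_L)/\H^1_e(L,V/T) \simeq \Ker\bigl(\H^1(L,E_\discu(V/T)) \ra \H^1(L,E_+(V/T))\bigr).
\]
I anticipate no genuine obstacle: the arithmetic content has already been absorbed into Theorem~\ref{theo:principal}, and what remains is a formal diagram chase. The only point meriting attention is the upward orientation of the vertical arrows, which is what makes the relevant term the cokernel $\Coker f$ rather than a kernel, and thereby produces the quotient $\Ker(\pi_L)/\H^1_e(L,V/T)$ on the correct side of the isomorphism.
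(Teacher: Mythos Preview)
Your proof is correct and follows exactly the paper's approach: the paper's own proof is the single sentence ``The snake lemma applied to the diagram of Theorem~\ref{theo:principal} yields the desired isomorphism,'' and you have simply unpacked that diagram chase in detail. Your observation about the upward orientation of the arrows (so that the relevant term is \(\Coker f\)) is the only subtlety, and you handle it correctly; note that the injectivity of \(f\) is in fact automatic from the snake sequence since \(\Ker f \hookrightarrow \Ker g = 0\).
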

\begin{proof}
The snake lemma applied to the diagram of Theorem~\ref{theo:principal} yields the desired isomorphism.
\end{proof}

We compute the right-hand side of the isomorphism from Corollary~\ref{coro:snake} under the strong additional assumption that the Hodge-Tate weights of the representation are less than or equal to \(1\).

\begin{theo} \label{theo:precis}
	Let \(V\) be a de Rham \(p\)\=/adic representation of \(\GG_K\).
	Let \(T\) be a \(\Zp\)\=/lattice in \(V\) stable under the action of \(\GG_K\).
	Let \(L\) be an algebraic extension of \(K\).
	Assume the Hodge-Tate weights of \(V\) are less than or equal to \(1\).
	If \(\h{L}\) is a perfectoid field, then there is a short exact sequence
	\[
		0 \ra \H^1_e(L,V/T) \ra \H^1(L,V/T) \xra{\pi_L} \H^1(L,(V/V_0)/(T/T_0)) \ra 0.
	\]
\end{theo}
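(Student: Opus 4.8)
The plan is to read the desired short exact sequence off Theorem~\ref{theo:principal}, the only additional input being the injectivity of the completion map \(\xi_L \colon \H^1(L,E_\discu(V/T)) \to \H^1(L,E_+(V/T))\); the hypothesis that the Hodge-Tate weights are \(\leq 1\) will be used solely to establish this injectivity. The surjectivity of \(\pi_L\) is the formal half and is valid for any de Rham \(V\): the top row of the diagram in Theorem~\ref{theo:principal} presents \(\H^1(L,V/T) \to \H^1(L,E_+(V/T))\) as a surjection with kernel \(\Ker(\pi_L)\), and Corollary~\ref{coro:perfectoidplus} identifies its target with \(\H^1(L,(V/V_0)/(T/T_0))\) compatibly, so that this surjection is identified with \(\pi_L\). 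Thus \(\pi_L\) is onto, and the whole statement reduces to the exactness \(\Ker(\pi_L) = \H^1_e(L,V/T)\); by Corollary~\ref{coro:snake} this is in turn equivalent to \(\Ker(\xi_L) = 0\).

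To analyse \(\xi_L\) I would work with the completion quotient. Since \(E_\discu(V/T) \hookrightarrow E_+(V/T)\) is the dense completion inclusion and both groups share the subobject \(V/T\), the quotient is \(C = E_+(V/T)/E_\discu(V/T) \simeq \h{t}_V(\Qpbar)/t_V(\Qpbar)\). Here the weight hypothesis enters: by Corollary~\ref{coro:Bn} the only weight contributing to \(\h{t}_V(\Qpbar)\) is \(1\) (weight \(0\) gives \(\B_0 = 0\), and there are no higher weights), whence \(\h{t}_V(\Qpbar) \simeq \Cp \otimes_K \Gr^{-1}\DdR(V)\) and \(C \simeq (\Cp/\Qpbar) \otimes_K \Gr^{-1}\DdR(V)\). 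Feeding \(\H^1(L,\Cp) = 0\) (Lemma~\ref{lemm:tvhat}) and \(\mathrm{cd}_p(L) \leq 1\) (Proposition~\ref{prop:cohomdim}) into the long exact sequence of \(0 \to E_\discu(V/T) \to E_+(V/T) \to C \to 0\) gives \(\H^1(L,C) = 0\) (so that \(\xi_L\) is automatically surjective) and reduces \(\Ker(\xi_L) = 0\) to the vanishing of the connecting map \(\H^0(L,C) \to \H^1(L,E_\discu(V/T))\), that is, to the surjectivity of
\[
	E_+(V/T)^{\GG_L} \longrightarrow C^{\GG_L} = (\h{L}/L) \otimes_K \Gr^{-1}\DdR(V).
\]

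Proving this surjectivity is the heart of the matter and the step I expect to be hardest. It asks that every \(\GG_L\)-invariant vector in the completed weight-\(1\) tangent direction be realised, modulo the field \(L\) itself, by an honest invariant point of \(E_+(V/T)\); this is a deeply ramified phenomenon in the spirit of Coates and Greenberg, and it is exactly where perfectoidness of \(\h{L}\) is indispensable. I would make it precise through the topological structure on Galois cohomology developed in the appendix: \(E_+(V/T)\) is the Hausdorff completion of \(E_\discu(V/T)\), and the point is to show that passing to this completion creates no kernel on \(\H^1\), i.e. a separatedness statement for \(\H^1(L,E_\discu(V/T))\) in its natural topology, combined with the surjectivity of the \(p\)-adic exponential along the \(\Cp = \B_1\) directions over a perfectoid field. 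The confinement to weights \(\leq 1\) is essential precisely because it limits \(\h{t}_V(\Qpbar)\) to the weight-\(1\) part \(\B_1 = \Cp\): a weight \(\geq 2\) would bring in the modules \(\B_n\) with \(n \geq 2\), whose nontrivial internal filtration obstructs the lifting, so that \(\xi_L\) acquires a genuine kernel and only the weaker Theorem~\ref{theo:principal} survives.
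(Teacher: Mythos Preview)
Your plan coincides with the paper's up to the decisive step. The paper gets surjectivity of \(\pi_L\) directly from Proposition~\ref{prop:cohomdim}, and then, exactly as you do, reduces everything to \(\Ker(\pi_L)=\H^1_e(L,V/T)\), equivalently \(\Ker(\xi_L)=0\) via Corollary~\ref{coro:snake}. Your use of the weight hypothesis to force \(\h{t}_V(\Qpbar)=t_V(\Cp)\) (only the \(\B_1=\Cp\) summand survives in Corollary~\ref{coro:Bn}) is also the paper's observation.

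The gap is that you do not prove the key step, and your intended route to it has a genuine obstruction. The six\-/term sequence of Appendix~\ref{apdx:contcohom} requires the subspace topology on \(M'\subset M\) to agree with the given topology on \(M'\); here \(E_\discu(V/T)\) carries the \emph{discrete} topology while its image in \(E_+(V/T)\) is dense, so the hypothesis fails (and the quotient topology on \(C\) is indiscrete). More concretely, your connecting map \(C^{\GG_L}\to\H^1(L,E_\discu(V/T))\) would send \(\bar e\) to the class of \(g\mapsto g\cdot e-e\) for a lift \(e\in E_+(V/T)\); this cocycle is continuous into \(E_+(V/T)\) but there is no reason it should be \emph{locally constant}, which is what landing in \(\H^1\) of the discrete module \(E_\discu(V/T)\) requires. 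The same issue blocks your computation of \(\H^1(L,C)\) via \(0\to\Qpbar\to\Cp\to\Cp/\Qpbar\to 0\). (Also, the surjectivity of \(\xi_L\) already follows from the diagram of Theorem~\ref{theo:principal}, so the detour through \(\H^1(L,C)\) is unnecessary.) Finally, your ``separatedness of \(\H^1(L,E_\discu(V/T))\)'' heuristic is off target: that group is discrete by construction, so separatedness is trivial and carries no information.

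The paper's argument for the key step is short and sidesteps \(C\) entirely. From Theorem~\ref{theo:principal} together with Ax--Sen--Tate (\(\h{t}_V(\Qpbar)^{\GG_L}=t_V(\h L)\)) one has the commuting exact rows
\[
\begin{tikzcd}[column sep=small]
0 \ar{r} & (V/T)^{\GG_L} \ar{r} & E_+(V/T)^{\GG_L} \ar{r} & t_V(\h{L}) \ar{r} & \Ker(\pi_L) \ar{r} & 0\\
0 \ar{r} & (V/T)^{\GG_L} \ar{r} \ar[equal]{u} & E_\discu(V/T)^{\GG_L} \ar{r} \ar{u} & t_V(L) \ar{r} \ar{u} & \H^1_e(L,V/T) \ar{r} \ar{u} & 0,
\end{tikzcd}
\]
and one reads it as a diagram of \emph{topological} groups (Appendix~\ref{apdx:contcohom}). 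The rightmost column sits inside the discrete group \(\H^1(L,V/T)\); the connecting map \(t_V(\h L)\to\Ker(\pi_L)\) is continuous; and \(t_V(L)\) is dense in \(t_V(\h L)\). A continuous map to a discrete target has the same image on any dense subset, so the image of \(t_V(\h L)\) is already \(\H^1_e(L,V/T)\); since the top connecting map is surjective, \(\Ker(\pi_L)=\H^1_e(L,V/T)\). This is the precise form of the phenomenon you were reaching for, executed without ever forming the ill\-/behaved quotient \(C\).
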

\begin{proof}
	Proposition~\ref{prop:cohomdim} implies the surjectivity of \(\pi_L\).
	We prove that \(\H^1_e(L,V/T) = \Ker(\pi_L)\).
	By Corollary~\ref{coro:Bn}, the hypothesis on the Hodge-Tate weights of \(V\) implies that
	\[
		\h{t}_V(\Qpbar) = t_V(\Cp).
	\]
	Thus, by Ax-Sen-Tate theorem~\cite{Tate67}, we have
	\begin{equation} \label{eq:H0tvCp}
		\H^0(L,\h{t}_V(\Qpbar)) = \H^0(L,t_V(\Cp)) = t_V(\h{L}).
	\end{equation}
	By Theorem~\ref{theo:principal} and the equation~\eqref{eq:H0tvCp}, there is an exact commutative diagram of Abelian groups
	\begin{equation} \label{eq:continuity}
		\begin{tikzcd}
			0 \ar{r} & (V/T)^{\GG_L} \ar{r} & E_+(V/T)^{\GG_L} \ar{r} & t_V(\h{L}) \ar{r} & \Ker(\pi_L) \ar{r} & 0\\
			0 \ar{r} & (V/T)^{\GG_L} \ar{r} \ar[equal]{u} & E_\discu(V/T)^{\GG_L} \ar{r} \ar{u} & t_V(L) \ar{r} \ar{u} & \H^1_e(L,V/T) \ar{r} \ar{u} & 0.
		\end{tikzcd}
	\end{equation}
	The diagram~\eqref{eq:continuity} can also be considered as a diagram of topological Abelian groups where \(\H^1_e(L,V/T) \subset \Ker(\pi_L) \subset \H^1(L,V/T)\) are discrete topological groups (see Appendix~\ref{apdx:contcohom}).
	Furthermore, \(t_V(L)\) is dense in \(t_V(\h{L})\).
	Therefore, by continuity, the diagram~\eqref{eq:continuity} yields
	\[
		\H^1_e(L,V/T) = \Ker(\pi_L).
	\]
\end{proof}

\begin{rema}
	While \(\Qpbar\) is dense in \(\BdR^+\) \cite{Colmez12}, if \(\h{L}\) is a perfectoid field, then \(L\) is in general not dense in \(\B_n^{\GG_L}\) for \(n > 1\) \cite{IovitaZaharescu99}, so the proof of Theorem~\ref{theo:precis} can not be extended to higher Hodge-Tate weights.
\end{rema}

\begin{rema} \label{rema:CGcomparaison}
	Let \(A\) be an Abelian variety defined over \(K\).
	We noted in Remarks~\ref{rema:Kummer} and~\ref{rema:connectedetale} that the representation \(V_p(A)\) is de Rham with Hodge-Tate weights \(0\) and \(1\), and that the exponential group coincides with the image of the Kummer map.
	Therefore, Theorem~\ref{theo:precis} applies to \(V_p(A)\), and we to recover Coates and Greenberg's result for Abelian varieties mentioned in the introduction (equation~\eqref{eq:CGtheo_intro}).
	Coates and Greenberg's original statement~\cite[Proposition~4.3]{CoatesGreenberg96} differs from the equivalent formulation given in the introduction on the following points.
	\begin{enumerate}
		\item Coates and Greenberg use the notion of \emph{deeply ramified extension} to state their result (notion which they also introduced), but \(L\) is a deeply ramified extension of \(K\) if and only if \(\h{L}\) is a perfectoid field~\cite[Remark~3.3]{Scholze12}.
		\item Instead of \(V_p(A)_0\), Coates and Greenberg use the representation \(V_p(A)^\prime\) defined as the minimal sub\=/\(\GG_K\)\=/representation of \(V_p(A)\) such that the inertia subgroup \(\II_K \subset \GG_K\) acts through a finite quotient on \(V_p(A)/V_p(A)^\prime\).
		We have \(V_p(A)_0 = V_p(A)^\prime\).
		Indeed, \(V_p(A)/V_p(A)_0\) is the maximal quotient of \(V_p(A)\) with unique Hodge-Tate weight \(0\), and Sen~\cite{Sen73} proved that if \(W\) is a de Rham \(p\)\=/adic representation of \(\GG_K\), then the inertia subgroup \(\II_K\) acts through a finite quotient on \(W\) if and only if \(0\) is the unique Hodge-Tate weight of \(W\).
	\end{enumerate}
\end{rema}

As a by-product of Theorem~\ref{theo:precis} and Corollary~\ref{coro:snake}, we obtain:
\begin{coro} \label{coro:EdiscequalEplus}
	Let \(V\) be a de Rham \(p\)\=/adic representation of \(\GG_K\).
	Let \(T\) be a \(\Zp\)\=/lattice in \(V\) stable under the action of \(\GG_K\).
	Let \(L\) be an algebraic extension of \(K\).
	Assume the Hodge-Tate weights of \(V\) are less than or equal to \(1\).
	If \(\h{L}\) is a perfectoid field, then
	\[
		\H^1(L,E_\discu(V/T)) \simeq \H^1(L,E_+(V/T)).
	\]
\end{coro}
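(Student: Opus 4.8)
The plan is to identify the comparison map as the right-hand vertical arrow of the diagram in Theorem~\ref{theo:principal}, and to prove it is simultaneously surjective and injective. Write \(\xi_L : \H^1(L,E_\discu(V/T)) \ra \H^1(L,E_+(V/T))\) for the map induced by the completion \(E_\discu(V/T) \ra E_+(V/T)\). By Theorem~\ref{theo:principal}, whose hypotheses are satisfied since \(\h{L}\) is perfectoid, this map sits in a morphism of short exact sequences whose two rows are
\[
	0 \ra \H^1_e(L,V/T) \ra \H^1(L,V/T) \ra \H^1(L,E_\discu(V/T)) \ra 0
\]
and
\[
	0 \ra \Ker(\pi_L) \ra \H^1(L,V/T) \ra \H^1(L,E_+(V/T)) \ra 0,
\]
in which the left vertical map is the inclusion \(\H^1_e(L,V/T) \hookrightarrow \Ker(\pi_L)\), the middle vertical map is the identity of \(\H^1(L,V/T)\), and the right vertical map is \(\xi_L\).

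Next I would feed this diagram into the snake lemma. Because the middle vertical arrow is the identity, its kernel and cokernel both vanish, so the six-term snake sequence collapses: it forces the inclusion on the left to be injective (which we already know), identifies \(\Ker(\xi_L)\) with the cokernel \(\Ker(\pi_L)/\H^1_e(L,V/T)\) of the left map — this is exactly the content of Corollary~\ref{coro:snake} — and, crucially, also forces \(\Coker(\xi_L) = 0\). Thus \emph{surjectivity} of \(\xi_L\) is automatic from the perfectoid hypothesis alone, before any assumption on the Hodge-Tate weights is imposed.

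It then remains to obtain injectivity, and here I would invoke Theorem~\ref{theo:precis}: under the additional assumption that the Hodge-Tate weights of \(V\) are less than or equal to \(1\), one has \(\H^1_e(L,V/T) = \Ker(\pi_L)\), so the quotient \(\Ker(\pi_L)/\H^1_e(L,V/T)\) is trivial. Via the identification from the previous step this gives \(\Ker(\xi_L) = 0\), and combined with the surjectivity already established, \(\xi_L\) is the desired isomorphism. The only substantive input is Theorem~\ref{theo:precis} itself — this is where the weight restriction enters, through the equality \(\h{t}_V(\Qpbar) = t_V(\Cp)\) of Corollary~\ref{coro:Bn} and the density/continuity argument comparing \(t_V(L)\) with \(t_V(\h{L})\); everything in the present corollary beyond that is a formal diagram chase, so I expect no genuine obstacle here.
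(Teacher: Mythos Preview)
Your argument is correct and matches the paper's approach exactly: the corollary is stated as an immediate by-product of Theorem~\ref{theo:precis} and Corollary~\ref{coro:snake}, and you have unpacked precisely that — the snake lemma applied to the diagram of Theorem~\ref{theo:principal} gives surjectivity of \(\xi_L\) and identifies \(\Ker(\xi_L)\) with \(\Ker(\pi_L)/\H^1_e(L,V/T)\), which vanishes by Theorem~\ref{theo:precis}.
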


The combination of Corollaries~\ref{coro:perfectoidplus} and~\ref{coro:EdiscequalEplus} yields:
\begin{coro} \label{coro:EdiscCG}
	Let \(V\) be a de Rham \(p\)\=/adic representation of \(\GG_K\).
	Let \(T\) be a \(\Zp\)\=/lattice in \(V\) stable under the action of \(\GG_K\).
	Let \(L\) be an algebraic extension of \(K\).
	Assume the Hodge-Tate weights of \(V\) are less than or equal to \(1\).
	If \(\h{L}\) is a perfectoid field, then
	\[
		\begin{split}
			\H^1(L,E_\discu(V_0/T_0)) & = 0\\
			\H^1(L,E_\discu(V/T)) & \simeq \H^1(L,(V/V_0)/(T/T_0)).
		\end{split}
	\]
\end{coro}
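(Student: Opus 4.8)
The plan is to derive both identities by formally combining Corollaries~\ref{coro:perfectoidplus} and~\ref{coro:EdiscequalEplus}; the only point requiring a small verification is that the hypothesis of Corollary~\ref{coro:EdiscequalEplus}, namely that the Hodge-Tate weights are at most \(1\), descends from \(V\) to the subrepresentation \(V_0\).

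First I would establish the vanishing \(\H^1(L,E_\discu(V_0/T_0))=0\). To apply Corollary~\ref{coro:EdiscequalEplus} to the pair \((V_0,T_0)\), I need the Hodge-Tate weights of \(V_0\) to be less than or equal to \(1\). This follows from the short exact sequence \(0 \ra V_0 \ra V \ra V/V_0 \ra 0\): by Proposition~\ref{prop:DdR} it induces an exact sequence of filtered \(K\)-vector spaces, so by Remark~\ref{rema:DdR} the weights of \(V_0\) form a subset (counted with multiplicity) of the weights of \(V\), hence are at most \(1\). Corollary~\ref{coro:EdiscequalEplus} applied to \((V_0,T_0)\) then gives \(\H^1(L,E_\discu(V_0/T_0)) \simeq \H^1(L,E_+(V_0/T_0))\), and the right-hand side vanishes by the first assertion of Corollary~\ref{coro:perfectoidplus}. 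This yields the first identity.

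Next I would prove the isomorphism \(\H^1(L,E_\discu(V/T)) \simeq \H^1(L,(V/V_0)/(T/T_0))\). Applying Corollary~\ref{coro:EdiscequalEplus} directly to \((V,T)\), whose weights are at most \(1\) by hypothesis, gives \(\H^1(L,E_\discu(V/T)) \simeq \H^1(L,E_+(V/T))\), and the second assertion of Corollary~\ref{coro:perfectoidplus} identifies the latter with \(\H^1(L,(V/V_0)/(T/T_0))\). Chaining the two isomorphisms gives the claim.

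Since every input is already in place, I do not expect a genuine obstacle here: the argument is essentially a two-step bookkeeping of the preceding corollaries. The one subtlety worth flagging is the descent of the weight hypothesis to \(V_0\) in the first part, which is precisely where Proposition~\ref{prop:DdR} and Remark~\ref{rema:DdR} enter; everything else is a direct substitution of previously established isomorphisms.
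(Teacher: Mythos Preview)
Your proposal is correct and follows exactly the paper's approach, which simply states that the corollary results from combining Corollaries~\ref{coro:perfectoidplus} and~\ref{coro:EdiscequalEplus}. Your explicit verification that the Hodge-Tate weights of \(V_0\) are at most \(1\) (via Proposition~\ref{prop:DdR} and Remark~\ref{rema:DdR}) makes explicit a point the paper leaves implicit, but the argument is otherwise identical.
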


\begin{rema} \label{rema:byproductEdisc}
	Let \(G\) is a \(p\)\=/divisible group defined over \(\OO_K\), and let \(L\) be an algebraic extension of \(K\).
	By Remarks~\ref{rema:VarAbel} and~\ref{rema:connectedetale}, we have
	\[
		E_\discu(V_p(G)/T_p(G)) \simeq G(\OO_{\Qpbar}), \text{ and } E_\discu(V_p(G)_0/T_p(G)_0) \simeq G^\circ(\OO_{\Qpbar}).
	\]
	Therefore, by Corollary~\ref{coro:EdiscCG},	if \(\h{L}\) is perfectoid, then
	\[
			\H^1(L,G^\circ) = 0, \text{ and } \H^1(L,G) \simeq \H^1(L,G^\et).
	\]
	Hence, we recover a theorem of Coates and Greenberg \cite[Theorem~3.1, Corollary~3.2]{CoatesGreenberg96} who computed these cohomology groups using explicit methods.
	Similarly, we recover the following result of Coates and Greenberg~\cite[Proposition~4.8]{CoatesGreenberg96} for Abelian varieties.
	Let \(A\) be an Abelian variety defined over \(K\).
	By Corollary~\ref{coro:EdiscCG}, if \(\h{L}\) is perfectoid, then
	\[
		\H^1(L,A^{(p)}) \simeq \H^1(L,(V_p(A)/V_p(A)_0)/(T_p(A)/T_p(A)_0)),
	\]
	and we recall that if \(A\) has good reduction and \(\tilde{A}\) is its reduction over \(k_K\), then
	\[
		\tilde{A}(k_{\Qpbar})[p^\infty] \simeq (V_p(A)/V_p(A)_0)/(T_p(A)/T_p(A)_0).
	\]
\end{rema}

If \(V\) is a \(p\)\=/adic representation of \(\GG_K\), and \(T\) a \(\Zp\)\=/lattice in \(V\) stable under the action of \(\GG_K\), then, for each \(\ast \in \{e,f,g\}\), the groups \(\H^1_\ast(K^\prime,T)\), where \(K^\prime\) is a finite extension of \(K\), are compatible under the corestriction maps.
If \(L\) is an algebraic extension of \(K\), the first \emph{Iwasawa cohomology group} of the extension \(L/K\) with coefficients in \(T\) is
\[
	\H^1_\Iw(L/K,T) = \limproj_{\cor, K^\prime} \H^1(K^\prime,T),
\]
and, for \(\ast \in \{e,f,g\}\), the module of \emph{\(\ast\)\=/universal norms} associated with \(T\) in the extension \(L/K\) is
\[
	\H^1_{\Iw, \ast}(L/K,T) = \limproj_{\cor, K^\prime} \H^1_\ast(K^\prime,T),
\]
where both limits are taken relatively to the corestriction maps and \(K^\prime\) runs through the finite extensions of \(K\) contained in \(L\).
If \(L\) is finite over \(K\), then \(\H^1_\Iw(L/K,T) = \H^1(L,T)\) and \(\H^1_{\Iw, \ast}(L/K,T)=\H^1_\ast(L,T)\).

\begin{rema}
	If \(A\) is an Abelian variety defined over \(K\), then
	\[
		\H^1_{\Iw, g}(L/K,T_p(A)) \simeq \limproj_{\norm, K^\prime} A^{(p)}(K^\prime)
	\]
	where the limit on the right-hand side is taken relatively to the norm maps and \(K^\prime\) runs through the finite extensions of \(K\) contained in \(L\).
\end{rema}

If \(V\) is a \(p\)\=/adic representation of \(\GG_K\), we set \(V^\ast(1) = \Hom_{\Qp}(V,\Qp(1))\) the Tate dual of \(V\).
If \(T\) is a \(\Zp\)\=/lattice in \(V\) stable under the action of \(\GG_K\), we set \(T^\ast(1) = \Hom_{\Zp}(T,\Zp(1))\) the Tate dual of \(T\).
For each finite extension \(K^\prime\) of \(K\), we recall~\cite[Proposition~3.8]{BlochKato90} that under Tate local duality
\[
	\H^1(K^\prime, V/T) \times \H^1(K^\prime, T^\ast(1)) \ra \H^2(K^\prime,
	\Qp(1)/\Zp(1)) \simeq \Qp/\Zp,
\]
we have:
\begin{itemize}
	\item \(\H^1_e(K^\prime,V/T)\) is the orthogonal complement of \(\H^1_g(K^\prime,T^\ast(1))\),
	\item \(\H^1_f(K^\prime,V/T)\) is the orthogonal complement of \(\H^1_f(K^\prime,T^\ast(1))\),
	\item \(\H^1_g(K^\prime,V/T)\) is the orthogonal complement of \(\H^1_e(K^\prime,T^\ast(1))\).
\end{itemize}
If \(L\) is an algebraic extension of \(K\), then Tate local duality induces a perfect pairing
\[
	\H^1(L, V/T) \times \H^1_\Iw(L/K, T^\ast(1)) \ra \Qp/\Zp,
\]
under which:
\begin{itemize}
	\item \(\H^1_e(L,V/T)\) is the orthogonal complement of \(\H^1_{\Iw,g}(L/K,T^\ast(1))\),
	\item \(\H^1_f(L,V/T)\) is the orthogonal complement of \(\H^1_{\Iw,f}(L/K,T^\ast(1))\),
	\item \(\H^1_g(L,V/T)\) is the orthogonal complement of \(\H^1_{\Iw,e}(L/K,T^\ast(1))\).
\end{itemize}

In particular, we have the following corollary of Proposition~\ref{prop:BK}.
\begin{coro} \label{coro:Iwasawa}
	Let \(V\) be a \(p\)\=/adic representation of \(\GG_K\).
	Let \(T\) be a \(\Zp\)\=/lattice in \(V\) stable under the action of \(\GG_K\).
	Let \(L\) be an algebraic extension of \(K\).
	Tate local duality induces a perfect pairing
	\[
		\H^1(L,E_\discu(V/T)) \times \H^1_{\Iw,g}(L/K,T^\ast(1)) \ra \Qp/\Zp.
	\]
\end{coro}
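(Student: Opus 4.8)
The plan is to combine the short exact sequence of Proposition~\ref{prop:BK} with the Tate-duality statements recalled immediately above the corollary, reducing everything to a formal property of Pontryagin duality. The point is that $\H^1(L,E_\discu(V/T))$ is a quotient of $\H^1(L,V/T)$, and the orthogonal complement of the corresponding subgroup under Tate duality is exactly $\H^1_{\Iw,g}(L/K,T^\ast(1))$.

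First I would recall that Proposition~\ref{prop:BK} identifies $\H^1(L,E_\discu(V/T))$ with the quotient $\H^1(L,V/T)/\H^1_e(L,V/T)$. Next I would invoke the perfect Tate-duality pairing
\[
	\H^1(L,V/T) \times \H^1_\Iw(L/K,T^\ast(1)) \ra \Qp/\Zp
\]
recalled above, under which $\H^1_e(L,V/T)$ is the orthogonal complement of $\H^1_{\Iw,g}(L/K,T^\ast(1))$. This is a pairing between a discrete $p$\=/primary torsion group and a compact (profinite) group, hence is an instance of Pontryagin duality with dualising object $\Qp/\Zp$; in particular $\H^1_\Iw(L/K,T^\ast(1)) \simeq \Hom_{\text{cont}}(\H^1(L,V/T),\Qp/\Zp)$.

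Then I would apply the standard fact that for a perfect pairing $A \times B \ra \Qp/\Zp$ and a \emph{closed} subgroup $A_0 \subset A$ with orthogonal complement $B_0 = A_0^{\perp}$, biduality gives $B_0^{\perp} = A_0$ and the induced pairing $(A/A_0) \times B_0 \ra \Qp/\Zp$ is again perfect. Here I take $A_0 = \H^1_e(L,V/T)$, so that $B_0 := A_0^{\perp} = \H^1_{\Iw,g}(L/K,T^\ast(1))$. Combining this with the identification $A/A_0 \simeq \H^1(L,E_\discu(V/T))$ from Proposition~\ref{prop:BK} yields exactly the asserted perfect pairing between $\H^1(L,E_\discu(V/T))$ and $\H^1_{\Iw,g}(L/K,T^\ast(1))$.

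The step that needs the most care is the topological bookkeeping. The orthogonality statement recalled above literally reads ``$\H^1_e$ is the orthogonal complement of $\H^1_{\Iw,g}$'', i.e.\ $(\H^1_{\Iw,g})^{\perp} = \H^1_e$; to run the argument I must transpose this to $(\H^1_e)^{\perp} = \H^1_{\Iw,g}$, which is where the double-annihilator identity $(C^{\perp})^{\perp} = C$ is used and hence where closedness of $\H^1_{\Iw,g}(L/K,T^\ast(1))$ is needed. I expect this to be the only genuine obstacle: one checks that $\H^1_{\Iw,g}(L/K,T^\ast(1))$ is closed in $\H^1_\Iw(L/K,T^\ast(1))$, being the inverse limit of the subgroups $\H^1_g(K^\prime,T^\ast(1))$ over the finite extensions $K^\prime$ of $K$ contained in $L$. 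Everything else is formal once the pairing is set up.
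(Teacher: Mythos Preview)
Your proposal is correct and follows essentially the same approach as the paper: the paper presents this corollary without proof, simply as an immediate consequence of Proposition~\ref{prop:BK} (which gives $\H^1(L,E_\discu(V/T)) \simeq \H^1(L,V/T)/\H^1_e(L,V/T)$) together with the Tate-duality orthogonality statements recalled just before. Your write-up is in fact more careful than the paper in making explicit the Pontryagin-duality step and the closedness of $\H^1_{\Iw,g}(L/K,T^\ast(1))$ needed for biduality.
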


If \(V\) is a de Rham \(p\)\=/adic representation of \(\GG_K\), we denote by \(\Fil^1 V\) the maximal sub\=/\(\GG_K\)\=/representation of \(V\) whose Hodge-Tate weights are greater than or equal to \(1\).
If \(T\) is a \(\Zp\)\=/lattice in \(V\) stable under the action of \(\GG_K\), we set \(\Fil^1 T = \Fil^1 V \cap T\).
We have the relation
\[
	(\Fil^1 V)^\ast(1) = V^\ast(1)/(V^\ast(1))_0.
\]
Therefore, under Tate local duality, the dual statement of Theorem~\ref{theo:principal} is the following.
\begin{theo} \label{theo:Iwasawa}
	Let \(V\) be a de Rham \(p\)\=/adic representation of \(\GG_K\).
	Let \(T\) be a \(\Zp\)\=/lattice in \(V\) stable under the action of \(\GG_K\).
	Let \(L\) be an algebraic extension of \(K\).
	Assume the Hodge-Tate weights of \(V\) are greater than or equal to \(0\).
	If \(\h{L}\) is a perfectoid field, then
	\[
		\H^1_{\Iw,g}(L/K,T) \simeq \H^1_\Iw(L/K,\Fil^1 T).
	\]
\end{theo}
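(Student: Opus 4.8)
The plan is to deduce Theorem~\ref{theo:Iwasawa} from Theorem~\ref{theo:precis} by applying the latter to the Tate dual representation and then dualising via Tate local duality; this is precisely the sense in which the statement is advertised above as ``the dual statement of Theorem~\ref{theo:principal}''.

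First I would set \(W = V^\ast(1)\) and \(S = T^\ast(1)\), so that \(S\) is a \(\GG_K\)-stable \(\Zp\)-lattice in the de Rham representation \(W\) and \(S^\ast(1) = T\). Since the Hodge-Tate weights of \(V\) are greater than or equal to \(0\), those of \(V^\ast\) are less than or equal to \(0\), whence the Hodge-Tate weights of \(W = V^\ast(1)\) are less than or equal to \(1\). Thus \(W\) satisfies the hypothesis of Theorem~\ref{theo:precis}, which yields a short exact sequence
\[
	0 \ra \H^1_e(L,W/S) \ra \H^1(L,W/S) \xra{\pi_L} \H^1(L,(W/W_0)/(S/S_0)) \ra 0.
\]
The next step is to identify the right-hand term: using the relation \((\Fil^1 V)^\ast(1) = V^\ast(1)/(V^\ast(1))_0 = W/W_0\) recalled just before the theorem, together with the corresponding identification of lattices \(S/S_0 = (\Fil^1 T)^\ast(1)\), one rewrites this term as \(\H^1(L,(\Fil^1 V)^\ast(1)/(\Fil^1 T)^\ast(1))\).

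Now I would dualise the above sequence. Tate local duality provides perfect pairings between a discrete torsion group and a compact one, so Pontryagin duality converts the sequence into a short exact sequence of dual groups. Concretely, the displayed Iwasawa-theoretic pairing applied with \((W,S)\) in place of \((V,T)\) gives a perfect pairing \(\H^1(L,W/S) \times \H^1_\Iw(L/K,T) \ra \Qp/\Zp\) (as \(S^\ast(1) = T\)), under which \(\H^1_e(L,W/S)\) is the orthogonal complement of \(\H^1_{\Iw,g}(L/K,T)\); and the same pairing applied to the dual data of \((\Fil^1 V, \Fil^1 T)\) gives \(\H^1(L,(\Fil^1 V)^\ast(1)/(\Fil^1 T)^\ast(1))^\ast \simeq \H^1_\Iw(L/K,\Fil^1 T)\). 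Dualising the surjection \(\pi_L\) therefore produces an injection
\[
	\pi_L^\ast : \H^1_\Iw(L/K,\Fil^1 T) \ra \H^1_\Iw(L/K,T)
\]
whose image is the orthogonal complement of \(\Ker(\pi_L) = \H^1_e(L,W/S)\), namely \(\H^1_{\Iw,g}(L/K,T)\). This yields the desired isomorphism \(\H^1_\Iw(L/K,\Fil^1 T) \simeq \H^1_{\Iw,g}(L/K,T)\).

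I expect the main obstacle to be the duality bookkeeping rather than any new geometric input. One must verify that the lattice-level identification \(S/S_0 = (\Fil^1 T)^\ast(1)\) is compatible with the \(\Qp\)-level relation \(W/W_0 = (\Fil^1 V)^\ast(1)\), and that the orthogonality statements — stated at finite level for each \(K^\prime\) — pass correctly to the inverse limits defining the Iwasawa modules, so that the finite-level sequences dualise compatibly and the limit pairings remain perfect. Once these compatibilities are secured, the conclusion is a formal consequence of Pontryagin duality applied to the short exact sequence furnished by Theorem~\ref{theo:precis}.
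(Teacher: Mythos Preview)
Your proof is correct and follows exactly the route the paper takes: the paper itself presents Theorem~\ref{theo:Iwasawa} as the Tate-dual statement, using the relation \((\Fil^1 V)^\ast(1) = V^\ast(1)/(V^\ast(1))_0\) established in the preceding paragraph, and leaves the duality bookkeeping implicit. Note that although the paper literally says ``the dual statement of Theorem~\ref{theo:principal}'', the hypothesis that the Hodge--Tate weights of \(V\) are \(\geq 0\) corresponds under duality to the hypothesis of Theorem~\ref{theo:precis} (weights of \(V^\ast(1)\) are \(\leq 1\)), and indeed it is the equality \(\H^1_e = \Ker(\pi_L)\) from Theorem~\ref{theo:precis} that is needed; your identification of Theorem~\ref{theo:precis} as the correct input is right.
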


\begin{rema} \label{rema:Sen}
	By a theorem of Sen~\cite{Sen72} (see also \cite[Theorem~2.13]{CoatesGreenberg96}), if \(L\) is an infinite Galois extension of \(K\) whose Galois group \(\Gal(L/K)\) is a \(p\)\=/adic Lie group and such that \(L_0\) is a finite extension over \(\Qp\), then \(\h{L}\) is a perfectoid field.
	Therefore, the previous results apply to such extensions which are frequently encountered in non-commutative Iwasawa theory~\cite{CFKSV05}.
	In particular, if \(L = \bigcup_{n \in \N} K(\zeta_{p^n})\) is the cyclotomic extension, then \(\h{L}\) is perfectoid.
	We note some relations of our results with the literature:
	\begin{enumerate}
		\item If \(L\) is the cyclotomic extension of \(K\), and if \(V\) is a de Rham representation of \(\GG_K\), and \(T\) a \(\Zp\)\=/lattice in \(V\) stable under the action of \(\GG_K\), Berger~\cite{Berger05} proved that the quotient \(\H^1_\Iw(L/K,\Fil^1 T)/\H^1_{\Iw,g}(L/K,T)\) is a torsion module over the Iwasawa algebra \(\Zp[[\Gal(L/K)]]\).
			In particular, Berger's theorem requires no restriction on the Hodge-Tate weights of \(V\).
			However, the author does not know how to recover Berger's result from Theorem~\ref{theo:principal}.
			To do so would be an interesting first step to remove the hypothesis on the Hodge-Tate weights in Theorems~\ref{theo:precis} and~\ref{theo:Iwasawa} (and Theorems~\ref{theo:Iwasawafini} and~\ref{theo:Iwasawa_autresBK} below).
		\item Again in the cyclotomic case, the module \((\Be \otimes_{\Qp} V)^{\GG_L} = E_\e(V)^{\GG_L}\) appears in Colmez's work~\cite{Colmez98} on Iwasawa theory, where it is denoted by \(\D_\Iw(V)\).
		\item If \(L\) is the localization at a prime dividing \(p\) of the \(\Zp\)\=/anticyclotomic extension of an imaginary quadratic field in which \(p\) is inert, then by class field theory (see for example~\cite[\S 3]{Serre58}), Sen's theorem applies and thus \(\h{L}\) is a perfectoid field.
			Theorem~\ref{theo:Iwasawa} then partially answers a series of conjectures formulated by Büyükboduk~\cite[Conjectures~2.5, 2.6 and 2.7]{Buyukboduk15}.
	\end{enumerate}
\end{rema}

If \(V\) is a \(p\)\=/adic representation of \(\GG_K\) and \(T\) is a \(\Zp\)\=/lattice in \(V\) stable under the action of \(\GG_K\), for \(\ast \in \{e,f,g\}\), we set
\[
		\H^1_{\ast,L}(K,T)   = \bigcap_{K^\prime \subset L} \cor_{K^\prime/K} \left(\H^1_\ast(K^\prime,T)\right),
\]
where \(K^\prime\) runs through the finite extension of \(K\) contained in \(L\).
Then \(\H^1_{\ast,L}(K,T)\) is also the image of the natural projection map \(\H^1_{\Iw,\ast}(L/K,T) \ra \H^1(K,T)\).

We shall now use Theorem~\ref{theo:precis} and its corollaries to give a description of \(\H^1_{g,L}(K,T)\) which generalises a theorem of Coates and Greenberg for Abelian varieties~\cite[Theorem~5.2]{CoatesGreenberg96}.
We need some notations.
Let
\[
	\res_{L/K}^0 : \H^1(K,(V/V_0)/(T/T_0)) \ra \H^1(L,(V/V_0)/(T/T_0))
\]
be the restriction map.
We have the previous map
\[
	\pi_K: \H^1(K,V/T) \ra \H^1(K,(V/V_0)/(T/T_0)),
\]
and we set
\[
	\Omega(L/K,V/T) = \frac{\Img(\pi_K)}{\Img(\pi_K)\cap \Ker(\res_{L/K}^0)}.
\]
\begin{theo} \label{theo:Iwasawafini}
	Let \(V\) be a de Rham \(p\)\=/adic representation of \(\GG_K\).
	Let \(T\) be a \(\Zp\)\=/lattice in \(V\) stable under the action of \(\GG_K\).
	Let \(L\) be an algebraic extension of \(K\).
	Assume the Hodge-Tate weights of \(V\) are less than or equal to \(1\).
	If \(\h{L}\) is a perfectoid field, then Tate local duality induces a perfect pairing
	\[
		\H^1_{g,L}(K,T^\ast(1)) \times \Omega(L/K,V/T) \ra \Qp/\Zp.
	\]
\end{theo}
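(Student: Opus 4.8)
The plan is to descend to level $K$ the Tate dual of Theorem~\ref{theo:precis}, which is already recorded in Iwasawa-theoretic form as Theorem~\ref{theo:Iwasawa}. Write $M = (V/V_0)/(T/T_0)$ and $S = T^\ast(1)$. Since the Hodge-Tate weights of $V$ are $\leq 1$, those of $V^\ast(1)$ are $\geq 0$, so Theorem~\ref{theo:Iwasawa} applies to the pair $(V^\ast(1), S)$ and produces an isomorphism $\H^1_{\Iw,g}(L/K,S) \simeq \H^1_\Iw(L/K,\Fil^1 S)$ induced by the inclusion $\Fil^1 S \hookrightarrow S$. Here $\Fil^1 S = (T/T_0)^\ast(1)$ is the Tate dual of $M$: applying the relation $(\Fil^1 V)^\ast(1) = V^\ast(1)/(V^\ast(1))_0$ with $V^\ast(1)$ in place of $V$ gives $\Fil^1 V^\ast(1) = (V/V_0)^\ast(1)$, hence $\Fil^1 S = (T/T_0)^\ast(1)$. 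Projecting to level $K$ and using functoriality, this identifies $\H^1_{g,L}(K,S)$ with the image of the norm submodule $F := \Img\!\big(\H^1_\Iw(L/K,\Fil^1 S) \to \H^1(K,\Fil^1 S)\big)$ under the map $j\colon \H^1(K,\Fil^1 S) \to \H^1(K,S)$ induced by $\Fil^1 S \hookrightarrow S$; that is, $\H^1_{g,L}(K,S) = F/(F \cap Q)$ with $Q = \Ker(j)$.

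I would then work inside the perfect local Tate pairing $\H^1(K,M) \times \H^1(K,\Fil^1 S) \to \Qp/\Zp$ (valid since $M$ and $\Fil^1 S$ are Tate dual) and compute two orthogonal complements. For the first, the adjunction $\langle \res_{K'/K} x, y \rangle = \langle x, \cor_{K'/K} y \rangle$ at each finite level $K' \subset L$ shows that the orthogonal complement of $\cor_{K'/K} \H^1(K',\Fil^1 S)$ is $\Ker(\res_{K'/K})$; since the groups $\H^1(K',\Fil^1 S)$ are compact, $F = \bigcap_{K'} \cor_{K'/K}\H^1(K',\Fil^1 S)$, and passing to the limit gives $F^\perp = \Ker(\res_{L/K}^0)$ inside $\H^1(K,M)$. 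For the second, I write $\Img(\pi_K) = \Ker\!\big(\delta\colon \H^1(K,M) \to \H^2(K,V_0/T_0)\big)$ from the long exact sequence of $0 \to V_0/T_0 \to V/T \to M \to 0$; dualizing this sequence to $0 \to \Fil^1 S \to S \to S/\Fil^1 S \to 0$ and using that local Tate duality carries connecting maps to the duals of the connecting maps of the dual sequence, $\delta$ is dual to $\partial\colon \H^0(K,S/\Fil^1 S) \to \H^1(K,\Fil^1 S)$. Therefore $\Img(\pi_K)^\perp = \Img(\partial) = Q$.

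To finish I would invoke the formal fact that in a perfect pairing $A \times B \to \Qp/\Zp$ of cofinitely and finitely generated groups, two subgroups $P, R \subset A$ induce a canonical perfect duality between $P/(P\cap R)$ and $R^\perp/(R^\perp \cap P^\perp)$, coming from $(P\cap R)^\perp = P^\perp + R^\perp$. Applying this with $P = \Img(\pi_K)$ and $R = \Ker(\res_{L/K}^0)$ gives $P^\perp = Q$ and $R^\perp = F$, hence a perfect pairing between $\Omega(L/K,V/T) = \Img(\pi_K)/(\Img(\pi_K)\cap \Ker(\res_{L/K}^0)) = P/(P\cap R)$ and $F/(F\cap Q)$. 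By the first paragraph the latter is exactly $\H^1_{g,L}(K,T^\ast(1))$, which yields the asserted perfect pairing.

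I expect the main obstacle to lie in the duality bookkeeping rather than in any single hard input. Three points need care: that the isomorphism of Theorem~\ref{theo:Iwasawa} is genuinely realized by $\Fil^1 S \hookrightarrow S$, so that it is compatible both with projection to level $K$ and with the Tate pairing; the precise adjointness (up to sign) of the connecting maps $\delta$ and $\partial$ under local Tate duality; and the compactness and closure subtleties guaranteeing that norm submodules coincide with intersections of corestriction images and that $(P\cap R)^\perp = P^\perp + R^\perp$. Once these standard local-duality facts are secured, the two orthogonal-complement computations and the formal duality lemma assemble directly into the statement.
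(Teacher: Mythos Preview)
Your argument is correct and arrives at the result, but it is organised rather differently from the paper's proof. The paper never descends to orthogonal-complement computations inside $\H^1(K,M)\times\H^1(K,\Fil^1 S)$; instead it stays at level $L$ and uses the auxiliary module $E_\discu(V/T)$. Concretely, Corollary~\ref{coro:Iwasawa} gives a perfect pairing between $\H^1(L,E_\discu(V/T))$ and $\H^1_{\Iw,g}(L/K,T^\ast(1))$, and since restriction and corestriction are adjoint this immediately yields a perfect pairing between $\Img\bigl(\res_{L/K}^\disc\bigr)$ and $\H^1_{g,L}(K,T^\ast(1))$. The remainder of the paper's proof is a short diagram chase identifying $\Img(\res_{L/K}^\disc)$ with $\Omega(L/K,V/T)$, using that $\pi_K^\disc$ is surjective (Proposition~\ref{prop:BK}) and that $\H^1(L,E_\discu(V/T))\simeq\H^1(L,M)$ (Corollary~\ref{coro:EdiscCG}). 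What your route buys is that, once Theorem~\ref{theo:Iwasawa} is in hand, you never again mention $E_\discu$ and work entirely with standard local-duality formalism at level $K$; what the paper's route buys is that, by working at level $L$ through $E_\discu$, it sidesteps precisely the three verifications you single out as the main obstacles (Mittag--Leffler for $F=\bigcap_{K^\prime}\cor_{K^\prime/K}\H^1(K^\prime,\Fil^1 S)$, adjointness of the connecting maps $\delta$ and $\partial$, and the closure identity $(P\cap R)^\perp=P^\perp+R^\perp$), reducing everything to two commutative squares.
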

\begin{proof}
	We follow closely Coates and Greenberg's proof~\cite[Theorem~5.2]{CoatesGreenberg96}.
	Let
	\[
		\res_{L/K}^\disc : \H^1(K,E_\discu(V/T)) \ra \H^1(L,E_\discu(V/T))
	\]
	be the restriction map.
	By Corollary~\ref{coro:Iwasawa}, Tate local duality induces a perfect pairing
	\[
		\Img(\res_{L/K}^\disc) \times \H^1_{g,L}(K,T^\ast(1)) \ra \Qp/\Zp.
	\]
	Therefore, it is enough to prove that there is a canonical isomorphism
	\begin{equation} \label{eq:desiredisoResOmega}
		\Img(\res_{L/K}^\disc) \simeq \Omega(L/K,V/T).
	\end{equation}
	By Proposition~\ref{prop:BK}, there is an exact commutative diagram
	\[
		\begin{tikzcd}
			\H^1(L,V/T) \ar{r}{\pi_L^\disc} & \H^1(L,E_\discu(V/T)) \ar{r} & 0\\
			\H^1(K,V/T) \ar{r}{\pi_K^\disc} \ar{u}{\res_{L/K}} & \H^1(K,E_\discu(V/T)) \ar{u}{\res_{L/K}^\disc} \ar{r} & 0,
		\end{tikzcd}
	\]
	thus, we have
	\begin{equation} \label{eq:pidisc}
		\Img(\res_{L/K}^\disc) = \Img(\res_{L/K}^\disc \circ \pi_K^\disc) = \Img(\pi_L^\disc \circ \res_{L/K}).
	\end{equation}
	By Corollary~\ref{coro:EdiscCG}, we have the commutative diagram
	\[
		\begin{tikzcd}
			\H^1(L,V/T) \ar{r}{\pi_L^\disc} \ar[equal]{d} & \H^1(L,E_\discu(V/T)) \ar{d}[sloped]{\sim} \\
			\H^1(L,V/T) \ar{r}{\pi_L} & \H^1(L,(V/V_0)/(T/T_0)),
		\end{tikzcd}
	\]
	hence, we have
	\begin{equation} \label{eq:pidiscpi0}
		\Img(\pi_L^\disc \circ \res_{L/K}) = \Img(\pi_L \circ \res_{L/K}).
	\end{equation}
	Finally, the commutative diagram
	\[
		\begin{tikzcd}
			\H^1(L,V/T) \ar{r}{\pi_L} & \H^1(L,(V/V_0)/(T/T_0))\\
			\H^1(K,V/T) \ar{r}{\pi_K} \ar{u}{\res_{L/K}} & \H^1(K,(V/V_0)/(T/T_0)) \ar{u}{\res_{L/K}^0}
		\end{tikzcd}
	\]
	gives
	\begin{equation} \label{eq:pi0res0}
		\Img(\pi_L \circ \res_{L/K}) = \Img(\res^0_{L/K} \circ \pi_K) \simeq \Omega(L/K,V/T).
	\end{equation}
	The combination of the equations~\eqref{eq:pidisc}, \eqref{eq:pidiscpi0} and \eqref{eq:pi0res0} yields the desired isomorphism~\eqref{eq:desiredisoResOmega}.
\end{proof}

\begin{rema} \label{rema:otherBK}
	It is possible to extend each result of this subsection to the other Bloch-Kato subgroups under additional assumptions.
	Let \(V\) be a \(p\)\=/adic representation of \(\GG_K\).
	If \(K^\prime\) is a finite extension of \(K\), then
	\[
		\D_{\cris, K^\prime}(V) = (\Bcris \otimes_{\Qp} V)^{\GG_{K^\prime}}
	\]
	is a finite-dimensional \(K^\prime_0\)\=/vector space equipped with the semi-linear endomorphism \(\phi \otimes 1\), simply denoted by \(\phi\), and we have \cite[Corollary~3.8.4]{BlochKato90}
	\[
		\begin{split}
			\H^1_f(K^\prime,V)/\H^1_e(K^\prime,V) & \simeq \D_{\cris, K^\prime}(V)/(1- \phi)\D_{\cris, K^\prime}(V),\\
			\H^1_g(K^\prime,V)/\H^1_f(K^\prime,V) & \simeq \D_{\cris, K^\prime}(V)^{\phi = p^{-1}}.
		\end{split}
	\]
	Therefore, we have the two following ways to control the Bloch-Kato subgroups from one another.
	\begin{enumerate}
		\item If \(\D_{\cris, K^\prime}(V)/(1- \phi)\D_{\cris, K^\prime}(V)\) (respectively \(\D_{\cris, K^\prime}(V)^{\phi = p^{-1}}\)) is trivial, then
		\[
			\H^1_e(K^\prime,V) = \H^1_f(K^\prime,V), \quad (\text{respectively }\H^1_f(K^\prime,V) = \H^1_g(K^\prime,V)).
		\]
		\item If \(L\) is an algebraic extension of \(K\) such that \(L_0\) is a finite extension over \(\Qp\), then the dimension
		\[
			\dim_{\Qp}\left( \H^1_g(K^\prime,V)/\H^1_e(K^\prime,V) \right) \leq [K^\prime_0:\Qp] \cdot \dim_{K^\prime_0} \D_{\cris,K^\prime}(V)
		\]
		is bounded as \(K^\prime\) varies through the finite extensions of \(K\) contained in \(L\).
	\end{enumerate}
\end{rema}

In particular, we obtain:
\begin{theo} \label{theo:Iwasawa_autresBK}
	Let \(V\) be a de Rham \(p\)\=/adic representation of \(\GG_K\).
	Let \(T\) be a \(\Zp\)\=/lattice in \(V\) stable under the action of \(\GG_K\).
	Let \(L\) be an infinite Galois extension of \(K\) whose Galois group is a \(p\)\=/adic Lie group and such that \(L_0\) is finite over \(\Qp\).
	Assume the Hodge-Tate weights of \(V\) are greater than or equal to \(0\).
	Then, for \(\ast \in \{e,f\}\), the quotient
	\[
		\H^1_\Iw(L/K,\Fil^1 T)/\H^1_{\Iw,\ast}(L/K,T)
	\]
	is of finite \(\Zp\)\=/rank.
\end{theo}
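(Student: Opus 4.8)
The plan is to use Theorem~\ref{theo:Iwasawa} to turn the quotient into a quotient of two modules of $g$-universal norms, to establish a uniform local bound coming from Remark~\ref{rema:otherBK}, and finally to push that bound through the inverse limit defining the Iwasawa cohomology.

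First I would reduce the problem. Since the Hodge-Tate weights of $V$ are greater than or equal to $0$ and $\h{L}$ is perfectoid by Sen's theorem (Remark~\ref{rema:Sen}), Theorem~\ref{theo:Iwasawa} provides an isomorphism $\H^1_\Iw(L/K,\Fil^1 T) \simeq \H^1_{\Iw,g}(L/K,T)$ under which $\H^1_{\Iw,\ast}(L/K,T)$ corresponds to a submodule (it is one, being an intersection of the inclusions $\H^1_\ast(K^\prime,T) \subset \H^1_g(K^\prime,T)$ compatible with corestriction). Thus it is enough to prove that $\H^1_{\Iw,g}(L/K,T)/\H^1_{\Iw,\ast}(L/K,T)$ has finite $\Zp$\=/rank for $\ast \in \{e,f\}$.

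Next I would establish the local bound. For each finite extension $K^\prime$ of $K$ contained in $L$, set $D_{K^\prime} = \H^1_g(K^\prime,T)/\H^1_\ast(K^\prime,T)$. Since $\H^1_\ast(K^\prime,T) = \alpha_{K^\prime}^{-1}\big(\H^1_\ast(K^\prime,V)\big)$ for every $\ast$, the map induced by $\alpha_{K^\prime}$,
\[
	D_{K^\prime} \ra \H^1_g(K^\prime,V)/\H^1_\ast(K^\prime,V),
\]
is injective; as $\H^1(K^\prime,T)$ is a finitely generated $\Zp$\=/module, $D_{K^\prime}$ is a finitely generated \emph{torsion-free} $\Zp$\=/module, hence free. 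By Remark~\ref{rema:otherBK} its rank is bounded by $\dim_{\Qp}\big(\H^1_g(K^\prime,V)/\H^1_\ast(K^\prime,V)\big) \leq [K^\prime_0:\Qp]\cdot \dim_{K^\prime_0}\D_{\cris,K^\prime}(V)$. Using $K^\prime_0 \subseteq L_0$ and $\dim_{K^\prime_0}\D_{\cris,K^\prime}(V) \leq \dim_{\Qp} V$, this is at most $d := [L_0:\Qp]\cdot\dim_{\Qp}V$, which is finite by the hypothesis that $L_0/\Qp$ is finite, and independent of $K^\prime$.

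Finally I would pass to the limit. The short exact sequences $0 \ra \H^1_\ast(K^\prime,T) \ra \H^1_g(K^\prime,T) \ra D_{K^\prime} \ra 0$ are compatible under corestriction, so applying the left-exact functor $\limproj$ yields an injection
\[
	\H^1_{\Iw,g}(L/K,T)/\H^1_{\Iw,\ast}(L/K,T) \hookrightarrow \limproj_{K^\prime} D_{K^\prime},
\]
and it suffices to bound the $\Zp$\=/rank of $\limproj_{K^\prime} D_{K^\prime}$. Choosing a cofinal tower $K = K_0 \subset K_1 \subset \cdots$ with $\bigcup_n K_n = L$ (possible since $\Gal(L/K)$ is a $p$\=/adic Lie group, hence has a countable cofinal system of open subgroups) reduces this to an inverse limit along a sequence of free $\Zp$\=/modules of rank $\leq d$. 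As each $D_{K_n}$ is torsion free, $\limproj_n D_{K_n} \hookrightarrow \limproj_n \big(D_{K_n}\otimes_{\Zp}\Qp\big)$, and an inverse limit of $\Qp$\=/vector spaces of dimension $\leq d$ has dimension $\leq d$. Hence $\limproj_{K^\prime} D_{K^\prime}$ has $\Zp$\=/rank at most $d$, and so does the quotient in the statement.

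I expect the last step to be the main obstacle: the uniform local bound does not transfer mechanically to the limit because $-\otimes_{\Zp}\Qp$ does not commute with $\limproj$. The way around it is to exploit the torsion-freeness of the $D_{K^\prime}$ to embed $\limproj D_{K^\prime}$ into $\limproj(D_{K^\prime}\otimes\Qp)$, and then to observe that the bounded dimension forces the Mittag-Leffler condition on the system of stable images, so the limiting $\Qp$\=/dimension cannot exceed $d$.
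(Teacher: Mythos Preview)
Your argument is correct and follows the same route as the paper: reduce via Theorem~\ref{theo:Iwasawa} to bounding \(\H^1_{\Iw,g}(L/K,T)/\H^1_{\Iw,\ast}(L/K,T)\), then use the uniform local bound from Remark~\ref{rema:otherBK}. The paper's proof simply asserts that this bound yields finite \(\Zp\)-rank of the Iwasawa quotient, whereas you spell out the passage to the limit carefully (torsion-freeness of the \(D_{K'}\), the embedding into \(\limproj (D_{K'}\otimes\Qp)\), and the Mittag--Leffler stabilisation of images); this extra care is justified and the argument goes through. One cosmetic point: your tower notation \(K_0\subset K_1\subset\cdots\) clashes with the paper's convention that \(K_0\) denotes the maximal unramified subextension of \(K\); you may want to rename the tower.
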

\begin{proof}
	By Sen's theorem (Remark~\ref{rema:Sen}), the field \(\h{L}\) is perfectoid, so Theorem~\ref{theo:Iwasawa} applies and we have
	\[
		\H^1_{\Iw,g}(L/K,T) \simeq \H^1_\Iw(L/K,\Fil^1 T).
	\]
	Furthermore, the last criterion of Remark~\ref{rema:otherBK} applies to the extension \(L/K\) and implies that, for \(\ast \in \{e,f\}\), the quotient of
	\[
		\H^1_{\Iw,\ast}(L/K,T) \subset \H^1_{\Iw,g}(L/K,T)
	\]
	is of finite \(\Zp\)\=/rank.
\end{proof}

\appendix
\section{Continuous group cohomology and the compact\texorpdfstring{\-/}{-}open topology} \label{apdx:contcohom}
We recall the definition of continuous group cohomology~\cite[\S 2]{Tate76}.
We shall note that for locally compact and separated topological groups, the continuous group cohomology groups can be equipped with a well-behaved structure of topological group induced by the compact-open topology on the sets of continuous cochains\footnote{This observation is certainly not new, but the author was not able to find a reference for it.}.

\subsection{The compact-open topology}
If \(X\) and \(Y\) are topological spaces, then we denote by \(\CC(X,Y)\) the set of continuous maps from \(X\) to \(Y\), and by \(\CC_c(X,Y)\) the topological space obtained by endowing \(\CC(X,Y)\) with the compact-open topology generated by the subsets
\[
	T(K,U) = \{ f \in \CC(X,Y), f(K) \subset U \},
\]
where \(K\) runs through the compact subsets of \(X\) and \(U\) through the open subsets of \(Y\).

\begin{lemm} \label{lemm:functorialityCc}
	Let \(X\), \(X^\prime\), \(Y\) and \(Y^\prime\) be topological spaces.
	Let \(\phi: X \ra X^\prime\) and \(\psi: Y \ra Y^\prime\) be continuous maps.
	The map
	\[
		\begin{split}
			\pi: \CC_c(X^\prime,Y) & \ra \CC_c(X,Y^\prime)\\
			f & \mapsto \psi \circ f \circ \phi
		\end{split}
	\]
	is continuous.
\end{lemm}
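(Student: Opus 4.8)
The plan is to reduce continuity of \(\pi\) to checking the preimage of each subbasic open set, and then to observe that pushing a compact set forward along \(\phi\) and pulling an open set back along \(\psi\) interact perfectly with the definition of the compact-open topology.

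First I would recall that the compact-open topology on \(\CC_c(X,Y^\prime)\) is generated by the subbasis consisting of the sets \(T(K,U)=\{g \in \CC(X,Y^\prime), g(K) \subset U\}\), where \(K \subset X\) runs through the compact subsets and \(U \subset Y^\prime\) through the open subsets. Since continuity of a map into a topological space may be tested on any subbasis of the target, it suffices to prove that \(\pi^{-1}(T(K,U))\) is open in \(\CC_c(X^\prime,Y)\) for every such \(K\) and \(U\).

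Next I would unwind the definition of \(\pi\). A map \(f \in \CC_c(X^\prime,Y)\) lies in \(\pi^{-1}(T(K,U))\) precisely when \((\psi \circ f \circ \phi)(K) \subset U\), that is, when \(f(\phi(K)) \subset \psi^{-1}(U)\). The key point is that \(\phi(K)\) is compact in \(X^\prime\), being the continuous image of the compact set \(K\), while \(\psi^{-1}(U)\) is open in \(Y\), being the preimage of an open set under the continuous map \(\psi\). Setting \(K^\prime = \phi(K)\) and \(U^\prime = \psi^{-1}(U)\), the membership condition becomes exactly \(f(K^\prime) \subset U^\prime\), i.e. \(f \in T(K^\prime, U^\prime)\).

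Hence \(\pi^{-1}(T(K,U)) = T(K^\prime, U^\prime)\) is itself a subbasic open set of the compact-open topology on \(\CC_c(X^\prime,Y)\), and the continuity of \(\pi\) follows. I do not expect any genuine obstacle: the only ingredients are that continuous maps carry compacts to compacts, that continuous maps pull open sets back to open sets, and that continuity may be verified on a subbasis. The content of the lemma is precisely the functoriality of the subbasis \(T(-,-)\), contravariant in the source variable through \(\phi\) and covariant in the target variable through \(\psi\).
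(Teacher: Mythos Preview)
Your argument is correct and coincides with the paper's own proof: both compute \(\pi^{-1}(T(K,U)) = T(\phi(K),\psi^{-1}(U))\) and note that \(\phi(K)\) is compact and \(\psi^{-1}(U)\) is open. You give a bit more detail on why it suffices to test continuity on a subbasis, but the content is identical.
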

\begin{proof}
	Let \(K\) be a compact subset of \(X\), and let \(U^\prime\) be an open subset of \(Y^\prime\).
	We have \(\pi^{-1}(T(K,U^\prime))=T(\phi(K),\psi^{-1}(U^\prime))\).
	Furthermore, by continuity of \(\phi\) and \(\psi\) respectively, the set \(\phi(K)\) is compact in \(X^\prime\), and \(\psi^{-1}(U^\prime)\) is open in \(Y\).
	Therefore, the set \(\pi^{-1}(T(K,U^\prime))\) is open in \(\CC_c(X^\prime,Y)\).
\end{proof}

Let \(\Top\) be the category of topological spaces.
By Lemma~\ref{lemm:functorialityCc}, there is a bifunctor
\[
	\CC_c(\cdot, \cdot): \Top^\op \times \Top \ra \Top.
\]
If \(X\) is a topological space, we set 
\[
	\begin{split}
		X \times (\cdot):  \Top & \ra \Top\\
		Y & \mapsto X \times Y.
	\end{split}
\]
the product functor, where:
\begin{itemize}
	\item \(X\times Y\) is equipped with product topology for any topological space \(Y\), and
	\item a continuous map \(\psi: Y \ra Y^\prime\) is mapped to \((\id_X, \psi): X \times Y \ra X \times Y^\prime\).
\end{itemize}

\begin{theo} \label{theo:compactopenadjoint}
	If \(X\) is a locally compact and separated topological space, then the functor
	\[
		\CC_c(X, \cdot):  \Top \ra \Top
	\]
	is right adjoint to the product functor \(X \times (\cdot)\).
\end{theo}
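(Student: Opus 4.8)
The plan is to produce the adjunction by exhibiting, for all topological spaces $Y$ and $Z$, a bijection
\[
	\Phi: \Hom_{\Top}(X \times Y, Z) \riso \Hom_{\Top}(Y, \CC_c(X,Z))
\]
natural in both variables, given by the usual currying $g \mapsto \hat{g}$ with $\hat{g}(y)(x) = g(x,y)$, and inverse $h \mapsto \check{h}$ with $\check{h}(x,y) = h(y)(x)$. As bare set-maps these assignments are visibly mutually inverse, so all the work lies in three points: that $\hat{g}$ is a well-defined continuous map into $\CC_c(X,Z)$ whenever $g$ is continuous; that $\check{h}$ is continuous whenever $h$ is; and that $\Phi$ is natural.

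First I would check that $\hat{g}$ takes values in $\CC(X,Z)$: for fixed $y$ the map $x \mapsto g(x,y)$ is the composite of $g$ with the continuous map $x \mapsto (x,y)$, hence continuous. To see that $\hat{g}: Y \ra \CC_c(X,Z)$ is itself continuous it suffices to show that the preimage of a subbasic open $T(K,U)$ is open. Now $\hat{g}^{-1}(T(K,U)) = \{ y \in Y : g(K \times \{y\}) \subset U \}$; given a point $y_0$ of this set, the compact slice $K \times \{y_0\}$ lies in the open set $g^{-1}(U)$, and the tube lemma (which needs only compactness of $K$) produces an open $V \ni y_0$ with $K \times V \subset g^{-1}(U)$, whence $V$ is contained in the preimage. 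This direction uses no hypothesis on $X$.

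The hypothesis that $X$ is locally compact and separated enters in the continuity of $\check{h}$, which I would deduce from the continuity of the evaluation map
\[
	\mathrm{ev}: X \times \CC_c(X,Z) \ra Z, \qquad (x,f) \mapsto f(x),
\]
since $\check{h} = \mathrm{ev} \circ (\id_X \times h)$ and $\id_X \times h$ is continuous. Proving that $\mathrm{ev}$ is continuous is the main obstacle, and it is exactly here that local compactness and separatedness are indispensable: given $(x_0, f_0)$ with $f_0(x_0) \in U$ open, the set $f_0^{-1}(U)$ is an open neighbourhood of $x_0$, and local compactness together with separatedness furnish an open $V \ni x_0$ whose closure $\overline{V}$ is compact and contained in $f_0^{-1}(U)$; then $V \times T(\overline{V}, U)$ is an open neighbourhood of $(x_0, f_0)$ carried by $\mathrm{ev}$ into $U$, since $(x,f) \in V \times T(\overline{V},U)$ gives $x \in \overline{V}$ and hence $f(x) \in U$.

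Finally I would verify naturality. The functoriality of $\CC_c(\cdot,\cdot)$ recorded in Lemma~\ref{lemm:functorialityCc} makes the postcomposition $\CC_c(X,\psi)$ and the action of $X \times (\cdot)$ on morphisms explicit, and the naturality squares — for a map $Y^\prime \ra Y$ and a map $Z \ra Z^\prime$ — commute by a direct unwinding of the defining formula for $\hat{g}$, as currying manifestly intertwines precomposition in the $Y$-variable and postcomposition in the $Z$-variable on both sides of $\Phi$. Assembling these verifications yields the natural bijection $\Phi$, which establishes the claimed adjunction.
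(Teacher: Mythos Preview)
Your proof is correct and follows the same currying/exponential-law approach as the paper; the only difference is that where the paper cites Bourbaki for the bijection \(\CC(X\times Y, Z) \riso \CC(Y,\CC_c(X,Z))\), you supply a self-contained argument via the tube lemma and the continuity of evaluation. Note that the paper derives the continuity of the evaluation map as Corollary~\ref{coro:evaluation} \emph{after} the adjunction, whereas you establish it directly as an ingredient --- both orderings are valid, and your proof stands on its own.
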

\begin{proof}
	Let \(Y\) and \(Z\) be topological spaces.
	If \(X\) is a locally compact and separated topological space, then the map
	\[
		\begin{split}
			\CC(X\times Y, Z) & \ra \CC(Y,\CC_c(X,Z))\\
			f & \mapsto [y \mapsto f(\cdot , y)]
		\end{split}
	\]
	is a bijection \cite[X \S 3 no.~4 Théorème~3]{BourbakiTG5_10}\footnote{A \emph{localement compact} topological space as defined by Bourbaki~\cite[I \S 9 no.~7 Définition~4]{BourbakiTG1_4} is a locally compact and separated topological space.}, which is obviously functorial in \(Y\) and \(Z\). 
\end{proof}

\begin{coro} \label{coro:compactopengroup}
	Let \(X\) be a locally compact and separated topological space.
	If \(M\) is a topological group, then the topological space \(\CC_c(X,M)\) endowed with the group structure induced by \(M\) is a topological group.
\end{coro}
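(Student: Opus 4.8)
The group operations on \(\CC_c(X,M)\) are defined pointwise: for \(f,g \in \CC_c(X,M)\) the product \(f\cdot g\) is the map \(x \mapsto f(x)g(x)\) and the inverse \(f^{-1}\) is \(x \mapsto f(x)^{-1}\), both of which are continuous maps \(X \ra M\) since \(M\) is a topological group. All the group axioms hold pointwise, so the only thing to prove is that multiplication \(\CC_c(X,M) \times \CC_c(X,M) \ra \CC_c(X,M)\) and inversion \(\CC_c(X,M) \ra \CC_c(X,M)\) are continuous for the compact-open topology.

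The plan is to deduce both continuity statements formally from the adjunction of Theorem~\ref{theo:compactopenadjoint}. Since \(X\) is locally compact and separated, that theorem shows that \(\CC_c(X, \cdot) : \Top \ra \Top\) is a right adjoint, and right adjoints preserve limits; in particular they preserve finite products. Concretely, the two projections \(M \times M \ra M\) induce, by the functoriality of Lemma~\ref{lemm:functorialityCc} (applied with \(\phi = \id_X\), so that \(\CC_c(X,\cdot)\) is a covariant functor in the second variable), a continuous comparison map
\[
	\CC_c(X, M\times M) \ra \CC_c(X,M) \times \CC_c(X,M),
\]
and product-preservation says precisely that this map is a homeomorphism, naturally in \(M\).

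Next I would feed the continuous structure maps of \(M\) through the functor \(\CC_c(X,\cdot)\). The multiplication \(m : M \times M \ra M\) and inversion \(\iota : M \ra M\) are continuous, so they yield continuous maps \(\CC_c(X,m) : \CC_c(X,M\times M) \ra \CC_c(X,M)\) and \(\CC_c(X,\iota) : \CC_c(X,M) \ra \CC_c(X,M)\). Composing \(\CC_c(X,m)\) with the inverse of the homeomorphism above gives a continuous map \(\CC_c(X,M) \times \CC_c(X,M) \ra \CC_c(X,M)\); unwinding the definitions, a pair \((f,g)\) corresponds to the map \(x \mapsto (f(x), g(x))\) in \(\CC_c(X, M\times M)\), whose image under \(\CC_c(X,m)\) is \(x \mapsto f(x)g(x) = (f\cdot g)(x)\). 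Thus this continuous map is exactly pointwise multiplication, and likewise \(\CC_c(X,\iota)\) is pointwise inversion. Hence both operations are continuous and \(\CC_c(X,M)\) is a topological group.

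The only genuine content, and the step I would be most careful about, is the identification of the comparison map as a homeomorphism, i.e.\ that the product topology on \(\CC_c(X,M)\times\CC_c(X,M)\) coincides with the compact-open topology on \(\CC_c(X, M\times M)\). This is exactly the assertion that the right adjoint \(\CC_c(X,\cdot)\) preserves the categorical product of \(\Top\) (which is the topological product), and I would simply invoke the general fact that right adjoints preserve limits rather than checking it by hand. Everything else in the argument is formal.
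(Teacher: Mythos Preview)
Your proof is correct and is exactly the paper's argument, just unpacked: the paper simply notes that \(\CC_c(X,\cdot)\) is a right adjoint by Theorem~\ref{theo:compactopenadjoint}, hence continuous (limit-preserving), hence sends group objects to group objects. Your explicit identification of the product-comparison map and the pointwise description of multiplication and inversion is precisely what that one-liner means when unwound.
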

\begin{proof}
	The functor \(\CC_c(X,\cdot)\) is a right adjoint by Theorem~\ref{theo:compactopenadjoint}, and therefore it is continuous~\cite[V \S 5 Theorem~1]{MacLane98}.
	Hence, it associates group objects with group objects.
\end{proof}

\begin{coro} \label{coro:evaluation}
	Let \(X\) be a locally compact and separated topological space.
	Let \(Y\) be a topological space.
	The \emph{evaluation} map
	\[
		\begin{split}
			X \times \CC_c(X,Y) & \ra Y\\
			(x, f) & \mapsto f(x)
		\end{split}
	\]
	is continuous and is the universal morphism from the product functor \(X\times (\cdot)\) to \(Y\).
\end{coro}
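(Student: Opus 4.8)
The plan is to identify the evaluation map with the counit of the adjunction $X \times (\cdot) \dashv \CC_c(X, \cdot)$ established in Theorem~\ref{theo:compactopenadjoint}, and then to invoke the general principle that the counit of an adjunction is a universal morphism from the left adjoint to the object.

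First I would write down the adjunction bijection of Theorem~\ref{theo:compactopenadjoint} for arbitrary topological spaces $A$ and $B$ (with $X$ locally compact and separated):
\[
	\Phi_{A,B} : \CC(X \times A, B) \riso \CC(A, \CC_c(X, B)), \qquad f \mapsto [\,a \mapsto f(\cdot, a)\,],
\]
whose inverse sends $g \in \CC(A, \CC_c(X, B))$ to the continuous map $(x,a) \mapsto g(a)(x)$. To obtain both the continuity and the explicit shape of the evaluation map at once, I would specialise to $A = \CC_c(X, Y)$ and $B = Y$ and apply $\Phi^{-1}$ to the identity $\id_{\CC_c(X,Y)}$. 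By the formula for $\Phi^{-1}$ this yields the map $(x, f) \mapsto f(x)$, which is the evaluation map; and since $\Phi^{-1}$ lands in $\CC(X \times \CC_c(X,Y), Y)$, continuity is automatic. No hand computation with the compact-open topology is needed, because continuity is inherited from the adjunction.

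Next I would verify the universal property directly from the bijection. Given any topological space $A$ and any continuous $h : X \times A \to Y$, I must produce a unique continuous $\tilde h : A \to \CC_c(X, Y)$ with $\mathrm{ev} \circ (\id_X \times \tilde h) = h$, that is $\tilde h(a)(x) = h(x, a)$. Taking $\tilde h = \Phi_{A,Y}(h)$ satisfies this by the definition of $\Phi$, and uniqueness follows from the injectivity of $\Phi_{A,Y}$. This is exactly the assertion that $(\CC_c(X,Y), \mathrm{ev})$ is the universal morphism from $X \times (\cdot)$ to $Y$; alternatively one may simply cite the standard equivalence between adjunctions and universal arrows \cite[IV~\S 1]{MacLane98}.

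The closest thing to an obstacle is purely bookkeeping: one must match the variables so that the counit at $Y$ is genuinely $\Phi^{-1}(\id_{\CC_c(X,Y)})$ and not some transposed map, and keep track of which argument of $\Phi$ is held fixed. There is no analytic difficulty here, since the delicate point — continuity of evaluation for the compact-open topology, which normally requires local compactness — has already been absorbed into Theorem~\ref{theo:compactopenadjoint}.
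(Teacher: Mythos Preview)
Your proposal is correct and follows essentially the same approach as the paper: both apply the adjunction bijection of Theorem~\ref{theo:compactopenadjoint} with \(A=\CC_c(X,Y)\) and \(B=Y\), observe that the inverse bijection sends \(\id_{\CC_c(X,Y)}\) to the evaluation map (yielding continuity for free), and recognise this as the counit/universal morphism of the adjunction. Your write-up is somewhat more explicit about the universal property than the paper's, but there is no substantive difference in method.
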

\begin{proof}
	By Theorem~\ref{theo:compactopenadjoint}, there is a bijection
	\[
		\begin{split}
			\CC(\CC_c(X,Y),\CC_c(X,Y)) & \ra \CC(X\times \CC_c(X,Y), Y)\\
			\phi & \mapsto [(x,f) \mapsto \phi(f)(x)]
		\end{split}
	\]
	under which the identity map \(\id_{\CC_c(X,Y)} \in \CC(\CC_c(X,Y),\CC_c(X,Y))\) is mapped to the evaluation map
	\[
		\begin{split}
			X \times \CC_c(X,Y) & \ra Y\\
			(x, f) & \mapsto f(x),
		\end{split}
	\]
	which is therefore continuous and is the universal morphism from the product functor \(X\times (\cdot)\) to \(Y\).
\end{proof}

\begin{coro} \label{coro:coboundpart}
	Let \(X\) and \(X^\prime\) be locally compact and separated topological spaces.
	Let \(Y\) and \(Z\) be topological spaces, and let
	\[
			\psi: X \times Y \ra Z
	\]
	be a continuous map.
	The map
	\[
		\begin{split}
			\CC_c(X^\prime,Y) & \ra \CC_c( X \times X^\prime , Z)\\
			f & \mapsto [ (x,x^\prime) \mapsto \psi(x,f(x^\prime))]
		\end{split}
	\]
	is continuous.
\end{coro}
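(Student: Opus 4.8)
The plan is to reduce the continuity of the given partial-application map to the continuity of a single uncurried map, via the exponential adjunction of Theorem~\ref{theo:compactopenadjoint}, and then to exhibit that uncurried map as a composition of maps already known to be continuous. The whole argument is formal once the adjunction is set up correctly, so I do not anticipate a genuine obstacle; the only subtlety is bookkeeping, namely invoking the adjunction with respect to the \emph{right} space.

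First I would record that \(X \times X'\) is again locally compact and separated, being a finite product of locally compact separated spaces. Writing \(W = \CC_c(X',Y)\) for brevity and denoting the map in question by \(\Phi : W \to \CC_c(X \times X', Z)\), I would then apply Theorem~\ref{theo:compactopenadjoint} with the locally compact separated space \(X \times X'\) in the role of ``\(X\)'', with \(W\) in the role of ``\(Y\)'', and with \(Z\) in the role of ``\(Z\)''. This furnishes a bijection
\[
	\CC\bigl((X \times X') \times W,\, Z\bigr) \riso \CC\bigl(W,\, \CC_c(X \times X', Z)\bigr), \qquad g \mapsto [\,w \mapsto g(\cdot, w)\,].
\]
Under this bijection, \(\Phi\) corresponds precisely to the uncurried map \(\tilde\Phi : (X \times X') \times W \to Z\) defined by \(\tilde\Phi((x,x'), f) = \psi(x, f(x'))\). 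Since both sides of the displayed bijection consist of continuous maps, it suffices to prove that \(\tilde\Phi\) is continuous.

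Next I would factor \(\tilde\Phi\) through continuous maps. Using the canonical homeomorphism \((X \times X') \times W \simeq X \times (X' \times W)\) together with the evaluation map \(\mathrm{ev} : X' \times W \to Y\), which is continuous by Corollary~\ref{coro:evaluation} because \(X'\) is locally compact and separated, the map \(\tilde\Phi\) is the composition
\[
	(X \times X') \times W \riso X \times (X' \times W) \xra{\id_X \times \mathrm{ev}} X \times Y \xra{\psi} Z.
\]
Here the first arrow is a homeomorphism, the second is continuous since \(\mathrm{ev}\) is continuous and a product of continuous maps is continuous, and the third is continuous by the hypothesis on \(\psi\). Hence \(\tilde\Phi\) is continuous, and therefore so is \(\Phi\).

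The hard part, such as it is, is purely the bookkeeping in the second paragraph: one must hide the \emph{pair} \((x,x')\) inside \(\CC_c\) while keeping \(f\) free, so the adjunction has to be invoked with \(X \times X'\) rather than with \(X\) alone, in order to match the shape of the target \(\CC_c(X \times X', Z)\). The remaining facts used, namely that ``locally compact and separated'' is stable under finite products and that \(\Phi\) is exactly the curried form of \(\tilde\Phi\), are routine and need only a word of justification.
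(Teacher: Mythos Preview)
Your proof is correct and follows essentially the same approach as the paper: both factor the uncurried map through \(\id_X \times \mathrm{ev}\) and \(\psi\) using Corollary~\ref{coro:evaluation}, and then invoke the adjunction of Theorem~\ref{theo:compactopenadjoint} with respect to the locally compact separated space \(X \times X'\). The only difference is expository order---the paper first verifies continuity of the uncurried map and then applies the adjunction, whereas you set up the adjunction first---but the content is identical.
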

\begin{proof}
	By Corollary~\ref{coro:evaluation}, the composition of \(\psi\) with the evaluation map
	\begin{equation} \label{eq:actioneval}
		\begin{array}{r@{\ }c@{\ }c@{\ }c@{\ }l}
			X \times X^\prime \times \CC_c(X^\prime,Y) & \ra & X \times Y & \ra & Z\\
			(x,x^\prime,f) & \mapsto & (x,f(x^\prime)) & \mapsto & \psi(x , f(x^\prime))
		\end{array}
	\end{equation}
	is continuous.
	Moreover, Theorem~\ref{theo:compactopenadjoint} applied to \(X \times X^\prime\) yields a bijection 
	\[
		\begin{split}
			\CC(X \times X^\prime \times \CC_c(X^\prime,Y) ,Z) & \ra \CC(\CC_c(X^\prime,Y), \CC_c(X \times X^\prime , Z))\\
			\xi & \mapsto [f \mapsto \xi(\cdot, \cdot, f)]
		\end{split}
	\]
	under which the continuous application~\eqref{eq:actioneval} is mapped to
	\[
		\begin{split}
			\CC_c(X^\prime,Y) & \ra \CC_c(X \times X^\prime , Z)\\
			f & \mapsto [ (x,x^\prime) \mapsto \psi(x,f(x^\prime))]
		\end{split}
	\]
	 which is therefore continuous.
\end{proof}

We shall also need the following lemma.
\begin{lemm} \label{lemm:relativetop}
	Let \(X\), \(Y\) and \(Y^\prime\) be topological spaces.
	Let \(\psi: Y \ra Y^\prime\) be a continuous map, and let
	\[
		\begin{split}
			\pi: \CC_c(X,Y) & \ra \CC_c(X,Y^\prime)\\
			f & \mapsto \psi \circ f,
		\end{split}
	\]
	be the continuous map induced by \(\psi\).
	\begin{enumerate}
		\item If \(\psi\) is injective, then \(\pi\) is injective.
			Moreover, if the topology of \(Y\) is the subspace topology from \(Y^\prime\), then the topology of \(\CC_c(X,Y)\) is the subspace topology from \(\CC_c(X,Y^\prime)\).
	\item If \(\psi\) is surjective and admits a continuous section \(s:Y^\prime \ra Y\), then \(\pi\) is surjective, and the topology of \(\CC_c(X,Y^\prime)\) is the quotient topology from \(\CC_c(X,Y)\).
	\end{enumerate}
\end{lemm}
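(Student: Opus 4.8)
The plan is to prove the two assertions separately, in each case reducing everything to the defining subbasis $T(K,U)$ of the compact-open topology and invoking the functoriality already recorded in Lemma~\ref{lemm:functorialityCc}: applied with the first map taken to be $\id_X$, it shows that the map $\pi$ in the statement is continuous, and it will also provide the continuity of the map induced by a section in part~(ii).

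For part~(i), the injectivity of $\pi$ is immediate, since $\psi$ injective and $\psi\circ f=\psi\circ g$ force $f=g$ pointwise. For the statement about topologies I would use that, the topology of $Y$ being the subspace topology from $Y'$, every open subset of $Y$ is of the form $\psi^{-1}(V)$ for some open $V\subset Y'$. The key identity is then
\[
	T(K,\psi^{-1}(V))=\pi^{-1}\bigl(T(K,V)\bigr),
\]
which holds because $f(K)\subset\psi^{-1}(V)$ is equivalent to $(\psi\circ f)(K)\subset V$. As $K$ ranges over the compact subsets of $X$ and $V$ over the open subsets of $Y'$, the left-hand sets form a subbasis of the compact-open topology on $\CC_c(X,Y)$, while the right-hand sets form a subbasis of the initial topology induced by $\pi$; hence the two topologies coincide and $\pi$ is a topological embedding.

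For part~(ii), surjectivity follows at once from the section: given $g\in\CC(X,Y')$ the composite $s\circ g$ lies in $\CC(X,Y)$ and satisfies $\pi(s\circ g)=\psi\circ s\circ g=g$. To upgrade this to the topological statement I would apply Lemma~\ref{lemm:functorialityCc} once more to obtain a \emph{continuous} map $s_\ast:\CC_c(X,Y')\ra\CC_c(X,Y)$, $g\mapsto s\circ g$, with $\pi\circ s_\ast=\id_{\CC_c(X,Y')}$. Thus $\pi$ is a continuous surjection admitting a continuous section, i.e. a retraction, and every retraction is a quotient map: if $\pi^{-1}(W)$ is open then $W=s_\ast^{-1}(\pi^{-1}(W))$ is open as well, so $\CC_c(X,Y')$ carries the quotient topology from $\CC_c(X,Y)$. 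None of this is deep — it is pure point-set topology — so there is no genuine obstacle; the only points warranting care are that in part~(i) one genuinely needs the \emph{embedding} hypothesis (so that every open of $Y$ is pulled back from $Y'$), and in part~(ii) the standard fact that a continuous retraction is automatically a quotient map.
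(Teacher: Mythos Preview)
Your proof is correct and follows essentially the same route as the paper: in part~(i) you both reduce to the identity $T(K,\psi^{-1}(V))=\pi^{-1}(T(K,V))$ on subbasic opens, and in part~(ii) you both produce the continuous section $s_\ast$ of $\pi$ and conclude that a continuous surjection with a continuous section is a quotient map (the paper cites Bourbaki for this last step, while you spell it out directly).
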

\begin{proof}
	We prove the first statement.
	If \(\psi\) is injective, the injectivity of \(\pi\) is obvious.
	We now further assume that the topology of \(Y\) is the subspace topology from \(Y^\prime\).
	Let \(K\) be a compact subset of \(X\), and let \(U\) be an open subset of \(Y\).
	Since the topology of \(Y\) is the subspace topology from \(Y^\prime\), there exists an open subset \(U^\prime\) of \(Y^\prime\) such that \(U = \psi^{-1}(U^\prime)\).
	Therefore, we have
	\[
		T(K,U) = T(K,\psi^{-1}(U^\prime)) = \pi^{-1}(T(K,U^\prime))
	\]
	in \(\CC_c(X,Y)\).
	Hence, the topology of \(\CC_c(X,Y)\) is the subspace topology from \(\CC_c(X,Y^\prime)\).

	We prove the second statement.
	If \(\psi\) is surjective and admits a continuous section \(s:Y^\prime \ra Y\), then \(s\) induces a continuous map
	\[
		\begin{split}
			\CC_c(X,Y^\prime) & \ra \CC_c(X,Y)\\
			f & \mapsto s \circ f
		\end{split}
	\]
	which is a continuous section of \(\pi\).
	Therefore, the map \(\pi\) is surjective, and the topology of \(\CC_c(X,Y^\prime)\) is the quotient topology from \(\CC_c(X,Y)\) \cite[I \S 3 no.~5 Proposition~9]{BourbakiTG1_4}.
\end{proof}

\subsection{Continuous group cohomology}
We refer the reader to \cite{BourbakiA10} for the classical results of homological algebra reviewed in the following paragraphs.

\subsubsection*{Definition}
Let \(G\) be a topological group with group law written multiplicatively.
Let \(M\) be a topological \(G\)\=/module, \ie \(M\) is a topological Abelian group, with group law written additively, equipped with a continuous group action of \(G\) compatible with the group structure of \(M\) and denoted by
\[
	\begin{split}
		G \times M & \ra M\\
		(g, m) & \mapsto g \cdot m.
	\end{split}
\]

For each \(n \in \N\), let \(G^n = \prod_{i = 1}^n G\) be the \(n\)\=/fold Cartesian product of \(G\) endowed with the product topology (and \(G^0 = \{\ast\}\) the single element set).
We endow the set of continuous \(n\)\=/cochains of \(G\) with coefficients in \(M\)
\[
	\Crm^n(G, M) = \CC(G^n,M)
\]
with the compact-open topology.

The next proposition follows from Corollary~\ref{coro:compactopengroup}.
\begin{prop} \label{prop:cochaintopgroup}
	If \(G\) is locally compact and separated, then \(\Crm^n(G,M)\) is a topological Abelian group for all \(n \in \N\).
\end{prop}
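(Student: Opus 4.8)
The plan is to derive this directly from Corollary~\ref{coro:compactopengroup}, which was established for an arbitrary locally compact and separated base space $X$; the only work is to recognise $G^n$ as a legitimate choice of $X$. By definition $\Crm^n(G,M) = \CC(G^n,M)$ is endowed with the compact-open topology, so it coincides with $\CC_c(G^n,M)$, and the group structure in question is the pointwise one induced by $M$. Thus the proposition is exactly the conclusion of Corollary~\ref{coro:compactopengroup} applied to $X = G^n$, provided $G^n$ satisfies the hypotheses of that corollary.

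The one point I would verify is that $G^n$, equipped with the product topology, is again locally compact and separated. For $n = 0$ this is immediate since $G^0 = \{\ast\}$ is a single point. For $n \geq 1$, a finite product of locally compact Hausdorff spaces is locally compact Hausdorff: the Hausdorff property is stable under arbitrary products, and local compactness is stable under \emph{finite} products (a product of neighbourhoods with compact closures has compact closure, by Tychonoff for finitely many factors). I would invoke this standard fact, possibly with a reference to Bourbaki in the same style as the rest of the appendix, rather than reprove it.

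Having checked that $G^n$ is locally compact and separated, I would apply Corollary~\ref{coro:compactopengroup} with $X = G^n$ and with the topological group $M$, concluding that $\CC_c(G^n,M)$ with the induced group structure is a topological group. Finally, since $M$ is a topological \emph{Abelian} group, the pointwise group law on $\CC_c(G^n,M)$ is commutative, so $\Crm^n(G,M)$ is a topological Abelian group, as claimed.

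There is no genuine obstacle here: the substance of the argument was already carried out in Corollary~\ref{coro:compactopengroup}, and the present statement is merely its specialisation to the base space $G^n$. The only care needed is to confirm the stability of ``locally compact and separated'' under finite products, which is the single reason the hypothesis on $G$ propagates to $G^n$.
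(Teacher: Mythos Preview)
Your proposal is correct and is exactly the approach taken in the paper, which simply states that the proposition follows from Corollary~\ref{coro:compactopengroup}. You have merely made explicit the routine check that $G^n$ inherits local compactness and separatedness from $G$, which the paper leaves implicit.
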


\begin{rema}
	Corollary~\ref{coro:compactopengroup} always applies to \(G^0=\{\ast\}\), and the map
	\[
		\begin{split}
			\Crm^0(G,M) = \{ f : \{\ast\} \ra M \} & \riso M \\
			f & \mapsto f(\ast)
		\end{split}
	\]
	is an isomorphism of topological groups.
\end{rema}

\begin{exem}
	If \(G\) is compact and \(M\) is discrete, then \(\Crm^n(G,M)\) is discrete for all \(n \in \N\).
\end{exem}

For each \(n \in \N\), the coboundary map
\[
		d_n : \Crm^n(G,M) \ra \Crm^{n+1}(G,M)
\]
is a group homomorphism defined by
\[
	\begin{split}
		d_n (f)(g_1, \ldots , g_{n+1}) & = g_1 \cdot f(g_2, \ldots, g_{n+1})\\
		& \quad + \sum_{i=1}^n (-1)^i f(g_1, \ldots, g_i g_{i+1}, \ldots, g_{n+1})\\
		& \quad + (-1)^{n+1}f(g_1,\ldots,g_n),
	\end{split}
\]
where \(f \in \Crm^n(G,M)\) and \((g_1,\dots,g_{n+1}) \in G^{n+1}\).

\begin{prop} \label{prop:coboundcont}
	If \(G\) is locally compact and separated, then, for each \(n \in \N\), the coboundary map \(d_n\) is a morphism of topological Abelian groups.
\end{prop}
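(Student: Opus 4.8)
The plan is to prove that $d_n$ is continuous; that it is a group homomorphism is already part of its definition, and by Proposition~\ref{prop:cochaintopgroup} both $\Crm^n(G,M)$ and $\Crm^{n+1}(G,M)$ are topological Abelian groups once $G$ is locally compact and separated. Since addition in the topological group $\Crm^{n+1}(G,M)$ is continuous, any finite sum of continuous maps into it is again continuous; hence it suffices to show that each of the $n+2$ summands appearing in the formula for $d_n(f)$ defines a continuous self-map of cochain spaces. I would first record that every finite Cartesian power $G^m$ is again locally compact and separated, so that the results of the previous subsection apply to the domains $G^n$ and $G^{n+1}$.

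For the leading term $f \mapsto [(g_1,\dots,g_{n+1}) \mapsto g_1 \cdot f(g_2,\dots,g_{n+1})]$, the natural tool is Corollary~\ref{coro:coboundpart}. I would apply it with $X = G$, $X^\prime = G^n$, $Y = Z = M$, and $\psi : G \times M \ra M$ the action map, which is continuous because $M$ is a topological $G$\=/module, together with the canonical homeomorphism $G \times G^n \simeq G^{n+1}$. This gives at once the continuity of the map $\Crm^n(G,M) \ra \Crm^{n+1}(G,M)$ sending $f$ to $[(g_1,\dots,g_{n+1}) \mapsto \psi(g_1, f(g_2,\dots,g_{n+1}))]$.

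For each middle term $f \mapsto (-1)^i (f \circ \mu_i)$, where $\mu_i : G^{n+1} \ra G^n$ sends $(g_1,\dots,g_{n+1})$ to $(g_1,\dots,g_i g_{i+1},\dots,g_{n+1})$, and for the final term $f \mapsto (-1)^{n+1}(f \circ \mathrm{pr})$, where $\mathrm{pr} : G^{n+1} \ra G^n$ forgets the last coordinate, I would invoke Lemma~\ref{lemm:functorialityCc}. The maps $\mu_i$ are continuous because multiplication in $G$ is continuous, and $\mathrm{pr}$ is a projection; postcomposing with the continuous automorphism $m \mapsto (-1)^i m$ of $M$ accounts for the sign. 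Thus Lemma~\ref{lemm:functorialityCc}, applied with $\phi = \mu_i$ (respectively $\mathrm{pr}$) and $\psi$ the sign automorphism, shows each of these summands is continuous. Adding the $n+2$ continuous summands in the topological group $\Crm^{n+1}(G,M)$ then yields the continuity of $d_n$.

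The only genuinely substantive point is the treatment of the leading term, where the group action is intertwined with evaluation of the cochain: this is precisely the configuration that Corollary~\ref{coro:coboundpart} was designed to isolate, so once that corollary is in hand the argument is routine. The remaining verifications—continuity of the $\mu_i$ and of $\mathrm{pr}$, local compactness and separatedness of the finite powers $G^m$, and continuity of finite sums into a topological group—are all standard.
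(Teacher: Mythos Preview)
Your proof is correct and follows essentially the same approach as the paper: the paper also decomposes \(d_n\) into its \(n+2\) summands, handles the leading term via Corollary~\ref{coro:coboundpart} (with the same choices \(X=G\), \(X^\prime=G^n\), \(Y=Z=M\), \(\psi\) the action map), and handles the remaining terms via Lemma~\ref{lemm:functorialityCc} applied to the projection and multiplication maps \(G^{n+1}\ra G^n\). The only cosmetic difference is that the paper packages the summation as a composite \(\tilde{\Sigma}\circ(d_n^i)_i\circ\Delta\) and absorbs the signs into the alternating sum \(\tilde{\Sigma}\), whereas you absorb the signs into the \(\psi\) of Lemma~\ref{lemm:functorialityCc}; either way the argument is the same.
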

\begin{proof}
	Let \(n \in \N\).
	We prove that the coboundary map \(d_n\) is continuous.
	The coboundary map \(d_n\) decomposes as
	\[
		d_n : \Crm^n(G,M) \xra{\Delta} \prod_{i=0}^{n+1} \Crm^n(G,M) \xra{(d_n^{i})_i} \prod_{i=0}^{n+1} \Crm^{n+1}(G,M) \xra{\tilde{\Sigma}} \Crm^{n+1}(G,M)
	\]
	where
	\begin{itemize}
		\item
	\[
		\begin{split}
			\Delta : \Crm^n(G,M) & \ra \prod_{i=0}^{n+1} \Crm^n(G,M)\\
			f & \mapsto (f, f, \ldots, f),
		\end{split}
	\]
	is the diagonal map which is continuous,
\item
	\[
		\begin{split}
			\tilde{\Sigma} : \prod_{i=0}^{n+1} \Crm^{n+1}(G,M) & \ra \Crm^{n+1}(G,M)\\
			(f_i)_i & \mapsto \sum_{i=0}^{n+1} (-1)^i f_i
		\end{split}
	\]
	is the alternating sum which is continuous by Proposition~\ref{prop:cochaintopgroup},
\item
	\[
		\begin{split}
			d_n^0 : \Crm^n(G,M) & \ra \Crm^{n+1}(G,M)\\
			f & \mapsto [(g_j) \mapsto g_1 \cdot f(g_2, \ldots, g_{n+1})]
		\end{split}
	\]
	which is continuous by Corollary~\ref{coro:coboundpart} (applied to \(X=G\), \(X^\prime=G^n\), \(Y=Z=M\) and \(\psi\) the action of \(G\) on \(M\)),
\item
	\[
		\begin{split}
			d_n^{n+1} : \Crm^n(G,M) & \ra \Crm^{n+1}(G,M)\\
			f & \mapsto [ (g_j) \mapsto f(g_1,\ldots,g_n)]
		\end{split}
	\]
	which is continuous by functoriality (Lemma~\ref{lemm:functorialityCc} applied to \(Y=Y^\prime = M\), \(\psi\) the identity, and \(\phi: X=G^{n+1} \ra X^\prime = G^n, (g_1, \ldots, g_{n+1}) \mapsto (g_1,\ldots , g_n)\))
\item and, for \(1\leq i\leq n\),
	\[
		\begin{split}
			d_n^i : \Crm^n(G,M) & \ra \Crm^{n+1}(G,M)\\
			f & \mapsto [ (g_j) \mapsto f(g_1, \ldots, g_i g_{i+1}, \ldots, g_{n+1})]
		\end{split}
	\]
	which is continuous by functoriality (Lemma~\ref{lemm:functorialityCc} applied to \(Y=Y^\prime = M\), \(\psi\) the identity, and \(\phi: X=G^{n+1} \ra X^\prime = G^n, (g_1, \ldots, g_{n+1}) \mapsto (g_1,\ldots, g_i g_{i+1},\ldots , g_{n+1})\)).
	\end{itemize}
	Hence, the map \(d_n\) is continuous.
	Therefore, by Proposition~\ref{prop:cochaintopgroup}, the coboundary map \(d_n\) is a morphism of topological Abelian groups.
\end{proof}

Let \(\Zrm^n(G,M) = \Ker(d_n)\) be the subgroup of \(\Crm^n(G,M)\) of continuous \(n\)\=/cocycles of \(G\) with coefficients in \(M\), and let
\[
	\Brm^n(G,M) = \left\{ \begin{array}{ll}
		0 & \text{if } n=0,\\
		\Img(d_{n-1}) & \text{if } n>0,
	\end{array}\right.
\]
be the subgroup of continuous \(n\)\=/coboundaries of \(G\) with coefficients in \(M\).
The coboundary maps satisfy \(d_{n+1} \circ d_n = 0\) for each \(n \in \N\), hence, we have
\[
	\Brm^n(G,M) \subset \Zrm^n(G,M) \subset \Crm^n(G,M).
\]
The \(n\)\=/th continuous group cohomology group of \(G\) with coefficients in \(M\) is
\[
	\H^n(G,M) = \Zrm^n(G,M)/\Brm^n(G,M).
\]
We endow \(\Brm^n(G,M)\) and \(\Zrm^n(G,M)\) with the subspace topology from
\(\Crm^n(G,M)\), and \(\H^n(G,M)\) with the quotient topology.

As corollary of Proposition~\ref{prop:cochaintopgroup}, we have:
\begin{coro} \label{coro:cohotopgroup}
	If \(G\) is locally compact and separated, then \(\Brm^n(G,M)\), \(\Zrm^n(G,M)\) and \(\H^n(G,M)\) are topological Abelian groups for all \(n \in \N\).
\end{coro}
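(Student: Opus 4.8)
The plan is to deduce the statement entirely from Proposition~\ref{prop:cochaintopgroup}, which guarantees that $\Crm^n(G,M)$ is a topological Abelian group once $G$ is locally compact and separated, combined with two standard permanence properties: a subgroup of a topological group, equipped with the subspace topology, is again a topological group; and the quotient of a topological Abelian group by a subgroup, equipped with the quotient topology, is again a topological group. The entire argument is formal, so I would keep it short.

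First I would record, as already noted in the excerpt, that $\Brm^n(G,M) \subset \Zrm^n(G,M) \subset \Crm^n(G,M)$ are subgroups, with $\Zrm^n(G,M) = \Ker(d_n)$ and $\Brm^n(G,M) = \Img(d_{n-1})$ (or $0$ when $n=0$). For any subgroup $H$ of a topological group $A$, the restriction of the continuous multiplication $A \times A \ra A$ to $H \times H$ takes values in $H$ and is continuous for the subspace topology on $H \times H$ (which coincides with the product of the subspace topologies), and likewise for inversion; hence $H$ is a topological group. Applying this to $\Zrm^n(G,M) \subset \Crm^n(G,M)$ and then to $\Brm^n(G,M) \subset \Zrm^n(G,M)$ — the transitivity of the subspace topology meaning it is irrelevant whether $\Brm^n$ inherits its topology from $\Zrm^n$ or from $\Crm^n$ — shows that both $\Zrm^n(G,M)$ and $\Brm^n(G,M)$ are topological Abelian groups.

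It remains to treat $\H^n(G,M) = \Zrm^n(G,M)/\Brm^n(G,M)$ with the quotient topology. Since $\Zrm^n(G,M)$ is Abelian, the subgroup $\Brm^n(G,M)$ is automatically normal, so I would invoke the standard fact that the quotient of a topological group by a normal subgroup is a topological group. The one point meriting explicit attention — and the only place where the argument is not completely automatic — is that the quotient projection $\pi \colon \Zrm^n(G,M) \ra \H^n(G,M)$ is open: for $U$ open in $\Zrm^n(G,M)$, the set $\pi^{-1}(\pi(U)) = \bigcup_{b \in \Brm^n(G,M)} (U + b)$ is a union of translates of $U$, hence open, so $\pi(U)$ is open by definition of the quotient topology. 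Openness of $\pi$ then yields the continuity of the induced addition and negation on $\H^n(G,M)$. I expect no genuine obstacle beyond this; I would merely remark that, since $\Brm^n(G,M)$ need not be closed in $\Zrm^n(G,M)$, the group $\H^n(G,M)$ need not be separated, but this does not affect its being a topological Abelian group, which is all that is asserted.
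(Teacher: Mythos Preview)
Your proposal is correct and follows exactly the route the paper intends: the paper states the corollary with no proof beyond the line ``As corollary of Proposition~\ref{prop:cochaintopgroup}, we have:'', and your argument simply spells out the standard permanence properties (subgroup with subspace topology, Abelian quotient with quotient topology) that make this deduction immediate. Your remark that \(\H^n(G,M)\) need not be separated is a welcome clarification but not required.
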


\begin{rema}
	The identification of \(\H^0(G,M)\) with the subgroup of \(G\)\=/invariant
	elements \(M^G\) of \(M\), via the map \(f \mapsto f(\ast)\), is an
	isomorphism of topological groups (where \(M^G \subset M\) is equipped with
	the subspace topology from \(M\)).
\end{rema}

\subsubsection*{Functoriality}
If \(\phi: H \ra G\) is a morphism of topological groups, if \(M\) is a topological \(G\)-module and \(N\) is a topological \(H\)-module, and if \(\psi: M \ra N\) is a morphism of topological groups compatible with \(\phi\) (\ie \(h\cdot\psi(m)=\psi(\phi(h)\cdot m)\) for all \(m \in M\) and \(h \in H\)), then \(\psi\) induces a group homomorphism
\begin{equation} \label{eq:funct}
	\H^n(G,M) \ra \H^n(H,N)
\end{equation}
for each \(n \in \N\).
It follows from the functoriality of \(\CC_c(\cdot,\cdot)\) and the definition of the continuous group cohomology that the maps~\eqref{eq:funct} is continuous for each \(n \in \N\).
Therefore, if \(G\) and \(H\) are locally compact and separated, then the maps~\eqref{eq:funct} is a morphism of topological Abelian groups for each \(n \in \N\).

We shall now check that for locally compact and separated topological groups, the topology on the cohomology groups is well-behaved with respect to long exact sequences.
To do so, we shall need the following topological version of the snake lemma.

\begin{lemm}[topological snake lemma] \label{lemm:topsnake}
	Let
	\[
		\begin{tikzcd}
					 & A \ar{r} \ar{d}{\alpha}       & B \ar{r}  \ar{d}{\beta}     & C \ar{r} \ar{d}{\gamma} & 0 \\
			0 \ar{r} & A^\prime \ar{r} & B^\prime \ar{r} & C^\prime & 
		\end{tikzcd}
	\]
	be a commutative diagram of topological Abelian groups whose rows, considered as sequences of Abelian groups, are exact.
	Assume that the topology of \(C\) is the quotient topology from \(B\) and that the topology of \(A^\prime\) is the subspace topology from \(B^\prime\).
	If the kernels of \(\alpha\), \(\beta\) and \(\gamma\) are endowed with their respective subspace topology and the cokernels with their respective quotient topology, then there is a sequence of topological Abelian groups
	\[
		\Ker(\alpha) \ra \Ker(\beta) \ra \Ker(\gamma) \ra \Coker(\alpha) \ra \Coker(\beta) \ra \Coker(\gamma)
	\]
	whose underlying sequence of Abelian groups is exact.
\end{lemm}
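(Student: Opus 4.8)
The plan is to observe that the ordinary snake lemma already supplies the six group homomorphisms together with the exactness of the underlying sequence of abelian groups; the only genuinely new content is that each of these maps is \emph{continuous}, so that the displayed sequence lives in the category of topological abelian groups. I would therefore fix notation $u\colon A\ra B$, $v\colon B\ra C$ for the top row and $u'\colon A'\ra B'$, $v'\colon B'\ra C'$ for the bottom row (so that exactness makes $v$ surjective and $u'$ injective), and check continuity of the six arrows one at a time, disposing of the four maps induced by the horizontal arrows first and treating the connecting homomorphism last.

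For the maps $\Ker(\alpha)\ra\Ker(\beta)\ra\Ker(\gamma)$ continuity is formal: each kernel carries the subspace topology, the relevant horizontal map is continuous, and the universal property of the subspace topology lets one corestrict a continuous map into a subspace. Dually, for $\Coker(\alpha)\ra\Coker(\beta)\ra\Coker(\gamma)$ each cokernel carries the quotient topology, so continuity follows from the universal property of the quotient topology applied to the composite of a horizontal map with a quotient projection. None of these four verifications uses the two standing hypotheses.

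The crux is the connecting map $\partial\colon\Ker(\gamma)\ra\Coker(\alpha)$, and this is where both hypotheses enter. Since $C$ carries the quotient topology from $B$, the surjection $v$ realises $C$ as a quotient group of $B$ and is therefore an open map; restricting it to the saturated subset $\tilde B=v^{-1}(\Ker(\gamma))$ yields an open, hence quotient, surjection $v|\colon\tilde B\ra\Ker(\gamma)$. Consequently $\partial$ is continuous as soon as $\partial\circ v|$ is. Now for $b\in\tilde B$ one has $v'(\beta(b))=\gamma(v(b))=0$, so $\beta(b)\in\Ker(v')=\Img(u')$, and $\partial(v(b))$ is the class of $(u')^{-1}(\beta(b))$ in $\Coker(\alpha)$. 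Thus $\partial\circ v|$ factors as
\[
	\tilde B \xra{\beta} \Img(u') \xra{(u')^{-1}} A' \ra \Coker(\alpha),
\]
and every arrow here is continuous: the first is a restriction–corestriction of the continuous map $\beta$, the last is the quotient projection, and $(u')^{-1}$ is continuous precisely because the second hypothesis makes $u'$ a topological embedding, so that $u'\colon A'\riso\Img(u')$ is a homeomorphism onto its image (endowed with the subspace topology, under which $\Ker(v')=\Img(u')$ is computed).

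The main obstacle I anticipate is exactly this continuity of $\partial$, and within it the one genuinely topological input—that the restriction of $v$ to the saturated preimage $\tilde B$ is again a quotient map. This is not automatic for arbitrary continuous surjections, but it becomes available here because $v$ is a morphism of topological groups exhibiting $C$ as a quotient group of $B$, whence $v$ is open, and openness passes to saturated subsets. The two hypotheses of the statement are tailored to this argument: the quotient-topology hypothesis on $C$ permits lifting along $v$ in a topologically controlled way, while the subspace-topology hypothesis on $A'$ permits inverting $u'$ continuously. Once these are in place, the well-definedness of all six maps and the exactness of the underlying sequence are exactly the content of the classical snake lemma, which I would simply cite.
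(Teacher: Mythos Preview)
Your argument is correct. The paper itself does not give a proof but merely cites two references: Schochet for a direct diagram-chasing argument, and Borceux--Bourn / Borceux--Clementino for the observation that topological abelian groups form a \emph{homological} category in which the snake lemma holds formally. Your proof is exactly the explicit version of the first approach: you invoke the classical snake lemma for the algebraic content and then verify continuity map by map, with the key step being the connecting homomorphism. Your identification of where each hypothesis is used is accurate --- the quotient-topology hypothesis on \(C\) makes \(v\) open so that its restriction to the saturated set \(\tilde B = v^{-1}(\Ker\gamma)\) remains a quotient map, and the subspace-topology hypothesis on \(A'\) makes \(u'\) a topological embedding so that \((u')^{-1}\) is continuous on \(\Img(u')\). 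This is precisely the content one would find upon unfolding the Schochet reference, so your proof is more explicit than the paper's but not genuinely different in strategy.
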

\begin{proof}
	A direct proof via ``diagram chasing'' can be found in~\cite[Proposition~4]{Schochet99}.
	We could also argue that the category of topological Abelian groups is \emph{homological}, therefore the ``snake lemma'' holds in it~\cite{BorceuxBourn04,BorceuxClementino05}.
\end{proof}

Let \(G\) be a topological group.
Let
\[
	0 \ra M^\prime \ra M \ra M^\dprime \ra 0
\]
be a short exact sequence of topological \(G\)\=/modules such that the topology of \(M^\prime\) is the subspace topology from \(M\) and the topology of \(M^\dprime\) is the quotient topology from \(M\).
Then, there is a six-term exact sequence of Abelian groups
\begin{equation} \label{eq:h0h1}
	\begin{tikzcd}
		0 \ar{r} & \H^0(G, M^\prime) \ar{r} & \H^0(G, M) \ar{r} \ar[phantom, ""{coordinate, name=Z}]{d} & \H^0(G, M^\dprime) \ar[rounded corners, to path={ -- ([xshift=2ex]\tikztostart.east) |- (Z) [near end]\tikztonodes -| ([xshift=-2ex]\tikztotarget.west) -- (\tikztotarget)}]{dll} \\
				& \H^1(G, M^\prime) \ar{r} & \H^1(G, M) \ar{r} & \H^1(G, M^\dprime).
	\end{tikzcd}
\end{equation}
Moreover, if there exists a continuous section of the projection of \(M\) on \(M^\dprime\) as topological spaces, then there is a long exact sequence of Abelian groups
\begin{equation} \label{eq:hlong}
	\begin{tikzcd}
		\cdots \ar{r} & \H^n(G, M^\prime) \ar{r} & \H^n(G, M) \ar{r} \ar[phantom, ""{coordinate, name=Z}]{d} & \H^n(G, M^\dprime) \ar[rounded corners, to path={ -- ([xshift=2ex]\tikztostart.east) |- (Z) [near end]\tikztonodes -| ([xshift=-2ex]\tikztotarget.west) -- (\tikztotarget)}]{dll} \\
				& \H^{n+1}(G, M^\prime) \ar{r} & \H^{n+1}(G, M) \ar{r} & \H^{n+1}(G, M^\dprime) \ra \cdots.
	\end{tikzcd}
\end{equation}

\begin{rema}
	Such a continuous section of the projection of \(M\) on \(M^\dprime\) exists in particular if the topology of \(M^\dprime\) is the discrete topology.
\end{rema}

\begin{prop} \label{prop:longcont}
	If \(G\) is locally compact and separated, then the sequences~\eqref{eq:h0h1} and~\eqref{eq:hlong} are sequences of topological Abelian groups.
\end{prop}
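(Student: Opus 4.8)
The plan is to realise every arrow of \eqref{eq:h0h1} and \eqref{eq:hlong} as one of the six maps furnished by the topological snake lemma (Lemma~\ref{lemm:topsnake}), applied to a diagram of cochain groups, so that continuity comes for free once the hypotheses of that lemma are checked. First I would recall the standard construction of the connecting homomorphism. For each \(n \in \N\), the coboundary \(d_n\) descends to the quotient by coboundaries and produces a commutative diagram whose rows are exact as sequences of Abelian groups:
\[
	\begin{tikzcd}[column sep=small]
		 & \Crm^n(G,M')/\Brm^n(G,M') \ar{r} \ar{d} & \Crm^n(G,M)/\Brm^n(G,M) \ar{r} \ar{d} & \Crm^n(G,M'')/\Brm^n(G,M'') \ar{r} \ar{d} & 0 \\
		0 \ar{r} & \Zrm^{n+1}(G,M') \ar{r} & \Zrm^{n+1}(G,M) \ar{r} & \Zrm^{n+1}(G,M'') &
	\end{tikzcd}
\]
The kernels of the vertical maps are the groups \(\H^n(G,-)\) and their cokernels are \(\H^{n+1}(G,-)\); thus the snake lemma yields exactly the connecting map \(\H^n(G,M'') \ra \H^{n+1}(G,M')\) together with the functorial maps on either side. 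Splicing these over all \(n\) gives \eqref{eq:hlong}, and the case \(n=0\) alone gives \eqref{eq:h0h1}.

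Next I would verify the two topological hypotheses of Lemma~\ref{lemm:topsnake}. For the subspace condition, since \(M'\) carries the subspace topology from \(M\), the first part of Lemma~\ref{lemm:relativetop} shows that \(\Crm^{n+1}(G,M')\) carries the subspace topology from \(\Crm^{n+1}(G,M)\); as \(\Zrm^{n+1}(G,M') = \Zrm^{n+1}(G,M) \cap \Crm^{n+1}(G,M')\), the leftmost term of the bottom row has the subspace topology from the middle one. For the quotient condition, when \(n=0\) the top row is simply \(0 \ra M' \ra M \ra M'' \ra 0\) and \(M''\) carries the quotient topology from \(M\) by hypothesis, which already settles \eqref{eq:h0h1} without any section; for \(n \geq 1\), as required in \eqref{eq:hlong}, a continuous section of \(M \ra M''\) induces, by the second part of Lemma~\ref{lemm:relativetop}, a quotient map \(\Crm^n(G,M) \ra \Crm^n(G,M'')\), and since this composes to a quotient map onto \(\Crm^n(G,M'')/\Brm^n(G,M'')\) that factors through \(\Crm^n(G,M)/\Brm^n(G,M)\), the rightmost term of the top row is a topological quotient of the middle one.

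The point I expect to require the most care is checking that the topology the snake lemma assigns to the kernels and cokernels agrees with the topology fixed on cohomology in the paper. The cokernel \(\Zrm^{n+1}/\Brm^{n+1}\) receives the quotient topology from \(\Zrm^{n+1}\), matching the definition verbatim. The kernel, however, is \(\Zrm^n/\Brm^n\) endowed with the subspace topology from \(\Crm^n/\Brm^n\), whereas \(\H^n\) is defined with the quotient topology coming from \(\Zrm^n \subset \Crm^n\). These two topologies coincide: for a topological Abelian group \(\Crm^n\) with subgroups \(\Brm^n \subset \Zrm^n \subset \Crm^n\), the quotient map \(\Crm^n \ra \Crm^n/\Brm^n\) is open, and one checks that its restriction to \(\Zrm^n\) is open onto its image, so the subspace topology on \(\Zrm^n/\Brm^n \subset \Crm^n/\Brm^n\) equals the quotient topology from \(\Zrm^n\).

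With the topologies thus reconciled, Lemma~\ref{lemm:topsnake} shows that all maps in \eqref{eq:h0h1} and in each degree-\(n\) piece of \eqref{eq:hlong} are continuous; the functorial maps at the splicing points coincide by construction (they are all induced by the module morphisms \(M' \ra M\) and \(M \ra M''\), already known to be continuous), so the spliced sequences are sequences of topological Abelian groups. Finally, Corollary~\ref{coro:cohotopgroup} ensures every term is a topological Abelian group, which completes the argument.
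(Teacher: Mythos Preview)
Your proof is correct and follows essentially the same approach as the paper: both apply the topological snake lemma (Lemma~\ref{lemm:topsnake}) to the diagram built from \(\Crm^n/\Brm^n \to \Zrm^{n+1}\), verifying the subspace and quotient hypotheses via Lemma~\ref{lemm:relativetop}. You are in fact more careful than the paper on one point it leaves implicit, namely that the subspace topology on \(\Zrm^n/\Brm^n \subset \Crm^n/\Brm^n\) agrees with the quotient topology from \(\Zrm^n\); your openness argument for this is correct.
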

\begin{proof}
	We recall how the exact sequences of groups~\eqref{eq:h0h1} and~\eqref{eq:hlong} are obtained.
	By functoriality (and under the additional assumption that there exists a continuous section of the projection of \(M\) on \(M^\dprime\) if \(n \geq 1\)), there is a commutative diagram of Abelian groups
	\begin{equation} \label{eq:snakecohom}
		\begin{tikzcd}
			& \frac{\Crm^n(G,M^\prime)}{\Brm^n(G,M^\prime)} \ar{r} \ar{d} & \frac{\Crm^n(G,M)}{\Brm^n(G,M)} \ar{r} \ar{d} & \frac{\Crm^n(G,M^\dprime)}{\Brm^n(G,M^\dprime)} \ar{d} \ar{r} & 0 \\
			0 \ar{r} & \Zrm^{n+1}(G,M^\prime) \ar{r} & \Zrm^{n+1}(G,M) \ar{r} & \Zrm^{n+1}(G,M^\dprime) &
		\end{tikzcd}
	\end{equation}
	where the vertical arrows are the factorisations of the coboundary maps (and where the surjectivity of \(\Crm^n(G,M) \ra \Crm^n(G,M^\dprime)\) follows from Lemma~\ref{lemm:relativetop} when \(n \geq 1\)).
	The exact sequences of groups~\eqref{eq:h0h1} and~\eqref{eq:hlong} are obtained by applying the classical snake lemma to the diagram~\eqref{eq:snakecohom}.

	By Proposition~\ref{prop:cochaintopgroup} and Corollary~\ref{coro:cohotopgroup}, each object in the diagram~\eqref{eq:snakecohom} is a topological group, and we already noted that the horizontal arrows in the diagram~\eqref{eq:snakecohom} are all morphisms of topological groups by functoriality of \(\CC_c(\cdot,\cdot)\).
	By Proposition~\ref{prop:coboundcont}, the vertical arrows in the diagram~\eqref{eq:snakecohom} which are the factorisations of the coboundary maps are also morphisms of topological groups.
	Moreover, by Lemma~\ref{lemm:relativetop}, the topology of \(\Crm^n(G,M^\dprime)/\Brm^n(G,M^\dprime)\) is the quotient topology from \(\Crm^n(G,M)/\Brm^n(G,M)\), and the topology of \(\Zrm^{n+1}(G,M^\prime)\) is the subspace topology from \(\Zrm^{n+1}(G,M)\).
	Therefore, the topological snake lemma~\ref{lemm:topsnake} applies to the diagram~\eqref{eq:snakecohom} and implies that the sequences~\eqref{eq:h0h1} and~\eqref{eq:hlong} are sequences of topological Abelian groups.
\end{proof}

\bibliographystyle{plain}
\bibliography{references}
\contact
\end{document}